\newtheorem{theorem}{Theorem}
\theoremstyle{plain}
\newtheorem{conjecture}{Conjecture}
\newtheorem{corollary}{Corollary}
\newtheorem{definition}{Definition}
\newtheorem{lemma}{Lemma}
\newtheorem{proposition}{Proposition}
\newtheorem{remark}{Remark}
\numberwithin{equation}{section}
\newcommand{\B}{\mathbb{B}}
\newcommand{\R}{\mathbb{R}}
\newcommand{\fblv}{f_{\overline{\lambda}_0,v_0}}
\newcommand{\Blvp}{\B^+_{\lambda,v_0}}
\newcommand{\Bblvp}{\B^+_{\overline{\lambda}_0,v_0}}
\newcommand{\cna}{c_{n,\alpha}}
\newcommand{\CNA}{C(n,\alpha)}
\newcommand{\ipt}{(1-2r\cos\phi+r^2)}
\newcommand{\flv}{f_{\lambda,v_0}}
\newcommand{\Sph}{\mathbb{S}}
\newcommand{\pk}{\widetilde{p}}
\newcommand{\PK}{\widetilde{P}}
\newcommand{\pka}{\widetilde{p}_\alpha}
\newcommand{\PKA}{\widetilde{P}_\alpha}
\newcommand{\plv}{\phi_{\lambda,v_0}}
\newcommand{\upka}{p_\alpha}
\newcommand{\UPKA}{P_\alpha}
\begin{document}
\title{on a family of integral operators on the ball}
\author[W. Tian]{Wenchuan Tian}
\address{Department of Mathematics\\University of California\\ Santa Barbara, CA 93106}
\email{wtian@math.ucsb.edu}

\maketitle

\begin{abstract}
In this work, we transform the equation in the upper half space first studied by Caffarelli and Silvestre to an equation in the Euclidean unit ball $\mathbb{B}^n$. We identify the Poisson kernel for the equation in the unit ball. Using the Poisson kernel, we define the extension operator. We prove an extension inequality in the limit case and prove the uniqueness of the extremal functions in the limit case using the method of moving spheres. In addition we offer an interpretation of the limit case inequality as a conformally invariant generalization of Carleman's inequality. 
\end{abstract}

\section{\protect\bigskip Introduction}
\subsection{Conformally Invariant Generalization of Carleman's Inequality}
In \cite{C} Carleman proved the following:
\begin{theorem}\cite{C}
For any $u\in C^\infty(\overline{\B^2})$ such that $u$ is harmonic in $\B^2$ with respec to the Euclidean metric then we have
\begin{equation}\label{CarlemanInequality}
\int_{\B^2} e^{2u}dx\leq \frac{1}{4\pi}\left(\int_{\Sph^1} e^u d\theta\right)^2.
\end{equation}
Where equality holds for either $u(x)=c$ or $u(x)=-2\ln|x-x_0|+c$ where $c\in \R$ is any constant and $x_0\in \R^2\backslash \overline{\B}^2$.
\end{theorem}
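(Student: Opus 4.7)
The plan is to reduce Carleman's estimate to a power-series inequality by exploiting the two-dimensional complex structure. Since $\B^2$ is simply connected and $u$ is harmonic, there is a holomorphic $f\in C^\infty(\overline{\B^2})$ with $u = \operatorname{Re}(f)$, and $G := e^{f/2}$ is then a nowhere-vanishing holomorphic function on $\B^2$, smooth up to the boundary, with $|G|^2 = e^u$ and $|G|^4 = e^{2u}$. The inequality to prove becomes
\begin{equation*}
\int_{\B^2} |G|^4\,dx \;\leq\; \frac{1}{4\pi}\left(\int_{\Sph^1} |G|^2\,d\theta\right)^2.
\end{equation*}

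Next, expanding $G(z)=\sum_{k\geq 0} a_k z^k$ and $G(z)^2=\sum_{n\geq 0} c_n z^n$ with $c_n=\sum_{k=0}^n a_k a_{n-k}$, Parseval on $\Sph^1$ identifies the right-hand side as $\pi\bigl(\sum_k|a_k|^2\bigr)^2$, while integrating $|G^2|^2$ in polar coordinates on the disk identifies the left-hand side as $\pi\sum_n|c_n|^2/(n+1)$. Carleman's inequality therefore reduces to the purely algebraic statement
\begin{equation*}
\sum_{n\geq 0}\frac{|c_n|^2}{n+1} \;\leq\; \left(\sum_{k\geq 0}|a_k|^2\right)^2,
\end{equation*}
which I would close by applying Cauchy-Schwarz within each convolution to get $|c_n|^2 \leq (n+1)\sum_{k=0}^n|a_k|^2|a_{n-k}|^2$; substituting cancels the weight $1/(n+1)$ and the remaining double sum rearranges as a product of two copies of $\sum_k|a_k|^2$.

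The step I expect to require the most care is the characterization of extremals. Equality in each Cauchy--Schwarz application forces $k\mapsto a_k a_{n-k}$ to be constant for every $n$. Since $a_0 = G(0) \neq 0$, comparing the $k=0$ and $k=1$ entries yields $a_n = a_0(a_1/a_0)^n$, so $G$ is either constant (giving $u \equiv c$) or of the form $G(z) = A/(1-Bz)$ with $A, B \in \mathbb{C}$ and $|B| < 1$, the latter forced by smoothness on $\overline{\B^2}$. Taking the logarithm of $|G|^2$ and setting $x_0 := 1/B$, which lies in $\R^2 \setminus \overline{\B^2}$, then recovers $u(x) = -2\log|x - x_0| + c$, in agreement with the stated family of extremals.
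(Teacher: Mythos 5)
The paper does not prove this theorem at all---it is quoted directly from Carleman's paper \cite{C}---but your argument is correct and is in essence Carleman's original proof: pass to the holomorphic function $G=e^{f/2}$, reduce via Parseval to the coefficient inequality $\sum_n |c_n|^2/(n+1)\leq\bigl(\sum_k|a_k|^2\bigr)^2$, and close with Cauchy--Schwarz on each convolution, the equality analysis then forcing $a_n=a_0(a_1/a_0)^n$ and hence the stated extremals. The only thing worth adding is the one-line converse check that $G(z)=A/(1-Bz)$ (and constants) actually attain equality, which is immediate since then $a_ka_{n-k}=A^2B^n$ is constant in $k$ for every $n$.
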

Note that the inequality (\ref{CarlemanInequality}) is conformally invariant and that it also holds for subharmonic functions. We will refer to (\ref{CarlemanInequality}) as Carleman's inequality through out this article. 

Using inequality ($\ref{CarlemanInequality}$) Carleman proved that isoperimetric inequality holds in two dimensional minimal surfaces in $\R^3$ \cite{C}. Beckenbach and Rado also used inequality ($\ref{CarlemanInequality}$) to prove that the isoperimetric inequality holds in analytic surfaces in $\R^3$ with nonpositive gaussian curvature \cite{BR}.

There are several generalizations of Carleman's inequality to higher dimensional unit ball. For example, Hang, Wang and Yan \cite{HWY2} proved proved the following generalization for harmonic functions in higher dimensional unit ball:
\begin{corollary}\cite[Corollary 3.1]{HWY2}
	Assume $n\geq 3$, then for $\widetilde{f}\in L^\infty(\Sph^{n-1})$,
	\[\left\|e^{\widetilde{P} \widetilde{f}}\right\|_{L^{\frac{n}{n-1}}(\B^n)}\leq n^{-1}\omega_n^{-\frac{1}{n}}\left\|e^{\widetilde{f}}\right\|_{L^1(\Sph^{n-1})}.\]
	Here
	\begin{equation*}
\widetilde{P}\widetilde{f}(x)=\frac{1}{n\omega_n}\int_{\Sph^{n-1}}\frac{1-|x|^2}{|x-\xi|^n}\widetilde{f}(\xi)d\xi	
\end{equation*}
is the harmonic extension of $\widetilde{f}$, $\omega_{n}$ is the volume of the unit ball in $\R^n$ with the Euclidean metric. Moreover, equality holds if and only if $\widetilde{f}$ is constant.
\end{corollary}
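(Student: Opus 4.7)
\noindent My plan is as follows. Set $u := \widetilde{P}\widetilde{f}$, $p := n/(n-1)$, and normalize $\int_{\Sph^{n-1}} e^{\widetilde{f}}\,d\sigma = n\omega_n$ by shifting $\widetilde{f}$ (both sides of the inequality scale by $e^c$ under $\widetilde{f}\mapsto \widetilde{f}+c$); the claim then reads $\int_{\B^n} e^{pu}\,dx \leq \omega_n = |\B^n|$.

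The first step is pointwise Jensen's inequality, exploiting that the kernel $\frac{1-|x|^2}{n\omega_n|x-\xi|^n}\,d\sigma(\xi)$ is a probability measure on $\Sph^{n-1}$ for each fixed $x$: convexity of $e^t$ gives $e^{u(x)} \leq \widetilde{P}(e^{\widetilde{f}})(x)$, with equality only if $\widetilde{f}$ is a.e.\ constant. This step alone is already insufficient, however. Raising to the $p$-th power and integrating would reduce the claim to the linear estimate $\|\widetilde{P}g\|_{L^{n/(n-1)}(\B^n)}\leq C\|g\|_{L^1(\Sph^{n-1})}$, which is \emph{false}: a boundary Dirac mass $g=\delta_{\xi_0}$ already produces $\widetilde{P}g(x)\asymp (1-|x|^2)/|x-\xi_0|^n$, whose $L^{n/(n-1)}$-norm diverges logarithmically. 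Hence harmonicity of $u$ (not merely subharmonicity of $e^u$) has to be used.

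The useful interpretation is geometric. With $\phi := u/(n-1)$, the conformal metric $g_\phi := e^{2\phi}g_{\mathrm{Eucl}}$ on $\overline{\B^n}$ has bulk volume form $e^{n\phi}\,dx = e^{pu}\,dx$ and boundary area form $e^{(n-1)\phi}\,d\sigma = e^{\widetilde{f}}\,d\sigma$, so the claim reads exactly as the Euclidean isoperimetric inequality
\begin{equation*}
\mathrm{Vol}_{g_\phi}(\B^n) \;\leq\; (n\omega_n^{1/n})^{-p}\,\bigl[\mathrm{Area}_{g_\phi}(\Sph^{n-1})\bigr]^{n/(n-1)}
\end{equation*}
applied to a conformal metric whose logarithmic factor is harmonic. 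To execute the proof I would reduce to the zonal case via a Baernstein-type rearrangement on the sphere: replacing $\widetilde{f}$ by its symmetric decreasing rearrangement $\widetilde{f}^*$ preserves the right-hand side and, by a star-function comparison applied to the convex integrand $e^{pt}$, does not decrease the left-hand side. For zonal data the problem becomes essentially one-dimensional and is closed by direct comparison against constants.

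The main obstacle is precisely this sharp rearrangement step, i.e.\ the assertion $\int_{\B^n} e^{p\widetilde{P}\widetilde{f}}\,dx \leq \int_{\B^n} e^{p\widetilde{P}\widetilde{f}^*}\,dx$; once it is in hand, the equality case is read off by tracing strictness (strict rearrangement forces $\widetilde{f}$ to already be symmetric decreasing, and strict Jensen in the first step then forces it to be constant). This rearrangement step is essentially the content of \cite{HWY2}, and a detailed execution would follow their symmetrization argument.
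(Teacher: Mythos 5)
You have correctly diagnosed why the naive route fails: Jensen against the Poisson probability measure gives $e^{\widetilde{P}\widetilde{f}}\leq \widetilde{P}(e^{\widetilde{f}})$ pointwise, but the strong endpoint bound $\|\widetilde{P}g\|_{L^{n/(n-1)}(\B^n)}\lesssim\|g\|_{L^1(\Sph^{n-1})}$ is indeed false (only the weak-type estimate holds, as in Proposition \ref{WeakEstimate}), and your isoperimetric reformulation with $\phi=u/(n-1)$ is the right way to read the statement. Note, however, that this corollary is quoted from \cite{HWY2} and is not proved in this paper, so the relevant comparison is with the method the paper (following \cite{HWY2}) uses for its analogous results, Theorem \ref{maininequality} and Theorem \ref{TheoremLimitCaseInequalityExplicitIn}.

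Measured against that, your proposal has a genuine gap at its final and decisive step. The symmetrization claim $\int_{\B^n}e^{p\widetilde{P}\widetilde{f}}\,dx\leq\int_{\B^n}e^{p\widetilde{P}\widetilde{f}^{*}}\,dx$ is plausible and is in fact classical (a Baernstein--Taylor type comparison for spherical rearrangement, since at fixed $|x|$ the Poisson kernel is a decreasing function of the angle and has radius-independent mean), but it is not the heart of the matter: after reduction to zonal monotone data, nothing in your sketch produces the sharp constant $n^{-1}\omega_n^{-1/n}$ or the conclusion that constants are extremal. ``Direct comparison against constants'' is not an argument --- for zonal data the problem is still a genuinely infinite-dimensional variational problem. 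In \cite{HWY2}, and in this paper's Theorem \ref{maininequality}, that step is closed by different machinery: compactness of the subcritical operator (Corollary \ref{compact}) gives existence of a maximizer, the maximizer may be taken symmetric decreasing, and the Kazdan--Warner type identity of Lemma \ref{Kazdan-Warner} applied to the Euler--Lagrange equation forces the monotone maximizer to be constant; the exponential inequality is then obtained by a limiting argument with uniform kernel bounds (Lemma \ref{integration}, Remark \ref{lowerbound}) and dominated convergence. None of this appears in your proposal, so the sharp inequality is not established. The equality discussion is also muddled: your final scheme no longer uses the Jensen step in the main inequality, so ``strict Jensen in the first step'' cannot be invoked to characterize equality; in the paper's framework the equality case comes out of the Euler--Lagrange/Kazdan--Warner analysis (and, in the limit case, the moving-sphere uniqueness of Theorem \ref{MainTheoremLimitCaseUniqueness}), not from tracing strictness in a rearrangement.
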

Note that the inequality in the corollary also works for subharmonic functions but it is not invariant under conformal transformation.

In \cite{Ch} Chen proposed another way to generalize the Carleman's inequality in dimension $4$. He considered the extension inequality related to the equation studied by Caffarelli and Silvestre \cite{CS}, as a limit case of the inequality he proved the following:
\begin{corollary}\cite[Corollary 1]{Ch}
For any $u:\B^4\to\R$ satisfying $\Delta^2 u\leq 0$ and $-\frac{\partial u}{\partial \nu}\leq 1$, 
\[\left(\int_{\B^4} e^{4u}dx\right)^\frac{1}{4}\leq S\left(\int_{\Sph^3} e^{3u}d\xi\right)^{\frac{1}{3}}.\]
Note that here $\Delta$ is the Laplacian in $\B^4$ with the Euclidean metric, $\nu$ is the outer unit normal vector with respect to the Euclidean metric. Here $S$ is the sharp constant, and is assumed by the solution to the equation
\begin{equation}\label{ChenBihamonicEquation}
\begin{cases}
	\Delta^2 u=0,\text{ in }\B^4\\
	u=0,\text{ on }\Sph^3\\
	-\frac{\partial u}{\partial \nu}=1,\text{ on }\Sph^3.\\
\end{cases}	
\end{equation}
\end{corollary}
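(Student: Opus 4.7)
The plan is to derive this corollary as the exponential endpoint of an extension inequality for the Caffarelli--Silvestre Poisson kernel at $\alpha=1$ in dimension $4$, by combining a maximum-principle reduction with a Jensen/H\"older computation.

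The first step is to reduce to equality in the Neumann condition. The hypothesis $\Delta^2 u\leq 0$ together with $-\partial u/\partial\nu\leq 1$, combined with positivity of the Green's function for the biharmonic Dirichlet problem on the ball (Boggio's formula), should allow one to replace $u$ by the unique biharmonic $v$ with $v|_{\Sph^3}=u|_{\Sph^3}$ and $-\partial v/\partial\nu=1$, showing $u\leq v$ pointwise. Since this replacement leaves the right-hand side unchanged and only increases the left-hand side, one is reduced to the case $u=v$. Then $v$ decomposes as $v=u_0+P^{\mathrm{Dir}}f$, where $f=u|_{\Sph^3}$, $u_0$ is the explicit radial solution of (\ref{ChenBihamonicEquation}), and $P^{\mathrm{Dir}}$ is the biharmonic Poisson extension with zero Neumann data, whose positive kernel $K(x,\xi)$ is computable from the conformal transform of the Caffarelli--Silvestre kernel introduced earlier in the paper.

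The second step is Jensen plus H\"older. Since the kernel $K(x,\cdot)$ normalized by its total mass $m(x)=\int_{\Sph^3}K(x,\xi)\,d\xi$ is a probability measure on $\Sph^3$, Jensen applied to $e^{4P^{\mathrm{Dir}}f(x)}$ gives
\[
e^{4P^{\mathrm{Dir}}f(x)}\leq \frac{1}{m(x)}\int_{\Sph^3}K(x,\xi)\,e^{4m(x)f(\xi)}\,d\xi.
\]
Integrating in $x\in\B^4$, swapping the order of integration, and applying H\"older with conjugate exponents $(4/3,4)$ produces $\|e^{3f}\|_{L^1(\Sph^3)}^{1/3}$ on the right and an explicit integral involving $e^{4u_0}$ and $K$ on the left, identifying the prefactor as $S$.

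The main obstacle is the sharpness of $S$. Jensen is saturated only when $f$ is essentially constant on the support of $K(x,\cdot)$, and H\"older only when $e^{3f}$ is proportional to a specific weight; for both to be saturated simultaneously, $f$ must be constant, corresponding to $u=u_0+c$. The exponent matching behind this joint saturation reflects the conformal invariance of $\Delta^2$ in dimension $4$, and a classification of extremals via the method of moving spheres (developed elsewhere in the paper) is the natural tool to make the uniqueness argument rigorous. Once the extremals are classified, $S$ is identified by direct evaluation on $u_0$, giving $S = \left(\int_{\B^4}e^{4u_0}\,dx\right)^{1/4}/|\Sph^3|^{1/3}$.
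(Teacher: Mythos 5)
This corollary is quoted from \cite{Ch}; the present paper does not prove it, so your argument has to be judged on its own terms and against the route Chen (and Sections 4--5 of this paper, for the general $n$ analogue) actually take. Your first step is sound and is essentially the reduction Chen alludes to when he stresses positivity of the Green's function of (\ref{ChenBihamonicEquation}): with $v$ biharmonic, $v|_{\Sph^3}=u|_{\Sph^3}$, $-\partial v/\partial\nu=1$, the function $w=v-u$ satisfies $\Delta^2 w\ge 0$, $w|_{\Sph^3}=0$, $\partial_\nu w\le 0$, and the Green representation together with $G\ge 0$ (Boggio) and $\Delta_y G\ge 0$ on $\Sph^3$ gives $w\ge 0$, so $u\le v$. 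Moreover, in dimension $4$ your kernel $K$ does coincide with the conformally transported kernel $\widetilde{p}_{-2}$ of this paper and has mass identically $1$; but that identification needs an argument (M\"obius covariance of $\Delta^2$ on $\R^4$, plus the vanishing of the first-order boundary term of the hyperbolic-harmonic extension), which you assert rather than prove.

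The second step has a genuine gap that the method cannot survive. With $m(x)\equiv 1$, Jensen gives $e^{4P^{\mathrm{Dir}}f(x)}\le\int_{\Sph^3}K(x,\xi)e^{4f(\xi)}d\xi$, and after integrating in $x$ and swapping the order of integration you are left with $\int_{\Sph^3}e^{4f(\xi)}W(\xi)\,d\xi$, where $W(\xi)=\int_{\B^4}e^{4u_0(x)}K(x,\xi)\,dx$ is, by rotational symmetry, a positive constant. At this point no application of H\"older can produce $\bigl(\int_{\Sph^3}e^{3f}d\xi\bigr)^{4/3}$: H\"older only raises exponents, and the inequality you would need, $W\int_{\Sph^3}e^{4f}d\xi\le S^4\bigl(\int_{\Sph^3}e^{3f}d\xi\bigr)^{4/3}$, is false in general (test $e^{f}=t\mathbf{1}_A+\varepsilon$ on a small cap $A$: the ratio of the two sides grows like $|A|^{-1/3}$). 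So the pointwise Jensen step is too lossy, the chain of estimates breaks, and the sharpness discussion built on it is moot. The actual derivation does not pass through Jensen at all: one first proves the sharp family $\|\PKA f\|_{L^{2n/(n-2+\alpha)}(\B^n)}\le S_{n,\alpha}\|f\|_{L^{2(n-1)/(n-2+\alpha)}(\Sph^{n-1})}$ (Theorem \ref{maininequality}, via compactness, subcritical approximation and the Kazdan--Warner identity), then sets $f=1+\frac{n-2+\alpha}{2}F$ and lets $\alpha\to 2-n$ using the uniform upper and lower bounds on $\PKA 1$ (Theorem \ref{TheoremLimitCaseInequalityExplicitIn}); for $n=4$ this yields $\|e^{\widetilde{I}_4+\widetilde{P}_{-2}F}\|_{L^4(\B^4)}\le S\|e^{F}\|_{L^3(\Sph^3)}$, and your Step 1 comparison then produces the corollary. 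If you want a proof in the spirit you propose, you must keep the sharp linear extension inequality as the analytic core rather than trying to replace it by Jensen plus H\"older.
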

Note that Chen did not prove uniqueness of extremal function for this generalization, and as he pointed out at the end of \cite{Ch} that this generalization works well because the Green's function of equation (\ref{ChenBihamonicEquation}) is positive. As a result will be difficult for us to find similar generalizations in higher dimensions.

In this article, we propose another way to generalize the Carleman's inequality:
\begin{corollary}
Assume $n\geq 3$, then for any $\widetilde{f}\in L^\infty(\Sph^{n-1})$
\[\left\|e^{\widetilde{I}_n+\widetilde{P}_{2-n}\widetilde{f}}\right\|_{L^n(\B^n)}\leq S_n\left\|e^{\widetilde{f}}\right\|_{L^{n-1}(\Sph^{n-1})}.\]
The sharp constant
$S_n=\frac{\left\|e^{\widetilde{I}_n}\right\|_{L^n(\B^n)}}{\left|\Sph^{n-1}\right|^{\frac{1}{n-1}}}$. Moreover, equality holds if and only if $\widetilde{f}(\xi)=-\ln|1-\zeta\cdot \xi|+C$. Where $C\in\R$ is a constant and $\zeta\in\B^n$.
\end{corollary}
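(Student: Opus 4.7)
The plan is to derive this corollary directly from the endpoint (``limit case'') extension inequality that the paper proves as its main theorem, which I expect to have essentially the same shape as the displayed inequality, with the sharp constant and extremal characterization already established via the moving spheres method. The role of $\widetilde{I}_n$ in the statement is, I expect, the conformally invariant correction term coming from extending the constant function, chosen precisely so that the whole expression $\widetilde{I}_n+\widetilde{P}_{2-n}\widetilde{f}$ transforms correctly under Möbius self-maps of $\B^n$ (analogous to how logarithmic terms appear in Moser--Trudinger type limits of Sobolev embeddings). Thus the corollary should amount to (i) identifying the sharp constant explicitly and (ii) translating the classification of extremals from the moving spheres argument into the explicit family $-\ln|1-\zeta\cdot\xi|+C$.

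First I would pin down $S_n$ by testing a single extremal. The constant function $\widetilde{f}\equiv 0$ is the $\zeta=0$ case of the claimed extremal family, so it must achieve equality. Since $\widetilde{P}_{2-n}$ should, by the structure of the Poisson kernel identified earlier in the paper, annihilate constants (or send them to a multiple absorbed into the normalization of $\widetilde{I}_n$), substituting $\widetilde{f}\equiv 0$ reduces the inequality to the equality
\[\left\|e^{\widetilde{I}_n}\right\|_{L^n(\B^n)} = S_n\cdot\left|\Sph^{n-1}\right|^{\frac{1}{n-1}},\]
which yields the stated formula for the sharp constant.

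Next, to see that every function $\widetilde{f}(\xi)=-\ln|1-\zeta\cdot\xi|+C$ is extremal, I would invoke the conformal invariance of the inequality. These functions are precisely the boundary values, up to an additive constant, of $-\ln$ of the conformal factor on $\Sph^{n-1}$ induced by the Möbius automorphism of $\B^n$ sending $0$ to $\zeta$. Since $\widetilde{f}\equiv 0$ is an extremal, its full orbit under the conformal group of the ball must also consist of extremals, and a direct change of variables using the explicit Poisson kernel from the previous sections of the paper should confirm that this orbit is exactly the claimed family.

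The main conceptual obstacle, uniqueness of the extremals, I expect to inherit directly from the moving spheres classification established as part of the main theorem: that argument should force any extremizer $\widetilde{f}$ to produce, via $\widetilde{P}_{2-n}$, an extended function on $\B^n$ whose level sets are compatible with a one-parameter family of reflections through spheres, and standard moving-spheres reasoning then identifies $\widetilde{f}$ with a Möbius-orbit element of the constant extremal. This is the genuinely hard step, but it is handled upstream; the corollary itself is essentially a repackaging of that theorem in exponential form.
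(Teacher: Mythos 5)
Your proposal is correct and takes essentially the same route as the paper: the corollary is precisely a repackaging of Theorem \ref{LimitCaseInequalityTian} (the limit-case inequality, with $S_n$ identified through equality at constants) and Theorem \ref{MainTheoremLimitCaseUniqueness} (the moving-spheres classification of solutions of the Euler--Lagrange equation), combined with the conformal invariance coming from (\ref{InBall}) and (\ref{ConformalTransformationPhiBall}), which identifies the M\"obius orbit of the constant function with the family $-\ln|1-\zeta\cdot\xi|+C$. The only minor corrections are that $\widetilde{P}_{2-n}$ preserves constants rather than annihilating them (indeed $\widetilde{P}_{2-n}1=1$ by Remark \ref{IntegrationLemmaLimitCaseRemark}), which is exactly what makes the additive constant $C$ harmless, and that the passage from ``equality holds'' to ``$\widetilde{f}$ satisfies the Euler--Lagrange equation'' is the implicit variational bridge the paper likewise treats as standard.
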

Note that this inequality is invariant under conformal transformation and that it also holds for hyperbolic subharmonic functions.

The function $\widetilde{I}_n$ shows up naturally in the proof. When $n$ is even, we can think of $\|e^{\widetilde{I}_n+\widetilde{P}_{2-n}\widetilde{f}}\|_{L^n(\B^n)}$ as the $L^n$ norm of $e^{\widetilde{P}_{2-n}\widetilde{f}}$ measured using the Fefferman-Graham metric 
\[g=e^{2\widetilde{I}_n}dx^2\]
where $dx^2$ is the standard Euclidean metric on the unit ball. For further information related to the Fefferman-Graham metric, we refer the reader to \cite{CC}, \cite{AC} and \cite{FG}.

\subsection{Notations}
 Through out this article, we let 
\[\R^n_+=\{(y',y_n)\in \R^n\text{ such that }y'\in \R^{n-1},\ y_n>0\},\]
and
\[\B^n=\{x\in \R^n\text{ such that } |x|<1\},\]
here $|x|$ denotes the norm of $x$ with respect to the Euclidean metric. We also use the notation
\[\Sph^{n-1}=\{x\in \R^n\text{ such that } |x|=1\}\]
to denote the unit sphere in $\R^{n}$.

Through out this article we use notations like $(y',y_n)$ and $(x',x_n)$ to denote points in $\R^n$ where $y',\ x'\in \R^{n-1}$ and $y_n,\ x_n\in\R$.

In \cite{CS} Caffarelli and Silvestre considered an interesting generalization of Laplace equation in the upper half space. They considered for $-1<\alpha<1$ the equation
\begin{equation}\label{divform}
\begin{cases}
\text{div}\left( y_{n}^{\alpha }\nabla u\right) =0, \text{ for }y\in \R^{n}_+,\\
u(y',0)=f(y'), \text{ for } y'\in \R^{n-1}.
\end{cases}
\end{equation}%
In this article we want to consider the case $2-n\leq \alpha <1$ and transform equation \ref{divform} from the upper half space to the unit ball.

Let $\Psi :\mathbb{R}_{+}^{n}\rightarrow \mathbb{B}^{n}$ be the projection map defined by%
\begin{equation}\label{PsiDefinition}
\begin{split}
\left( x^{\prime },x_{n}\right) &=\Psi \left( y^{\prime },y_{n}\right)
=\left( \frac{2y^{\prime }}{1+2y_{n}+\left\vert y\right\vert ^{2}},\frac{%
-1+\left\vert y\right\vert ^{2}}{1+2y_{n}+\left\vert y\right\vert ^{2}}%
\right) \\
&=\left( \frac{2y^{\prime }}{\left( 1+y_{n}\right) ^{2}+\left\vert
y^{\prime }\right\vert ^{2}},1-\frac{2\left( 1+y_{n}\right) }{\left(
1+y_{n}\right) ^{2}+\left\vert y^{\prime }\right\vert ^{2}}\right)
\end{split}
\end{equation}%
with the inverse $\Psi ^{-1}:\mathbb{B}^{n}\rightarrow \mathbb{R}_{+}^{n}$ 
\begin{equation*}
\left( y^{\prime },y_{n}\right) =\Psi ^{-1}\left( x^{\prime },x_{n}\right)
=\left( \frac{2x^{\prime }}{\left( 1-x_{n}\right) ^{2}+\left\vert x^{\prime
}\right\vert ^{2}},\frac{1-\left\vert x\right\vert ^{2}}{\left(
1-x_{n}\right) ^{2}+\left\vert x^{\prime }\right\vert ^{2}}\right) .
\end{equation*}%
It is useful to record 
\begin{equation*}
\frac{1-2x_{n}+\left\vert x\right\vert ^{2}}{2}=\frac{2}{1+2y_{n}+\left\vert
y\right\vert ^{2}}
\end{equation*}%
Which means that if we define $[\Psi(y)]_n$ to be the $n-$th component of $\Psi(y)$ (note that by definition $[\Psi(y)]_n=\frac{-1+|y|^2}{1+2y_n+|y|^2}$), then we have
\begin{equation}\label{EquationConformalFactorTransformation}
\frac{1-2[\Psi(y)]_n+\left\vert \Psi(y)\right\vert ^{2}}{2}=\frac{2}{1+2y_{n}+\left\vert
y\right\vert ^{2}}.
\end{equation}

The restriction of $\Psi $ on $y_{n}=0$ is the stereographic projection $%
\mathbb{R}^{n-1}\rightarrow \mathbb{S}^{n-1}$. From the calculation in Proposition \ref{HowTheOperatorTranforms}, in particular (\ref{ConformalFactorPsi}), we see that 
\begin{equation*}
\Psi ^{\ast }dx^{2}=\frac{4dy^{2}}{\left( 1+2y_{n}+\left\vert y\right\vert
^{2}\right) ^{2}}.
\end{equation*}
It means that $\Psi :\R^n_+ \to \B^n$ is a conformal transformation. Here the conformal factor is very important for our calculation, through out this article we will use $|\Psi'(y)|$ to denote the conformal factor, in particular for any $y\in \R^n_+$ we have
\begin{equation}\label{PsiConformalFactorExampleUpperHalf}
|\Psi'(y)|= \frac{2}{1+2y_n+|y|^2},	
\end{equation}
and for any $w\in \R^{n-1}=\partial \R^n_+$ we have
\begin{equation}\label{PsiConformalFactorExampleBoundary}
|\Psi'(w)|=\frac{2}{1+|w|^2}.	
\end{equation}

For a function $\widetilde{f}$ on $\mathbb{B}^{n}$ or $\mathbb{S}^{n-1}$ we
define 
\begin{equation}\label{PsiTransformation}
f\left( y^{\prime },y_{n}\right) =\widetilde{f}\circ \Psi \left( y^{\prime
},y_{n}\right) \left( \frac{2}{1+2y_{n}+\left\vert y\right\vert ^{2}}\right)
^{\frac{n-2+\alpha }{2}}.
\end{equation}%
It is easy to check that this map is an isometry from $L^{\frac{2\left(
n-1\right) }{n-2+\alpha }}\left( \mathbb{S}^{n-1}\right) $ to $L^{\frac{%
2\left( n-1\right) }{n-2+\alpha }}\left( \mathbb{R}^{n-1}\right) $ and from $%
L^{\frac{2n}{n-2+\alpha }}\left( \mathbb{B}^{n}\right) $ to $L^{\frac{2n}{%
n-2+\alpha }}\left( \mathbb{R}_{+}^{n}\right) $. The inverse map is 
\begin{equation}\label{PsiTransformationInverse}
\widetilde{f}\left( x^{\prime },x_{n}\right) =f\circ \Psi ^{-1}\left(
x^{\prime },x_{n}\right) \left( \frac{2}{1-2x_{n}+\left\vert x\right\vert
^{2}}\right) ^{\frac{n-2+\alpha }{2}}.
\end{equation}  

In the limit case when $\alpha=2-n$ we still use notations $\widetilde{f}$ and $f$ to denote functions on $\Sph^{n-1}$ and $\R^{n-1}$ respectively, but the relation between them is different. Given any $\widetilde{f}: \Sph^{n-1}\to \R$, such that $e^{\widetilde{f}}\in L^{n-1}(\Sph^{n-1})$, define
\begin{equation}\label{flimitdef}
f(w)=\widetilde{f}\circ\Psi(w)+\ln|\Psi'(w)|	,
\end{equation}
then it is easy to see that we have $\left\|e^{\widetilde f}\right\|_{L^{n-1}(\Sph^{n-1})}=\left\|e^{f}\right\|_{L^{n-1}(\R^{n-1})}$.

\subsection{Main Results}
In this article, we revisit the extension problem studied in \cite{Ch}. We derive an explicit formula for $\PKA$ in (\ref{ExtensionFormulaAlpha}) and then carry out the analysis on $\B^n$.

In Theorem \ref{maininequality}, we prove that the following inequality has constant function as optimizers.

\begin{theorem}\label{MainInequalityTian}
	Assume $n\geq 3$ and $\alpha\in (2-n,1)$. For every $f\in L^{\frac{2(n-1)}{n-2+\alpha}}(\Sph^{n-1})$, we have
	\begin{equation*}
		\left\|\PKA f\right\|_{L^{\frac{2n}{n-2+\alpha}}(\B^n)}\leq S_{n,\alpha}\left\|f\right\|_{L^{\frac{2(n-1)}{n-2+\alpha}}(\Sph^{n-1})}.
	\end{equation*}
	Where $S_{n,\alpha}$ is a constant that only depends on $n$ and $\alpha$.  Up to conformal transformation any constant is an optimizer..
\end{theorem}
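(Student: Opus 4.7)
The plan is to reduce the inequality on $\B^n$ to its counterpart on the upper half space via the conformal map $\Psi$, prove the resulting Stein-Weiss type extension inequality there, and finally verify that constant functions on $\Sph^{n-1}$ pull back to extremizing bubbles on $\R^{n-1}$. The transformation formulas (\ref{PsiTransformation}) and (\ref{PsiTransformationInverse}) have been designed to be $L^p$-isometries for exactly the critical exponents $\frac{2(n-1)}{n-2+\alpha}$ on the boundary and $\frac{2n}{n-2+\alpha}$ on the interior. Combined with the conformal transformation law between $\PKA$ on $\B^n$ and $\UPKA$ on $\R^n_+$, which I would derive as part of Proposition \ref{HowTheOperatorTranforms}, the theorem becomes equivalent to
\begin{equation*}
\|\UPKA f\|_{L^{\frac{2n}{n-2+\alpha}}(\R^n_+)} \le S_{n,\alpha}\,\|f\|_{L^{\frac{2(n-1)}{n-2+\alpha}}(\R^{n-1})}.
\end{equation*}

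On $\R^n_+$ the kernel is explicit and homogeneous, so by duality the above is equivalent to a weighted Hardy-Littlewood-Sobolev inequality of Stein-Weiss type, of the kind handled by Chen in \cite{Ch}. To establish sharpness together with existence of extremizers, I would apply Riesz rearrangement in the tangential variable $y'$ to reduce to radially symmetric data, and then combine a concentration-compactness argument with the symmetry group generated by tangential translations, dilations, and the Kelvin transform to characterize the extremizers as the standard bubbles $(\lambda^2 + |y' - y_0'|^2)^{-(n-2+\alpha)/2}$, up to scalar multiple.

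The final step is a direct computation: if $\widetilde f \equiv c$ on $\Sph^{n-1}$, then (\ref{PsiTransformation}) together with (\ref{PsiConformalFactorExampleBoundary}) yields
\begin{equation*}
f(w) = c\left(\frac{2}{1+|w|^2}\right)^{\frac{n-2+\alpha}{2}},
\end{equation*}
which is precisely a standard bubble centered at $0$ with $\lambda = 1$. This identifies constants on $\Sph^{n-1}$ as optimizers, and acting by the Möbius group of $\B^n$ produces the remaining optimizers promised by the theorem.

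The main obstacle I anticipate is the intermediate step of establishing the conformal transformation law for $\PKA$ under $\Psi$. The degenerate equation $\operatorname{div}(y_n^\alpha \nabla u) = 0$ is not conformally invariant in the naive sense, so the weight $(1 + 2y_n + |y|^2)^{-(n-2+\alpha)/2}$ built into (\ref{PsiTransformation}) is doing precisely the work of absorbing the mismatch; tracking the correct powers requires careful bookkeeping of $|\Psi'|$ at both interior and boundary points via (\ref{PsiConformalFactorExampleUpperHalf}) and (\ref{PsiConformalFactorExampleBoundary}). Once this is in hand, the remaining steps either reduce to known HLS-type results on $\R^n_+$ or follow by symmetrization together with the explicit bubble computation above.
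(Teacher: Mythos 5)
Your reduction to the upper half space is sound (the paper proves exactly the transformation law you need, and constants do pull back to $c\bigl(\tfrac{2}{1+|w|^2}\bigr)^{\frac{n-2+\alpha}{2}}$), and your route is genuinely different from the paper's: you propose to work at the critical exponent on $\R^n_+$ in the style of Chen \cite{Ch}, whereas the paper never leaves the sphere and runs a subcritical approximation in the spirit of \cite{HWY2} (compactness of $\PKA:L^p(\Sph^{n-1})\to L^{\frac{2n}{n-2+\alpha}}(\B^n)$ for subcritical $p$ from Corollary \ref{compact}, spherical rearrangement, regularity, and the Kazdan--Warner identity of Lemma \ref{Kazdan-Warner} to force the subcritical maximizer to be constant). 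However, as written your argument has a genuine gap exactly at the heart of the theorem, namely the claim that the sharp constant is attained by the bubbles. First, existence of an extremizer at the critical exponent cannot be waved through with "concentration-compactness with the symmetry group": the operator is \emph{not} compact at the critical pair of exponents (the paper's remark with the sequence $\widetilde{f}_{\Phi_a}$ exhibits precisely the conformal loss of compactness), so after Riesz rearrangement in $y'$ you still must break the dilation invariance and rule out vanishing/concentration by a real argument (a Lieb-type refined compactness lemma, or the paper's subcritical detour). Second, and more seriously, even granting a radial decreasing extremizer, nothing in your sketch identifies it with $(\lambda^2+|y'-y_0'|^2)^{-(n-2+\alpha)/2}$. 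That identification is equivalent to classifying the solutions of the critical Euler--Lagrange integral equation, which requires a moving-planes/moving-spheres argument of the type in \cite{LZ}, \cite{L}; this is exactly Chen's uniqueness theorem, which the paper explicitly does not reprove for $\alpha\in(2-n,1)$ and only establishes in the limit case $\alpha=2-n$ (Theorem \ref{MainTheoremLimitCaseUniqueness}). Symmetrization plus existence alone does not single out the bubble.

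Note also that you cannot transplant the paper's mechanism for pinning down the extremizer into your critical half-space setting: the Kazdan--Warner identity is informative only because the subcritical exponent breaks conformal invariance, making $K=f_p^{\,p-\frac{2(n-1)}{n-2+\alpha}}$ nonconstant; at the critical exponent $K\equiv 1$, $XK=0$, and the identity is vacuous. So to complete your program you must either (i) import Chen's classification of the critical equation on $\R^{n-1}$ as a black box (at which point the theorem is essentially being quoted rather than proved), or (ii) carry out a moving-spheres classification yourself, or (iii) fall back on the paper's subcritical scheme, where compactness restores the existence of maximizers and Kazdan--Warner does the identification before passing $p$ down to the critical exponent.
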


Our proof of the existence of optimizer uses subcritical analysis which is similar to the proof in \cite{HWY2}. Note that we do not proof uniqueness in this theorem. Uniqueness is proved in \cite{Ch}.

In the limit case $\alpha\to 2-n$. We prove

\begin{theorem}\label{LimitCaseInequalityTian}
For dimension $n\geq 2$, and any function $\widetilde{F}\in L^\infty(\Sph^{n-1})$ we have
\begin{equation}\label{LimitCaseMainInequality}
\left\|e^{\widetilde{I}_n+\widetilde{P}_{2-n}\widetilde{F}}\right\|_{L^n(\B^n)}\leq S_n \left\|e^{\widetilde{F}}\right\|_{L^{n-1}(\Sph^{n-1})}.	
\end{equation}

Where $\widetilde{I}_n(x)=2\frac{d\widetilde{P}_\alpha 1}{d\alpha}\bigr|_{\alpha=2-n}$. When $n$ is even we have
\[\widetilde{I}_n(x)=\sum_{k=1}^{n/2-1} \frac{1}{2k}\cdot \frac{\Gamma\left(\frac{n-2}{2}\right)\Gamma\left(n-k-1\right)}{\Gamma(n-2)\Gamma\left(\frac{n}{2}-k\right)} (1-|x|^2)^k.\]
The sharp constant
$S_n=\frac{\left\|e^{\widetilde{I}_n}\right\|_{L^n(\B^n)}}{\left|\Sph^{n-1}\right|^{\frac{1}{n-1}}}$
\end{theorem}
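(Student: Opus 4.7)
The plan is to obtain Theorem~\ref{LimitCaseInequalityTian} as the $\alpha\to(2-n)^+$ limit of the subcritical inequality of Theorem~\ref{MainInequalityTian}, applied to a one-parameter family of test functions linearizing around the constant optimizer. Set $\beta=n-2+\alpha$, so that the subcritical exponents are $p_\beta=2(n-1)/\beta$ and $q_\beta=2n/\beta$ with the crucial identity $q_\beta/p_\beta=n/(n-1)$. Since constants are extremizers in Theorem~\ref{MainInequalityTian}, testing at $\widetilde f\equiv 1$ identifies the sharp constant as $S_{n,\alpha}=\|\PKA 1\|_{L^{q_\beta}(\B^n)}/|\Sph^{n-1}|^{1/p_\beta}$, so the subcritical inequality raised to the $q_\beta$-th power becomes
\[\int_{\B^n}(\PKA \widetilde f)^{q_\beta}\,dx\le \frac{\int_{\B^n}(\PKA 1)^{q_\beta}\,dx}{|\Sph^{n-1}|^{n/(n-1)}}\left(\int_{\Sph^{n-1}}\widetilde f^{\,p_\beta}\,d\xi\right)^{n/(n-1)}.\]

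For any $\widetilde F\in L^\infty(\Sph^{n-1})$ I would test with $\widetilde f_\alpha=1+\frac{\beta}{2}\widetilde F$, which is positive for $\beta$ small. On $\Sph^{n-1}$, the pointwise limit $(1+\frac{\beta}{2}\widetilde F)^{2(n-1)/\beta}\to e^{(n-1)\widetilde F}$ combined with the uniform bound coming from $\|\widetilde F\|_\infty$ and dominated convergence gives $\int_{\Sph^{n-1}}\widetilde f_\alpha^{\,p_\beta}\,d\xi\to\|e^{\widetilde F}\|_{L^{n-1}(\Sph^{n-1})}^{n-1}$. On $\B^n$, linearity gives $\PKA \widetilde f_\alpha=\PKA 1+\frac{\beta}{2}\PKA \widetilde F$, and the heart of the argument is to establish the uniform asymptotic expansion
\[\PKA 1(x)=1+\tfrac{\beta}{2}\widetilde I_n(x)+o(\beta)\quad\text{on }\overline{\B^n},\]
together with the uniform convergence $\PKA \widetilde F\to \widetilde P_{2-n}\widetilde F$ on $\overline{\B^n}$ as $\alpha\to(2-n)^+$. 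Both should follow from the explicit form of the kernel $\pka$ derived earlier in the paper; the first of the two identifies $\widetilde I_n$ with $2\,d\widetilde P_\alpha 1/d\alpha|_{\alpha=2-n}$ and in particular requires the zeroth-order term to be $1$. Granting these, the exponential limit $(1+\frac{\beta}{2}\varphi)^{2n/\beta}\to e^{n\varphi}$ applied to $\varphi=\widetilde I_n+\widetilde P_{2-n}\widetilde F$ and to $\varphi=\widetilde I_n$, together with a second dominated-convergence argument, yields
\[\int_{\B^n}(\PKA \widetilde f_\alpha)^{q_\beta}\,dx\to\|e^{\widetilde I_n+\widetilde P_{2-n}\widetilde F}\|_{L^n(\B^n)}^n,\quad \int_{\B^n}(\PKA 1)^{q_\beta}\,dx\to\|e^{\widetilde I_n}\|_{L^n(\B^n)}^n.\]
Substituting back and taking $n$-th roots gives exactly the stated inequality with $S_n=\|e^{\widetilde I_n}\|_{L^n(\B^n)}/|\Sph^{n-1}|^{1/(n-1)}$.

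For the explicit formula for $\widetilde I_n$ in even dimensions, I would compute $\PKA 1(x)$ directly from $\pka$: the integral over $\Sph^{n-1}$ reduces, for even $n$, to a polynomial in $1-|x|^2$ whose coefficients are ratios of Gamma functions, and differentiating in $\alpha$ at $\alpha=2-n$ yields the finite sum displayed in the theorem. The main obstacle is the uniform expansion of $\PKA 1$ up to the boundary: the pointwise version is immediate from the definition of $\widetilde I_n$, but uniform control near $|x|=1$ as the kernel degenerates at $\alpha\to 2-n$ is needed for the exponentiation-and-integration step, and this is where the explicit form of the kernel must be used carefully. Finally, the case $n=2$ falls outside the range of Theorem~\ref{MainInequalityTian} and must be treated separately; there $\widetilde I_2$ is the empty sum, so the statement reduces to Carleman's inequality~(\ref{CarlemanInequality}) applied to the harmonic function $u=\widetilde P_0\widetilde F$.
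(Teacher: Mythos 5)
Your overall route is the same as the paper's: test the subcritical inequality of Theorem \ref{maininequality} with $1+\frac{n-2+\alpha}{2}\widetilde F$, identify $S_{n,\alpha}$ by testing at constants, and let $\alpha\to 2-n$; the sphere-side limit, the identification of $S_n$, and the use of dominated convergence all match. The genuine gap is precisely the step you yourself flag as ``the main obstacle'': you require the expansion $\PKA 1=1+\frac{\beta}{2}\widetilde I_n+o(\beta)$ and the convergence $\PKA\widetilde F\to\widetilde P_{2-n}\widetilde F$ to hold \emph{uniformly} on $\overline{\B^n}$, and you give no argument for either. These uniform statements up to $|x|=1$ are not routine, since the kernel degenerates as $\alpha\to 2-n$ near the boundary, and without them the exponentiation-and-integration step on $\B^n$ has no justification; as written the proof is incomplete at its decisive point.

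The paper's proof shows that no uniform expansion is needed, and this is the fix you should adopt: write $(\PKA\widetilde f)^{\frac{2n}{\beta}}=(\PKA 1)^{\frac{2n}{\beta}}\bigl(1+\tfrac{\beta}{2}\,\PKA\widetilde F/\PKA 1\bigr)^{\frac{2n}{\beta}}$. Lemma \ref{integration} gives $\PKA 1\le\left(\frac{2}{1+r}\right)^{n-2+\alpha}$, hence $(\PKA 1)^{2n/(n-2+\alpha)}\le\left(\frac{2}{1+r}\right)^{2n}\le 2^{2n}$, a bound uniform in $\alpha$ and $x$; positivity of the kernel gives $|\PKA\widetilde F|\le\|\widetilde F\|_{L^\infty}\,\PKA 1$, so the second factor is at most $e^{n\|\widetilde F\|_{L^\infty}}$ for small $\beta$; and Remark \ref{lowerbound} supplies a positive lower bound for $\PKA 1$, uniform for $\alpha$ near $2-n$, so the ratio converges pointwise. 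With this constant dominating function, pointwise convergence (which is exactly the definition of $\widetilde I_n$ together with $\widetilde P_{2-n}1=1$ from Remark \ref{IntegrationLemmaLimitCaseRemark}) plus dominated convergence finishes the ball-side limit, and the same bounds give $\lim_{\beta\to 0}(S_{n,\alpha})^{2/\beta}$. Two smaller points: your observation that $n=2$ falls outside Theorem \ref{maininequality} and reduces to Carleman's inequality is correct and worth keeping, since the paper is silent on this; and for even $n$ the paper obtains the closed formula for $\widetilde I_n$ from the induction relation of Lemma \ref{InInductionLemma} rather than from a direct evaluation of $\PKA 1$, which your sketch would still have to carry out.
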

Our proof of the limit case inequality is very similar to that of \cite{Ch}. What is different is that we found a very useful induction relation concerning the function $\widetilde{I}_n$. The induction relation is proved in Lemma \ref{InInductionLemma}. In the case when $n$ is even we found an explicit formula for the function $\widetilde{I}_n$ using the induction relation.

We also consider the variational problem
\[S_n=\sup\left\{\left\|e^{\widetilde{I}_n+\widetilde{P}_{2-n}\widetilde{f}}\right\|: \widetilde{f}\in L^{\infty}(\Sph^{n-1}), \left\|e^{\widetilde{f}}\right\|_{L^{n-1}(\Sph^{n-1})}=1 \right\},\]
and derived the Euler Lagrange equation
\[e^{(n-1)\widetilde{f}(\xi)}=\int_{\B^n}e^{n\widetilde{I}_n +n\widetilde{P}_{2-n}\widetilde{f}}\widetilde{p}_{2-n}(x,\xi)dx.\]
We prove the following uniqueness result
\begin{theorem}\label{MainTheoremLimitCaseUniqueness}
For any integer $n\geq 2	$, if $\widetilde{f}\in L^\infty(\Sph^{n-1})$ satisfies the equation
\[e^{(n-1)\widetilde{f}(\xi)}=\int_{\B^n}e^{n\widetilde{I}_n +n\widetilde{P}_{2-n}\widetilde{f}}\widetilde{p}_{2-n}(x,\xi)dx,\]
then for all $\xi\in\Sph^{n-1}$
\[\widetilde{f}(\xi)=\ln \frac{1-|\zeta|^2}{|\xi-\zeta|^2}+C_n,\]
where $\zeta \in\B^{n}$ and $C_n=-\frac{1}{n-1} \ln \left|\Sph^{n-1}\right|$ is a constant. Here $\left|\Sph^{n-1}\right|$ denotes the volume of the standard sphere.
\end{theorem}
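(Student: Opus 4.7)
The plan is to exploit the conformal invariance of the Euler--Lagrange equation and apply the method of moving spheres, following the general philosophy used for integral-equation classifications of Li, Zhu, and Chen--Li.

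First, I pass from the ball to the Euclidean setting via $\Psi$. Using the prescription \eqref{flimitdef}, set $f(w)=\widetilde f\circ\Psi(w)+\ln|\Psi'(w)|$ on $\R^{n-1}$, and define the corresponding extension $F$ on $\R^n_+$ obtained by pulling back $\widetilde I_n+\widetilde P_{2-n}\widetilde f$. The conformal covariance of $\widetilde p_{2-n}$ (which I would verify directly from the explicit formula derived earlier in the paper for $\PKA$, specialized/limited at $\alpha=2-n$) rewrites the Euler--Lagrange equation as an integral equation of the schematic form
\begin{equation*}
e^{(n-1)f(w)}=\int_{\R^n_+}e^{nG(y)}K(y,w)\,dy,
\end{equation*}
where $K$ transforms covariantly under Kelvin inversions in $\R^{n-1}$ and $G$ is determined from $f$ by a corresponding Poisson-type extension. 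The normalization $\|e^{\widetilde f}\|_{L^{n-1}(\Sph^{n-1})}=\|e^f\|_{L^{n-1}(\R^{n-1})}<\infty$ furnishes the decay on $f$ at infinity that is needed to start the moving spheres.

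Next, fix $w_0\in\R^{n-1}$ and for $\lambda>0$ define the Kelvin reflection $f_{w_0,\lambda}(w)=f\bigl(w_0+\lambda^2(w-w_0)/|w-w_0|^2\bigr)+2\ln\bigl(\lambda/|w-w_0|\bigr)$. Because the integral equation is conformally invariant, $f_{w_0,\lambda}$ satisfies the same equation. Comparing $f$ with $f_{w_0,\lambda}$ on the region $|w-w_0|>\lambda$ via the standard kernel-difference argument (the ``Hang--Wang--Yang / Jin--Li--Xiong'' integral-type maximum principle adapted to the log-case) shows that for $\lambda$ small the inequality $f\ge f_{w_0,\lambda}$ holds there, and one may enlarge $\lambda$ until a critical radius $\lambda(w_0)\in(0,\infty]$ is reached. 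The $L^{n-1}$-integrability at infinity rules out $\lambda(w_0)=\infty$ for every $w_0$ (this is the usual pigeonhole/asymptotic dichotomy), so for each $w_0$ there is a finite critical radius at which $f\equiv f_{w_0,\lambda(w_0)}$ identically. A calculus lemma of Li--Zhu type then forces
\begin{equation*}
f(w)=\ln\frac{2\mu}{1+\mu^2|w-w_1|^2}+c
\end{equation*}
for some $\mu>0$, $w_1\in\R^{n-1}$, and $c\in\R$.

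Finally I push forward along $\Psi^{-1}$. The two-parameter family above is exactly the $\Psi$-image of the conformal orbit of constants on $\Sph^{n-1}$, and a direct computation yields
\begin{equation*}
\widetilde f(\xi)=\ln\frac{1-|\zeta|^2}{|\xi-\zeta|^2}+C
\end{equation*}
for some $\zeta\in\B^n$ and $C\in\R$. Substituting a constant function (the case $\zeta=0$) back into the Euler--Lagrange equation and recalling $S_n=\|e^{\widetilde I_n}\|_{L^n(\B^n)}/|\Sph^{n-1}|^{1/(n-1)}$ from Theorem~\ref{LimitCaseInequalityTian} pins down $C=C_n=-\frac{1}{n-1}\ln|\Sph^{n-1}|$.

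The main obstacle I anticipate is the moving-spheres comparison step: the logarithmic (limit-case) nature of the equation means one cannot just differentiate a power, and one must instead establish the integral maximum principle for the kernel $K$ by hand, checking the necessary sign of $K(y,w)-K\bigl(y^*,w\bigr)$ for reflected points. Getting the kernel's conformal covariance in a usable pointwise form, together with uniform control at infinity of $G$ inherited from $f$, is the technical heart of the argument; once these are in hand, the remainder is a standard application of the Li--Zhu classification lemma followed by a conformal change of coordinates.
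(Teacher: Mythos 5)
Your outline follows essentially the same route as the paper: transfer the Euler--Lagrange equation to the upper half space via $\Psi$ and (\ref{flimitdef}), run the method of moving spheres on the resulting integral equation (\ref{ELup}), classify $f$ with the Li--Zhu calculus lemma (Lemma \ref{LemmaLiZhu}), and transform back to the ball. The only stylistic deviation is that you propose a purely kernel-based comparison, whereas the paper obtains the interior comparison of the extensions through Lemma \ref{hyperbolicharmonic} and the maximum principle for hyperbolic-harmonic functions; either way the argument hinges on the covariance identities (\ref{pkphi}), (\ref{PKphi}) and (\ref{Extension}), which rest on the properties of $\widetilde{I}_n$ (the induction relation (\ref{InInductionRelation}), Lemma \ref{InLnHyperbolicHarmonic}, Lemma \ref{LemmaBoundaryValueThroughInduction}). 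You correctly identify this covariance as the technical heart, but note that it is precisely where the function $\widetilde{I}_n$ does real work rather than a routine verification.

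The genuine gap is regularity. The hypothesis is only $\widetilde f\in L^\infty(\Sph^{n-1})$, while every stage of your moving-spheres scheme needs more: starting the sphere at $v_0$ uses that $r^{n-1}e^{(n-1)f}$ is increasing in $r$ near $r=0$, i.e.\ $f\in C^1$ near $v_0$ (Proposition \ref{startf}); the contradiction at the critical radius uses $\overline{\nabla}(f_{\overline{\lambda}_0,v_0}-f)(w_0)=0$ at an interior minimum, again $C^1$ (Proposition \ref{PropositionMovingSphereStep3}); and Lemma \ref{LemmaLiZhu} is stated for $C^1$ functions. The paper supplies this by the bootstrap of Proposition \ref{AppendixPropositionRegularity2}, which upgrades an $L^\infty$ solution of (\ref{ELball}) to $C^1(\Sph^{n-1})$ via the kernel estimates of Lemmas \ref{CalphaF}, \ref{CalphaU} and \ref{C1F}; without some such step your argument does not apply to the stated class of solutions. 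A smaller inaccuracy: the bound $\|e^{f}\|_{L^{n-1}(\R^{n-1})}<\infty$ does not by itself give the pointwise control at infinity needed to start the sphere for large $|w|$; what the paper actually uses there is the exact asymptotic behavior $f(w)=\widetilde f\circ\Psi(w)+\ln\frac{2}{1+|w|^2}$ encoded in (\ref{flimitdef}), while the $L^{n-1}$ integrability is what rules out $\overline{\lambda}_0=\infty$. Since you do define $f$ through (\ref{flimitdef}), the needed asymptotics are available to you, but they should be invoked explicitly rather than attributed to the integral normalization.
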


This uniqueness result in the limit case is new; the proof uses the moving sphere method.

The method of moving spheres is a powerful tool to prove uniqueness of solutions to equations that have conformal symmetry. The method relies on maximum principle and the conformal symmetry of the equation. The moving sphere method was invented by Li and Zhu in \cite{LZ}. For further information related to the moving sphere method we refer the reader to \cite{L} and \cite{FKT}. The method of moving spheres can be considered as a powerful generalization of the method of moving planes. For more information about the method of moving planes we refer the readers to the  articles \cite{ChenLiOu}, \cite{GNN} and \cite{GuoWang}.

This article is organized as follows: in section \ref{PoissonKernelUnitBall} we transform the equation studied by Caffarelli and Silvestre \cite{CS} from the upper half space to the unit ball. We also identify the Poisson kernel of the corresponding equation in the unit ball and study how the Poisson kernel transforms under conformal transformation of the unit ball. In section \ref{FamilyOfInequalities} we prove a family of conformally invariant extension inequalities. Note that this is the same result as in Chen's work \cite{Ch}. For the proof, we use the method from \cite{HWY2} which is different from Chen's proof. In section \ref{LimitCaseInequality} we take limit $\alpha\to 2-n$ to obtain a limit case inequality. Our proof of the limit case inequality is very similar to Chen's proof in \cite{Ch}, but we use a slightly better way to estimate the extension of constant function. We also prove important results about the function $\widetilde{I}_n$ in section \ref{LimitCaseInequality}, these results are used in section \ref{UniquenessThroughMovingSpheres} to establish uniqueness of the limit case inequality. In the proof of uniqueness, we used the method of moving spheres.

\section{The Poisson Kernel in the Unit Ball}\label{PoissonKernelUnitBall}
In this chapter we transform the equation (\ref{divform}) from the upper half space to the unit ball. We also identify the Poisson kernel of the corresponding equation in the unit ball and study how the Poisson kernel transforms under conformal changes.

 \subsection{The Equation in the Unit Ball}
Now we are ready to transform the equation (\ref{divform}) from the upper half space to the unit ball. For any $2-n\leq\alpha<1$ define the operator $\mathcal{L}$ in $\B^n$ such that for any $x=(x',x_n)\in\B^n$ and for any $\widetilde{u}\in C^2(\B^n)$
\begin{eqnarray*}
\mathcal{L}\widetilde{u} &=&\left( \frac{1-2x_{n}+\left\vert x\right\vert ^{2}}{2}\right)
^{\left( n+2-\alpha \right) /2} \\ 
& &\cdot\left[ div\left[ \left( \frac{1-\left\vert
x\right\vert ^{2}}{2}\right) ^{\alpha }\nabla \widetilde{u}\right] +\frac{%
\alpha \left( 2-n-\alpha \right) }{2}\left( \frac{1-\left\vert x\right\vert
^{2}}{2}\right) ^{\alpha -1}\widetilde{u}\right].
\end{eqnarray*}%
Note that in $\B^n$ we have $\mathcal{L}\widetilde{u}=0$  if and only if
\begin{equation}\label{EquationBall}
div\left[ \left( \frac{1-\left\vert x\right\vert ^{2}}{2}\right) ^{\alpha
}\nabla \widetilde{u}\right] +\frac{\alpha \left( 2-n-\alpha \right) }{2}%
\left( \frac{1-\left\vert x\right\vert ^{2}}{2}\right) ^{\alpha -1}%
\widetilde{u}=0.
\end{equation}

\begin{proposition}\label{HowTheOperatorTranforms}
\bigskip (How the operator tranforms) For any $2-n\leq\alpha<1$ and any $u\in C^2(\R^n_+)$ define $\widetilde{u}$ using (\ref{PsiTransformationInverse}) then we have
\[div(y_n^\alpha\nabla u)=0,\text{ in }\R^n_+\]
if and only
\[\mathcal{L}\widetilde{u}=0,\text{ in }\B^n\]
\end{proposition}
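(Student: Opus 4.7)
My plan is to verify the equivalence by direct substitution, unwinding the definition of $\mathcal{L}$. The target is the pointwise identity
\[\mathrm{div}(y_n^\alpha\nabla u)(y)=\rho(y)^{(n+2-\alpha)/2}\,\mathcal{L}\widetilde u(\Psi(y)),\]
with $\rho(y):=|\Psi'(y)|=\tfrac{2}{1+2y_n+|y|^2}$; since $\rho>0$ on $\overline{\R^n_+}$, the claimed equivalence follows at once.

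The first step is to record the algebraic identities linking quantities on the two sides. By (\ref{EquationConformalFactorTransformation}), $\rho(y)=\tfrac{1-2x_n+|x|^2}{2}$ at $x=\Psi(y)$, and the formula for $\Psi^{-1}$ yields $y_n=\tfrac{1-|x|^2}{(1-x_n)^2+|x'|^2}$, which combined with the previous identity gives the crucial relation $y_n\rho(y)=\tfrac{1-|x|^2}{2}$. Conformality of $\Psi$, expressed as $D\Psi(y)D\Psi(y)^T=\rho(y)^2 I_n$, is what drives the cross-term cancellations in the chain rule. Substituting $u(y)=\widetilde u(\Psi(y))\,\rho(y)^{(n-2+\alpha)/2}$ (the inversion of (\ref{PsiTransformationInverse})) into $\mathrm{div}(y_n^\alpha\nabla u)=y_n^\alpha\Delta u+\alpha y_n^{\alpha-1}\partial_{y_n}u$, and writing $y_n^\alpha=(\tfrac{1-|x|^2}{2})^\alpha\rho^{-\alpha}$, the $\Delta\widetilde u$-terms produced by conformality assemble into precisely the divergence piece $\mathrm{div}\bigl[(\tfrac{1-|x|^2}{2})^\alpha\nabla\widetilde u\bigr]$ appearing inside $\mathcal{L}\widetilde u$, multiplied by the expected prefactor $\rho^{(n+2-\alpha)/2}$.

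The main technical obstacle, and the bulk of the computation, is verifying that the residual zero-order contribution in $\widetilde u$ collapses to exactly $\tfrac{\alpha(2-n-\alpha)}{2}(\tfrac{1-|x|^2}{2})^{\alpha-1}\widetilde u$ after multiplication by $\rho^{(n+2-\alpha)/2}$. These pieces originate from $\Delta\rho^{(n-2+\alpha)/2}$, from $|\nabla\rho^{(n-2+\alpha)/2}|^2/\rho^{(n-2+\alpha)/2}$, and from the $\partial_{y_n}\rho$ term that appears after expanding the $\alpha y_n^{\alpha-1}\partial_{y_n}u$ summand; they must be re-expressed in terms of $1-|x|^2$ and $\rho$ using the two identities above. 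Two sanity checks will guide the bookkeeping: the coefficient $\alpha(2-n-\alpha)$ must vanish at $\alpha=0$ (the harmonic case, where no potential correction should appear) and at $\alpha=2-n$ (the limit case central to the rest of the paper), and indeed both endpoints kill the product, which pins down the coefficient and provides a useful cross-check on the algebra.
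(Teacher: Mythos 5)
Your plan is essentially the paper's own proof: a direct chain-rule computation driven by the conformality relation $D\Psi(y)D\Psi(y)^{T}=|\Psi'(y)|^{2}I_n$, the second derivatives of $\Psi$, and the key identity $y_n|\Psi'(y)|=\tfrac{1-|\Psi(y)|^{2}}{2}$, ending with the same bracketed operator and the same positivity argument for the equivalence. One bookkeeping slip: since $\mathcal{L}$ as defined already carries the factor $\left(\tfrac{1-2x_n+|x|^{2}}{2}\right)^{(n+2-\alpha)/2}$, which equals $|\Psi'(y)|^{(n+2-\alpha)/2}=\rho(y)^{(n+2-\alpha)/2}$ at $x=\Psi(y)$, the identity the computation actually yields is $\mathrm{div}(y_n^{\alpha}\nabla u)=\mathcal{L}\widetilde u\circ\Psi$ with no extra prefactor, so your displayed target double-counts $\rho^{(n+2-\alpha)/2}$; your subsequent accounting (divergence piece and zero-order piece each multiplied once by $\rho^{(n+2-\alpha)/2}$) is the correct one, and in any case the positivity of the factor leaves the stated equivalence unaffected.
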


\begin{proof}
In the following, for any $y=(y',y_n)\in \R^n$, let $\rho =\left( 1+2y_{n}+\left\vert y\right\vert
^{2}\right) /2$. Then we have $\nabla \rho =\left( y^{\prime },1+y_{n}\right) $. Let $a,\ b,\ c,\ d=1,\ 2,\ ...,\ n$ be indices. Suppose $x=\Psi(y)$, then by direct calculation we have, when $c\neq n$,
\begin{eqnarray*}
	\frac{\partial x_{c}}{\partial y_{a}}
&=&\left\{ 
\begin{array}{cc}
-\frac{4y_a y_c}{(1+2y_n+|y|^2)^2}, & a\neq c \text{ and } a\neq n, \\ 
\frac{2}{1+2y_n+|y|^2}-\frac{4y_c^2}{(1+2y_n+|y|^2)^2}, & c=a\neq n,\\
-\frac{4y_c(y_n+1)}{(1+2y_n+|y|^2)^2}, & a=n,
\end{array}%
\right.  \\
\end{eqnarray*}
and when $c=n$
\begin{eqnarray*}
\frac{\partial x_{n}}{\partial y_{a}}
&=&\left\{ 
\begin{array}{cc}
\frac{2y_a}{1+2y_n+|y|^2}+\frac{2y_a(1-|y|^2)}{(1+2y_n+|y|^2)^2}, & a\neq n, \\ 
\frac{2y_n}{1+2y_n+|y|^2}+\frac{2(1-|y|^2)(1+y_n)}{(1+2y_n+|y|^2)^2}, & a=n.\\
\end{array}%
\right.  \\
\end{eqnarray*}
From it we have
\begin{eqnarray}\label{ConformalFactorPsi}
\sum_{a=1}^n\frac{\partial x_{c}}{\partial y_{a}} \frac{\partial x_{d}}{\partial y_{a}} &=&\left\{ 
\begin{array}{cc}
\rho^{-2}, & c=d, \\ 
0, & c\neq d.%
\end{array}%
\right.	
\end{eqnarray}

\begin{eqnarray*}
\sum_{a=1}^n\frac{\partial x_{c}}{\partial y_{a}}\frac{\partial \rho }{\partial y_{a}}
&=&\left\{ 
\begin{array}{cc}
-\rho ^{-1}y_{c}, & c\neq n, \\ 
\rho ^{-1}\left( 1+y_{n}\right) & c=n.%
\end{array}%
\right. , \\
\sum_{a=1}^n\frac{\partial x_{c}}{\partial y_{a}^{2}} &=&\left\{ 
\begin{array}{cc}
-\left( n-2\right) \rho ^{-2}y_{c}, & c\neq n, \\ 
\left( n-2\right) \rho ^{-2}\left( 1+y_{n}\right) & c=n.%
\end{array}%
\right.
\end{eqnarray*}%
We calculate 
\begin{equation*}
\frac{\partial u}{\partial y_{a}}=\rho ^{-\left( n+\alpha \right) /2}\left[
\rho \frac{\partial \widetilde{u}}{\partial x_{c}}\frac{\partial x_{c}}{%
\partial y_{a}}+\left( 1-\frac{n+\alpha }{2}\right) \widetilde{u}\frac{%
\partial \rho }{\partial y_{a}}\right] ,
\end{equation*}%
\begin{eqnarray*}
div (y_n^\alpha \nabla u) &=&\frac{\partial }{\partial y_{a}}\left( y_{n}^{\alpha }\frac{%
\partial u}{\partial y_{a}}\right) \\
&=&y_{n}^{\alpha }\rho ^{-\left( n+\alpha \right) /2}\bigg[ \rho \frac{%
\partial ^{2}\widetilde{u}}{\partial x_{c}\partial x_{d}}\frac{\partial x_{c}%
}{\partial y_{a}}\frac{\partial x_{d}}{\partial y_{a}}+\rho \frac{\partial 
\widetilde{u}}{\partial x_{c}}\frac{\partial ^{2}x_{c}}{\partial y_{a}^{2}} \\
&&+\left( 2-\frac{n+\alpha }{2}\right) \frac{\partial \rho }{\partial y_{a}}%
\frac{\partial \widetilde{u}}{\partial x_{c}}\frac{\partial x_{c}}{\partial
y_{a}}+n\left( 1-\frac{n+\alpha }{2}\right) \widetilde{u}\bigg] \\
&&-\frac{n+\alpha }{2}y_{n}^{\alpha }\rho ^{-\left( n+\alpha \right) /2-1}%
\frac{\partial \rho }{\partial y_{a}}\left[ \rho \frac{\partial \widetilde{u}%
}{\partial x_{c}}\frac{\partial x_{c}}{\partial y_{a}}+\left( 1-\frac{%
n+\alpha }{2}\right) \widetilde{u}\frac{\partial \rho }{\partial y_{a}}%
\right] \\
&&+\alpha y_{n}^{\alpha -1}\rho ^{-\left( n+\alpha \right) /2}\left[
\rho \frac{\partial \widetilde{u}}{\partial x_{c}}\frac{\partial x_{c}}{%
\partial y_{n}}+\left( 1-\frac{n+\alpha }{2}\right) \widetilde{u}\frac{%
\partial \rho }{\partial y_{n}}\right] \\
&=&y_{n}^{\alpha }\rho ^{-\left( n+\alpha \right) /2}\bigg[ \rho ^{-1}\Delta 
\widetilde{u}+\rho  \frac{\partial \widetilde{u}}{\partial x_{c}}\frac{%
\partial ^{2}x_{c}}{\partial y_{a}^{2}} \\ 
&&+\left( 2-n-\alpha \right) \rho
^{-1}\left( \frac{\partial \widetilde{u}}{\partial x_{n}}\left(
1+y_{n}\right) -\frac{\partial \widetilde{u}}{\partial x_{j}}y_{j}\right) +n\left( 1-\frac{n+\alpha }{2}\right) \widetilde{u}\bigg] \\
&&-\left( n+\alpha \right) \left( 1-\frac{n+\alpha }{2}\right) y_{n}^{\alpha
}\rho ^{-\left( n+\alpha \right) /2}\widetilde{u} \\
&&+\alpha y_{n}^{\alpha
-1}\rho ^{-\left( n+\alpha \right) /2}\left[ \rho \frac{\partial \widetilde{u%
}}{\partial x_{c}}\frac{\partial x_{c}}{\partial y_{n}}+\left( 1-\frac{%
n+\alpha }{2}\right) \widetilde{u}\frac{\partial \rho }{\partial y_{n}}%
\right] \\
&=&y_{n}^{\alpha }\rho ^{-\left( n+\alpha \right) /2} \\
&&\cdot\left[ \rho ^{-1}\Delta 
\widetilde{u}-\alpha \rho ^{-1}\left( \frac{\partial \widetilde{u}}{\partial
x_{n}}\left( 1+y_{n}\right) -\frac{\partial \widetilde{u}}{\partial x_{j}}%
y_{j}\right) -\alpha \left( 1-\frac{n+\alpha }{2}\right) \widetilde{u}\right]
\\
&&+\alpha y_{n}^{\alpha -1}\rho ^{-\left( n+\alpha \right) /2}\bigg[ \rho
^{-1}\left( 1+y_{n}\right) \left( \frac{\partial \widetilde{u}}{\partial
x_{n}}\left( 1+y_{n}\right) -\frac{\partial \widetilde{u}}{\partial x_{j}}%
y_{j}\right) \\
&&-\frac{\partial \widetilde{u}}{\partial x_{n}}+\left( 1-\frac{%
n+\alpha }{2}\right) \widetilde{u}\left( 1+y_{n}\right) \bigg] \\
&=&y_{n}^{\alpha }\rho ^{-\left( n+\alpha \right) /2}\rho ^{-1}\Delta 
\widetilde{u}+\alpha y_{n}^{\alpha -1}\rho ^{-\left( n+\alpha \right) /2}%
\\
&&\cdot\bigg[ \rho ^{-1}\left( \frac{\partial \widetilde{u}}{\partial x_{n}}\left(
1+y_{n}\right)  -\frac{\partial \widetilde{u}}{\partial x_{j}}y_{j}\right)-\frac{\partial \widetilde{u}}{\partial x_{n}}+\left( 1-\frac{n+\alpha }{2}%
\right) \widetilde{u}\bigg] \\
&=&y_{n}^{\alpha -1}\rho ^{-\left( n+\alpha \right) /2}\left[ \frac{%
1-\left\vert x\right\vert ^{2}}{2}\Delta \widetilde{u}-\alpha x_{a}\frac{%
\partial \widetilde{u}}{\partial x_{a}}+\frac{\alpha \left( 2-n-\alpha
\right) }{2}\widetilde{u}\right] \\
&=&\left( \frac{1-\left\vert x\right\vert ^{2}}{2}\right) ^{\alpha -1}\left( 
\frac{1-2x_{n}+\left\vert x\right\vert ^{2}}{2}\right) ^{\left( n+2-\alpha
\right) /2} \\
& &\cdot \left[ \frac{1-\left\vert x\right\vert ^{2}}{2}\Delta \widetilde{u}-\alpha x_{a}\frac{\partial \widetilde{u}}{\partial x_{a}}+\frac{\alpha
\left( 2-n-\alpha \right) }{2}\widetilde{u}\right]
\end{eqnarray*}
\end{proof}

\begin{remark}
For any integer $n\geq 2$ and any $\alpha\in (2-n,0)$, we can apply theorem 1.1 in \cite{WZ} to show that solution to the equation
\[
\begin{cases}
	\mathcal{L}u=0, \text{ in }\B^n\\
	u=f, \text{ in }\Sph^{n-1}
\end{cases}	
\]
	is unique in $C^2(\B^n)\cap C^0(\overline{\B^n})$.
\end{remark}

\begin{remark}\label{EquivalenceCG}
Note that the equation (\ref{EquationBall}) is a special case of equation (4.2)
 in \cite{CG} if we take $\overline{g}$ as the Euclidean metric in the unit ball, $g^+$ as the hyperbolic metric in the unit ball, and $\rho=\frac{1-|x|^2}{2}$ as the defining function.
 \end{remark}

\subsection{Poisson Kernel in the Unit Ball}
Caffarelli and Silvestre \cite{CS} found a Poisson kernel that solves the Dirichlet problem (\ref{divform}). For any $-1<\alpha<1$, $y\in \R^n_+$ and any $\xi\in \R^{n-1}$  
\begin{equation}\label{PoissonKernelUpperHalfSpace}
\upka(y,w)=\cna \frac{y_n^{1-\alpha}}{|y-w|^{\frac{n-\alpha}{2}}}.
\end{equation} 
Here $\cna$ is the constant given in (\ref{cna}).
For any $f:\R^{n-1}\to \R$ regular enough, we can define
\[P_\alpha f=\int_{\R^{n-1}}p_{\alpha}(y,\xi)f(w)dw,\]
such that $P_\alpha f$ solves the Dirichlet problem (\ref{divform}).

We want to find the corresponding Poisson kernel in the unit ball. For any $\alpha\in[2-n,1)$ define
\begin{equation}\label{PoissonKernelUnitBall}
\pka(x,\xi)=2^{\alpha-1}\cna \frac{(1-|x|^2)^{1-\alpha}}{|x-\xi|^{n-\alpha}},
\end{equation}

here
\begin{equation}\label{cna}
c_{n,\alpha }^{-1} =\left\vert \mathbb{S}^{n-2}\right\vert \int_{0}^{\infty }%
\frac{r^{n-2}dr}{\left( 1+r^{2}\right) ^{\frac{n-\alpha }{2}}}
=\frac{\Gamma (\frac{1-\alpha }{2})\Gamma (\frac{n-1}{2})}{2\Gamma (\frac{%
n-\alpha }{2})}\left\vert \mathbb{S}^{n-2}\right\vert.
\end{equation}
For any function $\widetilde{\phi}:\Sph^{n-1}\to \R$, define
\begin{equation}\label{ExtensionFormulaAlpha}
\PKA \widetilde{\phi}(x)=\int_{\Sph^{n-1}} \pka (x,\xi)\widetilde{\phi}(\xi)d\xi.	
\end{equation}

Then we have the following proposition:

\begin{proposition}
	For any integer $n\geq 2$, any $\alpha\in (2-n,1)$, any $f\in L^{\frac{2(n-1)}{n-2+\alpha}}(\R^{n-1}) $, define $\widetilde{f}$ as in (\ref{PsiTransformationInverse}), then we have
	\[\widetilde{\UPKA f}= \PKA \widetilde{f}.\]
	Here $\widetilde{\UPKA f}$ is the transformation of $\UPKA f$ as defined in 
	(\ref{PsiTransformationInverse}), and $\PKA \widetilde{f}$ is the extension of $\widetilde{f}$ in the unit ball as defined in (\ref{ExtensionFormulaAlpha}).
\end{proposition}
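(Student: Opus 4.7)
The strategy is a direct change of variables in the integral defining $\UPKA f$, using (\ref{PsiTransformationInverse}) to pull everything back to the ball. Fix $x\in\B^n$ and set $y=\Psi^{-1}(x)\in\R^n_+$. By (\ref{PsiTransformationInverse}) and (\ref{EquationConformalFactorTransformation}),
\begin{equation*}
\widetilde{\UPKA f}(x)=\UPKA f(y)\cdot\left(\frac{1+2y_n+|y|^2}{2}\right)^{\frac{n-2+\alpha}{2}},
\end{equation*}
so it suffices to show that
\begin{equation*}
\UPKA f(y)=\left(\frac{1+2y_n+|y|^2}{2}\right)^{-\frac{n-2+\alpha}{2}}\PKA\widetilde{f}(x).
\end{equation*}
Starting from $\UPKA f(y)=\cna\int_{\R^{n-1}}\frac{y_n^{1-\alpha}}{|y-w|^{n-\alpha}}f(w)\,dw$, I substitute $\xi=\Psi(w)\in\Sph^{n-1}$, so that $dw=|\Psi'(w)|^{-(n-1)}d\xi=\bigl(\frac{1+|w|^2}{2}\bigr)^{n-1}d\xi$ by (\ref{PsiConformalFactorExampleBoundary}), and $f(w)=\widetilde{f}(\xi)\bigl(\frac{2}{1+|w|^2}\bigr)^{(n-2+\alpha)/2}$ by (\ref{PsiTransformation}).

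The geometric core of the computation is the standard Möbius identity
\begin{equation*}
|\Psi(y)-\Psi(w)|^2=|\Psi'(y)|\,|\Psi'(w)|\,|y-w|^2=\frac{2}{1+2y_n+|y|^2}\cdot\frac{2}{1+|w|^2}\cdot|y-w|^2,
\end{equation*}
which converts $|y-w|^{n-\alpha}$ in the denominator into $|x-\xi|^{n-\alpha}$ up to explicit powers of $(1+2y_n+|y|^2)/2$ and $(1+|w|^2)/2$. Combined with $y_n=\frac{1-|x|^2}{1-2x_n+|x|^2}=\frac{(1-|x|^2)(1+2y_n+|y|^2)}{4}$ (which again uses (\ref{EquationConformalFactorTransformation})), one rewrites
\begin{equation*}
\frac{y_n^{1-\alpha}}{|y-w|^{n-\alpha}}=\Bigl(\tfrac{1-|x|^2}{2}\Bigr)^{1-\alpha}|x-\xi|^{\alpha-n}\Bigl(\tfrac{1+2y_n+|y|^2}{2}\Bigr)^{-(n-2+\alpha)/2}\Bigl(\tfrac{1+|w|^2}{2}\Bigr)^{-(n-\alpha)/2}.
\end{equation*}

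Plugging these into the integral and collecting the factors involving $(1+|w|^2)/2$ yields the exponent $-\frac{n-\alpha}{2}-\frac{n-2+\alpha}{2}+(n-1)=0$, so those cancel completely; what remains is a factor $\bigl(\frac{1+2y_n+|y|^2}{2}\bigr)^{-(n-2+\alpha)/2}$ times $\cna\,\bigl(\frac{1-|x|^2}{2}\bigr)^{1-\alpha}|x-\xi|^{\alpha-n}=\widetilde{p}_\alpha(x,\xi)$, recognizing (\ref{PoissonKernelUnitBall}) upon writing $(\frac{1-|x|^2}{2})^{1-\alpha}=2^{\alpha-1}(1-|x|^2)^{1-\alpha}$. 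This gives exactly the identity displayed above, and multiplying back by $\bigl(\frac{1+2y_n+|y|^2}{2}\bigr)^{(n-2+\alpha)/2}$ in accordance with (\ref{PsiTransformationInverse}) produces $\PKA\widetilde{f}(x)$.

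\textbf{Main obstacle.} There is no conceptual difficulty; the proof is entirely a bookkeeping exercise with conformal weights. The one thing requiring care is ensuring that the three sources of $(1+|w|^2)/2$ factors — from the Möbius identity applied to $|y-w|^{n-\alpha}$, from the reweighting in (\ref{PsiTransformation}), and from the Jacobian of the stereographic projection — combine to exponent zero. Verifying this cancellation, and then matching the residual factor of $(1+2y_n+|y|^2)/2$ against the tilde-transformation prefactor in (\ref{PsiTransformationInverse}), is the only place an arithmetic slip is likely.
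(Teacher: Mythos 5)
Your proof is correct and follows essentially the same route as the paper: a direct change of variables $\xi=\Psi(w)$ in the half-space Poisson integral, with the exponents of the conformal weights collected and cancelled. The only difference is organizational — you package the computation through the identities $1-|\Psi(y)|^2=2y_n|\Psi'(y)|$ and $|\Psi(y)-\Psi(w)|^2=|\Psi'(y)|\,|\Psi'(w)|\,|y-w|^2$ (which the paper verifies later, in Proposition \ref{PropositionPoissonKernelConformalTransformationPsi}), whereas the paper's own proof carries out the same cancellation in explicit coordinates.
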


\begin{proof}
The proof is by direct calculation. Note that for any $w\in \R^{n-1}$ using the fact that (\ref{PsiTransformation}) and (\ref{PsiTransformationInverse}) are inverse to each other, we have
\[f(w)=\widetilde{f}\circ \Psi (w) \left(\frac{2}{1+|w|^2}\right)^{\frac{n-2+\alpha}{2}}.\]
As a result, we have
\begin{eqnarray*}
&&\left( P_{\alpha }f\right) \circ \Psi ^{-1}\left( x^{\prime },x_{n}\right)
\\
&=&c_{n,\alpha }\int_{\mathbb{R}^{n-1}}\frac{\left( \frac{1-\left\vert
x\right\vert ^{2}}{1-2x_{n}+\left\vert x\right\vert ^{2}}\right) ^{1-\alpha }%
}{\left( \left\vert \frac{2x^{\prime }}{\left( 1-x_{n}\right)
^{2}+\left\vert x^{\prime }\right\vert ^{2}}-w\right\vert ^{2}+\left( \frac{%
1-\left\vert x\right\vert ^{2}}{1-2x_{n}+\left\vert x\right\vert ^{2}}%
\right) ^{2}\right) ^{\frac{n-\alpha }{2}}} \\
&&\cdot\widetilde{f}\circ \Psi \left(
w\right) \left( \frac{2}{1+\left\vert w\right\vert ^{2}}\right) ^{\frac{%
n-2+\alpha }{2}}dw \\
&=&c_{n,\alpha }\int_{\mathbb{S}^{n-1}}\frac{\left( \frac{1-\left\vert
x\right\vert ^{2}}{1-2x_{n}+\left\vert x\right\vert ^{2}}\right) ^{1-\alpha }%
}{\left( \left\vert \frac{2x^{\prime }}{\left( 1-x_{n}\right)
^{2}+\left\vert x^{\prime }\right\vert ^{2}}-\frac{\xi ^{\prime }}{1-\xi _{n}%
}\right\vert ^{2}+\left( \frac{1-\left\vert x\right\vert ^{2}}{%
1-2x_{n}+\left\vert x\right\vert ^{2}}\right) ^{2}\right) ^{\frac{n-\alpha }{%
2}}} \\
&&\cdot\widetilde{f}\left( \xi \right) \left( 1-\xi _{n}\right) ^{\frac{\alpha
-n}{2}}d\xi \\
&=&2^{\left( \alpha -n\right) /2}c_{n,\alpha }\left( 1-2x_{n}+\left\vert
x\right\vert ^{2}\right) ^{\frac{n-2+\alpha }{2}}\int_{\mathbb{S}^{n-1}}%
\frac{\left( 1-\left\vert x\right\vert ^{2}\right) ^{1-\alpha }}{\left\vert
x-\xi \right\vert ^{n-\alpha }}\widetilde{f}\left( \xi \right) d\xi.%
\end{eqnarray*}%
Divide both sides by $\left(\frac{1-2x_n+|x|^2}{2}\right)^{\frac{n-2+\alpha}{2}}$ then we are done.
\end{proof}

\begin{remark}\label{RemarkExtensionSpecialCase}
For any integer $n\geq 2$ and $\alpha=2-n$ we can prove similar result. For any $f\in L^{\infty}(\R^{n-1})$, define
\[\widetilde{f}=f\circ \Psi^{-1},\]
then the same calculation as in the previous proposition show that
\[\left(P_{2-n}f\right)\circ\Psi^{-1}=\widetilde{P}_{2-n}\left(f\circ \Psi^{-1}\right)=\widetilde{P}_{2-n}\widetilde{f}.\]
\end{remark}

\begin{remark}
We note that
\begin{equation*}
\int_{\mathbb{S}^{n-1}}\widetilde{p}_{\alpha }\left( x,\xi \right) d\xi
\end{equation*}%
is not constant in $x$ except when $\alpha =0$ or $2-n$.
\end{remark}

\begin{proposition}\label{alphahamonic}
For any $\alpha \in [2-n,1)$ and for any $f\in C\left( \mathbb{S}^{n-1}\right) $%
\begin{eqnarray*}
u\left( x\right) :=
\begin{cases}
\int_{\mathbb{S}^{n-1}}\widetilde{p}_{\alpha }\left(
x,\xi \right) f\left( \xi \right) d\xi , x\in\B^n\\
f(x) , x\in\Sph^{n-1}
\end{cases}
\end{eqnarray*}%
defines a continuous function on $\overline{\mathbb{B}^{n}}$ which is smooth
in $\mathbb{B}^{n}$ and satisfies $\mathcal{L}u=0$.
\end{proposition}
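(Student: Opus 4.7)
The plan is to pull the extension back to the upper half-space via the conformal map $\Psi$, where the analogous Poisson representation is classical. Given $f\in C(\Sph^{n-1})\subset L^\infty(\Sph^{n-1})$, define $f_*$ on $\R^{n-1}$ by (\ref{PsiTransformation}); the decay factor $(2/(1+|w|^2))^{(n-2+\alpha)/2}$ ensures $f_*$ is continuous, bounded, and decays at infinity fast enough that $f_*\in L^{2(n-1)/(n-2+\alpha)}(\R^{n-1})$. For $\alpha\in(2-n,1)$ the preceding proposition then gives the identity $u=\PKA f=\widetilde{P_\alpha f_*}$ on $\B^n$; the endpoint $\alpha=2-n$ is handled identically using Remark \ref{RemarkExtensionSpecialCase}. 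Smoothness of $u$ inside $\B^n$ is immediate by differentiation under the integral sign, since $\pka(\cdot,\xi)$ and all its $x$-derivatives are uniformly bounded in $\xi$ on compact subsets of $\B^n$. The equation $\mathcal{L}u=0$ follows by combining this identity with Proposition \ref{HowTheOperatorTranforms}: $P_\alpha f_*$ solves $\operatorname{div}(y_n^\alpha\nabla\cdot)=0$ in $\R^n_+$ by the Caffarelli--Silvestre theory, and the transformation intertwines the two operators.

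The bulk of the work is continuity up to $\Sph^{n-1}$. I would split it into two pieces: (i) the function $v(x):=\int_{\Sph^{n-1}}\pka(x,\xi)\,d\xi$ extends continuously to $\overline{\B^n}$ with boundary value $1$; and (ii) for each $\xi_0\in\Sph^{n-1}$ and each $\delta>0$ one has $\int_{|\xi-\xi_0|\geq\delta}\pka(x,\xi)\,d\xi\to 0$ as $x\to\xi_0$. Piece (ii) is elementary: for $x$ within distance $\delta/2$ of $\xi_0$ and $|\xi-\xi_0|\geq\delta$ one has $|x-\xi|\geq\delta/2$, hence $\pka(x,\xi)\leq C\delta^{\alpha-n}(1-|x|^2)^{1-\alpha}$, which vanishes uniformly in $\xi$ as $x\to\xi_0$ since $\alpha<1$.

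For piece (i), I apply the identity above with $f\equiv 1$: the corresponding $f_*(w)=(2/(1+|w|^2))^{(n-2+\alpha)/2}$ is continuous and bounded on $\R^{n-1}$, and the standard boundary regularity of the Caffarelli--Silvestre extension gives $P_\alpha f_*(y',y_n)\to f_*(w_0)$ as $(y',y_n)\to(w_0,0)$ for each $w_0\in\R^{n-1}$. Transferring back through the conformal factor and using (\ref{EquationConformalFactorTransformation}), one verifies that $v(x)\to 1$ as $x\to\xi_0$ for every $\xi_0\neq N:=(0,\dots,0,1)$. Continuity at $N$ follows from the Euclidean rotation invariance $\pka(Rx,R\xi)=\pka(x,\xi)$ for $R\in O(n)$, which makes $v$ itself rotation invariant; rotating $N$ to any other boundary point reduces to the case already handled. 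With (i) and (ii) in hand, the decomposition
\begin{equation*}
u(x)-f(\xi_0)=\int_{\Sph^{n-1}}\pka(x,\xi)\bigl(f(\xi)-f(\xi_0)\bigr)\,d\xi+f(\xi_0)\bigl(v(x)-1\bigr)
\end{equation*}
together with uniform continuity of $f$ delivers $u(x)\to f(\xi_0)$. The principal technical obstacle is part (i), specifically the behavior at the north pole where the stereographic projection is singular; the $O(n)$-symmetry argument is the clean way to sidestep it.
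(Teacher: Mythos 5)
Your proposal is correct in outline but follows a genuinely different route for the two key ingredients. The paper works entirely on the ball: it observes that $v(x)=\int_{\Sph^{n-1}}\pka(x,\xi)\,d\xi$ is a radial function $h(|x|)$, extracts two-sided bounds for $h$ from the explicit computations of Lemma \ref{integration} and Remark \ref{lowerbound}, obtains continuity of $h$ on $[0,1]$ and $h(r)\to 1$ from the dominated convergence argument of Remark \ref{DCT}, and then performs essentially the same near/far splitting you do, with uniform continuity of $f$ controlling the near part; $\mathcal{L}u=0$ and interior smoothness are handled by differentiating the kernel under the integral. You instead obtain the normalization fact by transferring $\PKA 1$ to the half-space through the intertwining proposition, invoking the approximate-identity behaviour of $\upka$ there, and disposing of the north-pole singularity of the stereographic projection via the $O(n)$-invariance of $v$ (a clean trick, and legitimate since $v$ is radial); and you obtain $\mathcal{L}u=0$ by pulling the half-space equation back through Proposition \ref{HowTheOperatorTranforms}. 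Your route avoids the one-dimensional integral computations; the paper's route yields quantitative bounds on $h$ that it reuses later (e.g.\ in the proof of Theorem \ref{TheoremLimitCaseInequalityExplicitIn}), which is presumably why it takes that path.

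One step of yours needs more than a citation: you twice appeal to ``Caffarelli--Silvestre theory'' (for the facts that $\UPKA f_*$ solves $\text{div}(y_n^{\alpha}\nabla\cdot)=0$ in $\R^n_+$ and that it attains continuous boundary values), but \cite{CS} --- and the paper's own statements of (\ref{divform}) and (\ref{PoissonKernelUpperHalfSpace}) --- cover only $-1<\alpha<1$, whereas the proposition requires $\alpha\in[2-n,1)$, hence $\alpha\le -1$ once $n\ge 3$. Both facts do extend to the full range, but you must say so: a direct computation shows that $y\mapsto y_n^{1-\alpha}|y-w|^{-(n-\alpha)}$ is annihilated by $\text{div}(y_n^{\alpha}\nabla\cdot)$ for every $\alpha$, and the normalization (\ref{cna}) of $\cna$ gives $\int_{\R^{n-1}}\upka(y,w)\,dw=1$ for every $\alpha<1$, so the standard approximate-identity argument yields the boundary limit for bounded continuous data such as your $f_*$. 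With that verification made explicit (rather than delegated to the citation), your proof is complete.
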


\begin{proof}
The integral $\int_{\Sph^{n-1}}\pka (x,\xi)d\xi=\frac{1}{r^{n-1}}\int_{\Sph^{n-1}_r}\pka (x,\xi)dx$ is a function that only depends on $|x|$. Define $h(|x|)=\int_{\Sph^{n-1}}\pka (x,\xi)d\xi$, then by Lemma \ref{integration}, and remark \ref{lowerbound} we know that for $r\in [0,1]$
\[\left(\frac{2}{1+r}\right)^{n-2+\alpha}\frac{\Gamma\left(\frac{n-\alpha}{2}\right)\Gamma\left(\frac{n-1}{2}\right)}{\Gamma(n-1)\Gamma\left(\frac{1-\alpha}{2}\right)}\leq h(r)\leq \left(\frac{2}{1+r}\right)^{n-2+\alpha} \leq 2^{n-2+\alpha} .\]

By dominated convergence theorem as in Remark \ref{DCT}, we know that for $r\in [0,1]$, $h(r)$ is continuous and that
\[\lim_{r\to 1}h(r)=1.\]

By the continuity of $f$ on $\Sph^{n-1}$, we can choose $\delta>0$ small, such that when $|\xi_1-\xi_2|\leq \delta$, we have $|f(\xi_1)-f(\xi_2)|\leq \epsilon$.  By the continuity of $h(r)$ on the interval $[0,1]$ and the fact that it is strictly positive on $[0,1]$ we can choose $\delta>0$  smaller if needed, such that when $|x-x_0|\leq \delta$ we have $\left\lvert\frac{1}{h(|x|)}-\frac{1}{h(|x_0|)}\right\rvert<\epsilon$. Define
\begin{equation*}
	M:=\|f\|_{L^\infty(\Sph^{n-1})}\frac{\Gamma(n-1)\Gamma\left(\frac{1-\alpha}{2}\right)}{\Gamma\left(\frac{n-\alpha}{2}\right)\Gamma\left(\frac{n-1}{2}\right)}.
\end{equation*}
Note that we have
\[0<\frac{\Gamma\left(\frac{n-\alpha}{2}\right)\Gamma\left(\frac{n-1}{2}\right)}{\Gamma(n-1)\Gamma\left(\frac{1-\alpha}{2}\right)}\leq h(r),\]
for all $r\in [0,1]$.

Suppose $x_0\in \Sph^{n-1}$, and $x\in \B^n$ such that $|x-x_0|<\delta/2$ consider
\begin{eqnarray*}
|u(x)-u(x_0)|&=&\left\lvert\int_{\mathbb{S}^{n-1}}\widetilde{p}_{\alpha }\left(
x,\xi \right) f\left( \xi \right) d\xi-\int_{\mathbb{S}^{n-1}}\widetilde{p}_{\alpha }\left(
x,\xi \right) \frac{u(x_0)}{h(|x|)} d\xi\right\rvert	 \\
&\leq& \int_{|\xi-x_0|\leq \delta} \pka(x,\xi)\left (\left\lvert f(\xi)-f(x_0)\right\rvert+\left\lvert f(x_0)-\frac{f(x_0)}{h(|x|)}\right\rvert \right)d\xi
 \\ &&+\int_{|\xi-x_0|> \delta}\pka(x,\xi)\left\lvert f(\xi)-\frac{f(x_0)}{h(|x|)}\right\rvert d\xi\\
 &\leq & C(n,\alpha)\epsilon+\frac{C(n,\alpha)M (1-|x|^2)^{1-\alpha}}{\delta^{n-\alpha}}
\end{eqnarray*}
As a result $u(x)$ is continuous at $x_0$. 

The part that $\mathcal{L}u=0$ follows from dominated convergence theorem and direct calculation.
\end{proof}

Based on the Martin theory for harmonic functions, we make the following conjecture:

\begin{conjecture}
(The representation theorem) Let $\widetilde{u}:\B^n\to \R$ be a positive solution of

\[\mathcal{L} \widetilde{u}=0.\]
Then there exists a Borel measure $\nu $ on $\mathbb{S}^{n-1}$ s.t.%
\begin{equation*}
\widetilde{f}\left( x\right) =\int_{\mathbb{S}^{n-1}}\widetilde{p}_{\alpha
}\left( x,\xi \right) d\nu \left( \xi \right) .
\end{equation*}
\end{conjecture}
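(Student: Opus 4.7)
\medskip

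\noindent\textbf{Proof plan for the conjecture.}
The plan is to deduce this as an instance of Martin's representation theorem for positive solutions of the degenerate elliptic operator $\mathcal{L}$, via a reduction to the half space. By Proposition~\ref{HowTheOperatorTranforms}, positive $\mathcal{L}$-harmonic functions on $\B^n$ correspond bijectively through (\ref{PsiTransformationInverse}) to positive solutions of $\operatorname{div}(y_n^\alpha\nabla u)=0$ on $\R^n_+$. Since $\Psi$ extends to a conformal homeomorphism of $\overline{\R^n_+}\cup\{\infty\}$ onto $\overline{\B^n}$ sending $\R^{n-1}\cup\{\infty\}$ onto $\Sph^{n-1}$, a representation of positive solutions as integrals against a boundary measure on the half space transfers to one on the ball with kernel $\pka$ as identified in Proposition~\ref{alphahamonic}.

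The analytic machinery required is the standard one for Martin boundaries of second order elliptic operators: an interior Harnack inequality, existence and two-sided bounds for a Green's function $G(x,y)$, and the boundary Harnack principle. For $\alpha\in(-1,1)$ the weight $y_n^\alpha$ is an $A_2$ Muckenhoupt weight and the Fabes--Kenig--Serapioni theory supplies all three ingredients directly; for $\alpha\in[2-n,-1]$ the weight falls outside the $A_2$ range, and one must either reduce to the $A_2$ case by conjugating the equation through an explicit substitution that absorbs the zeroth order term of $\mathcal{L}$ into a new weight, or invoke the intertwining with the fractional Laplacian provided by the Caffarelli--Silvestre extension together with existing Martin boundary results for fractional operators.

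With those ingredients in hand, form the Martin kernel $M(x,y):=G(x,y)/G(0,y)$ and show, via the boundary Harnack principle, that for every $\xi\in\Sph^{n-1}$ the limit $K(x,\xi):=\lim_{y\to\xi}M(x,y)$ exists, is jointly continuous in $(x,\xi)$, and equals $\pka(x,\xi)/\pka(0,\xi)$. The latter identification follows from the fact that both sides are positive $\mathcal{L}$-harmonic, both have the same isolated boundary singularity at $\xi$, and both agree at $x=0$; a uniqueness theorem for positive solutions with a single boundary singularity then forces equality. A Fatou-type theorem, transferable from the half-space case by $\Psi$, shows that each $\pka(\cdot,\xi)$ is minimal in the cone of positive $\mathcal{L}$-harmonic functions. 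Choquet's theorem then gives the integral representation
\[
\widetilde u(x)=\int_{\Sph^{n-1}}\pka(x,\xi)\,d\nu(\xi)
\]
after the factor $1/\pka(0,\xi)$ is absorbed into the measure $\nu$.

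The main obstacle is the boundary Harnack principle and the accompanying Fatou theorem for $\mathcal{L}$ in the regime $\alpha\le-1$, where the classical degenerate elliptic toolbox for $A_2$-weighted equations does not apply directly; pushing the standard arguments through in that range will likely require combining the explicit form of $\pka$ from Proposition~\ref{alphahamonic} with sharp mapping estimates for the conformal change $\Psi$, or an independent argument based on the Caffarelli--Silvestre reformulation and the known Martin boundary of the fractional Laplacian on $\B^n$.
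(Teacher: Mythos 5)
The statement you are addressing is not proved in the paper at all: it is stated explicitly as a conjecture, motivated by the analogy with Martin theory for harmonic functions, and no argument is given. So there is no paper proof to compare against, and your text should be judged as a standalone attempt.

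As it stands, it is a strategy outline rather than a proof, and the gap you yourself flag is the genuinely hard one. The Martin-boundary machinery you invoke (interior Harnack, Green's function estimates, boundary Harnack principle, Fatou theorem, minimality of the kernels, then Choquet) is available off the shelf only for $\alpha\in(-1,1)$, where $y_n^{\alpha}$ is an $A_2$ weight and the Fabes--Kenig--Serapioni theory applies. The paper's range is $\alpha\in[2-n,1)$, and the case most important for its main results is precisely the limit case $\alpha=2-n$, which for $n\geq 3$ lies outside the $A_2$ regime (indeed $y_n^{\alpha}$ is not even locally integrable up to the boundary when $\alpha\leq -1$), so none of the three ingredients is known to you without new work; saying one must ``either reduce to the $A_2$ case by conjugation or invoke existing Martin boundary results for fractional operators'' names possible routes but carries out neither, and for $\alpha\leq -1$ the Caffarelli--Silvestre intertwining with a fractional Laplacian of order $(1-\alpha)/2>1$ is itself not the standard theory. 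Two further steps are asserted rather than proved: the identification $K(x,\xi)=\pka(x,\xi)/\pka(0,\xi)$ requires a uniqueness theorem for positive $\mathcal{L}$-harmonic functions with a single boundary singularity, and the minimality of $\pka(\cdot,\xi)$ requires a Fatou-type theorem; in this degenerate setting both are of the same order of difficulty as the conjecture itself. Finally, when you transfer from the half space to the ball you should treat the boundary point at infinity (which $\Psi$ sends to the north pole) explicitly, since half-space representations of positive solutions generically carry an extra term supported there. In short, the reduction via Proposition \ref{HowTheOperatorTranforms} and the overall Martin-theoretic plan are reasonable and consistent with how the paper frames the conjecture, but the proposal does not close it.
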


\subsection{Poisson Kernel under Conformal Transformation}
 We want to know how the Poisson kernel transforms under conform transformation. We prove the following:
\begin{proposition}\label{PropositionPoissonKernelConformalTransformationPsi}
For any integer $n\geq 2$, any $\alpha\in [2-n,1)$, any $y\in\R^n_+$ and any $w\in \R^{n-1}$ we have
\begin{equation}\label{pkPsi}
\pka(\Psi(y),\Psi(w))=p_\alpha(y,w)|\Psi'(y)|^{(2-n-\alpha)/2}|\Psi'(w)|^{(\alpha-n)/2}.
\end{equation}
Here $\Psi$ is the conformal transformation defined in (\ref{PsiDefinition}), $|\Psi'(y)|$ and $|\Psi'(w)|$ are the conformal factors in (\ref{PsiConformalFactorExampleUpperHalf}) and (\ref{PsiConformalFactorExampleBoundary}) respectively.
\end{proposition}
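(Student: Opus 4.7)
The plan is to substitute $\Psi(y)$ and $\Psi(w)$ into the definition
\[
\pka(x,\xi)=2^{\alpha-1}c_{n,\alpha}\,\frac{(1-|x|^2)^{1-\alpha}}{|x-\xi|^{n-\alpha}}
\]
and rewrite each factor in terms of $y$, $w$, and the conformal factors. Two geometric identities do all the work: an expression for $1-|\Psi(y)|^2$ and a ``Möbius identity'' for $|\Psi(y)-\Psi(w)|$.

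First I would compute $1-|\Psi(y)|^2$. Using the second form of $\Psi$ in (\ref{PsiDefinition}), a short algebraic manipulation (expand the two components of $\Psi(y)$, use $|y'|^2+(1+y_n)^2=1+2y_n+|y|^2$) gives
\[
1-|\Psi(y)|^2=\frac{4y_n}{1+2y_n+|y|^2}=2y_n\,|\Psi'(y)|,
\]
where the last equality invokes (\ref{PsiConformalFactorExampleUpperHalf}). Raising to the power $1-\alpha$ this contributes $2^{1-\alpha}y_n^{1-\alpha}|\Psi'(y)|^{1-\alpha}$ to the numerator of $\pka(\Psi(y),\Psi(w))$.

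Next I would establish the conformal distance identity
\begin{equation}\label{MobiusDistancePlan}
|\Psi(y)-\Psi(w)|^2=|y-w|^2\,|\Psi'(y)|\,|\Psi'(w)|
\end{equation}
for any $y\in\R^n_+$ and $w\in\R^{n-1}$ (with $|\Psi'(w)|=2/(1+|w|^2)$ as in (\ref{PsiConformalFactorExampleBoundary})). Set $\rho_y=(1+2y_n+|y|^2)/2$ and $\rho_w=(1+|w|^2)/2$, so that $\Psi(y)_c=y_c/\rho_y$ for $c<n$ and $\Psi(y)_n=1-(1+y_n)/\rho_y$ (and similarly with $w_n=0$). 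Forming the difference componentwise, the squared numerator becomes
\[
|y'|^2\rho_w^2+|w'|^2\rho_y^2-2(y'\!\cdot\!w')\rho_y\rho_w+((1+w_n)\rho_y-(1+y_n)\rho_w)^2,
\]
which upon expanding and using $|y'|^2+(1+y_n)^2=2\rho_y$, $|w'|^2+1=2\rho_w$, and $\rho_y+\rho_w-(1+y_n+w_n+y\!\cdot\!w)=\tfrac12|y-w|^2$ collapses to $\rho_y\rho_w|y-w|^2$. Dividing by $(\rho_y\rho_w)^2$ yields (\ref{MobiusDistancePlan}). Raising to the power $(n-\alpha)/2$ puts $|y-w|^{n-\alpha}|\Psi'(y)|^{(n-\alpha)/2}|\Psi'(w)|^{(n-\alpha)/2}$ into the denominator.

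Finally I would assemble the pieces. Combining the two displays,
\[
\pka(\Psi(y),\Psi(w))=2^{\alpha-1}c_{n,\alpha}\,\frac{2^{1-\alpha}y_n^{1-\alpha}\,|\Psi'(y)|^{1-\alpha}}{|y-w|^{n-\alpha}\,|\Psi'(y)|^{(n-\alpha)/2}|\Psi'(w)|^{(n-\alpha)/2}},
\]
and the powers of $2$ cancel while $1-\alpha-(n-\alpha)/2=(2-n-\alpha)/2$; recognizing the remaining $c_{n,\alpha}y_n^{1-\alpha}/|y-w|^{n-\alpha}$ as $p_\alpha(y,w)$ gives (\ref{pkPsi}). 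The only real obstacle is the bookkeeping in verifying (\ref{MobiusDistancePlan}); once that identity is in hand the rest is a tidy cancellation of constants and a single exponent arithmetic.
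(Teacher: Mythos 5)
Your proposal is correct and follows essentially the same route as the paper's proof: compute $1-|\Psi(y)|^2=2y_n|\Psi'(y)|$ and the distance identity $|\Psi(y)-\Psi(w)|^2=|y-w|^2|\Psi'(y)||\Psi'(w)|$, then substitute into the definition of $\pka$ and match exponents. Your expansion verifying the distance identity is just a more detailed version of the ``direct calculation'' the paper performs, and the exponent bookkeeping checks out.
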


\begin{proof}
For any $x\in \B^n$ and any $\xi\in \Sph^{n-1}$, by definition we have
\[\pka (x,\xi)=2^{\alpha-1}\cna \frac{(1-|x|^2)^{1-\alpha}}{|x-\xi|^{n-\alpha}}.\]	
From this we have for any $y\in\R^n_+$ and any $w\in \R^{n-1}$
\[\pka(\Psi(y),\Psi(w))=2^{\alpha-1}\cna \frac{(1-|\Psi(y)|^2)^{1-\alpha}}{|\Psi(y)-\Psi(w)|^{n-\alpha}}.\]
Through direct calculation we have
\[1-|\Psi(y)|^2=\frac{4y_n}{1+2y_n+|y|^2}=2y_n|\Psi'(y)|\]
and
\begin{eqnarray*}
|\Psi(y)-\Psi(w)|^2 &=& \left|\frac{2}{1+2y_n+|y|^2}(y',-y_n-1)-\frac{2}{1+|w|^2}(w,-1)\right|^2\\
&=&\frac{4}{(1+|w|^2)(1+2y_n+|y|^2)} |y-w|^2\\
&=&|y-w|^2 |\Psi'(w)||\Psi'(y)|
\end{eqnarray*}
Note that here we use the notation $(y',-y_n-1)$ and $(w,-1)$ to denote points in $\R^n$ and use the notation $\langle w,y'\rangle$ to denote the Euclidean inner product in $\R^{n-1}$. Combine these calculations together, then we can get (\ref{pkPsi}).
\end{proof}

We also want to consider how the Poisson kernel transform under the isometry group of $(\B^n,g_h)$. Here $g_h=\frac{4}{(1-|x|^2)}dx^2$ denotes the hyperbolic metric in the unit ball. We use the notation $SO(n,1)$ to denote the isometry group of the unit ball with the hyperbolic metric. For any $\Phi\in SO(n,1)$ any $x\in\B^n$ and any $\xi\in \Sph^{n-1}$ we use $|\Phi'(x)|$ and $|\Psi'(\xi)|$ to denote the conformal factors in $\B^n$ and $\Sph^{n-1}$ respectively. We prove the following:

\begin{proposition} For any integer $n\geq 2$ any $\alpha \in [2-n,1)$, any $x\in \B^n$, any $\xi\in \Sph^{n-1}$ and any $\Phi\in SO(n,1)$ we have
\begin{equation}\label{pkPhi}
\widetilde{p}_{\alpha }\left( \Phi \left( x\right) ,\Phi \left( \xi \right)
\right) =\widetilde{p}_{\alpha }\left( x,\xi \right) \left\vert \Phi
^{\prime }\left( x\right) \right\vert ^{\left( 2-n-\alpha \right)
/2}\left\vert \Phi ^{\prime }\left( \xi \right) \right\vert ^{\left( \alpha
-n\right) /2}.
\end{equation}
\end{proposition}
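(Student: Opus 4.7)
The plan is to reduce to Proposition \ref{PropositionPoissonKernelConformalTransformationPsi} via the conformal isometry $\Psi$, which identifies the hyperbolic upper-half-space model with the hyperbolic ball. Under this identification the hyperbolic isometry group is conjugated onto the group of M\"obius self-maps of $\R^n_+$; writing $\phi := \Psi^{-1}\circ \Phi \circ \Psi$, the map $\phi$ is such a self-map, and the task is to transfer the conformal covariance of the half-space kernel $\upka$ back through $\Psi$.

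The first step is to prove the half-space analogue of (\ref{pkPsi}): for every such $\phi$, every $y\in\R^n_+$, and every $w\in\R^{n-1}$,
\begin{equation*}
\upka(\phi(y),\phi(w)) = \upka(y,w)\,|\phi'(y)|^{(2-n-\alpha)/2}\,|\phi'(w)|^{(\alpha-n)/2}.
\end{equation*}
Given the explicit formula (\ref{PoissonKernelUpperHalfSpace}), this reduces to two M\"obius identities: the standard $|\phi(a)-\phi(b)|^2 = |\phi'(a)||\phi'(b)||a-b|^2$ for $a,b\in\overline{\R^n_+}$, and $[\phi(y)]_n = y_n|\phi'(y)|$ for $y\in\R^n_+$. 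The second identity I would verify on the generators of the half-space M\"obius group (horizontal translations, orthogonal rotations fixing the $y_n$-axis, dilations about the origin, and inversions in spheres centered on $\partial\R^n_+$) and extend by composition. Substituting both identities into (\ref{PoissonKernelUpperHalfSpace}) and collecting exponents yields the claim.

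The second step sandwiches this half-space covariance law between two applications of (\ref{pkPsi}). For $x\in\B^n$ and $\xi\in\Sph^{n-1}$ set $y=\Psi^{-1}(x)$, $w=\Psi^{-1}(\xi)$, and note that $\Psi(\phi(y))=\Phi(x)$ and $\Psi(\phi(w))=\Phi(\xi)$ by the definition of $\phi$. Applying (\ref{pkPsi}) to the pair $(\phi(y),\phi(w))$ rewrites $\pka(\Phi(x),\Phi(\xi))$ as $\upka(\phi(y),\phi(w))$ multiplied by conformal factors of $\Psi$ evaluated at $\phi(y)$ and $\phi(w)$; the half-space law then replaces $\upka(\phi(y),\phi(w))$ with $\upka(y,w)$ multiplied by conformal factors of $\phi$; finally (\ref{pkPsi}) applied in reverse to $(y,w)$ rewrites $\upka(y,w)$ in terms of $\pka(x,\xi)$. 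The outcome is $\pka(x,\xi)$ multiplied by four $|\Psi'|$-factors and two $|\phi'|$-factors, carrying exponents $(2-n-\alpha)/2$ on the $x$-side and $(\alpha-n)/2$ on the $\xi$-side.

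The last step is chain-rule bookkeeping. From $\Phi=\Psi\circ\phi\circ\Psi^{-1}$ and $|(\Psi^{-1})'(x)|=1/|\Psi'(y)|$,
\begin{equation*}
|\Phi'(x)| = \frac{|\Psi'(\phi(y))|\,|\phi'(y)|}{|\Psi'(y)|}, \qquad |\Phi'(\xi)| = \frac{|\Psi'(\phi(w))|\,|\phi'(w)|}{|\Psi'(w)|}.
\end{equation*}
These identities collapse each triple of $\Psi'$ and $\phi'$ factors into a single power of $|\Phi'(x)|$ or $|\Phi'(\xi)|$, delivering exactly (\ref{pkPhi}). The only genuinely non-trivial ingredient is the half-space covariance law in the first step; the remaining manipulation is automatic because the exponents $(2-n-\alpha)/2$ and $(\alpha-n)/2$ are matched between (\ref{pkPsi}) and the half-space law, which is precisely what makes the conformal-covariance statement natural.
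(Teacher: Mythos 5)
Your proposal is correct, but it takes a genuinely different route from the paper. The paper works entirely in the ball: it extracts the boundary-and-interior conformal factor from the invariance of the hyperbolic metric, $|\Phi'(x)|=\frac{1-|\Phi(x)|^2}{1-|x|^2}$, and gets the key distance identity $|\Phi(x)-\Phi(\xi)|^2=|\Phi'(x)||\Phi'(\xi)||x-\xi|^2$ by writing $\cosh d(x,z)$ in terms of $\frac{|x-z|^2}{(1-|x|^2)(1-|z|^2)}$, using that $\Phi$ preserves $d$, and letting $z\to\xi\in\Sph^{n-1}$; substituting into the explicit formula for $\pka$ finishes in a few lines. You instead conjugate by $\Psi$, prove the covariance of the half-space kernel $\upka$ under M\"obius self-maps $\phi=\Psi^{-1}\circ\Phi\circ\Psi$ of $\R^n_+$ (via the identities $|\phi(a)-\phi(b)|^2=|\phi'(a)||\phi'(b)||a-b|^2$ and $[\phi(y)]_n=y_n|\phi'(y)|$, checked on generators), and transfer back through (\ref{pkPsi}) with chain-rule bookkeeping; your exponent accounting is right. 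What the paper's route buys: it is shorter and self-contained, it avoids the classification/generation fact for the half-space M\"obius group (note you need all of $O(n-1)$, not just rotations, though reflections satisfy your identities trivially, and $[\phi(y)]_n=y_n|\phi'(y)|$ is in fact equivalent to $\phi$ being a hyperbolic isometry of the half-space model, which would spare you the generator check), and it produces the identities (\ref{PhiPrime}) and (\ref{PhiDistance}) that are reused elsewhere in the paper. What your route buys: it leans on the already-proved (\ref{pkPsi}) and the simpler half-space kernel, and it effectively establishes the half-space covariance law directly, which the paper only obtains later (for $\alpha=2-n$, equation (\ref{pkphi})) by combining (\ref{pkPsi}) with (\ref{pkPhi}). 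One small detail you should patch: your pointwise sandwich breaks at the boundary point $\xi$ with $\Psi^{-1}(\xi)=\infty$ and at the point that $\phi$ sends to infinity; since both sides of (\ref{pkPhi}) are continuous in $\xi$ for fixed $x\in\B^n$, the identity extends to these finitely many exceptional points by continuity, but this should be said.
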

\begin{proof}
	For any $\Phi \in SO\left( n,1\right)$, since it is an isometry of $\B^n$ with the hyperoblic metric $g_{h}=\frac{4}{\left(
1-\left\vert x\right\vert ^{2}\right) ^{2}}dx^{2}$, it is
a conformal transformation with respect to the Euclidean metric. We have for any $x\in \B^n$
\[\Phi^\ast \left(\frac{4}{(1-|x|^2)^2}dx^2\right)=\frac{4}{(1-|\Phi(x)|^2)^2}\Phi^\ast (dx^2)=\frac{4}{(1-|x|^2)^2}dx^2,\]
and
\[\Phi^\ast dx^2=|\Phi'(x)|^2dx^2. \]
Here $|\Phi'(x)|$ is the conformal factor, it is a notation similar to $|\Psi'(y)|$. From this we conclude that for any $x\in \B^n$
\begin{equation}\label{PhiPrime}
\left\vert \Phi ^{\prime }\left( x\right) \right\vert =\frac{1-\left\vert
\Phi \left( x\right) \right\vert ^{2}}{1-\left\vert x\right\vert ^{2}}.
\end{equation}

For any $x,\ z\in \B^n$, define $d(x,z)$ as the distance between the two points measured by the hyperbolic metric. Then we have
\begin{equation*}
\cosh d\left( x,z\right) =1+2\frac{\left\vert x-z\right\vert ^{2}}{\left(
1-\left\vert x\right\vert ^{2}\right) \left( 1-\left\vert z\right\vert
^{2}\right) }.
\end{equation*}
Since $\Phi$ is an isometry with respect to the hyperbolic metric, we have
Thus 
\begin{equation*}
\frac{\left\vert x-z\right\vert ^{2}}{\left( 1-\left\vert x\right\vert
^{2}\right) \left( 1-\left\vert z\right\vert ^{2}\right) }=\frac{\left\vert
\Phi \left( x\right) -\Phi \left( z\right) \right\vert ^{2}}{\left(
1-\left\vert \Phi \left( x\right) \right\vert ^{2}\right) \left(
1-\left\vert \Phi \left( z\right) \right\vert ^{2}\right) }
\end{equation*}%
Letting $z\rightarrow \xi \in \mathbb{S}^{n-1}$ yields%
\begin{equation}\label{PhiDistance}
\left\vert \Phi \left( x\right) -\Phi \left( \xi \right) \right\vert
^{2}=\left\vert \Phi ^{\prime }\left( \xi \right) \right\vert \left\vert
\Phi ^{\prime }\left( x\right) \right\vert \left\vert x-\xi \right\vert ^{2}.
\end{equation}

Plug (\ref{PhiPrime}) and (\ref{PhiDistance}) into $\pka (\Phi(x),\Phi(\xi))$ we have
\begin{eqnarray*}
	\widetilde{p}_{\alpha }\left( \Phi \left( x\right) ,\Phi \left( \xi \right)
\right) &=& 2^{\alpha -1}\cna \frac{(1-|\Phi(x)|^2)^{1-\alpha}}{|\Phi(x)-\Phi(\xi)|^{n-\alpha}}\\
&=& 2^{\alpha -1}\cna \frac{(1-|x|^2)^{1-\alpha}|\Phi'(x)|^{1-\alpha}}{|x-\xi|^{n-\alpha}|\Phi'(x)|^{(n-\alpha)/2}|\Phi'(\xi)|^{(n-\alpha)/2}}\\
&=&\pka(x,\xi)|\Phi'(x)|^{(2-n-\alpha)/2}|\Phi'(\xi)|^{(\alpha-n)/2}.
\end{eqnarray*}
\end{proof}

It is also important to know how the Poisson kernel in the upper half space transforms under the group $SO(n,1)$. Using the definition of $\Psi$ as in (\ref{PsiDefinition}), through direct calculation, we have
\[|\Psi'(y)|=\frac{1-|\Psi(y)|^2}{2y_n}.\]

\begin{lemma}
For any $\Phi\in SO(n,1)$, define 
\[\phi=\Psi^{-1}\circ \Phi\circ\Psi,\] 
then for any $y\in \R^n_+$ and any $u\in \R^{n-1}$ we have
\begin{equation}\label{pkphi}
	p_{2-n}(\phi(y),\phi(w))=p_{2-n}(y,u)|\phi'(w)|^{1-n},
\end{equation}
as a result, for any $f:\R^n_+\to\R$ we have
\begin{equation}\label{PKphi}
	(P_{2-n}f)\circ\phi=P_{2-n}(f\circ\phi)
\end{equation}
\end{lemma}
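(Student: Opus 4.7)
The plan is to derive (\ref{pkphi}) by composing the two transformation laws already proved for $\widetilde{p}_\alpha$ under $\Psi$ and under isometries of $(\B^n,g_h)$, specialized to $\alpha=2-n$. The crucial simplification at the limit exponent is that $(2-n-\alpha)/2=0$ when $\alpha=2-n$, so both identities (\ref{pkPsi}) and (\ref{pkPhi}) lose their dependence on the interior conformal factor and retain only the boundary one. Then (\ref{PKphi}) will follow from (\ref{pkphi}) by a change of variables, using that $\phi$ restricted to $\R^{n-1}=\partial \R^n_+$ is a conformal map of $\R^{n-1}$ with $(n-1)$-dimensional volume Jacobian $|\phi'(w)|^{n-1}$.

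For (\ref{pkphi}), I would start from the definitional identity $\Psi\circ \phi=\Phi\circ\Psi$, so that for $y\in\R^n_+$ and $w\in\R^{n-1}$ one has $\Psi(\phi(y))=\Phi(\Psi(y))$ and $\Psi(\phi(w))=\Phi(\Psi(w))$. Applying (\ref{pkPsi}) at $\alpha=2-n$ to the pair $(\phi(y),\phi(w))$ gives
\[
\widetilde{p}_{2-n}\bigl(\Phi(\Psi(y)),\Phi(\Psi(w))\bigr)=p_{2-n}(\phi(y),\phi(w))\,|\Psi'(\phi(w))|^{1-n},
\]
while applying (\ref{pkPhi}) at $\alpha=2-n$ and then (\ref{pkPsi}) to the same left-hand side gives
\[
\widetilde{p}_{2-n}\bigl(\Phi(\Psi(y)),\Phi(\Psi(w))\bigr)=p_{2-n}(y,w)\,|\Psi'(w)|^{1-n}\,|\Phi'(\Psi(w))|^{1-n}.
\]
Equating the two expressions and solving yields
\[
p_{2-n}(\phi(y),\phi(w))=p_{2-n}(y,w)\left(\frac{|\Psi'(w)|\,|\Phi'(\Psi(w))|}{|\Psi'(\phi(w))|}\right)^{1-n}.
\]
The chain rule applied to $\phi=\Psi^{-1}\circ\Phi\circ\Psi$ at the boundary, together with $|(\Psi^{-1})'(\Psi(z))|=|\Psi'(z)|^{-1}$, identifies the parenthesized quantity as exactly $|\phi'(w)|$, which gives (\ref{pkphi}).

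For (\ref{PKphi}), I would compute $(P_{2-n}f)(\phi(y))=\int_{\R^{n-1}}p_{2-n}(\phi(y),u)f(u)\,du$ and substitute $u=\phi(w)$. Since $\phi$ restricts to a conformal self-map of $\R^{n-1}$ with linear conformal factor $|\phi'(w)|$, the volume element transforms as $du=|\phi'(w)|^{n-1}\,dw$; combining with (\ref{pkphi}) the two factors $|\phi'(w)|^{1-n}$ and $|\phi'(w)|^{n-1}$ cancel and what remains is $\int_{\R^{n-1}}p_{2-n}(y,w)\,f(\phi(w))\,dw=P_{2-n}(f\circ\phi)(y)$.

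The only step requiring care is the chain-rule identification of $|\phi'(w)|$ at a boundary point, because $\Psi$ sends interior of $\R^n_+$ to interior of $\B^n$ but its boundary restriction is the stereographic projection; one must verify that the factor $|\Psi'(w)|$ used on $\R^{n-1}$ in (\ref{PsiConformalFactorExampleBoundary}) really agrees with the intrinsic conformal factor of $\Psi|_{\R^{n-1}}:\R^{n-1}\to\Sph^{n-1}$, and similarly for $\Phi|_{\Sph^{n-1}}$. Beyond that bookkeeping, the argument is algebraic manipulation of the two transformation laws already established.
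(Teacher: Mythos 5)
Your proposal is correct and follows essentially the same route as the paper: it combines the transformation laws (\ref{pkPsi}) and (\ref{pkPhi}) at $\alpha=2-n$ (where only the boundary factor with exponent $1-n$ survives), identifies $|\phi'(w)|$ via the chain rule and the identity $|(\Psi^{-1})'(x)|=|\Psi'(\Psi^{-1}(x))|^{-1}$, and then deduces (\ref{PKphi}) by the change of variables $u=\phi(w)$ with Jacobian $|\phi'(w)|^{n-1}$, exactly as the paper does. The only cosmetic difference is that you equate two expressions for $\widetilde{p}_{2-n}(\Phi(\Psi(y)),\Phi(\Psi(w)))$ rather than writing one chain of equalities starting from $p_{2-n}(\phi(y),\phi(w))$.
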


\begin{proof}
Since we have
\[\Psi\circ\Psi^{-1}=Id,\]
using chain rule, we have
\[\Psi'(\Psi^{-1}(x))\cdot(\Psi^{-1})' (x)=Id\]
for all $x\in \B^n$. Note that we think of the left hand side of the equation as matrix multiplication. As a result we have

\begin{equation}\label{inverse}
|\Psi'(\Psi^{-1}(x))|=|(\Psi^{-1})'(x)|^{-1}	
\end{equation}

Using both (\ref{pkPsi})	 and (\ref{pkPhi}) we have
\begin{eqnarray*}
p_{2-n}(\phi(y),\phi(w))	 &=& p_{2-n}(\Psi^{-1}\circ \Phi\circ\Psi(y),\Psi^{-1}\circ \Phi\circ\Psi(w))
\\&=& \widetilde{p}_{2-n}(\Phi\circ\Psi(y),\Phi\circ\Psi(w))|\Psi'(\Psi^{-1}\circ \Phi\circ\Psi(w))|^{n-1}
\\&=& \widetilde{p}_{2-n}(\Psi(y),\Psi(w))|\Phi'(\Psi(w))|^{1-n}|\Psi'(\Psi^{-1}\circ \Phi\circ\Psi(w))|^{n-1}
\\&=& p_{2-n}(y,u)|\Psi'(w)|^{1-n}|\Phi'(\Psi(w))|^{1-n}|\Psi'(\Psi^{-1}\circ \Phi\circ\Psi(w))|^{n-1}
\\&=& p_{2-n}(y,w)|\phi'(w)|^{1-n}.
\end{eqnarray*}
Note that in the last step we used chain rule and (\ref{inverse}) by plugging in $x=\Phi\circ \Psi(w)$.

Using (\ref{pkphi}) we have
\begin{eqnarray*}
(P_{2-n}f)\circ\phi (y) &=& c_{n,2-n}\int_{\R^{n-1}}p_{2-n}(\phi(y),w)f(w)dw
\\&=& c_{n,2-n}\int_{\R^{n-1}}p_{2-n}(\phi(y),\phi(w))f(\phi(w))|\phi'(w)|^{n-1}dw
\\&=& c_{n,2-n}\int_{\R^{n-1}}p_{2-n}(y,w)f(\phi(w))dw
\\&=& P_{2-n}(f\circ\phi)(y).
\end{eqnarray*}

\end{proof}

\begin{lemma}

\begin{equation}\label{Extension}
\ln|\phi'(y)|=P_{2-n}\ln|\phi'(w)|	
\end{equation}
\end{lemma}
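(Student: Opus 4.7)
The plan is to show that $u(y):=\ln|\phi'(y)|$ solves the same Dirichlet problem in $\R^n_+$ as its $P_{2-n}$-Poisson extension, and then conclude by uniqueness. The key observation is that at $\alpha=2-n$ the equation $\mathrm{div}(y_n^{\alpha}\nabla u)=0$ in (\ref{divform}) is exactly the hyperbolic Laplace equation $\Delta_h u = 0$ for $g_h = y_n^{-2}dy^2$ on $\R^n_+$, since $y_n^n\,\mathrm{div}(y_n^{2-n}\nabla u)=\Delta_h u = y_n^{2}\Delta u + (2-n)y_n\partial_{y_n}u$; thus $P_{2-n}$ is the hyperbolic Poisson operator. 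Combining the conformal factor $|\Psi'(y)| = 2/(1+2y_n+|y|^2)$ with $1-|\Psi(y)|^2 = 2y_n|\Psi'(y)|$ shows that $\Psi$ is a hyperbolic isometry from $(\R^n_+, g_h)$ to the Poincar\'e ball, so the conjugate $\phi = \Psi^{-1}\Phi\Psi$ is a hyperbolic isometry of $\R^n_+$. Comparing $\phi^*g_h = g_h$ with $\phi^*(dy^2)=|\phi'(y)|^2dy^2$ yields the essential identity
\[|\phi'(y)| = \phi_n(y)/y_n.\]

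I would then verify hyperbolic harmonicity by writing $u(y) = \ln\phi_n(y) - \ln y_n$ and computing $\Delta_h\ln y_n = y_n^2(-1/y_n^2) + (2-n)y_n(1/y_n) = 1-n$ directly. Because $\phi$ is a hyperbolic isometry, $\Delta_h(\ln y_n\circ\phi) = (\Delta_h\ln y_n)\circ\phi = 1-n$; subtracting gives $\Delta_h u = 0$. For the boundary trace, Taylor expansion in the normal variable (using $\phi_n(y',0)=0$) gives $\phi_n(y',y_n)/y_n \to \partial_{y_n}\phi_n(y',0)$ as $y_n\downarrow 0$; conformality of $\phi$ at $(y',0)$ together with the preservation of $\R^n_+$ forces $d\phi(y',0)e_n = |\phi'(y',0)|\,e_n$, so $\partial_{y_n}\phi_n(y',0)=|\phi'(y',0)|$ and $u$ has boundary value $\ln|\phi'(w)|$ on $\R^{n-1}$, matching the boundary data of $P_{2-n}(\ln|\phi'|)$.

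These two facts together with uniqueness of the hyperbolic Poisson extension yield (\ref{Extension}). The main obstacle is the uniqueness step, because $u = \ln|\phi'|$ need not be bounded on all of $\R^n_+$: its behaviour at infinity depends on whether $\Phi$ fixes the point $\Psi(\infty)\in\Sph^{n-1}$, and $|\phi'|$ can blow up or vanish there, so a direct maximum-principle argument is not immediate. The cleanest remedy is to transport the identity to the unit ball via Remark \ref{RemarkExtensionSpecialCase}: on $\overline{\B^n}$ the corresponding function $\ln|\Phi'|$ is bounded and continuous (as $\Phi$ is a diffeomorphism of the closed ball), the same hyperbolic-harmonicity computation applies verbatim, and uniqueness for the Dirichlet problem in the ball follows from Proposition \ref{alphahamonic} combined with a standard maximum-principle argument for $\Delta_h^{\B^n}$. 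Pulling back by $\Psi$ then gives (\ref{Extension}) on $\R^n_+$.
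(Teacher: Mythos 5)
Your first half is sound and is a genuinely different observation from the paper: on the half space, $\phi=\Psi^{-1}\circ\Phi\circ\Psi$ is an isometry of $(\R^n_+,y_n^{-2}dy^2)$, so $|\phi'(y)|=\phi_n(y)/y_n$, and since $\Delta_h\ln y_n=1-n$ is constant, $\ln|\phi'|=\ln\phi_n-\ln y_n$ is hyperbolic harmonic with boundary trace $\ln|\phi'(w)|$; also $P_{2-n}$ is indeed the hyperbolic Poisson operator. The genuine gap is exactly the step you flag as the obstacle — uniqueness — and your proposed remedy does not close it. Uniqueness really does need care here: $y_n^{n-1}$ is hyperbolic harmonic with zero boundary values, and $\ln|\phi'|$ is unbounded (it blows up logarithmically at the pole of $\phi$ on $\partial\R^n_+$ and tends to $-\infty$ at infinity), so "same boundary data plus harmonicity" alone proves nothing.

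The fix via the ball is based on a false claim. The pullback of $\ln|\phi'(y)|$ under $\Psi^{-1}$ is \emph{not} $\ln|\Phi'(x)|$; by the chain rule and (\ref{EquationConformalFactorTransformation}) it is $\ln|\Phi'(x)|+\ln\frac{|x-e_n|^2}{2}-\ln\frac{|\Phi(x)-e_n|^2}{2}$, which has logarithmic singularities at the two boundary points $e_n$ and $\Phi^{-1}(e_n)$, so it is neither bounded nor continuous on $\overline{\B^n}$ and the "standard maximum principle" does not apply directly. Moreover the half-space computation does not transfer verbatim: writing $\ln|\Phi'(x)|=v\circ\Phi-v$ with $v=\ln\frac{1-|x|^2}{2}$, one finds $\Delta_{\mathbb{H}}v=-\frac{n}{2}-\frac{n-2}{2}|x|^2$, which is not constant for $n\geq 3$, so $\Delta_{\mathbb{H}}\ln|\Phi'|=\frac{n-2}{2}\bigl(|x|^2-|\Phi(x)|^2\bigr)\neq 0$ in general. (This is consistent with the paper: combining (\ref{InBall}) with Lemma \ref{InLnHyperbolicHarmonic} shows the hyperbolic-harmonic extension of $\ln|\Phi'(\xi)|$ in the ball is $\ln|\Phi'|+\widetilde{I}_n\circ\Phi-\widetilde{I}_n$, not $\ln|\Phi'|$ itself; were your ball identity true, it would force $\widetilde{I}_n\circ\Phi=\widetilde{I}_n$ for every $\Phi\in SO(n,1)$, contradicting that $\widetilde{I}_n$ is nonconstant for $n\geq 3$.) By contrast, the paper avoids any uniqueness argument altogether: it derives (\ref{Extension}) purely algebraically by combining the covariance identity (\ref{InBall}) for $\widetilde{I}_n$ with (\ref{Inconformal}), the kernel rules (\ref{pkPsi}), (\ref{PKphi}), (\ref{inverse}) and the chain rule, and letting the $\widetilde{I}_n$- and $\Psi$-terms cancel. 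If you want to keep your PDE route, you must either carry out the uniqueness argument on the half space with explicit control at the pole of $\phi$ and at infinity (a barrier or Phragm\'en--Lindel\"of type argument excluding solutions like $y_n^{n-1}$ and Poisson-kernel singularities), or work on the ball with the correct pulled-back function and handle its two boundary log-singularities as removable for the maximum principle.
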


\begin{proof}
	Since $\Phi=\Psi\circ\phi\circ\Psi^{-1}$, by (\ref{InBall}), we have
	\[\widetilde{I}_n\circ\Phi(x)+\ln|\Phi'(x)|=\widetilde{I}_n(x)+\widetilde{P}_{2-n}(\ln|\Phi'(\xi)|).\]
	If we define $y=\Psi^{-1}(x)$, then using (\ref{Inconformal}) we have
	\begin{eqnarray*}
\widetilde{I}_n(x) &=& \widetilde{I}_n\circ\Psi(y)
\\&=& P_{2-n}(\ln|\Psi'(u)|)	-\ln|\Psi'(y)|,
	\end{eqnarray*}
	and
	\begin{eqnarray*}
	\widetilde{I}_n\circ\Phi(x)  &=&\widetilde{I}_n\circ\Phi\circ\Psi(y)
	\\&=&\widetilde{I}_n\circ\Psi(\phi(y))
	\\&=&(P_{2-n}\ln|\Psi'(w)|)\circ\phi(y)-\ln|\Psi'(\phi(y))|
	\\&=&P_{2-n}\ln|\Psi'(\phi(w))|(y)-\ln|\Psi'(\phi(y))|,
	\end{eqnarray*}
where the last step follows from (\ref{PKphi}).

On the other hand, using chain rule, we have
\begin{eqnarray*}
\ln|\Phi'(x)|&=&\ln|\Psi'(\phi\circ\Psi^{-1}(x))|+\ln|\phi'(\Psi^{-1}(x))|+\ln|(\Psi^{-1})'(x)|
\\&=&\ln|\Psi'(\phi(y))|+\ln|\phi'(y)|+\ln|(\Psi^{-1})'(\Psi(y))|.
\end{eqnarray*}
If we define $w=\Psi^{-1}(\xi)$, then using chain rule, we have
\[
\ln|\Phi'(\xi)|=\ln|\Psi'(\phi\circ\Psi^{-1}(\xi))|+\ln|\phi'(\Psi^{-1}(\xi))|+\ln|(\Psi^{-1})'(\xi)|,
\]
and by (\ref{pkPsi}) we have
\[(\widetilde{P}_{2-n}\ln|\Phi'(\xi)|)\circ\Psi=P_{2-n}\ln|\Psi'(\phi(w))|+P_{2-n}\ln|\phi'(w)|+P_{2-n}\ln|(\Psi^{-1})'(\Psi(w))|.\]
Putting everything together and using (\ref{inverse}) we have
\[\ln|\phi'(y)|=P_{2-n}\ln|\phi'(w)|.\]

\end{proof}

For any $\Phi \in SO\left( n,1\right) $ and any $\widetilde{f}\in L^{\frac{2(n-1)}{n-2+\alpha}}(\Sph^{n-1})$ we define
\begin{equation}\label{ConformalTransformationPhiSphere}
\widetilde{f}_{\Phi }\left( \xi \right) =\widetilde{f}\circ \Phi \left( \xi
\right) \left\vert \Phi ^{\prime }\left( \xi \right) \right\vert ^{\left(
n-2+\alpha \right) /2}.
\end{equation}%
Then it is easy to see that $\widetilde{f}_{\Phi}\in L^{\frac{2(n-1)}{n-2+\alpha}}(\Sph^{n-1})$ and that 
\[\|\widetilde{f}_{\Phi}\|_{L^{\frac{2(n-1)}{n-2+\alpha}}(\Sph^{n-1})}=\|\widetilde{f}\|_{L^{\frac{2(n-1)}{n-2+\alpha}}(\Sph^{n-1})}.\]
Moreover, the extension of $\widetilde{f}_{\Phi}$ as defined in (\ref{ExtensionFormulaAlpha}) transforms in the following way:
\begin{equation}\label{ConformalTransformationPhiBall}
\widetilde{P}_{\alpha }\left( \widetilde{f}_{\Phi }\right) \left( x\right)
=\left( \widetilde{P}_{\alpha }\widetilde{f}\right) \circ \Phi \left(
x\right) \left\vert \Phi ^{\prime }\left( x\right) \right\vert ^{\left(
n-2+\alpha \right) /2}.
\end{equation}

\section{A Family of Conformally Invariant Extension Inequalities}\label{FamilyOfInequalities}

\subsection{Compactness}
The goal of this subsection is to prove that the extension operator $\PKA: L^{p}(\Sph^{n-1})\to L^q(\B^n)$ is compact for certain choices of $p$ and $q$. This is done in  Corollary \ref{compact}. Before we can prove Corollary \ref{compact}, we need to prove an estimate in Proposition \ref{WeakEstimate}. 

We need the following definition:
\begin{definition}\label{SphereBallDefinition}
For $r\in (0,1]$, define $\Sph^{n-1}_r=\{x\in \B^n: |x|=r \}$ and $\B^n_r=\{x\in \B^n: |x|<r\}.$	
\end{definition}

The proof of Proposition \ref{WeakEstimate} depends on the following important technical lemma, which is also used in section \ref{PoissonKernelUnitBall}:
\begin{lemma}\label{integration}
For any $n\geq 2$, $\alpha\in [2-n,1)$	and $r\in (0,1)$
\begin{equation*}
\int_{\Sph^{n-1}_r}\pka (x,\xi)dx \leq 1,	
\end{equation*}
The notation $\Sph^{n-1}_r$ is as in definition \ref{SphereBallDefinition}.
\end{lemma}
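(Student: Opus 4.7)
The plan is to reduce the claim to a pointwise bound on $\PKA 1$, and then derive that bound by transferring to the upper half space, where the identity $\int_{\R^{n-1}}p_\alpha(y,w)\,dw=1$ is elementary.

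First I would use rotational invariance of $\B^n$, $\Sph^{n-1}_r$, and $\pka$ to set $\xi=e_n$. Writing $x=r\omega$ with $\omega\in\Sph^{n-1}$ and using the algebraic identity $|r\omega-e_n|^2=1-2r\,\omega\cdot e_n+r^2=|\omega-re_n|^2$, which forces $\pka(r\omega,e_n)=\pka(re_n,\omega)$, the integral collapses to $r^{n-1}h(r)$, where $h(|x|):=\PKA 1(x)=\int_{\Sph^{n-1}}\pka(x,\xi)\,d\xi$ is the $\mathcal{L}$-harmonic extension of the constant function $1$. So the lemma reduces to showing $r^{n-1}h(r)\leq 1$.

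The crucial step is the pointwise bound $h(r)\leq(2/(1+r))^{n-2+\alpha}$. Under (\ref{PsiTransformation}) the boundary function $\widetilde{f}\equiv 1$ on $\Sph^{n-1}$ corresponds to $f(w)=(2/(1+|w|^2))^{(n-2+\alpha)/2}$ on $\R^{n-1}$, and the identity $\widetilde{P_\alpha f}=\PKA\widetilde{f}$ established just before this lemma, combined with (\ref{EquationConformalFactorTransformation}), yields
\[
P_\alpha f(y)=h(|\Psi(y)|)\,|\Psi'(y)|^{(n-2+\alpha)/2}.
\]
Since $n-2+\alpha\geq 0$ in our range, we have $f(w)\leq 2^{(n-2+\alpha)/2}$ pointwise, and together with $\int_{\R^{n-1}}p_\alpha(y,w)\,dw=1$ (immediate from the definition (\ref{cna}) of $\cna$) this forces $P_\alpha f(y)\leq 2^{(n-2+\alpha)/2}$. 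Rearranging and using $|\Psi'(y)|=(1-2x_n+|x|^2)/2$ with $x=\Psi(y)$ gives $h(|x|)\leq(4/(1-2x_n+|x|^2))^{(n-2+\alpha)/2}$; since the left side depends only on $|x|$, taking the infimum of the right side over $x_n\in[-|x|,|x|]$, attained at $x_n=-|x|$ with value $(2/(1+|x|))^{n-2+\alpha}$, proves the claim.

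It then suffices to verify $r^{n-1}(2/(1+r))^{n-2+\alpha}\leq 1$ on $[0,1]$. Setting $\beta:=n-2+\alpha$, the degenerate case $\beta=0$ (i.e.\ $\alpha=2-n$, where in fact $h\equiv 1$) is just $r^{n-1}\leq 1$; when $\beta>0$, put $\gamma:=(n-1)/\beta$, so that $\alpha<1$ forces $\gamma>1$, and the required inequality becomes $2r^\gamma\leq 1+r$. This follows from the concavity of $g(r):=1+r-2r^\gamma$ on $(0,1]$, since $g(0)=1$ and $g(1)=0$ then force $g(r)\geq 1-r\geq 0$ by the chord inequality for concave functions. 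The only real obstacle is tracking the conformal factors through $\Psi$ correctly; once the identity $P_\alpha f=h\cdot|\Psi'|^{(n-2+\alpha)/2}$ is in hand, the pointwise bound reduces to the trivial fact that $P_\alpha$ sends $L^\infty$ into $L^\infty$ with norm $1$.
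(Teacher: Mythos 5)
Your argument is correct, and it is genuinely different from the paper's. The paper proves the lemma by brute force: it reduces the surface integral to the one-dimensional integral $\int_0^\pi \sin^{n-2}\phi\,(1-2r\cos\phi+r^2)^{-(n-\alpha)/2}d\phi$, evaluates it via the substitutions $u=\tan(\phi/2)$ and $v=\frac{1+r}{1-r}u$, drops the factor $\bigl(1+(\tfrac{1-r}{1+r})^2v^2\bigr)^{-(n-2+\alpha)/2}$, and invokes the Gamma-function identity behind (\ref{cna}) to arrive at the bound $\frac{2^{n-2+\alpha}r^{n-1}}{(1+r)^{n-2+\alpha}}\leq 1$. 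You instead reduce (by the same symmetry identity $\int_{\Sph^{n-1}_r}\pka(x,\xi)\,dx=r^{n-1}\,\PKA 1(r)$ that the paper also uses elsewhere) to the pointwise bound $\PKA 1\leq(2/(1+r))^{n-2+\alpha}$, and obtain that bound softly from the conformal correspondence with the half space: the transfer identity $\widetilde{\UPKA f}=\PKA\widetilde f$ together with (\ref{EquationConformalFactorTransformation}), the normalization $\int_{\R^{n-1}}p_\alpha(y,w)\,dw=1$ built into (\ref{cna}), the bound $f(w)=(2/(1+|w|^2))^{(n-2+\alpha)/2}\leq 2^{(n-2+\alpha)/2}$, and optimization of the conformal factor over $x_n\in[-r,r]$; this yields exactly the same bound as the paper without any explicit integral evaluation, and there is no circularity since the transfer proposition is proved in Section \ref{PoissonKernelUnitBall} by direct calculation, independently of this lemma. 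Two small remarks: the transfer proposition is stated for $\alpha\in(2-n,1)$, so at the endpoint $\alpha=2-n$ you should cite Remark \ref{RemarkExtensionSpecialCase} (your separate treatment of $\beta=0$, where $h\equiv 1$, does exactly this in substance); and the trade-off is that the paper's explicit computation also produces the lower bound of Remark \ref{lowerbound} and the exact identity of Remark \ref{IntegrationLemmaLimitCaseRemark}, both used later, which your softer argument does not give. Also, your final elementary step can be shortened: for $\gamma\geq 1$ and $r\in[0,1]$ one has $2r^\gamma\leq 2r\leq 1+r$, so the concavity/chord argument, while correct, is not needed.
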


\begin{proof}
\begin{eqnarray*}
&&\int_{\Sph^{n-1}_r}\pka (x,\xi)dx
\\ &=&	2^{\alpha-1} c_{n,\alpha}|\Sph^{n-2}|r^{n-1}(1-r^2)^{1-\alpha}\int_0^\pi\frac{\sin^{n-2}(\phi)}{(r^2+1-2r\cos(\phi))^{(n-\alpha)/2}}d\phi.
\end{eqnarray*}

Using u-substitution, take $u=\tan(\phi/2)$, we get
\begin{equation}\label{IntegrationLemmaUSub}
\begin{split}
	&\int_0^\pi\frac{\sin^{n-2}(\phi)}{(r^2+1-2r\cos(\phi))^{(n-\alpha)/2}}d\phi \\ &=\int_0^\infty \left(\frac{2u}{1+u^2}\right)^{n-2}\frac{1}{\left(r^2+1-2r\left(\frac{1-u^2}{1+u^2}\right)\right)^{(n-\alpha)/2}}\frac{2du}{1+u^2}\\
	&=\frac{2^{n-1}}{(1-r)^{n-\alpha}}\int_0^\infty\frac{u^{n-2}}{(1+u^2)^{(n-2+\alpha)/2}}\frac{du}{\left(\left(\frac{1+r}{1-r}\right)^2u^2+1\right)^{(n-\alpha)/2}}
	\end{split}
\end{equation}
Using u-substitution again, take $v=\frac{1+r}{1-r}u$, we have
\begin{eqnarray*}
	&&\int_0^\pi\frac{\sin^{n-2}(\phi)}{(r^2+1-2r\cos(\phi))^{(n-\alpha)/2}}d\phi \\ &=&\frac{2^{n-1}(1-r)^{\alpha-1}}{(1+r)^{n-1}}\int_0^\infty\frac{v^{n-2}}{\left(1+\left(\frac{1-r}{1+r}\right)^2v^2\right)^{(n-2+\alpha)/2}}\frac{dv}{(v^2+1)^{(n-\alpha)/2}}\\
	&\leq &\frac{2^{n-1}(1-r)^{\alpha-1}}{(1+r)^{n-1}}\int_0^\infty\frac{v^{n-2}dv}{(v^2+1)^{(n-\alpha)/2}}\\
	&=&\frac{2^{n-2}(1-r)^{\alpha-1}\Gamma(\frac{1-\alpha}{2})\Gamma(\frac{n-1}{2})}{(1+r)^{n-1}\Gamma(\frac{n-\alpha}{2})}
\end{eqnarray*}
Overall, we have
\begin{eqnarray*}
	\int_{\Sph^{n-1}_r}\pka (x,\xi)dx
	&\leq &\frac{2^{n-3+\alpha}r^{n-1}c_{n,\alpha}|\Sph^{n-2}|\Gamma(\frac{1-\alpha}{2})\Gamma(\frac{n-1}{2})}{(1+r)^{n-2+\alpha}\Gamma(\frac{n-\alpha}{2})}\\
	&=& \frac{2^{n-2+\alpha}r^{n-1}}{(1+r)^{n-2+\alpha}}\\
	&\leq & 1.
	\end{eqnarray*}
Here we used (\ref{cna}) and the fact that the function $\frac{r^{n-1}}{(1+r)^{n-2+\alpha}}$ is an increasing function of $r$ for $2-n\leq\alpha<1$ and $0<r<1$.
\end{proof}

\begin{remark}\label{IntegrationLemmaLimitCaseRemark}
	From (\ref{IntegrationLemmaUSub}) we see that when $\alpha=2-n$
\begin{equation*}
\begin{split}
	&\int_0^\pi\frac{\sin^{n-2}(\phi)}{(r^2+1-2r\cos(\phi))^{n-1}}d\phi\\ 
	&=\int_0^\infty \left(\frac{2u}{1+u^2}\right)^{n-2}\frac{1}{\left(r^2+1-2r\left(\frac{1-u^2}{1+u^2}\right)\right)^{n-1}}\frac{2du}{1+u^2}\\
	&=\frac{2^{n-1}}{(1-r)^{2n-2}}\int_0^\infty\frac{u^{n-2}du}{\left(\left(\frac{1+r}{1-r}\right)^2u^2+1\right)^{n-1}}.
	\end{split}
\end{equation*}
Now if we take $v=\frac{1+r}{1-r}u$ then we have
\begin{equation*}
\begin{split}
\int_0^\pi\frac{\sin^{n-2}(\phi)}{(r^2+1-2r\cos(\phi))^{n-1}}d\phi &=\frac{2^{n-1}}{(1-r^2)^{n-1}}\int_0^\infty \frac{v^{n-2}dv}{(1+v^2)^{n-1}}\\
&=\frac{2^{n-2}\left(\Gamma\left(\frac{n-1}{2}\right)\right)^2}{(1-r^2)^{n-1}\Gamma(n-1)},
\end{split}
\end{equation*}
where in the last step we used (\ref{cna}). As a result, for any $x\in\B^n$
\[\int_{\Sph^{n-1}}\widetilde{p}_{2-n}(x,\xi)=1.\]
\end{remark}

\begin{remark}\label{lowerbound}
	From the calculation in Lemma $\ref{integration}$ we can also get a lower bound for the integration. Note that
	\begin{eqnarray*}
	&&\int_0^\pi\frac{\sin^{n-2}(\phi)}{(r^2+1-2r\cos(\phi))^{(n-\alpha)/2}}d\phi\\
	&=&\frac{2^{n-1}(1-r)^{\alpha-1}}{(1+r)^{n-1}}\int_0^\infty\frac{v^{n-2}}{\left(1+\left(\frac{1-r}{1+r}\right)^2v^2\right)^{(n-2+\alpha)/2}}\frac{dv}{(v^2+1)^{(n-\alpha)/2}}\\
	&\geq &	\frac{2^{n-1}(1-r)^{\alpha-1}}{(1+r)^{n-1}}\int_0^\infty\frac{v^{n-2}dv}{(v^2+1)^{n-1}}\\
	&=&\frac{2^{n-2}(1-r)^{\alpha-1}(\Gamma(\frac{n-1}{2}))^2}{(1+r)^{n-1}\Gamma(n-1)}.
	\end{eqnarray*}
As a result, we have
	
\begin{equation*}
	\int_{\Sph^{n-1}_r}\pka(x,\xi)dx\geq\frac{2^{n-2+\alpha}r^{n-1}\Gamma(\frac{n-\alpha}{2})\Gamma(\frac{n-1}{2})}{(1+r)^{n-2+\alpha}\Gamma(n-1)\Gamma(\frac{1-\alpha}{2})}
\end{equation*}
\end{remark}

\begin{remark}\label{DCT}
 In the calculation of Lemma \ref{integration}, we have
\[\frac{v^{n-2}}{\left(1+\left(\frac{1-r}{1+r}\right)^2 v^2\right)^{(n-2+\alpha)/2}(v^2+1)^{(n-\alpha)/2}}\leq \frac{v^{n-2}}{(v^2+1)^{(n-\alpha)/2}},\] 
	for all $r\in [0,1]$ and all $v\geq 0$. As a result, by dominated convergence theorem, for any $r_0\in [0,1]$ we have
\begin{eqnarray*}
&&\lim_{r\to r_0} \int_0^\infty\frac{v^{n-2}dv}{\left(1+\left(\frac{1-r}{1+r}\right)^2 v^2\right)^{(n-2+\alpha)/2}(v^2+1)^{(n-\alpha)/2}}\\
& =&\int_0^\infty \frac{v^{n-2}dv}{\left(1+\left(\frac{1-r_0}{1+r_0}\right)^2 v^2\right)^{(n-2+\alpha)/2}(v^2+1)^{(n-\alpha)/2}}, 
\end{eqnarray*}

	In particular, we have
\begin{eqnarray*}
&&\lim_{r\to 1} \int_0^\infty\frac{v^{n-2}dv}{\left(1+\left(\frac{1-r}{1+r}\right)^2 v^2\right)^{(n-2+\alpha)/2}(v^2+1)^{(n-\alpha)/2}}\\
& =&\int_0^\infty \frac{v^{n-2} dv}{(v^2+1)^{(n-\alpha)/2}}.
\end{eqnarray*}
\end{remark}

Before we can prove the estimate in Proposition \ref{WeakEstimate} we need to define the weak norm:
\begin{definition}
Define the weak norm $L^p_W(\B^n)$	, such that
\begin{equation*}
	|u|_{L^p_W (\B^n)}=\sup_{t>0} t\left\lvert|u|>t\right\rvert^{\frac{1}{p}}.
\end{equation*}
Here $\left\lvert |u|>t\right\rvert$ is the measure of the set $\{|u|>t\}$.
\end{definition}

We are now ready to prove the following estimates, the proof uses the same method as in \cite{HWY2}.
\begin{proposition}\label{WeakEstimate}
For any $n\geq 2$ and any $2-n \leq \alpha<1$ the extension operator $\PKA$ satisfies
\begin{equation*}
\left|\PKA f\right|_{L^{\frac{n}{n-1}}_W(\B^n)}	\leq C(n,\alpha)\|f\|_{L^1(\Sph^{n-1})},
\end{equation*}
and
\begin{equation*}
	\left\|\PKA f\right\|_{L^{\frac{np}{n-1}}(\B^n)}\leq C(n,\alpha,p) \|f\|_{L^p(\Sph^{n-1})}
\end{equation*}
\end{proposition}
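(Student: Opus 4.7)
The first and key step is to dominate $\pka$ pointwise by a Riesz-type kernel of order one on $\B^n$. Since $|x - \xi| \geq 1 - |x|$ whenever $x \in \B^n$ and $\xi \in \Sph^{n-1}$, one has
\[
(1 - |x|^2)^{1-\alpha} \leq 2^{1-\alpha}(1 - |x|)^{1-\alpha} \leq 2^{1-\alpha} |x-\xi|^{1-\alpha},
\]
and substituting into the explicit formula (\ref{PoissonKernelUnitBall}) for $\pka$ yields
\[
\pka(x, \xi) \leq C(n, \alpha)\, |x - \xi|^{-(n-1)}.
\]
Setting $Tf(x) := \int_{\Sph^{n-1}} |x-\xi|^{1-n} |f(\xi)|\, d\xi$, we have $|\PKA f(x)| \leq C(n,\alpha)\, Tf(x)$, so it suffices to bound $T$.

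Next I would prove the weak estimate by a Fubini/rearrangement argument. For any measurable $A \subset \B^n$ with $|A| = V$, the radially decreasing rearrangement about $\xi$ gives
\[
\int_A |x-\xi|^{1-n}\, dx \leq \int_{B(\xi, R)} |x-\xi|^{1-n}\, dx = |\Sph^{n-1}|\, R, \qquad R = (V/\omega_n)^{1/n},
\]
so Fubini yields $\int_A Tf(x)\, dx \leq C(n)\, V^{1/n} \|f\|_{L^1(\Sph^{n-1})}$. Choosing $A = \{x \in \B^n : Tf(x) > \lambda\}$ and rearranging produces $|A|^{(n-1)/n}\lambda \leq C\|f\|_{L^1}$, which is exactly the desired weak $L^{n/(n-1)}_W(\B^n)$ bound.

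Finally, for the strong estimate I would interpolate against an $L^\infty$ endpoint. By Lemma \ref{integration}, combined with the rotational symmetry used in the proof of Proposition \ref{alphahamonic}, one has $\int_{\Sph^{n-1}} \pka(x,\xi)\, d\xi \leq 2^{n-2+\alpha}$ uniformly in $x \in \B^n$, which gives $\|\PKA f\|_{L^\infty(\B^n)} \leq C(n,\alpha) \|f\|_{L^\infty(\Sph^{n-1})}$. Applying the Marcinkiewicz interpolation theorem between the weak $(1, n/(n-1))$ endpoint and the strong $(\infty, \infty)$ endpoint delivers a strong $(p, np/(n-1))$ estimate for every $1 < p \leq \infty$, completing the proposition. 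The proof is largely routine once the pointwise kernel comparison is observed; the only mild subtlety to verify is the rearrangement/Fubini step producing the layer-cake weak bound, and then checking that the exponents line up correctly in the interpolation.
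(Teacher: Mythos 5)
Your proposal is correct, and on the strong estimate it coincides with the paper's argument: both use the $L^\infty\to L^\infty$ bound coming from Lemma \ref{integration} (via the uniform bound $\int_{\Sph^{n-1}}\pka(x,\xi)\,d\xi\le 2^{n-2+\alpha}$, as recorded in the proof of Proposition \ref{alphahamonic}) followed by Marcinkiewicz interpolation, giving the strong $(p,np/(n-1))$ estimates for $1<p<\infty$. For the weak estimate, however, you take a genuinely different route. The paper bounds the kernel by $C(n,\alpha)(1-|x|)^{-(n-1)}$, so that after normalizing $\|f\|_{L^1(\Sph^{n-1})}=1$ the level set $\{\PKA f>\lambda\}$ is forced into the thin annulus $\{1-|x|<C(n,\alpha)\lambda^{-1/(n-1)}\}$; it then applies Chebyshev on that annulus together with Lemma \ref{integration} (which yields $\int_{\B^n\setminus\B^n_{r_0}}\pka(x,\xi)\,dx\le C(1-r_0)$), splitting into the cases $C\lambda^{-1/(n-1)}\ge 1$ and $<1$. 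You instead dominate $\pka(x,\xi)$ by the Riesz-type kernel $c_{n,\alpha}|x-\xi|^{1-n}$ (the inequality $(1-|x|^2)^{1-\alpha}\le 2^{1-\alpha}|x-\xi|^{1-\alpha}$ is valid precisely because $\alpha<1$, which covers the whole range $2-n\le\alpha<1$), and run the classical restricted-type argument: for any measurable $A\subset\B^n$ the rearrangement bound $\int_A|x-\xi|^{1-n}\,dx\le|\Sph^{n-1}|\left(|A|/\omega_n\right)^{1/n}$ plus Tonelli gives $\int_A Tf\le C(n)|A|^{1/n}\|f\|_{L^1}$, and choosing $A$ to be the level set produces $\lambda|A|^{(n-1)/n}\le C\|f\|_{L^1}$, which is the weak bound. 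Your route avoids the two-case split in $\lambda$ and needs Lemma \ref{integration} only for the $L^\infty$ endpoint, and it applies verbatim to any kernel dominated by $|x-\xi|^{1-n}$; the paper's argument instead exploits the boundary blow-up structure of $\pka$ and reuses Lemma \ref{integration} on annuli, keeping everything tied to quantities already computed there. Both proofs are complete and yield the same conclusion.
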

\begin{proof}
	Note that we only need to prove the weak estimate. The strong estimate follows from Marcinkiewicz interpolation theorem and the fact that for any $x\in \B^n$
	\begin{equation*}
		|\PKA f (x)|\leq |f|_{L^\infty(\Sph^{n-1})}\int_{\Sph^{n-1}}\pka (x,\xi)d\xi\leq C(n,\alpha)|f|_{L^\infty(\Sph^{n-1})},
\end{equation*}
here the last step follows from Lemma \ref{integration}. The constant $C(n,\alpha)$ here only depends on $n$ and $\alpha$. Note that it is different from the notation $c_{n,\alpha}$, and that $C(n,\alpha)$ changes through out the dissertation.
To prove the weak type estimate. Assume that $f\geq 0$ and $|f|_{L^1(\Sph^{n-1})}=1$. Note that
\begin{equation}\label{pkaLInfinityBound}
\begin{split}
	\pka(x,\xi) &= 2^{\alpha -1}c_{n,\alpha}\frac{(1-|x|^2)^{1-\alpha}}{|x-\xi|^{n-\alpha}}\\
	&\leq   C(n,\alpha) \frac{(1-|x|^2)^{1-\alpha}}{(1-|x|)^{n-\alpha}}\\
	&\leq  \frac{C(n,\alpha)}{(1-|x|)^{n-1}}.
\end{split}
\end{equation}
As a result, we have
\begin{equation}\label{pkainfty}
	0\leq\PKA f\leq\frac{C(n,\alpha)}{(1-|x|)^{n-1}}.
\end{equation}
From (\ref{pkainfty}) we conclude that
\begin{equation*}
|\PKA f>\lambda|=|\{x\in \B^n : 1-|x|< C(n,\alpha) \lambda ^{-\frac{1}{n-1}},\PKA f>\lambda\}|
\end{equation*}

If $1\leq C(n,\alpha)\lambda^{-\frac{1}{n-1}}$, then we have
\begin{eqnarray*}
|\PKA f>\lambda|&\leq & \frac{1}{\lambda}\int_{\B^n}\PKA fdx d\xi\\
&\leq &\frac{1}{\lambda}\int_{\Sph^{n-1}}f(\xi)\int_{\B^n}\pka (x,\xi)dx d\xi\\
&\leq &\frac{C(n,\alpha)}{\lambda}\\
&\leq &C(n,\alpha)\lambda^{-\frac{n}{n-1}}\lambda^{\frac{1}{n-1}}\\
&\leq &  C(n,\alpha)\lambda^{-\frac{n}{n-1}}.
\end{eqnarray*}
Note that here we used Lemma \ref{integration} and the fact that $\lambda^{\frac{1}{n-1}}\leq C(n,\alpha)$. Note also that the constant $C(n,\alpha)$ changes along the argument.

If $C(n,\alpha)\lambda^{-\frac{1}{n-1}}<1$ then we can define $r_0=1-C(n,\alpha)\lambda^{-\frac{1}{n-1}}$, then we have
\begin{eqnarray*}
|\PKA f>\lambda|&\leq & \frac{1}{\lambda}\int_{\B^n\backslash\B^{n}_{r_0}}\PKA fdx d\xi\\
&\leq &\frac{1}{\lambda}\int_{\Sph^{n-1}}f(\xi)\int_{\B^n\backslash\B^{n}_{r_0}}\pka (x,\xi)dx d\xi\\
&\leq &\frac{C(n,\alpha)(1-r_0)}{\lambda}\\
&\leq &  C(n,\alpha)\lambda^{-\frac{n}{n-1}}.
\end{eqnarray*}
This finishes the proof.
\end{proof}

With the help of Proposition \ref{WeakEstimate} we can prove the following:

\begin{corollary}\label{compact}
For any $n\geq 2$, $2-n\leq \alpha<1$, $1\leq p<\infty$, and $1\leq q<\frac{np}{n-1}$ the operator $\PKA: L^p(\Sph^{n-1})\to L^q(\B^n)$ is compact.
\end{corollary}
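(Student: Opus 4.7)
The plan is to approximate $\widetilde{P}_\alpha$ in operator norm by a family of operators with bounded continuous kernels, for which compactness is immediate from Arzel\`a--Ascoli. For $r\in(0,1)$, recall the notation $\B^n_r=\{x\in\B^n:|x|<r\}$ from Definition \ref{SphereBallDefinition}, and define the truncated operator
\begin{equation*}
T_r f(x)=\chi_{\B^n_r}(x)\int_{\Sph^{n-1}}\pka(x,\xi)f(\xi)\,d\xi,\qquad x\in\B^n.
\end{equation*}
On the compact set $\overline{\B^n_r}\times\Sph^{n-1}$ the kernel $\pka$ is bounded and uniformly continuous (since $|x-\xi|\ge 1-r>0$ there). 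Consequently, for $\|f\|_{L^p(\Sph^{n-1})}\le 1$, H\"older's inequality gives $|T_r f(x)|\le C_r$ uniformly in $x$, and the uniform continuity of $\pka$ on $\overline{\B^n_r}\times\Sph^{n-1}$ together with H\"older yields equicontinuity of $\{T_r f\}$ on $\overline{\B^n_r}$. By Arzel\`a--Ascoli this family is precompact in $C(\overline{\B^n_r})$, hence (extending by zero to $\B^n\setminus\B^n_r$) precompact in $L^q(\B^n)$. Thus each $T_r$ is compact from $L^p(\Sph^{n-1})$ to $L^q(\B^n)$.

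The key step is to show $T_r\to\widetilde{P}_\alpha$ in operator norm as $r\to 1$. For $p>1$, by Proposition \ref{WeakEstimate} we have the strong bound $\|\PKA f\|_{L^{np/(n-1)}(\B^n)}\le C(n,\alpha,p)\|f\|_{L^p(\Sph^{n-1})}$. Since $q<np/(n-1)$, H\"older's inequality on $\B^n\setminus\B^n_r$ gives
\begin{equation*}
\left\|\PKA f-T_r f\right\|_{L^q(\B^n)}\le \left|\B^n\setminus\B^n_r\right|^{\frac{1}{q}-\frac{n-1}{np}}\|\PKA f\|_{L^{\frac{np}{n-1}}(\B^n)}\le C\left|\B^n\setminus\B^n_r\right|^{\theta}\|f\|_{L^p(\Sph^{n-1})},
\end{equation*}
with $\theta=\tfrac{1}{q}-\tfrac{n-1}{np}>0$. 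The right side vanishes uniformly in $\|f\|_{L^p}\le 1$ as $r\to 1$, so $T_r\to\PKA$ in the operator norm.

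For the borderline $p=1$, where only the weak-type estimate $|\PKA f|_{L^{n/(n-1)}_W(\B^n)}\le C\|f\|_{L^1(\Sph^{n-1})}$ is available, the same truncation still works: using the layer-cake formula with $|\{|\PKA f|>t\}\cap E|\le\min(|E|,\,C^{n/(n-1)}t^{-n/(n-1)}\|f\|_{L^1}^{n/(n-1)})$ and splitting the $t$-integral at $t_0=(C\|f\|_{L^1})(|E|)^{-(n-1)/n}$ yields
\begin{equation*}
\left(\int_E|\PKA f|^q\,dx\right)^{1/q}\le C_q |E|^{\frac{1}{q}-\frac{n-1}{n}}\|f\|_{L^1(\Sph^{n-1})},
\end{equation*}
for any $q<n/(n-1)$ and measurable $E$. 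Applying this with $E=\B^n\setminus\B^n_r$ gives again operator-norm convergence $T_r\to\PKA$. Since the space of compact operators between Banach spaces is closed under the operator norm, compactness of $\widetilde{P}_\alpha:L^p(\Sph^{n-1})\to L^q(\B^n)$ follows.

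The main obstacle is the convergence step for $p=1$, where the lack of a strong endpoint bound forces the layer-cake argument above; the rest is essentially Arzel\`a--Ascoli for the smooth truncations together with the estimates already established in Proposition \ref{WeakEstimate}.
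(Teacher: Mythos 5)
Your proof is correct, and it takes a somewhat different route from the paper. The paper argues sequentially: given $\|f_i\|_{L^p}\le 1$, it uses the pointwise bound $|\PKA f_i(x)|\le C(n,\alpha)(1-|x|)^{1-n}$ together with interior (Schauder) estimates to extract a subsequence converging in $C^2_{loc}(\B^n)$, and then controls the tail $\|\PKA f_i-\PKA f_j\|_{L^q(\B^n\setminus\B^n_r)}$ by exactly the same H\"older-plus-Proposition~\ref{WeakEstimate} estimate you use, letting $r\to1$. You instead truncate the operator itself, get compactness of each $T_r$ from Arzel\`a--Ascoli applied to the bounded, uniformly continuous kernel on $\overline{\B^n_r}\times\Sph^{n-1}$, and conclude by operator-norm approximation, using that the compact operators are closed in operator norm. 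The tail estimate is shared, but your interior step avoids elliptic regularity entirely and is more elementary and self-contained. A genuine advantage of your write-up is the endpoint $p=1$: the paper's proof begins ``first assume $1<p<\infty$'' and never returns to $p=1$, where only the weak-type bound of Proposition~\ref{WeakEstimate} is available (Marcinkiewicz gives the strong bound only for $p>1$), whereas your layer-cake argument $\bigl(\int_E|\PKA f|^q\,dx\bigr)^{1/q}\le C_q|E|^{\frac1q-\frac{n-1}{n}}\|f\|_{L^1(\Sph^{n-1})}$ for $q<\frac{n}{n-1}$ handles this case cleanly and is a worthwhile complement to the published argument.
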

\begin{proof}
First assume $1<p<\infty$. Suppose we have a sequence of function $f_i\in L^p(\Sph^{n-1})$ such that $|f_i|_{L^p(\Sph^{n-1})}\leq 1$. Then from (\ref{pkainfty}) we have for all $i$ and all $x\in\B^n$
\begin{equation*}
	|\PKA f_i(x)|\leq \frac{C(n,\alpha)}{(1-|x|)^{n-1}}.
\end{equation*}
By Schauder estimate, there exists $u\in C^2(\B^n)$ such that $\PKA f_i\to u$ in $C^2_{loc}(\B^n)$. As a result we have: for $r\in (0,1)$
\begin{eqnarray*}
	|\PKA f_i-\PKA f_j|_{L^q(\B^n)} &\leq & |\PKA f_i-\PKA f_j|_{L^q(\B^n_r)}+|\PKA f_i-\PKA f_j|_{L^q(\B^n\backslash\B^n_r)}\\
	&\leq & |\PKA f_i-\PKA f_j|_{L^q(\B^n_r)}
	\\&&+|\PKA f_i-\PKA f_j|_{L^{\frac{np}{n-1}}(\B^n\backslash\B^n_r)}|\B^n\backslash\B^n_r|^{\frac{1}{q}-\frac{n-1}{np}}\\
	&\leq &|\PKA f_i-\PKA f_j|_{L^q(\B^n_r)}+C(n,\alpha,p)|\B^n\backslash\B^n_r|^{\frac{1}{q}-\frac{n-1}{np}},
\end{eqnarray*}
where we used Holder inequality and Proposition \ref{WeakEstimate}. Hence
\begin{equation*}
	\limsup_{i,j\to\infty}|\PKA f_i-\PKA f_j|_{L^q(\B^n)}\leq C(n,\alpha,p)|\B^n\backslash\B^n_r|^{\frac{1}{q}-\frac{n-1}{np}}.
\end{equation*}
Letting $r\to 1$, we see that  $\PKA f_i$ is a Cauchy sequence in $L^q(\B^n)$, hence $\PKA: L^p (\Sph^{n-1})\to L^q (\B^n)$ is compact.
\end{proof}
We can see that $\PKA :L^{\frac{2(n-1)}{n-2+\alpha}}(\Sph^{n-1})\to L^{\frac{2n}{n-2+\alpha}}(\B^n)$	 is not compact in the following example, which is inspired by the example given in \cite[Chapter 1]{Chang}.
\begin{remark}
We consider a sequence of conformal transformation $\Phi_a:\overline{\B^n}\to \overline{\B^n}$ defined by
\[\Phi_a(x)=\frac{a|x-a|^2+(1-|a|^2)(a-x)}{|a|^2|a^\ast-x|^2}.\]
Here $a\in \B^n$ such that $a=(0,...,0,1-\epsilon)$ for some $\epsilon\in (0,1)$, and $a^\ast=\frac{a}{|a|^2}$. From (\ref{PhiPrime}) we see that for any $x\in \B^n$
\[|\Phi_a'(x)|=\frac{1-|\Phi_a(x)|^2}{1-|x|^2}=\frac{1-|a|^2}{|a|^2|a^\ast-x|^2},\]
take limit $x\to \xi$ for some $\xi\in \Sph^{n-1}$ we get
\[|\Phi_a'(\xi)|=\frac{1-|a|^2}{|a|^2|a^\ast-\xi|^2}=\frac{\epsilon(2+\epsilon)}{(1-\epsilon)^2|a^\ast-\xi|^2}.\]
If $\xi\neq (0,...,0,1)$, then it is easy to see that $\lim_{\epsilon\to 0}|\Phi_a'(\xi)|=0$. If $\xi=(0,...,0,1)=\frac{a}{|a|}$, then we have
\[\left|\Phi_a'\left(\frac{a}{|a|}\right)\right|=\frac{\epsilon(2+\epsilon)}{\epsilon^2},\]
hence $\lim_{\epsilon\to 0}\left|\Phi_a'\left(\frac{a}{|a|}\right)\right|=\infty$.

Now consider the function $\widetilde{f} :\Sph^{n-1}\to\R$ such that $\widetilde{f}=1$. Define $\widetilde{f}_{\Phi_a}$ as in (\ref{ConformalTransformationPhiSphere}), then it is easy to see that
\[\|\widetilde{f}_{\Phi_a}\|_{L^{\frac{2(n-1)}{n-2+\alpha}}(\Sph^{n-1})}=\|\widetilde{f}\|_{L^{\frac{2(n-1)}{n-2+\alpha}}(\Sph^{n-1})},\]
and that $\widetilde{f}_{\Phi_a}$ weakly converges to the zero function in $L^{\frac{2(n-1)}{n-2+\alpha}}(\Sph^{n-1})$. For any given $x\in \B^n$, think of $\pka(x,\xi)$ as a function of $\xi$, using the $L^\infty$ bound (\ref{pkaLInfinityBound}) we can show that
\[\lim_{\epsilon\to 0}\PKA \widetilde{f}_{\Phi_a}(x)=0.\]
Now we can show that $\PKA \widetilde{f}_{\Phi_a}$ weakly converges to the zero function in $L^{\frac{2n}{n-2+\alpha}}(\B^n)$. For any function in the dual space $h\in L^{\frac{2n}{n+2-\alpha}}(\B^n)$ and any $r\in (0,1)$, we have
\begin{equation*}
\begin{split}
&\int_{\B^n} \PKA\widetilde{f}_{\Phi_a}(x)h(x)dx \\
&=\int_{\B^n\backslash\B^n_r}	 \PKA\widetilde{f}_{\Phi_a}(x)h(x)dx+\int_{\B^n_r}\PKA\widetilde{f}_{\Phi_a}(x)h(x)dx \\
& \leq \left\|\PKA \widetilde{f}_{\Phi_a}\right\|_{L^{\frac{2n}{n-2+\alpha}}(\B^n)} \|h\|_{L^{\frac{2n}{n+2-\alpha}}(\B^n\backslash \B^n_r)}+\int_{\B^n_r}\PKA\widetilde{f}_{\Phi_a}(x)h(x)dx,
\end{split}
\end{equation*}
where the second step follows from H\"older's inequality. Note that from (\ref{ConformalTransformationPhiBall}) we can see that
\[\left\|\PKA \widetilde{f}_{\Phi_a}\right\|_{L^{\frac{2n}{n-2+\alpha}}(\B^n)}=\left\|\PKA \widetilde{f}\right\|_{L^{\frac{2n}{n-2+\alpha}}(\B^n)}.\]
By dominated convergence theorem we have
\[\lim_{r\to 1}\|h\|_{L^{\frac{2n}{n+2-\alpha}}(\B^n\backslash \B^n_r)}=0.\]
Combine the $L^\infty$ bound (\ref{pkainfty}) with dominated convergence theorem we see that for any $r\in (0,1)$
\[\lim_{\epsilon\to 0}\int_{\B^n_r}\PKA\widetilde{f}_{\Phi_a}(x)h(x)dx=0.\]
Now for any $\delta>0$ small, we can choose $r\in (0,1)$ such that
\[\|h\|_{L^{\frac{2n}{n+2-\alpha}}(\B^n\backslash \B^n_r)}<\delta.\]
For this given $r$, we can choose $\epsilon>0$ small such that
\[\int_{\B^n_r}\PKA\widetilde{f}_{\Phi_a}(x)h(x)dx<\delta.\]
Combine these results, we see that
\[\lim_{\epsilon \to 0} \int_{\B^n} \PKA\widetilde{f}_{\Phi_a}(x)h(x)dx =0\]
for any $h\in L^{\frac{2n}{n+2-\alpha}}(\B^n)$.

But since
\[\left\|\PKA \widetilde{f}_{\Phi_a}\right\|_{L^{\frac{2n}{n-2+\alpha}}(\B^n)}=\left\|\PKA \widetilde{f}\right\|_{L^{\frac{2n}{n-2+\alpha}}(\B^n)}\neq 0,\]
we conclude that $\PKA:L^{\frac{2(n-1)}{n-2+\alpha}}(\Sph^{n-1})\to L^{\frac{2n}{n-2+\alpha}}(\B^n)$ is not compact.
\end{remark}

\subsection{Extremal Function}
Using Corollary \ref{compact}, we can identify the extremal function in the same way as in \cite{HWY2}. In order to do so we need the help of the Kazdan-Warner type condition which is proved in the following lemma
\begin{lemma}\label{Kazdan-Warner}
Suppose $\alpha\in (2-n,1)$, and $K,\ f\in C^1(\Sph^{n-1})$ such that for any $\xi\in\Sph^{n-1}$
\begin{equation*}
	K(\xi)f(\xi)^{\frac{n-\alpha}{n-2+\alpha}}=\int_{\B^n}\pka(x,\xi)\left(\PKA f(x)\right)^{\frac{n+2-\alpha}{n-2+\alpha}}dx.
\end{equation*}
Let $X$ be a conformal vector field in $\overline{\B}^n$, then we have
\begin{equation*}
	\int_{\Sph^{n-1}} XK\cdot f^{\frac{2(n-1)}{n-2+\alpha}}d\xi=0. 
\end{equation*}

\end{lemma}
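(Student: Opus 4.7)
The plan is to exploit the fact that the hypothesis is precisely the Euler--Lagrange equation for the ratio of two homogeneous functionals and that the numerator functional is conformally invariant. Set
\[
E(g):=\int_{\B^n}\left(\PKA g\right)^{\frac{2n}{n-2+\alpha}}dx,\qquad F(g):=\int_{\Sph^{n-1}}K\,g^{\frac{2(n-1)}{n-2+\alpha}}\,d\xi,
\]
let $\Phi_t\in SO(n,1)$ be the one-parameter family of M\"obius transformations of $\overline{\B^n}$ generated by $X$, define $f_t:=f_{\Phi_t}$ via (\ref{ConformalTransformationPhiSphere}), and write $\dot f:=\tfrac{d}{dt}\big|_{t=0}f_t$.

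First I will use (\ref{ConformalTransformationPhiBall}) together with the change of variables $y=\Phi_t(x)$ on the ball (with Jacobian $|\Phi_t'(x)|^n$) to show $E(f_t)\equiv E(f)$. On the other hand, differentiating $E(f_t)$ directly under the integral at $t=0$, expanding $\PKA(\dot f)$ through the representation (\ref{ExtensionFormulaAlpha}), applying Fubini, and substituting the hypothesis into the inner integral will yield
\[
0=\frac{d}{dt}\bigg|_{t=0}E(f_t)=\frac{2n}{n-2+\alpha}\int_{\Sph^{n-1}}K\,f^{\frac{n-\alpha}{n-2+\alpha}}\,\dot f\,d\xi.
\]

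Next I will compute $F(f_t)$ in two ways. Direct differentiation gives
\[
\frac{d}{dt}\bigg|_{t=0}F(f_t)=\frac{2(n-1)}{n-2+\alpha}\int_{\Sph^{n-1}}K\,f^{\frac{n-\alpha}{n-2+\alpha}}\,\dot f\,d\xi,
\]
which vanishes by the previous step. Alternatively, substituting $f_t(\xi)=f(\Phi_t(\xi))|\Phi_t'(\xi)|^{(n-2+\alpha)/2}$ and then changing variables $\eta=\Phi_t(\xi)$ on the sphere (Jacobian $|\Phi_t'(\xi)|^{n-1}$) recasts $F(f_t)$ as
\[
F(f_t)=\int_{\Sph^{n-1}}K\bigl(\Phi_t^{-1}(\eta)\bigr)\,f(\eta)^{\frac{2(n-1)}{n-2+\alpha}}\,d\eta,
\]
whose $t$-derivative at $0$ equals $-\int_{\Sph^{n-1}}XK\cdot f^{2(n-1)/(n-2+\alpha)}\,d\eta$. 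Equating the two expressions for $F'(0)$ gives the claimed identity.

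The heart of the argument is the algebraic relation $F'(0)=\tfrac{n-1}{n}E'(0)$, which falls out of the Euler--Lagrange hypothesis once the exponents are matched. I expect no serious obstacle beyond routine verification of differentiation under the integral and of the M\"obius change-of-variables formulas, which is secured by the $C^1$ regularity of $K,\,f$, the smoothness of the family $\Phi_t$, and the pointwise bound (\ref{pkaLInfinityBound}) on $\pka$.
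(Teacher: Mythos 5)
Your proposal is correct and follows essentially the same route as the paper: both use the one-parameter conformal family $f_{\Phi_t}$, the conformal invariance of $\left\|\PKA f_{\Phi_t}\right\|_{L^{\frac{2n}{n-2+\alpha}}(\B^n)}$ via (\ref{ConformalTransformationPhiBall}), and the change of variables that moves the flow onto $K$, yielding $\int_{\Sph^{n-1}}XK\cdot f^{\frac{2(n-1)}{n-2+\alpha}}d\xi=0$. The only difference is organizational: you work with the two functionals $E$ and $F$ separately and substitute the integral equation explicitly after Fubini, whereas the paper packages the same computation as criticality of the ratio functional $I(K,f)$ along the family $\Phi_t$; your version merely makes that criticality step more explicit.
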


\begin{proof}
Consider the functional
\begin{equation*}
I(K,f)=\frac{|\PKA f|_{L^{\frac{2n}{n-2+\alpha}}(\B^n)}}{\left(\int_{\Sph^{n-1}}K\cdot f^{\frac{2(n-1)}{n-2+\alpha}}d\xi\right)^{\frac{n-2+\alpha}{2(n-1)}}}
\end{equation*}
with Euler-Lagrange equation
	\begin{equation*}
	K(\xi)f(\xi)^{\frac{n-\alpha}{n-2+\alpha}}=\int_{\B^n}\pka(x,\xi)\left(\PKA f(x)\right)^{\frac{n+2-\alpha}{n-2+\alpha}}dx.
\end{equation*}
Consider $\Phi_t$ as the 1-parameter family of conformal group generated by $X$. Define 
\[f_{\Phi_t}=f\circ\Phi_t(\xi)|\Phi'_t(\xi)|^{\frac{n-2+\alpha}{2}}.\]
Since $f$ is a critical function for the functional $I(K,f)$, we have
\begin{equation*}
\frac{d}{dt}\biggr|_{t=0} I(K,f_{\Phi_t})=0.
\end{equation*}
Where according to the calculation of conformal invariance in the beginning, we have
\begin{eqnarray*}
	I(K,f_{\Phi_t})&=&\frac{|\PKA f_{\Phi_t}|_{L^{\frac{2n}{n-2+\alpha}}(\B^n)}}{\left(\int_{\Sph^{n-1}}K\cdot f_{\Phi_t}^{\frac{2(n-1)}{n-2+\alpha}}d\xi\right)^{\frac{n-2+\alpha}{2(n-1)}}}\\
	&=&\frac{|\PKA f|_{L^{\frac{2n}{n-2+\alpha}}(\B^n)}}{\left(\int_{\Sph^{n-1}}K\circ\Phi_{-t}\circ\Phi_t(\xi)\cdot \left(f\circ\Phi_t(\xi)\right)^{\frac{2(n-1)}{n-2+\alpha}}|\Phi_t '(\xi)|^{n-1}d\xi\right)^{\frac{n-2+\alpha}{2(n-1)}}}\\
	&=& \frac{|\PKA f|_{L^{\frac{2n}{n-2+\alpha}}(\B^n)}}{\left(\int_{\Sph^{n-1}}K\circ\Phi_{-t}(\xi)\cdot f^{\frac{2(n-1)}{n-2+\alpha}}d\xi\right)^{\frac{n-2+\alpha}{2(n-1)}}}\\
	&=& I(K\circ \Phi_{-t},f).
\end{eqnarray*}
As a result, we have
\begin{equation*}
	\frac{d}{dt}\biggr|_{t=0} I(K,f_{\Phi_t})=\frac{d}{dt}\biggr|_{t=0} I(K\circ \Phi_{-t},f)=0.
\end{equation*}
From this we can conclude that
\begin{equation*}
	\int_{\Sph^{n-1}}XK\cdot f^{\frac{2(n-1)}{n-2+\alpha}}d\xi=0.
\end{equation*}
\end{proof}

Now we can find the extremal function and the sharp constant using subcritical approximation as in \cite{HWY2}.

\begin{theorem}\label{maininequality}
	Assume $n\geq 3$ and $\alpha\in (2-n,1)$. For every $f\in L^{\frac{2(n-1)}{n-2+\alpha}}(\Sph^{n-1})$, we have
	\begin{equation*}
		\left\|\PKA f\right\|_{L^{\frac{2n}{n-2+\alpha}}(\B^n)}\leq S_{n,\alpha}\left\|f\right\|_{L^{\frac{2(n-1)}{n-2+\alpha}}(\Sph^{n-1})}.
	\end{equation*}
	Where $S_{n,\alpha}$ is a constant that only depends on $n$ and $\alpha$. Up to conformal transformation any constant is an optimizer.
\end{theorem}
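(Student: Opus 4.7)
The plan is to use a subcritical approximation argument along the lines of Hang--Wang--Yan \cite{HWY2}. Write $p = \frac{2(n-1)}{n-2+\alpha}$ and $q_c = \frac{2n}{n-2+\alpha} = \frac{np}{n-1}$, and for each $\beta \in (p, q_c)$ consider
\[
S_\beta = \sup\left\{\|\widetilde{P}_\alpha f\|_{L^\beta(\mathbb{B}^n)} : f \geq 0,\ \|f\|_{L^p(\mathbb{S}^{n-1})} = 1\right\}.
\]
Since $\beta < q_c$, Corollary \ref{compact} gives that $\widetilde{P}_\alpha : L^p(\mathbb{S}^{n-1}) \to L^\beta(\mathbb{B}^n)$ is compact, so $S_\beta$ is attained by some nonnegative $f_\beta$ satisfying an Euler--Lagrange equation
\[
c_\beta\, f_\beta(\xi)^{p-1} = \int_{\mathbb{B}^n} \widetilde{p}_\alpha(x,\xi)\, (\widetilde{P}_\alpha f_\beta(x))^{\beta - 1}\, dx
\]
for a Lagrange multiplier $c_\beta$ which should converge to a constant multiple of $S_{n,\alpha}^{q_c}$ as $\beta \to q_c$.

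To pass to the limit $\beta \nearrow q_c$ I would exploit the conformal invariance of the critical functional (\ref{ConformalTransformationPhiSphere})--(\ref{ConformalTransformationPhiBall}): after replacing $f_\beta$ by $(f_\beta)_{\Phi_\beta}$ for a suitable $\Phi_\beta \in SO(n,1)$ we may assume the probability measure $f_\beta^p\, d\sigma$ on $\mathbb{S}^{n-1}$ has its conformal center of mass at the origin. A bootstrap starting from the weak bound of Proposition \ref{WeakEstimate} and iterating the Euler--Lagrange equation, together with the pointwise control (\ref{pkaLInfinityBound}) of $\widetilde{p}_\alpha$ away from $\mathbb{S}^{n-1}$, should then yield uniform $L^\infty$ bounds on the normalized $(f_\beta)_{\Phi_\beta}$. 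I would then extract a subsequential weak limit $f^\ast$ in $L^p(\mathbb{S}^{n-1})$; the $L^\infty$ bound combined with Corollary \ref{compact} at any fixed subcritical exponent upgrades weak convergence of $\widetilde{P}_\alpha f_\beta$ to strong convergence in $L^{q_c}$, and continuity of $\beta \mapsto S_\beta$ then shows that $f^\ast$ attains the critical supremum, yielding the sharp constant $S_{n,\alpha}$.

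For the optimizer statement, I would verify directly that any constant $f \equiv c$ satisfies the critical Euler--Lagrange equation with a constant Lagrange multiplier: the integrand on the right-hand side becomes a radial function on $\mathbb{B}^n$, so by rotational symmetry the resulting integral is independent of $\xi$. Conformal covariance (\ref{ConformalTransformationPhiBall}) then promotes every constant to the whole family $\xi \mapsto c\,|\Phi'(\xi)|^{(n-2+\alpha)/2}$ of optimizers indexed by $\Phi \in SO(n,1)$.

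The main obstacle will be the passage to the critical limit. The critical problem loses compactness through concentration at a single point on $\mathbb{S}^{n-1}$ (as illustrated in the preceding remark by the sequence $\widetilde{f}_{\Phi_a}$), so the conformal normalization $\Phi_\beta$ must be chosen with care, simultaneously ruling out concentration and vanishing; in particular, the bootstrap from the weak-type estimate of Proposition \ref{WeakEstimate} to a uniform $L^\infty$ bound relies essentially on the subcritical gap $q_c - \beta > 0$. Uniqueness of the optimizer up to conformal transformation is not pursued here, as it was established by Chen in \cite{Ch}.
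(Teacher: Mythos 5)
Your plan has two genuine gaps, and they sit exactly at the points where the paper's proof does real work. First, the optimizer claim: verifying that a constant satisfies the critical Euler--Lagrange equation only shows that constants are critical points of the functional, not that they attain the supremum $S_{n,\alpha}$; critical points of a non-concave functional need not be maximizers, and nothing in your symmetry argument excludes the supremum being attained (or only approached) elsewhere. The paper closes this at the subcritical level with a different perturbation from yours: it keeps the \emph{ball} exponent critical and raises the \emph{sphere} exponent to $p>\frac{2(n-1)}{n-2+\alpha}$, so that $\PKA:L^p(\Sph^{n-1})\to L^{\frac{2n}{n-2+\alpha}}(\B^n)$ is compact by Corollary \ref{compact}; it then takes a maximizer $f_p$, replaces it by its symmetric decreasing rearrangement, gets $C^1$ regularity from Proposition \ref{AppendixPropositionRegularity1}, and applies the Kazdan--Warner-type identity of Lemma \ref{Kazdan-Warner} --- available precisely because the ball exponent is still the critical $\frac{n+2-\alpha}{n-2+\alpha}$ --- to force the monotone maximizer to be constant. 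The critical inequality with constants as optimizers then follows by letting $p$ decrease to $\frac{2(n-1)}{n-2+\alpha}$, a soft limit since $\|f\|_{L^p(\Sph^{n-1})}\to\|f\|_{L^{\frac{2(n-1)}{n-2+\alpha}}(\Sph^{n-1})}$ for bounded $f$.

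Second, your limit $\beta\nearrow q_c$ is not merely "the main obstacle"; as written it does not go through. Your subcritical problems fix the sphere exponent at its critical value and lower the ball exponent, so they are \emph{not} conformally invariant: after recentering, $(f_\beta)_{\Phi_\beta}$ is neither a maximizer of $S_\beta$ nor a solution of the $\beta$-Euler--Lagrange equation, so the bootstrap you propose has no equation to run on for the normalized functions. Moreover, concentration cannot be ruled out by a strict energy comparison here, because the concentrating conformal family $\widetilde{f}_{\Phi_a}$ from the paper's non-compactness remark asymptotically realizes the critical constant; a maximizing family as $\beta\to q_c$ could converge weakly to zero, in which case "continuity of $\beta\mapsto S_\beta$" tells you nothing about a limiting maximizer, and even granting existence of some extremal $f^\ast$ you would still need a classification (e.g.\ Chen's, or the paper's rearrangement plus Kazdan--Warner argument) to identify the sharp constant and to conclude that constants are optimizers.
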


\begin{proof}
	For $p>\frac{2(n-1)}{n-2+\alpha}$, by corollary \ref{compact}, the operator
	\begin{equation*}
		\PKA: L^p(\Sph^{n-1})\to L^{\frac{2n}{n-2+\alpha}}(\B^n)
	\end{equation*}
	is compact. 
	Consider the variational problem
	\begin{equation*}
		S_{n,\alpha}=\sup\left\{\left\|\PKA f\right\|_{L^{\frac{2n}{n-2+\alpha}}(\B^n)}: f\in L^p(\Sph^{n-1}) \text{ such that }  \left\|f\right\|_{L^{\frac{2(n-1)}{n-2+\alpha}}(\Sph^{n-1})}=1\right\}.
	\end{equation*}
	We show that the supremum is achieved as follows:

Consider a maximizing sequence $f_i\in L^p(\Sph^{n-1})$, with $f_i\geq 0$,
\[\|f_i\|_{L^p(\Sph^{n-1})}=1\]
and
\[\lim_{i\to\infty}\|f_i\|_{L^p(\Sph^{n-1})}=S_{n,\alpha}.\] 
By uniform boundedness of $L^p$ norm, we know that there exists a subsequence $f_i$ weakly converges to some function $f_p\in L^p(\Sph^{n-1})$. By compactness of $\PKA$ we also know that there exists a subsequence $f_i$ such that $\PKA f_i$ converges to $v$ in $ L^{\frac{n}{n-2+\alpha}}(\B^n)$ norm. By the weak $L^p$ convergence of $f_i$ to $f_p$, we also have $\PKA f_i$ converges to $\PKA f_p$ pointwise. As a result we have $\PKA f_p=v$ and the supremum $S_{n,\alpha}$ is achieved at $f_p$.

Replacing $f_p$ by $f_p^*$ if necessary, we may assume that $f_p$ is radial symmetric and decreasing. Meaning that for $\xi=(\xi_1,...,\xi_n)\in \R^n$ such that $|\xi|=1$, the function $f(\xi)$ only depends on $\xi_n$ and that $\frac{\partial f}{\partial \xi_n}(\xi)\leq 0$.

After rescaling, we may assume $f_p$ satisfy the Euler-Lagrange equation
\begin{equation*}
	\int_{\B^n} \pka(x,\xi) (\PKA f_p)(x)^{\frac{n+2-\alpha}{n-2+\alpha}}dx=f_p(\xi)^{p-1}=f_p(\xi)^{\frac{n-\alpha}{n-2+\alpha}}f_p(\xi)^{p-\frac{2(n-1)}{n-2+\alpha}}
\end{equation*}
Apply Proposition \ref{AppendixPropositionRegularity1} from the appendix, we know that $f_p\in C^1(\Sph^{n-1})$. By Lemma \ref{Kazdan-Warner}, we have
\begin{equation*}
\int_{\Sph^{n-1}}\langle\nabla f_p (\xi)^{p-\frac{2(n-1)}{n-2+\alpha}},\nabla\xi_n\rangle f_p(\xi)^{\frac{2(n-1)}{n-2}}d\xi=0.
\end{equation*}
Consider the function $g_p(r)=f_p(0,...,0,\sin r,\cos r)$ for $r\in [0,\pi]$. The equality becomes
\begin{equation*}
\int_0^\pi g_p '(r)g_p(r)^{p-1}\sin ^{n-2} (r) dr=0.	
\end{equation*}
Note that $g_p'=-\partial_n f\sin (r)\geq 0$. Hence we know that $f_p$ is actually a constant.
\end{proof}

\section{Limit Case Inequality}\label{LimitCaseInequality}
In this chapter we want to take limit $\alpha\to 2-n$ and study the limit case inequality. In the process of taking the limit, a very special function $\widetilde{I}_n$ shows up. The property of the function $\widetilde{I}_n$ is crucial in the study of the limit case inequality. We prove several important properties of the function $\widetilde{I}_n$ in Section \ref{SectionTheFunctionIn}.

Through out this Section \ref{LimitCaseInequality}, Section \ref{SectionTheFunctionIn} and Section \ref{UniquenessThroughMovingSpheres}, we still use notations $\widetilde{f}$ and $f$ to denote functions on $\Sph^{n-1}$ and $\R^{n-1}$ respectively, but the relation between them is different from the relation discussed in previous sections. We will specify their relation in (\ref{flimitdef}) below.

We consider the limit case $\alpha\to 2-n$ in the same way as \cite{Ch}, our statement and proof are slightly different.

For any $\widetilde{F}\in L^\infty (\Sph^{n-1})$, define $\widetilde{f}=1+\frac{n-2+\alpha}{2}\widetilde{F}$. We have $\widetilde{f}\in L^{\frac{2(n-1)}{n-2+\alpha}}(\Sph^{n-1})$ for all $\alpha\in (2-n,1)$. We prove the following theorem for $\widetilde{F}$.

\begin{theorem}\label{TheoremLimitCaseInequalityExplicitIn}
For dimension $n\geq 2$, and any function $\widetilde{F}\in L^\infty(\Sph^{n-1})$ we have
\begin{equation}\label{InequalityLimitCase}
\|e^{\widetilde{I}_n+\widetilde{P}_{2-n}\widetilde{F}}\|_{L^n(\B^n)}\leq S_n \|e^{\widetilde{F}}\|_{L^{n-1}(\Sph^{n-1})}.	
\end{equation}

Where $\widetilde{I}_n(x)=2\frac{d\widetilde{P}_\alpha 1}{d\alpha}\bigr|_{\alpha=2-n}$. When $n$ is even we have
\[\widetilde{I}_n(x)=\sum_{k=1}^{n/2-1} \frac{1}{2k}\cdot \frac{\Gamma\left(\frac{n-2}{2}\right)\Gamma\left(n-k-1\right)}{\Gamma(n-2)\Gamma\left(\frac{n}{2}-k\right)} (1-|x|^2)^k.\]
The sharp constant
$S_n=\frac{\|e^{\widetilde{I}_n}\|_{L^n(\B^n)}}{|\Sph^{n-1}|^{\frac{1}{n-1}}}$
\end{theorem}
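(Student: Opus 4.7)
The plan is a subcritical approximation, analogous to how Moser--Trudinger-type inequalities are extracted from Sobolev inequalities. Writing $\epsilon := n-2+\alpha$ and applying Theorem \ref{MainInequalityTian} to $\widetilde{f} := 1 + \tfrac{\epsilon}{2}\widetilde{F}$ (admissible for small $\epsilon$ since $\widetilde{F}\in L^\infty$ keeps $\widetilde{f}$ uniformly positive), then raising both sides to the power $2n/\epsilon$ and using $q/p = n/(n-1)$ gives
\begin{equation*}
\int_{\B^n}\bigl(\PKA \widetilde{f}\bigr)^{2n/\epsilon}\, dx \;\leq\; S_{n,\alpha}^{2n/\epsilon}\left(\int_{\Sph^{n-1}}\widetilde{f}^{\,2(n-1)/\epsilon}\, d\xi\right)^{n/(n-1)}.
\end{equation*}
The goal is to take $\epsilon\to 0$ on both sides and recognize the limit as (\ref{LimitCaseMainInequality}).

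For the passage to the limit I would Taylor expand around $\alpha=2-n$. Since $\PK_{2-n}1 = 1$ by Remark \ref{IntegrationLemmaLimitCaseRemark}, the very definition of $\widetilde{I}_n$ gives $\PKA 1 = 1 + \tfrac{\epsilon}{2}\widetilde{I}_n + o(\epsilon)$, and linearity together with $\PKA\widetilde{F}\to \PK_{2-n}\widetilde{F}$ yields $\PKA\widetilde{f} = 1 + \tfrac{\epsilon}{2}(\widetilde{I}_n + \PK_{2-n}\widetilde{F}) + o(\epsilon)$ pointwise on $\B^n$. The elementary identity $(1 + \tfrac{\epsilon}{2}g + o(\epsilon))^{2n/\epsilon}\to e^{ng}$ then produces pointwise convergence of the two integrands to $e^{n(\widetilde{I}_n + \PK_{2-n}\widetilde{F})}$ and $e^{(n-1)\widetilde{F}}$ respectively. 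To invoke dominated convergence I would rewrite $(\PKA\widetilde{f})^{2n/\epsilon} = \exp\bigl(\tfrac{2n}{\epsilon}\log(1 + \tfrac{\epsilon}{2}g_\alpha)\bigr)$ with $g_\alpha := \tfrac{2}{\epsilon}(\PKA\widetilde{f}-1)$ uniformly bounded on $\B^n$ --- this uses the $L^\infty$ control of $\PKA\widetilde{F}$ coming from Lemma \ref{integration} together with boundedness of $\widetilde{I}_n$ on $\overline{\B^n}$ (evident from the explicit formula when $n$ is even, and otherwise extracted from the recursion below); the Taylor bound $\tfrac{2n}{\epsilon}\log(1 + \tfrac{\epsilon}{2}g_\alpha)\le C$ uniformly in $\epsilon$ then supplies the integrable majorant $e^C$.

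The sharp constant is recovered automatically from the same procedure, using that Theorem \ref{MainInequalityTian} exhibits constants as extremizers: equality on $\widetilde{f}\equiv 1$ gives $S_{n,\alpha}^{2n/\epsilon} = |\Sph^{n-1}|^{-n/(n-1)}\int_{\B^n}(\PKA 1)^{2n/\epsilon}\, dx$, and the Taylor argument above converts this into $|\Sph^{n-1}|^{-n/(n-1)}\int_{\B^n}e^{n\widetilde{I}_n}\, dx = S_n^n$. Taking the $n$-th root of the limiting inequality delivers (\ref{LimitCaseMainInequality}) with precisely the claimed constant $S_n$, which is sharp since equality holds in the limit for constant $\widetilde{F}$.

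For the closed-form evaluation of $\widetilde{I}_n$ in even dimensions I would avoid differentiating the $\phi$-integral representation of $h_\alpha(|x|)=\PKA 1(x)$ directly, since the resulting beta-integral is unwieldy. Instead, since $\widetilde{I}_n$ is radial, I would write $\widetilde{I}_n(x) = J_n(1-|x|^2)$ and derive a dimensional recursion relating $J_{n+2}$ to $J_n$ (the anticipated Lemma \ref{InInductionLemma}) by reducing the $(n+2)$-dimensional Poisson-kernel integral to the $n$-dimensional one via a single integration by parts in the angular variable, then differentiating in $\alpha$ at the boundary value $\alpha = 2-n$. Iterating the recursion from the base case and gathering the $\Gamma$-factors produces the stated sum in powers of $(1-|x|^2)$. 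This $\Gamma$-bookkeeping, rather than the inequality itself, is the principal technical step; the inequality is essentially formal once the uniform Taylor bounds are in place.
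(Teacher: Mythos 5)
Your proposal is correct and is essentially the paper's own argument: subcritical approximation via Theorem \ref{MainInequalityTian} with $\widetilde{f}=1+\tfrac{\epsilon}{2}\widetilde{F}$, passage to the limit by dominated convergence using the uniform kernel bounds of Lemma \ref{integration} and Remark \ref{lowerbound}, recovery of the sharp constant from the fact that constants are optimizers, and the even-dimensional formula for $\widetilde{I}_n$ from the induction relation of Lemma \ref{InInductionLemma}. The only (cosmetic) difference is that you majorize $\exp\bigl(\tfrac{2n}{\epsilon}\log(1+\tfrac{\epsilon}{2}g_\alpha)\bigr)$ directly instead of factoring out $(\PKA 1)^{2n/\epsilon}$ as the paper does; just note that the uniform upper bound on $\tfrac{2}{\epsilon}(\PKA 1-1)$ should be drawn from the estimate $\PKA 1\le\left(\tfrac{2}{1+r}\right)^{n-2+\alpha}$ in Lemma \ref{integration} itself, not from boundedness of the limit function $\widetilde{I}_n$.
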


\begin{proof}
For any $\widetilde{F}\in L^\infty(\Sph^{n-1})$, define $\widetilde{f}=1+\frac{n-2+\alpha}{2}\widetilde{F}$. Define $\epsilon=n-2+\alpha$, from theorem \ref{maininequality}, we have
\[\|\widetilde{P}_\alpha (1+\epsilon \widetilde{F})\|_{L^{\frac{2n}{n-2+\alpha}}(\B^n)}\leq S_{n,\alpha}\|1+\epsilon\widetilde{F}\|_{L^{\frac{2(n-1)}{n-2+\alpha}}(\Sph^{n-1})},\]
which is equivalent to
\[\left(\int_{\B^n}(\PKA 1)^{\frac{n}{\epsilon}}\left(1+\frac{\epsilon \PKA \widetilde{F}}{\PKA 1}\right)^{\frac{n}{\epsilon}}\right)^{\frac{1}{n}}\leq (S_{n,\alpha})^{\frac{1}{\epsilon}}\left(\int_{\Sph^{n-1}}(1+\epsilon \widetilde{F})^{\frac{n-1}{\epsilon}}\right)^{\frac{1}{n-1}}.\]
As in \cite{Ch}, we need to find a lower bound for $\PKA 1$ and an upper bound for $(\PKA 1)^{\frac{n}{\epsilon}}$.

We handle the lower bound for $\PKA 1$ firstly. From remark \ref{lowerbound}, we know that
\[\PKA 1\geq \frac{\Gamma\left(\frac{n-\alpha}{2}\right)\Gamma\left(\frac{n-1}{2}\right)}{\Gamma(n-1)\Gamma\left(\frac{1-\alpha}{2}\right)}.\]
Since $\frac{\Gamma\left(\frac{n-\alpha}{2}\right)\Gamma\left(\frac{n-1}{2}\right)}{\Gamma(n-1)\Gamma\left(\frac{1-\alpha}{2}\right)}$ is a continuous function of $\alpha$ for all $\alpha\in [2-n,1)$, and
\[\frac{\Gamma\left(\frac{n-\alpha}{2}\right)\Gamma\left(\frac{n-1}{2}\right)}{\Gamma(n-1)\Gamma\left(\frac{1-\alpha}{2}\right)}>0\]
for $\alpha<1$. As a result for some $0<\alpha_0<1$, there exists $m>0$ such that
\[\frac{\Gamma\left(\frac{n-\alpha}{2}\right)\Gamma\left(\frac{n-1}{2}\right)}{\Gamma(n-1)\Gamma\left(\frac{1-\alpha}{2}\right)}\geq m>0\]
for all $\alpha$ such that $2-n\leq \alpha <\alpha_0<1$. Here $m$ will be the lower bound for the function $\PKA 1(x)$. Note that it does not depend on $\alpha$ or $x$.

Now we consider the upper bound for $(\PKA 1)^{\frac{n}{\epsilon}}$. From Lemma \ref{integration}, we have
\[\PKA 1 =\frac{\int_{\Sph^{n-1}_r} \pka(x,\xi)dx}{r^{n-1}}\leq \left(\frac{2}{1+r}\right)^{n-2+\alpha},\]
for all $\alpha\in [2-n,1)$.
As a result we have an upper bound
\[(\PKA 1)^{\frac{n}{\epsilon}}\leq \left(\frac{2}{1+r}\right)^{n}\leq 2^n,\]
for all $\alpha\in [2-n,1)$. If we define $\widetilde{I}_n=2\frac{d\PKA 1}{d\alpha}|_{\alpha=2-n} $, then with the help of the lower bound for $\PKA 1(x)$ and the upper bound for $(\PKA 1)^{\frac{n}{\epsilon}}$, we can apply dominated convergence theorem to get
\[\lim_{\epsilon\to 0}\int_{\B^n} (\PKA 1)^{\frac{n}{\epsilon}} \left(1+\frac{\epsilon \PKA \widetilde{F}}{\PKA 1}\right)^{\frac{n}{\epsilon}}=\int_{\B^n} e^{n\widetilde{I}_n +n\PK_{2-n} \widetilde{F}}.\]

For the right hand side of the inequality we can get
\[\lim_{\epsilon\to 0} \int_{\Sph^{n-1}}(1+\epsilon \widetilde{F})^{\frac{n-1}{\epsilon}}=\int_{\Sph^{n-1}} e^{(n-1)\widetilde{F}}.\]
In order to find the limit $\lim_{\epsilon\to 0} (S_{n,\alpha})^{\frac{1}{n}}$, first note that since constant is an optimizer in theorem \ref{maininequality}. As a result, if we take $\widetilde{F}=0$, then  we can have
\[\lim_{\epsilon\to 0}(S_{n,\alpha})^{\frac{1}{\epsilon}}=\lim_{\epsilon\to 0}\frac{\left(\int_{\B^n} (\PKA 1)^{\frac{n}{\epsilon}}
\right)^{\frac{1}{n}}}{|\Sph^{n-1}|^{\frac{1}{n-1}}}=\frac{\|e^{\widetilde{I}_n}\|_{L^n(\B^n)}}{|\Sph^{n-1}|^{\frac{1}{n-1}}}.\]
When $n$ is an even integer, using mathematical induction and (\ref{InInductionRelation})	it is easy to prove that
\[\widetilde{I}_n(x)=\sum_{k=1}^{n/2-1} \frac{1}{2k}\cdot \frac{\Gamma\left(\frac{n-2}{2}\right)\Gamma\left(n-k-1\right)}{\Gamma(n-2)\Gamma\left(\frac{n}{2}-k\right)} (1-|x|^2)^k.\]
This finishes the proof.
\end{proof}

\section{The Function $\widetilde{I}_n$}\label{SectionTheFunctionIn}
The function $\widetilde{I}_n$ naturally appears in the process of taking limit; its properties are very important for subsequent analysis. Yang \cite{Yang} found an explicit formula for the function $\widetilde{I}_n$ when $n$ is an even integer. In this section we prove an induction relation concerning $\widetilde{I}_n$, in particular the relation (\ref{InInductionRelation}). When $n$ is an even integer, this induction relation can be used to find explicit formula for $\widetilde{I}_n$ as discussed in the proof of Theorem \ref{TheoremLimitCaseInequalityExplicitIn}. When $n$ is odd we do not have an explicit formula for $\widetilde{I}_n$, but the same induction relation still applies.

In Subsection \ref{SubsectionConformalTransformationIn} we consider how $\widetilde{I}_n$ transforms under conformal transformation. In Subsection \ref{SubsectionInductionRelation} we prove the induction relation (\ref{InInductionRelation}). The induction relation (\ref{InInductionRelation}) is the basis for the proofs in Subsection \ref{SubsectionHyperbolicHarmonicThroughInduction} and Subsection \ref{SubsectionBoundaryValueThroughInduction}

\subsection{Conformal Transformation of $\widetilde{I}_n$}\label{SubsectionConformalTransformationIn}

In this subsection we want to take a closer look at how the function $\widetilde{I}_n$ transforms under conformal transformation $\Phi: \B^n\to \B^n$, as well as how the it transforms under the projection map $\Psi:\R^n_+\to \B^n$. These are given in (\ref{InBall}) and (\ref{Inconformal}) respectively.

To begin with, we have

\begin{equation*}
\PKA 1=2^{\alpha -1}c_{n,\alpha}\int_{\Sph^{n-1}}\frac{(1-|x|^2)^{1-\alpha}}{|x-\xi|^{n-\alpha}}d\xi,
\end{equation*}
taking derivative with respect to $\alpha$ at $\alpha=2-n$, we get

\begin{equation}\label{InFirstForm}
\begin{split}
\frac{d\PKA 1}{d\alpha} \biggr|_{\alpha=2-n} &= 2^{1-n}c_{n,2-n}\int_{\Sph^{n-1}}\frac{(1-|x|^2)^{n-1}\ln|x-\xi|}{|x-\xi|^{2n-2}}d\xi 
\\ &-\ln(1-|x|^2)+\ln(2)-\frac{\psi^0(n-1)}{2}+\frac{\psi^0(\frac{n-1}{2})}{2}
\end{split}
\end{equation}
where $\psi^0(x)=\frac{d}{dx}\ln(\Gamma(x))$ is the polygamma function.

Under conformal transformation $\Phi$, we can see that
\begin{eqnarray*}
 &&\int_{\Sph^{n-1}}\pka(\Phi(x),\xi)\ln|\Phi(x)-\xi|d\xi \\
&= &\int_{\Sph^{n-1}}(\pka(x,\xi)\ln|x-\xi|)|\Phi'(x)|^{\frac{2-n-\alpha}{2}}|\Phi'(\xi)|^{\frac{n-2+\alpha}{2}}d\xi\\
&&+\int_{\Sph^{n-1}}(\pka(x,\xi)\ln|\Phi'(\xi)|^{\frac{1}{2}})|\Phi'(x)|^{\frac{2-n-\alpha}{2}}|\Phi'(\xi)|^{\frac{n-2+\alpha}{2}}d\xi\\
&&+\int_{\Sph^{n-1}}(\pka(x,\xi)\ln|\Phi'(x)|^{\frac{1}{2}})|\Phi'(x)|^{\frac{2-n-\alpha}{2}}|\Phi'(\xi)|^{\frac{n-2+\alpha}{2}}d\xi.
\end{eqnarray*}
When we take limit $\alpha\to 2-n$, we get
\begin{eqnarray*}
\int_{\Sph^{n-1}}\pk_{2-n}(\Phi(x),\xi)\ln|\Phi(x)-\xi|d\xi &=&\int_{\Sph^{n-1}}\pk_{2-n}(x,\xi)\ln|x-\xi|d\xi\\
&&+\frac{1}{2}\PK_{2-n}(\ln|\Phi'(\xi)|)+\frac{1}{2}\ln|\Phi'(x)|.
\end{eqnarray*}
From which we can get the conformal transformation for $\frac{d\PKA 1}{d\alpha} \big|_{\alpha=2-n}$
\[\frac{d\PKA 1}{d\alpha}\bigg|_{\alpha=2-n}\circ\Phi(x)+\frac{1}{2}\ln|\Phi'(x)|=\frac{d\PKA 1}{d\alpha} \bigg|_{\alpha=2-n}(x)+\frac{1}{2}\PK_{2-n}(\ln|\Phi'(\xi)|).\]
Recall that for any $x\in \B^n$, we define
\[\widetilde{I}_n(x)=2\frac{d\PKA 1}{d\alpha} \bigg|_{\alpha=2-n}(x),\]
then we have
\begin{equation}\label{InBall}
\widetilde{I}_n	\circ \Phi(x)+\ln |\Phi'(x)|=\widetilde{I}_n(x)+\widetilde{P}_{2-n}(\ln|\Phi'(\xi)|).
\end{equation}
From this we see that $\widetilde{I}_n$ is a radial function, as a result, we sometimes think of $\widetilde{I}_{n}(x)$ as $\widetilde{I}_n(r)$ for $r=|x|$. 

We also want to consider how $\widetilde{I}_n$ changes under the transformation $\Psi:\R^n_+\to \B^n$. Since through change of variable we have
\[P_\alpha1=c_{n,\alpha}\int_{\R^{n-1}}\frac{1}{(|u|^2+1)^{\frac{n-\alpha}{2}}}du=c_{n,\alpha}|\Sph^{n-2}|\int_0^\infty \frac{r^{n-2}dr}{(r^2+1)^{\frac{n-\alpha}{2}}}=1,\]
we have
\[\frac{dP_{\alpha}1}{d\alpha}\bigg|_{\alpha=2-n}=0\]
for all $y\in \R^n_+$.

Define 
\[I_n(y)=2\frac{dP_{\alpha}1}{d\alpha}\bigg|_{\alpha=2-n},\] 
then using (\ref{pkPsi}) we can show that
\begin{equation}\label{Inconformal}
\widetilde{I}_n	\circ \Psi (y)+\ln|\Psi'(y)|=I_n(y)+P_{2-n}\ln|\Psi'(w)|=P_{2-n}\ln|\Psi'(w)|.
\end{equation}
Note that here $w\in \R^{n-1}$, whereas $|\Psi'(y)|$ and $|\Psi'(w)|$ are as in (\ref{PsiConformalFactorExampleUpperHalf}) and (\ref{PsiConformalFactorExampleBoundary}) respectively.

\subsection{Simplify the Function $\widetilde{I}_n$}
In this subsection we want to further simplify the function $\widetilde{I}_n$. We write the integration in the polar coordinate in the Euclidean ball $\B^n$, then (\ref{InFirstForm}) becomes

\begin{eqnarray*}
\frac{d\PKA 1}{d\alpha} \biggr|_{\alpha=2-n} &=& \frac{2^{1-n}\Gamma(n-1)}{\Gamma\left(\frac{n-1}{2}\right)^2}(1-r^2)^{n-1}\int_0^\pi\frac{\sin^{n-2}\phi\ln(1-2r\cos\phi+r^2) d\phi}{(1-2r\cos\phi+r^2)^{n-1}}\\
&&-\ln(1-r^2)+\ln(2)-\frac{\psi^0(n-1)}{2}+\frac{\psi^0(\frac{n-1}{2})}{2}.
\end{eqnarray*}
Note that here we also used the explicit formula for the constant $c_{n,2-n}$ from (\ref{cna}).

Polygamma functions have two special properties that are useful to us. The first property is
\begin{equation}\label{polygamma1}
\psi^0(n)=-\gamma+\sum_{k=1}^{n-1}\frac{1}{k},\text{ where }n\in\mathbb{N}^+ .
\end{equation}
Here $\gamma$ is the Euler Mascheroni constant. The second property is
\begin{equation}\label{polygamma2}
\psi^0(2z)=\frac{1}{2}\left(\psi^0(z)+\psi^0(z+\frac{1}{2})\right)+\ln(2),\text{ where }z\in\mathbb{C}^*.	
\end{equation}

For any $n\in \mathbb{N}^+$ such that $n\geq 2$, plug $z=\frac{n-1}{2}$ into (\ref{polygamma2}) we get
\begin{equation}\label{polygamma3}
\psi^0\left(\frac{n-1}{2}\right)	-\psi^0(n-1) =\psi^0(n-1)-\psi^0\left(\frac{n}{2}\right)-\ln (4).
\end{equation}
Combine (\ref{polygamma3}) and (\ref{polygamma1}) we get that when $n\in\mathbb{N}^+$ is an even integer then
\begin{equation}\label{polygammahalf}
\psi^0\left(\frac{n-1}{2}\right)=-\ln(4)-\gamma+\sum_{k=n/2}^{n-2}\frac{1}{k}+\sum_{k=1}^{n-2}\frac{1}{k}	
\end{equation}
Now combine (\ref{polygamma1}), (\ref{polygamma3}) and (\ref{polygammahalf}) we see that for any $n\geq 2$ such that $n$ is an even integer:
\begin{equation*}
	\psi^0\left(\frac{n-1}{2}\right)	-\psi^0(n-1)=-\log(4)+\sum_{k=n/2}^{n-2}\frac{1}{k}.
\end{equation*}
Using (\ref{polygamma1}) we can see that for any $n\geq 2$ such that $n$ is an odd integer:
\begin{equation*}
	\psi^0\left(\frac{n-1}{2}\right)	-\psi^0(n-1)=-\sum_{k=(n-1)/2}^{n-2}\frac{1}{k}.
\end{equation*}
As a result, when $n\geq 2$ is an even integer, equation (\ref{InFirstForm}) simplifies to

\begin{equation}\label{InSimplifiedFormEven}
\begin{split}
	&\frac{d\PKA 1}{d\alpha} \biggr|_{\alpha=2-n} \\&=\frac{2^{1-n}\Gamma(n-1)}{\Gamma\left(\frac{n-1}{2}\right)^2}\int_0^\pi \frac{(1-r^2)^{n-1}\sin^{n-2}\phi\ln\ipt}{\ipt^{n-1}}d\phi \\
	&-\ln(1-r^2)+\frac{1}{2}\sum_{k=n/2}^{n-2} \frac{1}{k}.
	\end{split}
\end{equation}
When $n\geq 2$ is an odd integer, equation (\ref{InFirstForm}) simplifies to
\begin{equation}\label{InSimplifiedFormOdd}
\begin{split}
	&\frac{d\PKA 1}{d\alpha} \biggr|_{\alpha=2-n} \\&=\frac{2^{1-n}\Gamma(n-1)}{\Gamma\left(\frac{n-1}{2}\right)^2}\int_0^\pi \frac{(1-r^2)^{n-1}\sin^{n-2}\phi\ln\ipt}{\ipt^{n-1}}d\phi \\
	&-\ln(1-r^2)+\ln2-\frac{1}{2}\sum_{k=(n-1)/2}^{n-2} \frac{1}{k}.
	\end{split}
\end{equation}

\subsection{Induction Relation}\label{SubsectionInductionRelation}
We have the following induction relation.

\begin{lemma}\label{InInductionLemma}
For $n\in\mathbb{N}^+$ such that $n>3$, $\widetilde{I}_n$ satisfies the induction relation
\begin{equation}\label{InInductionRelation}
\widetilde{I}_n=\frac{1-r^4}{4r(n-3)}\frac{d}{dr}(\widetilde{I}_{n-2})+\widetilde{I}_{n-2}+\frac{1-r^2}{2(n-3)}.
\end{equation}
\end{lemma}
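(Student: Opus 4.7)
The plan is to pin down $\widetilde{I}_n$ via an inhomogeneous ODE and to verify that the right-hand side $V(r) := \tfrac{1-r^4}{4r(n-3)}\widetilde{I}_{n-2}'(r) + \widetilde{I}_{n-2}(r) + \tfrac{1-r^2}{2(n-3)}$ satisfies the same ODE with the same value at the origin, so that $V = \widetilde{I}_n$ by uniqueness.

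First I would derive the governing ODE. Since $\widetilde{P}_\alpha 1$ solves $\mathcal{L}\widetilde{u} = 0$ and equals the constant $1$ at $\alpha = 2-n$ (Remark \ref{IntegrationLemmaLimitCaseRemark}), writing the equation in radial form and differentiating with respect to $\alpha$ at $\alpha = 2-n$ produces
\[
\frac{d}{dr}\!\left[\frac{r^{n-1}}{(1-r^2)^{n-2}}\widetilde{I}_n'(r)\right] = -\frac{2(n-2)r^{n-1}}{(1-r^2)^{n-1}}.
\]
With the regularity condition $\widetilde{I}_n'(0) = 0$, this ODE determines $\widetilde{I}_n$ up to an additive constant.

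A clean way to check that $V$ satisfies the same ODE is to pass to $s = r^2$. Integrating the ODE once and applying the M\"obius substitution $\tau = (1-\mu)/(1-s\mu)$ yields the compact representation
\[
\widetilde{I}_n'(r) = -(n-2)\,r\int_0^1 (1-\mu)^{(n-2)/2}(1-s\mu)^{(n-4)/2}\,d\mu \;=:\; -(n-2)\,r\, U_n(s).
\]
Differentiating the claimed induction in $s$, inserting this formula for $\widetilde{I}_n'$ and the analogous one for $\widetilde{I}_{n-2}'$, and using the ODE for $\widetilde{I}_{n-2}$ to eliminate $\widetilde{I}_{n-2}''$, the induction reduces after elementary algebra to the single integral identity
\[
4s(n-3)U_n(s) + (n-4)(1-s)^2 U_{n-2}(s) \;=\; 2s + \frac{2(n-4)}{n-2}.
\]
I would prove this by combining three linear relations among $U_n$ and the auxiliary integrals $V_n := \int_0^1 (1-\mu)^{(n-2)/2}(1-s\mu)^{(n-6)/2}d\mu$ and $W_n := \int_0^1 (1-\mu)^{(n-4)/2}(1-s\mu)^{(n-4)/2}d\mu$. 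Two relations come from $\int_0^1 \tfrac{d}{d\mu}[\,\cdot\,]\,d\mu$ applied to the products $(1-\mu)^{(n-2)/2}(1-s\mu)^{(n-4)/2}$ and $\mu(1-\mu)^{(n-2)/2}(1-s\mu)^{(n-4)/2}$; the third, $W_n = sV_n + (1-s)U_{n-2}$, comes from the algebraic rewriting $\mu = s^{-1}[1-(1-s\mu)]$ inside $W_n - U_{n-2}$. Eliminating $V_n$ and $W_n$ from these three equations yields the identity.

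Finally I would check the initial condition $V(0) = \widetilde{I}_n(0)$: setting $s=0$ in the integral representation gives $\widetilde{I}_{n-2}'(r) = -\tfrac{2(n-4)}{n-2}\,r + O(r^3)$ near $r=0$, so that $V(0) - \widetilde{I}_{n-2}(0) = \tfrac{1}{(n-2)(n-3)}$, which matches $\widetilde{I}_n(0) - \widetilde{I}_{n-2}(0)$ as computed from (\ref{InSimplifiedFormEven}) or (\ref{InSimplifiedFormOdd}) by a brief manipulation of harmonic sums (for even $n$) or polygamma differences (for odd $n$). The main obstacle is the algebraic bookkeeping: each integration-by-parts step is elementary, but the three relations must combine with exactly the right coefficients for the induction to close; carrying out the verification in the $s$-variable with the M\"obius-transformed integral is what makes this matching transparent.
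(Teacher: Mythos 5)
Your proposal is correct, but it takes a genuinely different route from the paper. The paper proves (\ref{InInductionRelation}) by brute force on the explicit integral: it integrates by parts in the angular variable $\phi$ in the integral defining $a_n$, evaluates the auxiliary integrals via (\ref{usk}), (\ref{uskp1}), (\ref{uslkp1}), obtains the recursion (\ref{InductionAn}) for $a_n$, and then translates back to $\widetilde{I}_n$ through (\ref{InSimplifiedFormEven})/(\ref{InSimplifiedFormOdd}). You instead derive first the radial ODE $\frac{d}{dr}\bigl[r^{n-1}(1-r^2)^{2-n}\widetilde{I}_n'\bigr]=-2(n-2)r^{n-1}(1-r^2)^{1-n}$ by differentiating $\mathcal{L}_\alpha\widetilde{P}_\alpha 1=0$ in $\alpha$ at $\alpha=2-n$ (the terms involving $\nabla\widetilde{P}_{2-n}1$ and the zeroth-order coefficient drop out because $\widetilde{P}_{2-n}1\equiv 1$ and $\alpha(2-n-\alpha)$ vanishes there); this ODE is exactly the radial form of $\Delta_{\mathbb H}\widetilde{I}_n=-\tfrac{(n-2)(1-r^2)}{2}$, which the paper only obtains afterwards, in Lemma \ref{InLnHyperbolicHarmonic}, as a consequence of the induction relation -- so your argument reverses the paper's logical order and, as a by-product, gives an independent proof of the interior statement of that lemma. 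I checked the remaining ingredients: the substitution $\sigma=s(1-\mu)/(1-s\mu)$ does give $\widetilde{I}_n'(r)=-(n-2)\,r\,U_n(s)$ with $s=r^2$; differentiating (\ref{InInductionRelation}) and eliminating $\widetilde{I}_{n-2}''$ via the level-$(n-2)$ ODE reduces precisely to $4s(n-3)U_n+(n-4)(1-s)^2U_{n-2}=2s+\tfrac{2(n-4)}{n-2}$; your relations from $\int_0^1\frac{d}{d\mu}[(1-\mu)^{a}(1-s\mu)^{b}]d\mu$ and $\int_0^1\frac{d}{d\mu}[\mu(1-\mu)^{a}(1-s\mu)^{b}]d\mu$ together with $W_n=sV_n+(1-s)U_{n-2}$ do yield this identity; and the initial-value match $V(0)-\widetilde{I}_{n-2}(0)=\tfrac{1}{(n-2)(n-3)}=\widetilde{I}_n(0)-\widetilde{I}_{n-2}(0)$ is correct for both parities. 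What each approach buys: yours exposes the governing ODE and a clean Euler-type first integral (and would let the paper shorten Lemma \ref{InLnHyperbolicHarmonic}), while the paper's stays entirely within elementary one-variable calculus already set up in its supplementary subsection. Three small points to tighten in a write-up: justify the interchange of $\partial_\alpha$ with $\mathcal{L}_\alpha$ at the endpoint $\alpha=2-n$ (one-sided derivative, differentiation under the integral sign, and radiality of $\widetilde{I}_n$); your elimination of $V_n,W_n$ divides by $b=\tfrac{n-4}{2}$, so handle $n=4$ separately (trivial, since $\widetilde{I}_2=0$ and $U_4\equiv\tfrac12$); and the derivation you sketch for $W_n=sV_n+(1-s)U_{n-2}$ by rewriting $\mu$ inside $W_n-U_{n-2}$ is circular as stated -- the relation follows immediately from $s(1-\mu)+(1-s)=1-s\mu$ under the integral sign.
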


\begin{proof}
The main calculation here is to use integration by parts to evaluate the integral
\[\int_0^\pi \frac{\sin^{n-2}\phi\ln\ipt}{\ipt^{n-1}}d\phi.\]
Take 
\[v=\sin^{n-3}\phi\] 
then we have \[dv=(n-3)\sin^{n-4}\phi\cos\phi d\phi.\] Take \[dw=\frac{\sin\phi \ln\ipt}{\ipt^{n-1}}d\phi,\] then we have \[w=-\frac{1}{2r}\left(\frac{\ln\ipt}{(n-2)\ipt^{n-2}}+\frac{1}{(n-2)^2\ipt^{n-2}}\right).\]
As a result, we have
\begin{equation*}
\begin{split}
	&\int_0^\pi\frac{\sin^{n-2}\phi\ln\ipt d\phi}{\ipt^{n-1}} \\
	&= \frac{(n-3)(1+r^2)}{(n-2)4r^2}\int\frac{\sin^{n-4}\phi\ln\ipt d\phi}{\ipt^{n-2}}
	\\ &-\frac{(n-3)}{(n-2)4r^2}\int\frac{\sin^{n-4}\phi\ln\ipt d\phi}{\ipt^{n-3}}
	\\ &+\frac{(n-3)(1+r^2)}{(n-2)^2 4r^2}\int\frac{\sin^{n-4}\phi d\phi}{\ipt^{n-2}}
	\\&-\frac{n-3}{(n-2)^2 4r^2}\int \frac{\sin^{n-4}\phi d\phi}{\ipt^{n-3}}.
\end{split}
\end{equation*}
We evaluate each one of the integrals separately in the next subsection. Using the results from next subsection, namely by (\ref{usk}), (\ref{uskp1}) and (\ref{uslkp1}), we have

\begin{equation*}
\begin{split}
	&\int_0^\pi\frac{\sin^{n-2}\phi\ln\ipt d\phi}{\ipt^{n-1}}\\
	 &=\frac{(1+r^2)}{(n-2)4r(1-r^2)}\frac{d}{dr}\left(\int\frac{\sin^{n-4}\phi\ln\ipt d\phi}{\ipt^{n-3}}\right)
	\\&+\frac{(n-3)}{2(n-2)(1-r^2)}\int\frac{\sin^{n-4}\phi\ln\ipt d\phi}{\ipt^{n-3}}
	\\&+\frac{1}{n-2}\frac{2^{n-4}\Gamma(\frac{n-3}{2})^2}{\Gamma(n-3)}\left(\frac{n-3}{n-2}+\frac{1+r^2}{2}\right)\frac{1}{(1-r^2)^{n-1}}	
\end{split}
\end{equation*}
If we define
\[a_n=\frac{2^{1-n}\Gamma(n-1)}{\Gamma\left(\frac{n-1}{2}\right)^2}(1-r^2)^{n-1}\int\frac{\sin^{n-2}\phi\ln\ipt d\phi}{\ipt^{n-1}},\]
then we have the following induction relation
\begin{equation}\label{InductionAn}
a_n=\frac{1-r^4}{4r(n-3)}\frac{d}{dr}(a_{n-2})+a_{n-2}+\frac{1}{2(n-2)}+\frac{1+r^2}{4(n-3)}	
\end{equation}

When $n$ is even, from (\ref{InSimplifiedFormEven}) we see that
\begin{equation}\label{InductionRelationBetweenInAn}
\widetilde{I}_n=2\frac{d\PKA 1}{d\epsilon} |_{\epsilon=0}=2a_n-2\ln(1-r^2)+\sum_{k=\frac{n}{2}}^{n-2}\frac{1}{k}.
\end{equation}
Taking derivative with respect to $r$ we see that
\begin{equation}\label{InductionDerivative}
\frac{d}{dr}(\widetilde{I}_{n-2})=2\frac{d}{dr}(a_{n-2})+\frac{4r}{1-r^2}.	
\end{equation}
Combine (\ref{InductionAn}), (\ref{InductionRelationBetweenInAn}) and (\ref{InductionDerivative}) we get that
\begin{equation*}
\begin{split}
	\widetilde{I}_n &=\frac{1-r^4}{4r(n-3)}\frac{d}{dr}(2a_{n-2})+2a_{n-2}+\frac{1}{n-2}+\frac{1+r^2}{2(n-3)}\\
	&-2\ln(1-r^2)+\sum_{k=n/2}^{n-2}\frac{1}{k}\\
	&=\frac{1-r^4}{4r(n-3)}\left(\frac{d}{dr}(\widetilde{I}_{n-2})-\frac{4r}{1-r^2}\right)\\
	&+2a_{n-2}-2\ln (1-r^2)+\sum_{k=(n-2)/2}^{n-4}\frac{1}{k}\\
	&+\frac{1}{n-3}+\frac{1+r^2}{2(n-3)}\\
	&=\frac{1-r^4}{4r(n-3)}\frac{d}{dr}(\widetilde{I}_{n-2})+\widetilde{I}_{n-2}+\frac{1-r^2}{2(n-3)}.
\end{split}	
\end{equation*}
This is the end of the calculation for the case when $n$ is even.

In the case when $n$ is odd, from (\ref{InSimplifiedFormOdd}) we have
\begin{equation*}
\widetilde{I}_n=2\frac{d\PKA 1}{d\epsilon} |_{\epsilon=0}=2a_n-2\ln(1-r^2)+\ln4-\sum_{k=(n-1)/2}^{n-2} \frac{1}{k}.
\end{equation*}
Going through similar calculations as in the case when $n$ is even, we can see that for the case $n$ is odd we have exactly the same induction relation.
\end{proof}

Using the induction relation (\ref{InInductionRelation}) and the fact that $\widetilde{I}_2=0$, it is easy to find an explicit formula for $\widetilde{I}_{n}$ when $n$ is even.

\subsection{Supplementary Calculation}
In this subsection we continue several calculations from previous subsection. We use $k$ to denote any positive integer.
\begin{lemma}
	For any $k\in \mathbb{N}^+$ such that $k\geq 2$
	\begin{equation}\label{usk}
	\int_0^\pi\frac{\sin^{k-1}\phi d\phi}{\ipt^k}=\frac{2^k}{|\Sph^{k-1}|c_{k+1,1-k}}\frac{1}{(1-r^2)^k}.
\end{equation}
\end{lemma}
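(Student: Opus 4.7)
The plan is to evaluate the integral directly by the same Weierstrass substitution used repeatedly in Section~\ref{PoissonKernelUnitBall}, namely $u = \tan(\phi/2)$, and then reduce the resulting Euclidean integral to the defining formula for $c_{k+1,1-k}$ in (\ref{cna}). The shape of the integral is already a strong hint: with $n = k+1$ and $\alpha = 1-k$ we have $n-\alpha = 2k$ and $1-\alpha = k$, so the integrand is (up to the constant $2^{\alpha-1}c_{n,\alpha}$) the spherical average of $\widetilde{p}_{1-k}(x,\xi)$ in the $(k{+}1)$-dimensional ball, which is exactly the setup of Lemma~\ref{integration} specialized to a single limiting parameter.

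First I would perform the substitution $u = \tan(\phi/2)$, using the standard identities
\[
\sin\phi = \frac{2u}{1+u^2},\qquad \cos\phi = \frac{1-u^2}{1+u^2},\qquad d\phi = \frac{2\,du}{1+u^2},
\]
together with the algebraic simplification
\[
1-2r\cos\phi+r^2 = \frac{(1-r)^2+(1+r)^2 u^2}{1+u^2},
\]
which is already used in the derivation of (\ref{IntegrationLemmaUSub}). A short cancellation converts the $\phi$-integral into
\[
\int_0^\pi\frac{\sin^{k-1}\phi\,d\phi}{\ipt^k} \;=\; 2^k\int_0^\infty \frac{u^{k-1}\,du}{\bigl((1-r)^2+(1+r)^2u^2\bigr)^k}.
\]

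Next I would rescale by $v = \frac{1+r}{1-r}\,u$ to pull the $r$-dependence out of the integral. This produces a factor of $(1-r^2)^{-k}$ together with $2^k$, leaving the dimensionless integral $\int_0^\infty \frac{v^{k-1}\,dv}{(1+v^2)^k}$. Finally, I would recognize this remaining integral using (\ref{cna}) with $n=k+1$ and $\alpha=1-k$, where $\frac{n-\alpha}{2}=k$ and $|\Sph^{n-2}|=|\Sph^{k-1}|$; this gives
\[
\int_0^\infty \frac{v^{k-1}\,dv}{(1+v^2)^k} = \frac{1}{|\Sph^{k-1}|\,c_{k+1,1-k}},
\]
and combining the constants yields the claimed identity.

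There is no real obstacle here: both substitutions are forced by the structure of the integrand, and the only care needed is bookkeeping of powers of $(1\pm r)$ when rescaling to confirm that everything collapses to a single factor $(1-r^2)^{-k}$. The identification of the Euclidean integral with $(|\Sph^{k-1}|c_{k+1,1-k})^{-1}$ is immediate from the very definition of $c_{n,\alpha}$.
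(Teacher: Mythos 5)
Your proof is correct and follows essentially the same route as the paper: the paper's proof simply cites Remark \ref{IntegrationLemmaLimitCaseRemark} (applied with $n=k+1$, i.e.\ $\alpha=2-n=1-k$), and that remark consists of exactly your two substitutions $u=\tan(\phi/2)$ and $v=\frac{1+r}{1-r}u$, followed by identifying $\int_0^\infty \frac{v^{k-1}\,dv}{(1+v^2)^{k}}$ with $\frac{1}{|\Sph^{k-1}|c_{k+1,1-k}}$ via (\ref{cna}). So you have just written out in full the computation the paper invokes by reference.
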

\begin{proof}
This follows directly from Remark \ref{IntegrationLemmaLimitCaseRemark}.
\end{proof}

\begin{lemma}
	For any $k\in \mathbb{N}^+$ such that $k\geq 2$
	\begin{equation}\label{uskp1}
	\int_0^\pi\frac{\sin^{k-1}\phi d\phi}{\ipt^{k+1}}=\frac{2^{k-1}(\Gamma(k/2))^2}{\Gamma(k)}\frac{1+r^2}{(1-r^2)^{k+2}}	
	\end{equation}
\end{lemma}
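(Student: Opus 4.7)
The plan is to reduce the statement to the previous lemma (\ref{usk}) by differentiation in the parameter $r$. Let
\[I_k(r)=\int_0^\pi\frac{\sin^{k-1}\phi\,d\phi}{\ipt^k},\]
so by (\ref{usk}) we know $I_k(r)=C_k(1-r^2)^{-k}$ with $C_k=2^k/(|\Sph^{k-1}|c_{k+1,1-k})$. First I would differentiate under the integral sign, obtaining
\[I_k'(r)=2k\int_0^\pi\frac{(\cos\phi-r)\sin^{k-1}\phi\,d\phi}{\ipt^{k+1}}.\]
This relates the desired exponent $k+1$ integral to $I_k'(r)$, but with an unwanted factor $\cos\phi-r$ in the numerator.

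Next I would eliminate that factor via the algebraic identity
\[\cos\phi-r=\frac{1-r^2}{2r}-\frac{\ipt}{2r},\]
which one checks by expanding. Substituting this into the expression for $I_k'(r)$ splits it into the target integral plus a multiple of $I_k(r)$:
\[I_k'(r)=\frac{k(1-r^2)}{r}\int_0^\pi\frac{\sin^{k-1}\phi\,d\phi}{\ipt^{k+1}}-\frac{k}{r}I_k(r).\]
Solving for the target integral and plugging in $I_k(r)=C_k(1-r^2)^{-k}$, $I_k'(r)=2krC_k(1-r^2)^{-k-1}$, a direct simplification gives
\[\int_0^\pi\frac{\sin^{k-1}\phi\,d\phi}{\ipt^{k+1}}=\frac{C_k(1+r^2)}{(1-r^2)^{k+2}}.\]

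Finally, I would simplify the constant $C_k$ using (\ref{cna}) with $n=k+1$ and $\alpha=1-k$. Since $\frac{1-\alpha}{2}=\frac{n-1}{2}=\frac{k}{2}$ and $\frac{n-\alpha}{2}=k$, this gives $c_{k+1,1-k}|\Sph^{k-1}|=\frac{2\Gamma(k)}{\Gamma(k/2)^2}$, hence $C_k=\frac{2^{k-1}\Gamma(k/2)^2}{\Gamma(k)}$, matching the claimed formula. The main obstacle is spotting the correct algebraic identity for $\cos\phi-r$ that converts the differentiated integrand into a clean combination of the target integral and $I_k$ itself; once that identity is in hand, the rest is an elementary substitution and an application of the Gamma function identities already used to express $c_{n,\alpha}$.
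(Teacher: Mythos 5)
Your proposal is correct and follows essentially the same route as the paper's proof: differentiate the integral from (\ref{usk}) with respect to $r$, use the identity $\cos\phi-r=\frac{1-r^2}{2r}-\frac{1}{2r}\ipt$ to express the derivative as a combination of the exponent-$(k+1)$ integral and the exponent-$k$ integral, then solve and identify the constant via (\ref{cna}) with $n=k+1$, $\alpha=1-k$. The paper simply states the resulting differentiated identity and cites (\ref{usk}); you have filled in exactly the algebra it leaves implicit.
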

\begin{proof}
By taking derivative with respect to $r$, we get
\begin{equation*}
\begin{split}
&\frac{r}{k(1-r^2)}\frac{d}{dr}\left(\int_0^\pi\frac{\sin^{k-1}\phi d\phi}{\ipt^k}\right)	\\
&=\int_0^\pi\frac{\sin^{k-1}\phi d\phi}{\ipt^{k+1}}-\frac{1}{1-r^2}\int_0^\pi\frac{\sin^{k-1}\phi d\phi}{\ipt^k}.
\end{split}
\end{equation*}
Combine this with (\ref{usk}) then we are done.
\end{proof}

\begin{lemma}
	For any $k\in \mathbb{N}^+$ such that $k\geq 2$
\begin{equation}\label{uslkp1}
\begin{split}
	&\int_0^\pi\frac{\sin^{k-1}\phi\ln\ipt d\phi}{\ipt^{k+1}} 
	\\&=\frac{r}{k(1-r^2)}\frac{d}{dr}\left(\int_0^\pi\frac{\sin^{k-1}\phi\ln\ipt d\phi}{\ipt^{k}}\right)\nonumber
	\\ &+\frac{1}{1-r^2}\int_0^\pi\frac{\sin^{k-1}\phi\ln\ipt d\phi}{\ipt^k}\nonumber
	\\ &+\frac{2^{k-1}(\Gamma(k/2))^2}{\Gamma(k)}\frac{2r^2}{k(1-r^2)^{k+2}}	.
\end{split}	
\end{equation}
\end{lemma}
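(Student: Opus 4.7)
The plan is to prove the identity by differentiating the auxiliary integral
\[
I(r) := \int_0^\pi \frac{\sin^{k-1}\phi\,\ln\ipt\,d\phi}{\ipt^{k}}
\]
with respect to $r$ under the integral sign. Using
\[
\frac{\partial}{\partial r}\bigl[\ipt\bigr] = 2r-2\cos\phi, \qquad 2r-2\cos\phi = \frac{\ipt - (1-r^2)}{r},
\]
the differentiation produces the integrand $\sin^{k-1}\phi\cdot(2r-2\cos\phi)\cdot\ipt^{-k-1}\bigl(1 - k\ln\ipt\bigr)$. Substituting the algebraic identity for $2r-2\cos\phi$ splits the derivative into four terms: one proportional to $\int \sin^{k-1}\phi\,\ipt^{-k}\,d\phi$, one proportional to $I(r)$, one proportional to $\int \sin^{k-1}\phi\,\ipt^{-k-1}\,d\phi$, and one proportional to the target integral $J(r):=\int_0^\pi \sin^{k-1}\phi\,\ln\ipt\,\ipt^{-k-1}\,d\phi$.

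Next I would solve the resulting linear equation for $J(r)$. After dividing through by the prefactor $k(1-r^2)/r$, this yields
\[
J(r) = \frac{r}{k(1-r^2)}\frac{dI}{dr} + \frac{1}{1-r^2}I(r) + R(r),
\]
where the remainder $R(r)$ is exactly
\[
R(r) = \frac{1}{k}\int_0^\pi\frac{\sin^{k-1}\phi\,d\phi}{\ipt^{k+1}} - \frac{1}{k(1-r^2)}\int_0^\pi\frac{\sin^{k-1}\phi\,d\phi}{\ipt^{k}}.
\]
The first two terms here already match the first two terms in the lemma, so the task reduces to showing that $R(r)$ equals $\frac{2^{k-1}(\Gamma(k/2))^2}{\Gamma(k)}\cdot\frac{2r^2}{k(1-r^2)^{k+2}}$.

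For this final step I would invoke the two previous lemmas. From (\ref{usk}) combined with the explicit evaluation of $c_{k+1,1-k}$ via (\ref{cna}), the integral $\int_0^\pi \sin^{k-1}\phi\,\ipt^{-k}\,d\phi$ equals $\frac{2^{k-1}(\Gamma(k/2))^2}{\Gamma(k)}(1-r^2)^{-k}$, while (\ref{uskp1}) gives $\int_0^\pi \sin^{k-1}\phi\,\ipt^{-k-1}\,d\phi = \frac{2^{k-1}(\Gamma(k/2))^2}{\Gamma(k)}(1+r^2)(1-r^2)^{-k-2}$. Pulling out the common prefactor in $R(r)$ leaves the purely algebraic simplification $-(1-r^2) + (1+r^2) = 2r^2$ over the common denominator $(1-r^2)^{k+2}$, which produces precisely the claimed remainder.

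I do not expect any substantive obstacle: differentiation under the integral sign is legitimate because, for $r\in(0,1)$, the integrand and its $r$-derivative are bounded uniformly in a neighborhood by integrable functions of $\phi$ (the singularity $\ln\ipt$ at $\phi=0$ is integrable against $\sin^{k-1}\phi$ for $k\ge 2$). The only care needed is the bookkeeping when rearranging the four terms of $\frac{dI}{dr}$ and when verifying the prefactors of $R(r)$ match; this is the routine part. The conceptual idea — differentiating the $\ln\ipt$ version of the integral to relate consecutive powers of $\ipt$ in the denominator, with a remainder controlled by the log-free integrals already computed in (\ref{usk}) and (\ref{uskp1}) — is what drives the proof.
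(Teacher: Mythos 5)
Your proposal is correct and is essentially the paper's own argument: the paper likewise differentiates $\int_0^\pi\sin^{k-1}\phi\,\ln\ipt\,\ipt^{-k}\,d\phi$ in $r$, solves for the integral with exponent $k+1$, and evaluates the leftover log-free terms $\frac{1}{k}\int_0^\pi\frac{\sin^{k-1}\phi\,d\phi}{\ipt^{k+1}}-\frac{1}{k(1-r^2)}\int_0^\pi\frac{\sin^{k-1}\phi\,d\phi}{\ipt^{k}}$ via (\ref{usk}) and (\ref{uskp1}). Your bookkeeping (the identity $2r-2\cos\phi=\frac{\ipt-(1-r^2)}{r}$, the conversion of (\ref{usk}) through (\ref{cna}), and the final simplification to $2r^2/(1-r^2)^{k+2}$) all checks out, and your remark on differentiation under the integral sign only adds a justification the paper leaves implicit.
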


\begin{proof}
Take derivative of $\int_0^\pi\frac{\sin^{k-1}\phi\ln\ipt d\phi}{\ipt^k}$ with respect to $r$, we can get
\begin{equation*}
\begin{split}
	 &\int_0^\pi\frac{\sin^{k-1}\phi\ln\ipt d\phi}{\ipt^{k+1}}
	\\&=\frac{r}{k(1-r^2)}\frac{d}{dr}\left(\int_0^\pi\frac{\sin^{k-1}\phi\ln\ipt d\phi}{\ipt^{k}}\right)
	\\ &+\frac{1}{1-r^2}\int_0^\pi\frac{\sin^{k-1}\phi\ln\ipt d\phi}{\ipt^k}
	\\ &-\frac{1}{k(1-r^2)}\int_0^\pi\frac{\sin^{k-1}\phi d\phi}{\ipt^k}
	\\ &+\frac{1}{k}\int_0^\pi\frac{\sin^{k-1}\phi d\phi}{\ipt^{k+1}}.
\end{split}
\end{equation*}

By (\ref{usk}) and (\ref{uskp1}), we have
\begin{equation*}
\begin{split}
&\frac{1}{k}\int\frac{\sin^{k-1}\phi d\phi}{\ipt^{k+1}}-\frac{1}{k(1-r^2)}\int\frac{\sin^{k-1}\phi d\phi}{\ipt^k}\\
&=\frac{2^{k-1}(\Gamma(k/2))^2}{\Gamma(k)}\frac{2r^2}{k(1-r^2)^{k+2}}	
\end{split}	
\end{equation*}
Combine these two equations we can get (\ref{uslkp1}).
\end{proof}

\subsection{Hyperbolic Harmonic Through Induction}\label{SubsectionHyperbolicHarmonicThroughInduction}
Using the induction relation (\ref{InInductionRelation}), we can prove that $\widetilde{I}_n	\circ \Psi (y)+\ln|\Psi'(y)|$ is harmonic with respect to the standard hyperbolic metric. We prove it in the unit ball model of hyperbolic space.
\begin{lemma}\label{InLnHyperbolicHarmonic}
For $n\geq 2$, in $\B^n$ we have
\[\Delta_{\mathbb{H}} \left(\widetilde{I}_n+\ln \frac{1-2x_n+|x|^2}{2}\right)	=0.\]
Here $\Delta_{\mathbb{H}}$ is the Laplacian in hyperbolic space. For any function $u\in C^\infty(\B^n)$ we have
\[\Delta_{\mathbb{H}}u=\left(\frac{1-|x|^2}{2}\right)^2\Delta u+(n-2)\frac{1-|x|^2}{2}\langle x, \nabla u \rangle.\]
Where $\Delta$ is the Laplacian in Euclidean space and $\langle x, \nabla u \rangle$ is the inner product in Euclidean space.
\end{lemma}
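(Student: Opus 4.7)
The plan is to use identity (\ref{Inconformal}) to recast the claim as hyperbolic-harmonicity of a Poisson extension on the upper half space. From (\ref{EquationConformalFactorTransformation}), for $x=\Psi(y)$ we have $\ln\frac{1-2x_n+|x|^2}{2} = \ln|\Psi'(y)|$, and so (\ref{Inconformal}) gives
\begin{equation*}
\Bigl(\widetilde{I}_n+\ln\tfrac{1-2x_n+|x|^2}{2}\Bigr)\circ\Psi(y)
= \widetilde{I}_n(\Psi(y))+\ln|\Psi'(y)|
= P_{2-n}(\ln|\Psi'(w)|)(y).
\end{equation*}
It therefore suffices to prove that this Poisson extension is hyperbolic-harmonic on $(\R^n_+, y_n^{-2}dy^2)$ and then transfer back through $\Psi$.

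First I verify that $\Psi$ is an isometry from $(\R^n_+, y_n^{-2}dy^2)$ to $(\B^n, \frac{4}{(1-|x|^2)^2}dx^2)$. This follows from $\Psi^* dx^2 = |\Psi'(y)|^2 dy^2$ (established via (\ref{ConformalFactorPsi})) combined with the identity $1-|\Psi(y)|^2 = \frac{4y_n}{1+2y_n+|y|^2} = 2y_n|\Psi'(y)|$, which drops out of (\ref{PsiDefinition}) and (\ref{EquationConformalFactorTransformation}). Consequently a function $u$ on $\B^n$ is annihilated by $\Delta_{\mathbb{H}}$ if and only if $u\circ\Psi$ is annihilated by the upper-half-space hyperbolic Laplacian $y_n^2\Delta-(n-2)y_n\partial_{y_n}$.

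Next, the Poisson kernel $p_{2-n}(y,w)$ satisfies $\mathrm{div}_y(y_n^{2-n}\nabla_y p_{2-n})=0$ by the Caffarelli--Silvestre construction in \cite{CS}; differentiating under the integral gives $\mathrm{div}_y(y_n^{2-n}\nabla_y P_{2-n}g)=0$ for our boundary data $g=\ln|\Psi'(w)|$. A short direct calculation, parallel to the one in Proposition \ref{HowTheOperatorTranforms}, shows
\begin{equation*}
\mathrm{div}\bigl(y_n^{2-n}\nabla u\bigr) = y_n^{-n}\bigl[y_n^2\Delta u-(n-2)y_n\partial_{y_n}u\bigr],
\end{equation*}
so the divergence equation and upper-half-space hyperbolic-harmonicity are equivalent. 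Hence $P_{2-n}g$ is hyperbolic-harmonic on $\R^n_+$, and pulling back through the isometry $\Psi$ finishes the proof.

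The main obstacle is technical: justifying differentiation under the integral for $g(w)=\ln\frac{2}{1+|w|^2}$, which is unbounded below as $|w|\to\infty$. However $p_{2-n}(y,w)$ and its $y$-derivatives decay like $|w|^{-(2n-2)}$ uniformly in $y$ on compact subsets of $\R^n_+$, so absolute integrability and the dominated convergence theorem make the interchange routine.
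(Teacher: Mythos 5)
Your proof is correct, but it takes a genuinely different route from the paper. The paper argues by brute force: it first computes $\Delta_{\mathbb{H}}\ln\frac{1-2x_n+|x|^2}{2}=\frac{(n-2)(1-|x|^2)}{2}$ directly, and then proves $\Delta_{\mathbb{H}}\widetilde{I}_n=-\frac{(n-2)(1-|x|^2)}{2}$ by induction in steps of two in $n$, using the induction relation (\ref{InInductionRelation}) of Lemma \ref{InInductionLemma}, the base cases $\widetilde{I}_2=0$ and the explicit $\widetilde{I}_3$, and a long polar-coordinate computation. You instead transfer the whole statement to the upper half space: by (\ref{EquationConformalFactorTransformation}) and (\ref{Inconformal}) the function in question composed with $\Psi$ is exactly $P_{2-n}(\ln|\Psi'(w)|)$, and its hyperbolic harmonicity follows from the kernel equation together with the fact that $\Psi$ is an isometry of the two hyperbolic models. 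This is not circular: (\ref{Inconformal}) is established in Subsection \ref{SubsectionConformalTransformationIn} from the kernel transformation law (\ref{pkPsi}) and $P_\alpha 1\equiv 1$ alone, independently of the present lemma. Two points to tighten. First, your appeal to \cite{CS} for the kernel equation does not literally apply, since Caffarelli--Silvestre assume $-1<\alpha<1$ while $2-n\le -1$ for $n\ge 3$; you should instead verify directly that $p_{2-n}(y,w)=c_{n,2-n}\,y_n^{n-1}|y-w|^{2-2n}$ is annihilated by $y_n^2\Delta_y-(n-2)y_n\partial_{y_n}$, a one-line computation (it is the classical Poisson kernel of the hyperbolic Laplacian on the half space). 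Note that you cannot shortcut this by quoting Proposition \ref{alphahamonic} on the ball, because the boundary datum on $\Sph^{n-1}$ corresponding to $\ln|\Psi'(w)|$ blows up logarithmically at the north pole, so your choice to work on the half space with explicit decay estimates is the right one, and your dominated-convergence justification of differentiation under the integral is adequate. As for what each approach buys: yours is shorter and conceptual, exploiting the conformal covariance already available earlier in the paper; the paper's induction, though heavier, is entirely self-contained (no interchange of limits at all), yields the quantitative identity $\Delta_{\mathbb{H}}\widetilde{I}_n=-\frac{(n-2)(1-|x|^2)}{2}$ explicitly, and produces the intermediate ODE relation (\ref{inductionstep}) that is reused in the proof of the boundary-value Lemma \ref{LemmaBoundaryValueThroughInduction}.
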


\begin{proof}
	Through direct calculation we have
\[\nabla\ln\frac{1-2x_n+|x|^2}{2}=\frac{2x-2e_n}{1-2x_n+|x|^2},\]
here $e_n$ means the unit vector in the direction $x_n$.

As a result, we have
\[\Delta \ln\frac{1-2x_n+|x|^2}{2}=\frac{2n-4}{1-2x_n +|x|^2},\]
and
\begin{eqnarray*}
\Delta_{\mathbb{H}} \ln\frac{1-2x_n+|x|^2}{2} &=&\left(\frac{1-|x|^2}{2}\right)^2 \frac{2n-4}{1-2x_n +|x|^2}
\\&&+(n-2)\frac{1-|x|^2}{2}\left\langle x,\frac{2x-2e_n}{1-2x_n+|x|^2}\right\rangle
\\	&=& \frac{(1-|x|^2)^2(n-2)+(n-2)(1-|x|^2)(2|x|^2-2x_n)}{2(1-2x_n+|x|^2)}
\\ &=& \frac{(n-2)(1-|x|^2)}{2}
\end{eqnarray*}

Next we want to show that
\begin{equation}\label{InInductionResult}
\Delta_{\mathbb{H}} \widetilde{I}_n=-\frac{(n-2)(1-|x|^2)}{2}.
\end{equation}
Since $\widetilde{I}_n$ is a radial function, we can verify this in polar coordinates in the Euclidean unit ball. Where we have
\[\Delta_{\mathbb{H}} \widetilde{I}_n=\left(\frac{1-r^2}{2}\right)^2\partial_r^2 \widetilde{I}_n+\left(\frac{1-r^2}{2}\right)^2\frac{n-1}{r}\partial_r \widetilde{I}_n+\frac{(n-2)r(1-r^2)}{2}\partial_r \widetilde{I}_n\]
For $n=2$ it is easy to see that $\widetilde{I}_2=0$, and for $n=3$, we can integrate by part to get $\widetilde{I}_3=\ln(4)+\frac{(1-r)^2\ln(1-r)-(1+r)^2\ln(1+r)}{2r}$. So it is easy to verify by direct calculation that (\ref{InInductionResult}) is true for $n=2$ and $n=3$.

When $n>3$, suppose (\ref{InInductionResult}) is true for $\widetilde{I}_{n-2}$, from which we have
\begin{eqnarray*}
\Delta_{\mathbb{H}} \widetilde{I}_{n-2} &=&\left(\frac{1-r^2}{2}\right)^2\partial_r^2 \widetilde{I}_{n-2}+\left(\frac{1-r^2}{2}\right)^2\frac{n-3}{r}\partial_r \widetilde{I}_{n-2}
\\&&+\frac{(n-4)r(1-r^2)}{2}\partial_r \widetilde{I}_{n-2}\\
&=& \left(\frac{1-r^2}{2}\right)^2\partial_r^2 \widetilde{I}_{n-2}+\frac{1-r^2}{4r}((n-3)+(n-5)r^2)\partial_r \widetilde{I}_{n-2} \\
&=&-\frac{(n-4)(1-r^2)}{2}.
\end{eqnarray*}
After rearranging we get
\begin{equation}\label{inductionstep}
	\partial_r^2 \widetilde{I}_{n-2}=-\frac{2(n-4)}{1-r^2}-\frac{2(n-4)r}{1-r^2}\partial_r\widetilde{I}_{n-2}-\frac{n-3}{r}\partial_r\widetilde{I}_{n-2}.
\end{equation}
Consider (\ref{InInductionRelation}), take derivative with respect to $r$, we get
\[\partial_r \widetilde{I}_n=\frac{1-r^4}{4r(n-3)}\partial_r^2\widetilde{I}_{n-2}+\left(1-\frac{1+3r^4}{4r^2(n-3)}\right)\partial_r \widetilde{I}_{n-2}-\frac{r}{n-3},\]
and
\begin{eqnarray*}
\partial_r^2 \widetilde{I}_n &=& 	\frac{1-r^4}{4r(n-3)}\partial_r^3\widetilde{I}_{n-2} +\left(1-\frac{1+3r^4}{2r^2(n-3)}\right)\partial_r^2 \widetilde{I}_{n-2} 
\\ &&+\frac{1-3r^4}{2r^3(n-3)}\partial_r \widetilde{I}_{n-2}-\frac{1}{n-3}.
\end{eqnarray*}
Using (\ref{inductionstep}), we have
\begin{eqnarray*}\partial_r^3\widetilde{I}_{n-2} &=&-\frac{4(n-4)r}{(1-r^2)^2}+\left(\frac{n-3}{r^2}-\frac{2(n-4)(1+r^2)}{(1-r^2)^2}\right)\partial_r\widetilde{I}_{n-2}
\\&&-\left(\frac{n-3}{r}+\frac{2(n-4)r}{1-r^2}\right)\partial_r^2 \widetilde{I}_{n-2},	
\end{eqnarray*}
and
\begin{eqnarray*}
 &&\frac{1-r^4}{4r(n-3)}\partial_r^3\widetilde{I}_{n-2}
\\ &=&-\frac{(n-4)(1+r^2)}{(n-3)(1-r^2)}+\left(\frac{1-r^4}{4r^3}-\frac{(n-4)(1+r^2)^2}{2r(n-3)(1-r^2)}\right)\partial_r\widetilde{I}_{n-2}\\
&&-\left(\frac{1-r^4}{4r^2}+\frac{(n-4)(1+r^2)}{2(n-3)}\right)\partial_r^2\widetilde{I}_{n-2}	
\end{eqnarray*}

As a result, we have
\begin{eqnarray*}
\partial_r^2\widetilde{I}_{n} &=& \left(\frac{-(n+1)r^4+2(n-2)r^2-(n-1)}{4r^2(n-3)}\right)	\partial_r^2\widetilde{I}_{n-2}\\
&&+\left(\frac{(n-1)-(3n-9)r^2-(5n-13)r^4-(n-11)r^6}{4r^3(n-3)(1-r^2)}\right)\partial_r\widetilde{I}_{n-2}\\
&&- \frac{(n-3)+(n-5)r^2}{(n-3)(1-r^2)}
\end{eqnarray*}

Hence
\begin{eqnarray*}
&&\left(\frac{1-r^2}{2}\right)^2\partial_r^2\widetilde{I}_{n}
\\ &=& \left(\frac{-(n+1)r^4+2(n-2)r^2-(n-1)}{4r^2(n-3)}\right)\left(\frac{1-r^2}{2}\right)^2	\partial_r^2\widetilde{I}_{n-2}\\
&&+\left(\frac{(n-1)-(3n-9)r^2-(5n-13)r^4-(n-11)r^6}{8r^3(n-3)}\right)\frac{1-r^2}{2}\partial_r\widetilde{I}_{n-2}\\
&&- \frac{(n-3)+(n-5)r^2}{2(n-3)}\left(\frac{1-r^2}{2}\right)
\end{eqnarray*}

\begin{eqnarray*}
&&\left(\frac{1-r^2}{2}	\right)^2\frac{n-1}{r}\partial_r\widetilde{I}_n 
\\&=& 
\frac{(n-1)(1-r^4)}{4r^2(n-3)}\left(\frac{1-r^2}{2}\right)^2\partial_r^2\widetilde{I}_{n-2}\\
&&+\left(\frac{(n-1)(1-r^2)^2}{4r}-\frac{(n-1)(1-r^2)^2(1+3r^4)}{16r^3(n-3)}\right)\partial_r\widetilde{I}_{n-2}\\
&&-\frac{(n-1)(1-r^2)^2}{4(n-3)}
\end{eqnarray*}

\begin{eqnarray*}
&&\frac{(n-2)r(1-r^2)}{2}\partial_r \widetilde{I}_n
\\ & =& \frac{(n-2)(1+r^2)}{2(n-3)}\left(\frac{1-r^2}{2}\right)^2\partial_r^2\widetilde{I}_{n-2}\\
&& +\left(1-\frac{1+3r^4}{4(n-3)r^2}\right)\frac{(n-2)r(1-r^2)}{2}\partial_r\widetilde{I}_{n-2}\\
&& -\frac{(n-2)r^2(1-r^2)}{2(n-3)}
\end{eqnarray*}

Adding them up, we get
\begin{eqnarray*}
\Delta_{\mathbb{H}} \widetilde{I}_n &=& \frac{(n-2)-r^2}{n-3}\left(\frac{1-r^2}{2}\right)	^2\partial_r^2 \widetilde{I}_{n-2}\\
&&+\frac{-(n-5)r^4+(n^2-8n+13)r^2+(n-2)(n-3)}{2r(n-3)}\left(\frac{1-r^2}{2}\right)\partial_r\widetilde{I}_{n-2}\\
&&- \frac{(1-r^2)((n-2)+(n-4)r^2)}{2(n-3)}
\end{eqnarray*}

Note that we have
\begin{eqnarray*}
&&\frac{(n-2)-r^2}{(n-3)}\cdot\frac{(n-3)+(n-5)r^2}{2r}
\\&=&\frac{-(n-5)r^4+(n^2-8n+13)r^2+(n-2)(n-3)}{2r(n-3)},	
\end{eqnarray*}
as a result, we have

\begin{eqnarray*}
&&\Delta_{\mathbb{H}} \widetilde{I}_n
\\ &=& \frac{(n-2)-r^2}{n-3} \left(\left(\frac{1-r^2}{2}\right)	^2\partial_r^2 \widetilde{I}_{n-2} +\frac{(n-3)+(n-5)r^2}{2r} \left(\frac{1-r^2}{2}\partial_r \widetilde{I}_{n-2}\right) \right)\\
&& -\frac{(1-r^2)((n-2)+(n-4)r^2)}{2(n-3)}
\end{eqnarray*}

Use induction assumption in the form
\[\left(\frac{1-r^2}{2}\right)^2\partial_r^2 \widetilde{I}_{n-2}+\frac{1-r^2}{4r}((n-3)+(n-5)r^2)\partial_r \widetilde{I}_{n-2}
=-\frac{(n-4)(1-r^2)}{2},\]

Then we get
\[\Delta_{\mathbb{H}} \widetilde{I}_n = -\frac{(n-2)(1-r^2)}{2}.\]
Which finishes the proof.
\end{proof}

\subsection{Boundary Value Through Induction}\label{SubsectionBoundaryValueThroughInduction}
Using the induction relation (\ref{InInductionRelation}), we can find boundary value for $\widetilde{I}_n$. We have the following lemma.

\begin{lemma}\label{LemmaBoundaryValueThroughInduction}
For $n>2$ we have
\[\lim_{r\to 1}\widetilde{I}_n=0,\]	
\[\lim_{r\to 1}\partial_r\widetilde{I}_n=-1,\]
and
\[\lim_{r\to 1}\frac{\widetilde{I}_n -0}{r-1}=-1.\]
\end{lemma}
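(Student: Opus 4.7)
My plan is to establish the first two limits by induction on $n$ in steps of two (handling the odd and even parities with separate base cases), after which the third limit follows immediately from L'Hopital's rule: once $\widetilde{I}_n(1) = 0$ and $\partial_r \widetilde{I}_n(r) \to -1$ as $r \to 1$, one has $\lim_{r \to 1} \widetilde{I}_n(r)/(r-1) = \lim_{r \to 1} \partial_r \widetilde{I}_n(r) = -1$.

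For base cases I would invoke the explicit formulas already on hand. For $n = 4$, the even-dimensional formula from Theorem \ref{TheoremLimitCaseInequalityExplicitIn} reduces to $\widetilde{I}_4 = (1-r^2)/2$, giving both limits by inspection. For $n = 3$, the expression $\widetilde{I}_3 = \ln 4 + \left[(1-r)^2 \ln(1-r) - (1+r)^2 \ln(1+r)\right]/(2r)$ recorded inside the proof of Lemma \ref{InLnHyperbolicHarmonic} gives $\widetilde{I}_3(1) = \ln 4 - 2\ln 2 = 0$ (using $t^2 \ln t \to 0$) and $\partial_r \widetilde{I}_3(1) = -1$ by a short differentiation. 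For even $n$ one could alternatively bypass induction entirely, since the leading coefficient in the polynomial formula of Theorem \ref{TheoremLimitCaseInequalityExplicitIn} equals $1/2$, yielding $\partial_r \widetilde{I}_n(1) = -1$ directly.

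For the inductive step, suppose $\widetilde{I}_{n-2}$ satisfies the first two limits. Evaluating the relation (\ref{InInductionRelation}) at $r = 1$ immediately gives $\widetilde{I}_n(1) = 0$, since the prefactor $(1-r^4)/[4r(n-3)]$ vanishes like $(1-r)$ while $\partial_r \widetilde{I}_{n-2}$ remains bounded. Differentiating (\ref{InInductionRelation}) in $r$ and evaluating at $r = 1$, three of the four resulting terms are handled by direct substitution using that the derivative of the prefactor at $r = 1$ equals $-1/(n-3)$, and these contribute $1/(n-3) + (-1) + (-1/(n-3)) = -1$ in total. The main obstacle is the remaining middle term $\frac{1-r^4}{4r(n-3)} \partial_r^2 \widetilde{I}_{n-2}$, which is indeterminate of form $0 \cdot \infty$ since $\partial_r^2 \widetilde{I}_{n-2}$ may blow up at $r = 1$. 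To resolve it I would substitute the rearranged hyperbolic-harmonic equation (\ref{inductionstep}) for $\partial_r^2 \widetilde{I}_{n-2}$ and simplify using $(1-r^4)/(1-r^2) = 1 + r^2$, which produces the factor $(1 + r \partial_r \widetilde{I}_{n-2})$; by the inductive hypothesis this factor tends to $0$ as $r \to 1$, so the middle term vanishes at $r = 1$. Combining everything, $\partial_r \widetilde{I}_n(1) = -1$, completing the induction.
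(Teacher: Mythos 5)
Your proof is correct, and its engine is the same as the paper's: induction in steps of two on the relation (\ref{InInductionRelation}), with the dangerous term $\frac{1-r^4}{4r(n-3)}\partial_r^2\widetilde{I}_{n-2}$ tamed by substituting (\ref{inductionstep}) and cancelling against $1-r^2$, which is exactly what the paper does. The differences are minor but real. First, the paper takes $n=2,3$ as base cases and therefore must carry a dimension-dependent constant $C$ ($C=0$ for $n=2$, $C=-1$ for $n\geq 3$) through the inductive step, verifying at the end that the limit $-1$ is independent of $C$; by instead starting from $n=3$ and $n=4$ (the latter via $\widetilde{I}_4=\tfrac{1}{2}(1-r^2)$ from Theorem \ref{TheoremLimitCaseInequalityExplicitIn}) you may assume $\partial_r\widetilde{I}_{n-2}\to -1$ throughout, which is precisely what makes your factor $1+r\,\partial_r\widetilde{I}_{n-2}\to 0$ argument work and removes the bookkeeping (note only one of the two pieces produced by (\ref{inductionstep}) carries that factor; the other piece dies simply because the prefactor $\frac{1-r^4}{4r(n-3)}$ vanishes while $\partial_r\widetilde{I}_{n-2}$ stays bounded, so your conclusion stands). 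Second, for the third limit the paper runs yet another induction, dividing (\ref{InInductionRelation}) by $r-1$ and again splitting the cases $n=4$ and $n\geq 5$, whereas you obtain it at once from the first two limits by L'H\^{o}pital (or the mean value theorem); that is a genuine, and welcome, simplification. Your observation that for even $n$ the whole statement can be read off from the polynomial formula (leading coefficient $\tfrac12$) is also correct, though that formula is itself a product of the same induction relation, so it is a shortcut in presentation rather than in substance.
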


\begin{proof}
For $n=2$, we have $\widetilde{I}_n =0$. For $n=3$ we have 
\[\widetilde{I}_3= \ln 4+\frac{(1-r)^2\ln (1-r)-(1+r)^2\ln(1+r)}{2r}.\]
It is easy to see that
\[\lim_{r\to 1}\widetilde{I}_3=0,\]
\[\lim_{r\to 1}\partial_r\widetilde{I}_3=-1,\]
and
\[\lim_{r\to 1}\frac{\widetilde{I}_3 -0}{r-1}=-1.\]

In general we use induction to prove that 
\[\lim_{r\to 1}\widetilde{I}_n=0\]
and
\[\lim_{r\to 1}\partial_r\widetilde{I}_n=C.\]
Here $C=0$ for $n=2$ and $C=-1$ for $n\geq 3$. Suppose the induction assumption holds for $n-2$, then we have
\[\lim_{r\to 1}\widetilde{I}_{n-2}=0\]
and
\[\lim_{r\to 1}\partial_r\widetilde{I}_{n-2}=C.\]

Now consider $\widetilde{I}_n$. By (\ref{InInductionRelation}), we have
\[\lim_{r\to 1}\widetilde{I}_{n}= \lim_{r\to 1}\frac{1-r^4}{4r(n-3)}\lim_{r\to 1}\partial_r\widetilde{I}_{n-2} +\lim_{r\to 1} \widetilde{I}_{n-2}+\lim_{r\to 1}\frac{1-r^2}{2(n-3)}. \]
Using the induction assumption for $\widetilde{I}_{n-2}$, it is easy to see that
\[\lim_{r\to 1}\widetilde{I}_{n}=0.\]

Start with (\ref{InInductionRelation}), take derivative with respect to $r$, we get
\[\partial_r \widetilde{I}_n=\frac{1-r^4}{4r(n-3)}\partial_r^2\widetilde{I}_{n-2}+\left(1-\frac{1+3r^4}{4r^2(n-3)}\right)\partial_r \widetilde{I}_{n-2}-\frac{r}{n-3}.\]
Use (\ref{inductionstep}) to substitute $\partial_r^2 \widetilde{I}_{n-2}$, then we get
\begin{eqnarray*}
\partial_r \widetilde{I}_n &=& \frac{1-r^4}{4r(n-3)}\left(-\frac{2(n-4)}{1-r^2}-\frac{2(n-4)r}{1-r^2}\partial_r\widetilde{I}_{n-2}-\frac{n-3}{r}\partial_r\widetilde{I}_{n-2}\right)\\
&&+\left(1-\frac{1+3r^4}{4r^2(n-3)}\right)\partial_r \widetilde{I}_{n-2}-\frac{r}{n-3}.	
\end{eqnarray*}
Use the induction assumption and take limit we have
\[\lim_{r\to 1} \partial_r\widetilde{I}_n=-1.\]

Now we use induction to show that
\[\lim_{r\to 1}\frac{\widetilde{I}_n(r)-0}{r-1}=C,\]
again, we have $C=0$ for $n=2$ and $C=-1$ for $n\geq 3$.
Suppose it is true for $n-2$ then we have the induction assumption
\[\lim_{r\to 1}\frac{\widetilde{I}_{n-2}(r)-0}{r-1}=C.\]
Use (\ref{InInductionRelation}), then we have
\[\frac{\widetilde{I}_n-0}{r-1}=-\frac{(1+r)(1+r^2)}{4r(n-3)}\partial_r \widetilde{I}_{n-2}+\frac{\widetilde{I}_{n-2}-0}{r-1}-\frac{1+r}{2(n-3)}.\]
Now take limit and use the induction assumption, then we have
\[\lim_{r\to 1}\frac{\widetilde{I}_{n}(r)-0}{r-1}=-\frac{C}{n-3}+C-\frac{1}{n-3}.\]
When $n\geq 5$, we have $\lim_{r\to 1} \partial_r\widetilde{I}_{n-2}=-1$ and $\lim_{r\to 1} \frac{\widetilde{I}_{n-2}-0}{r-1}=-1$, and as a result we have
\[\lim_{r\to 1}\frac{\widetilde{I}_{n}(r)-0}{r-1}=-1.\]
When $n=4$, we have $\lim_{r\to 1} \partial_r\widetilde{I}_{n-2}=0$ and $\lim_{r\to 1} \frac{\widetilde{I}_{n-2}-0}{r-1}=0$, and
\[\lim_{r\to 1}\frac{\widetilde{I}_{n}(r)-0}{r-1}=-\frac{1}{n-3}=-1.\]
This concludes the proof.
\end{proof}

\section{Uniqueness in the Limit Case Through the Method of Moving Spheres}\label{UniquenessThroughMovingSpheres}
In this section, we prove Theorem \ref{MainTheoremLimitCaseUniqueness}. For the convenience of calculation, it will be easier for us transform the inequality (\ref{InequalityLimitCase}) and its corresponding Euler-Lagrange equation to the upper half space using the transformation $\Psi:\R^n_+\to \B^n$ as given in (\ref{PsiDefinition}).

Note that in (\ref{InequalityLimitCase}) we have the inequality 
\[
	\|e^{\widetilde{I}_n+\widetilde{P}_{2-n}\widetilde{f}}\|_{L^n(\B^n)}\leq S_n\|e^{\widetilde{f}}\|_{L^{n-1}(\Sph^{n-1})},
\]
with Euler-Lagrange euquation
\begin{equation}\label{ELball}
	e^{(n-1)\widetilde{f}(\xi)}=\int_{\B^n}e^{n\widetilde{I}_n+n\widetilde{P}_{2-n}\widetilde{f}}\widetilde{p}_{2-n}(x,\xi)dx
\end{equation}

For any $\widetilde{f}\in L^{\infty}(\B^n)$, if we define a corresponding $f$ such that
\begin{equation}\label{flimitdef}
f(w)=\widetilde{f}\circ\Psi(w)+\ln|\Psi'(w)|	,
\end{equation}
then by change of variable we have
\begin{eqnarray*}
\left\|e^{\widetilde f}\right\|_{L^{n-1}(\Sph^{n-1})}^{n-1} &=&\int_{\Sph^{n-1}}e^{(n-1)\widetilde f}d\xi \\
&=&\int_{\R^{n-1}}e^{(n-1)(\widetilde{f}\circ\Psi+\ln|\Psi'(w)|)}dw\\
&=& \left\|e^{f}\right\|_{L^{n-1}(\R^{n-1})}^{n-1}.
\end{eqnarray*}
On the other hand, using change of variable, (\ref{pkPsi}) and (\ref{Inconformal}) we have

\begin{eqnarray*}
\left\|e^{\widetilde{I}_n+\widetilde{P}_{2-n} \widetilde{f}}\right\|_{L^{n}(B^n)}^n&=& \int_{\B^n}e^{n\widetilde{I}_n+n\widetilde{P}_{2-n}\widetilde{f}}dx\\ 
&=& \int_{\R^n_+}e^{n\widetilde{I}_n\circ\Psi+n(\widetilde{P}_{2-n}\widetilde{f})\circ \Psi+n\ln|\Psi'(y)|}dy
\\ &=&\int_{\R^n_+} e^{nP_{2-n}(\widetilde{f}\circ\Psi+\ln|\Psi'(w)|)}dy\\
&=& \left\|e^{P_{2-n}f}\right\|_{L^n(\R^{n}_+)}^n.
\end{eqnarray*}
Hence we have the inequality
\begin{equation}\label{limitupperhalf}
	\|e^{P_{2-n}f}\|_{L^n(\R^n_+)}\leq S_n\|e^{f}\|_{L^{n-1}(\R^{n-1})}.
\end{equation}
Moreover, if we assume that $\widetilde{f}\in L^\infty(\Sph^{n-1})$ is a solution to (\ref{ELball}), then the corresponding $f$ as defined in \ref{flimitdef} satisfies the Euler-Lagrange equation
\begin{equation}\label{ELup}
e^{(n-1)f(w)}=\int_{\R^n_+}e^{nP_{2-n}f}p_{2-n}(y,w)dy.	
\end{equation}

This section is organized as follows: we define several notation related to the inversion with respect to a sphere in Subsection \ref{SubsectionInversionSphereNotation}. We prove Theorem \ref{MainTheoremLimitCaseUniqueness} in Subsection \ref{SubsectionInversionWithRespectSphere} to Subsection \ref{SubsectionEndofMovingSphere}.  We start by assuming $\widetilde{f}\in C^1(\Sph^{n-1})$ is a solution  to (\ref{ELball}), then define the corresponding $f$ as in (\ref{flimitdef}). We will prove that the function $f$ as a solution to the equation (\ref{ELup}) is unique up to conformal transformation.

Note that this is different from directly proving uniqueness of smooth solutions to (\ref{ELup}). Since by starting with smooth solutions of (\ref{ELball}) we gain the asymptotic behavior $\ln|\Phi'(u)|$ as in (\ref{flimitdef}). This asymptotic behavior helps us to start the sphere in Subsection \ref{SubsectionStarttheSphere}.

\subsection{Notation}\label{SubsectionInversionSphereNotation}
For any $w\in\R^{n-1}$ and $\lambda\in \R$, define $v=(w,0)\in\R^n$. For all $y\in\R^n$ such that $y\neq v$ define the inversion with respect to a sphere centered at $v$ with radius $\lambda$ as
\begin{equation}\label{plvd}
\phi_{\lambda,v}(y)=v+\frac{\lambda^2}{|y-v|^2}(y-v).	
\end{equation}

Note that $\phi_{\lambda,v}(y)$ maps the upper half space into the itself, and $\phi_{\lambda,v}(y):\R^n_+\to \R^n_+$ is a conformal transformation. We use  $|\phi_{\lambda,v}'(y)|$ to denote the conformal factor such that
\[\phi_{\lambda,v}^\ast dy^2=|\phi_{\lambda,v}'(y)|^2 dy^2\]
Note that $|\phi_{\lambda,v}'(y)|^n$ is the Jacobian in $\R^n_+$. Through direct calculation using (\ref{plvd}), we can see that
\[|\phi_{\lambda,v}'(y)|^n=\frac{\lambda^{2n}}{|y-v|^{2n}},\]
and
\begin{equation}\label{plvConformalFactor}
	|\phi_{\lambda,v}'(y)|=\frac{\lambda^{2}}{|y-v|^{2}}
\end{equation}

 Note that since $\Psi\circ \phi_{\lambda,v}\circ\Psi^{-1}$ is a conformal transformation that maps $\B^n$ to itself, we have $\Psi\circ \phi_{\lambda,v}\circ\Psi^{-1}\in SO(n,1)$. For any $f:\R^{n-1}\to\R$, we can define $f_{\lambda,v}$ as the conformal transformation of $f$ under $\phi_{\lambda,v}$ such that
\begin{equation}\label{flvlimitdef}
f_{\lambda,v}=f\circ\phi_{\lambda,v}+\ln|\phi_{\lambda,v}'|.	
\end{equation}

In addition we define
\[\B_{\lambda,v}=\{y\in\R^n: |y-v|<\lambda\},\]
with $\B^+_{\lambda,v}=\B_{\lambda,v}\cap\R^n_+$ and $\overline{\B}^+_{\lambda,v}$ denotes the closure of $\B^+_{\lambda,v}$ in $\R^n$. We also use $\overline{\R}^n_+$ to denote the closure of $\R^n_+$ in $\R^n$.

\subsection{Inversion with Respect to Spheres}\label{SubsectionInversionWithRespectSphere}
Suppose $\widetilde{f}\in C^\infty(\Sph^{n-1})$ is a solution to (\ref{ELball}), for $w\in \partial \R^n_+=\R^{n-1}$ define
\[
f(w)=\widetilde{f}\circ\Psi (w)+\ln |\Psi'(w)|.	
\]
Through the discussion before (\ref{ELup}) we know that $f$ is a solution to (\ref{ELup}). Under transformation $\plv$ as defined in (\ref{plvd}) we can define $\flv$ as in (\ref{flvlimitdef})
\begin{eqnarray*}
\flv (w)&=& f\circ \plv (w)+\ln|\plv'(w)|	
\\&=& \widetilde{f}\circ \Psi\circ\plv(w)+\ln|\Psi'(\plv(w))|+\ln|\plv'(w)|
\\ &=& \widetilde{f}\circ \Psi\circ\plv(w)+\ln\frac{2\lambda^2}{(1+|v_0|^2)|w-v_0|^2+\lambda^4+2\lambda^2 \langle v_0,w-v_0\rangle}.
\end{eqnarray*}
Note that here $|\Psi'(\plv(w))|$ is as in (\ref{PsiConformalFactorExampleBoundary}) and $|\plv'(w)|$ is as in (\ref{plvConformalFactor}). The notation $\langle v_0,u-v_0\rangle$ denotes the Euclidean inner product in $\R^{n}$. 

In the special case $v_0=0$, we have
\begin{equation}\label{asympf}
f_{\lambda,0}(v)=\widetilde{f}\circ\Psi\circ \phi_{\lambda,0} (v)+\ln\frac{2\lambda^2}{\lambda^4+|v|^2}.	
\end{equation}
\begin{remark}\label{RemarkflvSmooth}
Note that for any $\widetilde{f}\in C^{\infty}(\Sph^{n-1})$ and any $\lambda>0$ the function $f_{\lambda,0}$ is smooth in $\R^{n-1}$. We can see this using the definition of $\plv$ and $\Psi$, for any $w\in \R^{n-1}$
\begin{equation*}
\begin{split}
\Psi\circ \phi_{\lambda,0}(w) &=\Psi \left(\frac{\lambda^2}{|w|^2}w\right)\\
 &=	\left(\frac{\frac{2\lambda^2}{|w|^2}w}{1+\frac{\lambda^4}{|w|^2}},\frac{-1+\frac{\lambda^4}{|w|^2}}{1+\frac{\lambda^4}{|w|^2}}\right)\\
 &=\left(\frac{2\lambda^2 w}{\lambda^4+|w|^2},\frac{\lambda^4-|w|^2}{\lambda^4+|w|^2}\right).
\end{split}	
\end{equation*}
If we define $\Psi\circ\phi_{\lambda,0}(0)=(0,...,0,1)\in\R^n$ then it is easy to see that the map 
\[\Psi\circ\phi_{\lambda,0}:\R^{n-1}\to \Sph^{n-1}\]
is smooth. The other parts of the function $f_{\lambda,0}$ is also smooth.

Through similar calculation we can also conclude that for any $\widetilde{f}\in C^{\infty}(\Sph^{n-1})$, any $\lambda>0$ and any $v_0\in \R^{n-1}$, the function $\flv$ is smooth in $\R^{n-1}$.
\end{remark}

On the other hand, for any $\widetilde{f}\in C^\infty(\Sph^{n-1})$ define $f$ as in (\ref{flimitdef}) then we have
\[P_{2-n}(f)=P_{2-n}(\widetilde{f}\circ\Psi)+P_{2-n}(\ln|\Psi' (w)|).\]
Using (\ref{Inconformal}) and Remark \ref{RemarkExtensionSpecialCase}, we see that
\begin{equation}\label{pfharmonic}
\begin{split}
P_{2-n}(f) &=P_{2-n}(\widetilde{f}\circ\Psi)+\widetilde{I}_n\circ\Psi(y)+\ln|\Psi'(y)|\\
&=(\widetilde{P}_{2-n}\widetilde{f})\circ\Psi+\widetilde{I}_n\circ\Psi(y)+\ln|\Psi'(y)|.	
\end{split}
\end{equation}

For $\flv$, using (\ref{PKphi}) and (\ref{Extension}), we have for any $y\in \R^n_+$
\begin{eqnarray*}
P_{2-n}\flv (y) &=&P_{2-n}(f)\circ \plv (y)+\ln|\plv'(y)|	
\\ &=& P_{2-n}(\widetilde{f}\circ\Psi)\circ\plv (y)+\widetilde{I}_n\circ\Psi\circ\plv(y)
\\&&+\ln|\Psi'(\plv(y))|+\ln|\plv'(y)|
\\ &=& P_{2-n}(\widetilde{f}\circ\Psi)\circ\plv (y)+\widetilde{I}_n\circ\Psi\circ\plv(y)\\
& &+\ln\frac{2\lambda^2}{(1+|v_0|^2)|y-v_0|^2+\lambda^4 +2\lambda^2\langle v_0, y-v_0\rangle +\lambda^2 y_n}.
\end{eqnarray*}
Using result from Remark \ref{RemarkExtensionSpecialCase} we have
\begin{equation}\label{pflvharmonic}
\begin{split}
	P_{2-n}\flv (y) &= \left(\widetilde{P}_{2-n}\widetilde{f}\right)\circ\Psi\circ\plv (y)+\widetilde{I}_n\circ\Psi\circ\plv(y)\\
&+\ln\frac{2\lambda^2}{(1+|v_0|^2)|y-v_0|^2+\lambda^4 +2\lambda^2\langle v_0, y-v_0\rangle +\lambda^2 y_n}.
\end{split}
\end{equation}

\begin{remark}
Note that when $\widetilde{f}\in C(\Sph^{n-1})$, both functions $P_{2-n}f$ and $P_{2-n}\flv$ are continuous in $\overline{\R}^n_+$. We can see this from (\ref{pfharmonic}) and (\ref{pflvharmonic}) using similar calculations as in Remark \ref{RemarkflvSmooth}.
\end{remark}

Again, note that . When $v_0=0$, we have
\begin{equation}\label{asymppf}
P_{2-n} f_{\lambda,0} (y)=\left(\widetilde{P}_{2-n}\widetilde{f}\right)\circ\Psi\circ \phi_{\lambda,0}(y)+\widetilde{I}_n\circ\Psi\circ\phi_{\lambda,0}(y)+\ln\frac{2\lambda^2}{|y|^2+\lambda^4+\lambda^2 y_n}	
\end{equation}

 We can show the following result, which will be used in subsequent parts.

\begin{lemma}\label{hyperbolicharmonic}
Suppose $\widetilde{f}\in C(\Sph^{n-1})$. Define $f$, $\flv$ as in (\ref{flimitdef}) and (\ref{flvlimitdef}) repectively. Define the coresponding extensions $P_{2-n}f$ and $P_{2-n}\flv$ as in (\ref{pfharmonic}), then $P_{2-n}f$ and $P_{2-n}\flv$ are harmonic in $\R^n_+$ with the standard Hyperbolic metric and with boundary values $f$ and $\flv$ respectively.
\end{lemma}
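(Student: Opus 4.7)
The plan is to decompose $P_{2-n}f$ and $P_{2-n}\flv$ via (\ref{pfharmonic}) and (\ref{pflvharmonic}) and verify each summand is hyperbolic harmonic on $\R^n_+$. First I would verify that $\Psi:\R^n_+\to\B^n$ is an isometry from the upper half space with $dy^2/y_n^2$ to the ball with $4dx^2/(1-|x|^2)^2$: from $\Psi^*dx^2=|\Psi'(y)|^2 dy^2$ with $|\Psi'(y)|=2/(1+2y_n+|y|^2)$, together with the identity $1-|\Psi(y)|^2=2y_n|\Psi'(y)|$ derived in the proof of Proposition \ref{PropositionPoissonKernelConformalTransformationPsi}, a direct computation gives $\Psi^*(4dx^2/(1-|x|^2)^2)=dy^2/y_n^2$. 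A similar check shows $\plv:\R^n_+\to\R^n_+$ is itself a hyperbolic isometry (inversion about a sphere centered on $\partial\R^n_+$). Hyperbolic harmonicity therefore transfers freely across $\Psi$ and $\plv$.

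Next I would check each summand of (\ref{pfharmonic}) is hyperbolic harmonic. For $\alpha=2-n$ the zero-order coefficient $\alpha(2-n-\alpha)/2$ in (\ref{EquationBall}) vanishes, and a short computation against the hyperbolic volume form $(2/(1-|x|^2))^n$ shows the remaining equation $\mathrm{div}[((1-|x|^2)/2)^{2-n}\nabla u]=0$ coincides with $\Delta_{\mathbb{H}}u=0$ on $\B^n$ up to a positive factor. Proposition \ref{alphahamonic} then gives that $\widetilde{P}_{2-n}\widetilde{f}$ is hyperbolic harmonic on $\B^n$, hence $(\widetilde{P}_{2-n}\widetilde{f})\circ\Psi$ is hyperbolic harmonic on $\R^n_+$. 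For the remaining summand, identity (\ref{EquationConformalFactorTransformation}) gives $\ln|\Psi'(y)|=\ln((1-2[\Psi(y)]_n+|\Psi(y)|^2)/2)$, so $\widetilde{I}_n\circ\Psi(y)+\ln|\Psi'(y)|$ equals $(\widetilde{I}_n+\ln((1-2x_n+|x|^2)/2))\circ\Psi$, which is hyperbolic harmonic by Lemma \ref{InLnHyperbolicHarmonic} precomposed with the isometry.

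For the boundary values I would let $y\to w\in\R^{n-1}$, so $\Psi(y)\to\Psi(w)\in\Sph^{n-1}$. Then Proposition \ref{alphahamonic} yields $(\widetilde{P}_{2-n}\widetilde{f})\circ\Psi(y)\to\widetilde{f}(\Psi(w))$; Lemma \ref{LemmaBoundaryValueThroughInduction} yields $\widetilde{I}_n\circ\Psi(y)\to 0$; and continuity of the conformal factor gives $\ln|\Psi'(y)|\to\ln|\Psi'(w)|$. The sum equals $\widetilde{f}(\Psi(w))+\ln|\Psi'(w)|=f(w)$ by (\ref{flimitdef}).

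For $\flv$ the same argument applies to (\ref{pflvharmonic}): since $\Psi\circ\plv:\R^n_+\to\B^n$ is a composition of hyperbolic isometries, $(\widetilde{P}_{2-n}\widetilde{f})\circ\Psi\circ\plv$ is hyperbolic harmonic. Applying (\ref{EquationConformalFactorTransformation}) at $z=\plv(y)$ identifies the third summand in (\ref{pflvharmonic}) as $\ln|\Psi'(\plv(y))|+\ln|\plv'(y)|$, so regrouping gives $(\widetilde{I}_n+\ln((1-2x_n+|x|^2)/2))\circ\Psi\circ\plv(y)+\ln|\plv'(y)|$; the first piece is hyperbolic harmonic by Lemma \ref{InLnHyperbolicHarmonic}, and the second equals $2\ln\lambda-2\ln|y-v_0|$, which is hyperbolic harmonic on $\R^n_+$ by direct computation since $v_0\in\partial\R^n_+$. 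Boundary values recover $\flv$ by (\ref{flvlimitdef}) in the same manner. The expected main obstacle is just bookkeeping the conformal factors $|\Psi'|$ and $|\plv'|$ and confirming boundary continuity termwise; no genuinely new analytic input beyond the earlier sections is required.
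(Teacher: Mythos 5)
Your proposal is correct and takes essentially the same approach as the paper: both decompose $P_{2-n}f$ via (\ref{pfharmonic}), use Proposition \ref{alphahamonic} for $(\widetilde{P}_{2-n}\widetilde{f})\circ\Psi$, Lemma \ref{InLnHyperbolicHarmonic} transported by the isometry $\Psi$ for $\widetilde{I}_n\circ\Psi+\ln|\Psi'(y)|$, and Lemma \ref{LemmaBoundaryValueThroughInduction} together with continuity for the boundary values. The only cosmetic difference is in handling $\flv$: the paper writes $P_{2-n}\flv=(P_{2-n}f)\circ\plv+\ln|\plv'|$ and transfers harmonicity through the hyperbolic isometry $\plv$, whereas you regroup the summands of (\ref{pflvharmonic}) directly, and both variants rest on the same observation that $\ln|\plv'|$ is hyperbolic harmonic.
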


\begin{proof}
	We consider $P_{2-n}f$ firstly. Using (\ref{pfharmonic}) and (\ref{pkPsi}) we can see that
\begin{eqnarray*}
	P_{2-n}(f) &=&P_{2-n}(\widetilde{f}\circ\Psi)+\widetilde{I}_n\circ\Psi(y)+\ln|\Psi'(y)|
	\\ &=& (\widetilde{P}_{2-n}\widetilde{f})\circ\Psi+\widetilde{I}_n\circ\Psi(y)+\ln|\Psi'(y)|.
\end{eqnarray*}
From Proposition \ref{alphahamonic} we can see that $(\widetilde{P}_{2-n}\widetilde{f})\circ\Psi$ is harmonic in $\R^{n}_+$ with the Hyperbolic metric and that $\widetilde{P}_{2-n}\widetilde{f}\circ\Psi(u)=\widetilde{f}\circ\Psi(u)$ for all $u\in \partial\R^n_+=\R^{n-1}$.

From Lemma \ref{InLnHyperbolicHarmonic} we see that in $\B^n$ the fucntion $\widetilde{I}_n(x)+\ln\frac{1-2x_n+|x|^2}{2}$ is hyperbolic harmonic. Since $\Psi :\R^n_+\to \B^n$ is an isometry between two models of hyperbolic spaces, we have $\widetilde{I}_n\circ\Psi(y)+\ln|\Psi'(y)|$ is also harmonic in $\R^{n}_+$ with the Hyperbolic metric. Note that here we used the relation (\ref{EquationConformalFactorTransformation}).

From Lemma \ref{LemmaBoundaryValueThroughInduction} we see that $\widetilde{I}_n$ is continuous up to the boundary and that $\widetilde{I}_n\circ \Psi(w)=0$ for all $w\in \partial\R^n_+=\R^{n-1}$. As a result, we have $\widetilde{I}_n\circ\Psi(w)+\ln|\Psi'(w)|=\ln|\Psi'(w)|$ for all $w\in \partial \R^n_+=\R^{n-1}$. This finishes the proof for $P_{2-n}f$.

Now we consider $P_{2-n}\flv$. By Remark \ref{RemarkflvSmooth}, $\flv$ is continuous on $\partial \R^{n}_+$. Using the definition of $\flv$, (\ref{PKphi}) and (\ref{Extension}), we have
\[P_{2-n}\flv=P_{2-n}f\circ\plv+\ln|\plv'|.\]
Since $\plv$ is an isometry of $\R^n_{+}$ with hyperbolic metric, we have $P_{2-n}f\circ\plv$ is harmonic in $\R^{n}_+$ with the hyperbolic metric. On the other hand, using (\ref{plvConformalFactor}), by direct calculation we can see that $\ln|\plv'|$ is hyperbolic harmonic in $\R^{n}_+$.

From (\ref{pflvharmonic}) and similar calculations as in Remark \ref{RemarkflvSmooth} we see that $P_{2-n}\flv$ is continuous in $\overline{R}^n_+$. As a result, $P_{2-n}\flv$ is hyperbolic harmonic in $\R^n_+$ with boundary value $\flv$.

\end{proof}

Note that in this proof we only need the asymptotic behavior (\ref{asympf}) and (\ref{asymppf}) and Proposition \ref{alphahamonic}. We also need the regularity $\widetilde{f}\in C(\Sph^{n-1})$ in order to use Proposition \ref{alphahamonic}, but we do not need to assume  $\widetilde{f}$ to be a solution of (\ref{ELball}).

\subsection{Start the Sphere}\label{SubsectionStarttheSphere}
Now we start the moving sphere argument. The first step is called start the sphere; and we prove it in the following two propositions. The proof of Proposition \ref{startf} relies on the asymptotic behavior (\ref{asympf}) while the proof of Proposition \ref{startu} relies on the asymptotic behavior (\ref{asymppf}) and the maximum principle.

\begin{proposition}\label{startf} For any $\widetilde{f}\in C^\infty(\Sph^{n-1})$ any $\lambda>0$ and any $v_0\in \partial \R^n_{+}=\R^{n-1}$, define $f$ and $\flv$ as in (\ref{flimitdef}) and (\ref{flvlimitdef}) respectively.
Then there exists $\lambda_0>0$ depending only on $v_0$, such that for all $0<\lambda<\lambda_0$
\[e^{(n-1)\flv}(w)< e^{(n-1)f}(w),\]
for all $|w-v_0|>\lambda$.
\end{proposition}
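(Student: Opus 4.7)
The plan is to write $f(w)-f_{\lambda,v_0}(w)$ as a Lipschitz piece in $\widetilde{f}$ plus a geometric logarithm, show the logarithm dominates when $\lambda$ is small, and then exponentiate; since $n-1>0$, the strict inequality $f(w)>f_{\lambda,v_0}(w)$ immediately yields the desired $e^{(n-1)f_{\lambda,v_0}(w)}<e^{(n-1)f(w)}$. Using (\ref{flimitdef}), (\ref{flvlimitdef}), (\ref{PsiConformalFactorExampleBoundary}) and (\ref{plvConformalFactor}), a direct computation gives
\[
f(w)-f_{\lambda,v_0}(w)=\bigl[\widetilde{f}\circ\Psi(w)-\widetilde{f}\circ\Psi(\phi_{\lambda,v_0}(w))\bigr]+\ln R_{\lambda,v_0}(w),
\]
where
\[
R_{\lambda,v_0}(w)=\frac{(1+|v_0|^2)|w-v_0|^2+2\lambda^2\langle v_0,w-v_0\rangle+\lambda^4}{\lambda^2(1+|w|^2)}.
\]
Expanding $\lambda^2|w|^2$ around $v_0$ in the denominator gives the key algebraic identity
\[
R_{\lambda,v_0}(w)-1=\frac{(1+|v_0|^2-\lambda^2)(|w-v_0|^2-\lambda^2)}{\lambda^2(1+|w|^2)},
\]
so $\ln R_{\lambda,v_0}$ vanishes exactly on the sphere $|w-v_0|=\lambda$ and is strictly positive outside it, provided $\lambda<\sqrt{1+|v_0|^2}$.

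Because $\widetilde{f}\in C^\infty(\Sph^{n-1})$ and $\Psi$ extends smoothly to the one-point compactification of $\R^{n-1}$, the composition $\widetilde{f}\circ\Psi$ is globally Lipschitz on $\R^{n-1}$ with some constant $L$. Setting $t:=|w-v_0|/\lambda>1$, one checks $|w-\phi_{\lambda,v_0}(w)|=\lambda(t-1/t)$, and therefore
\[
\bigl|\widetilde{f}\circ\Psi(w)-\widetilde{f}\circ\Psi(\phi_{\lambda,v_0}(w))\bigr|\leq \min\{L\lambda(t-1/t),\,2\|\widetilde{f}\|_{L^\infty(\Sph^{n-1})}\}.
\]
The task reduces to showing $\ln R_{\lambda,v_0}(w)$ strictly dominates this minimum for every $t>1$, once $\lambda$ is below some threshold $\lambda_0=\lambda_0(v_0,\widetilde{f})$.

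I would split this into three overlapping regimes in $t$. Near the sphere, a Taylor expansion yields $\ln R_{\lambda,v_0}=2(t-1)(1+O(\lambda))+O((t-1)^2)$ versus the Lipschitz bound $2L\lambda(t-1)+O((t-1)^2)$, so the log wins in a fixed annulus $t\in(1,1+\eta_0)$ provided $\lambda<1/(2L)$. On a compact range $t\in[1+\eta_0,T]$, $\ln R_{\lambda,v_0}\to 2\ln t$ uniformly as $\lambda\to 0$ while the Lipschitz bound tends to $0$ uniformly, and continuity handles the middle regime. For $t\geq T$, an AM-GM estimate on the numerator of $R_{\lambda,v_0}$ gives $R_{\lambda,v_0}\geq t^2/(1+(|v_0|+\lambda t)^2)$; a short case analysis on whether $\lambda t$ remains bounded or is large shows that, for $T$ sufficiently large and $\lambda$ sufficiently small, $\ln R_{\lambda,v_0}$ exceeds $2\|\widetilde{f}\|_{L^\infty(\Sph^{n-1})}$. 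Choosing $\lambda_0$ small enough to satisfy all three regimes simultaneously gives the required threshold.

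The main obstacle is the delicate matching at $t\to 1^+$, where both $\ln R_{\lambda,v_0}$ and the $\widetilde{f}$-difference vanish, so the comparison must be carried out at the level of first-order coefficients in $(t-1)$; the explicit algebraic identity for $R_{\lambda,v_0}-1$ isolates the correct linear-in-$(t-1)$ behavior of the log term and makes this matching transparent.
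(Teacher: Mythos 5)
Your proposal is correct, but it follows a genuinely different route from the paper. The paper's proof (written for $v_0=0$) splits $\{|w|>\lambda\}$ into two regions: near the center it uses that $\frac{d}{dr}\left(r^{n-1}e^{(n-1)f}(r,\theta)\right)>0$ for $0<r<r_0$ (a consequence of $f\in C^1$ and $e^{(n-1)f}>0$), which gives the Kelvin-type comparison $e^{(n-1)f_{\lambda,0}}<e^{(n-1)f}$ for $\lambda<|w|<r_0$ in one stroke; for $|w|>r_0$ it compares $\left(\tfrac{2m}{1+|w|^2}\right)^{n-1}$ with $\left(\tfrac{2\lambda^2M}{|w|^2+\lambda^4}\right)^{n-1}$, where $m=\inf_{\Sph^{n-1}}e^{\widetilde f}$ and $M=\sup_{\Sph^{n-1}}e^{\widetilde f}$. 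You instead work additively, writing $f-\flv$ as a Lipschitz difference of $\widetilde f\circ\Psi$ plus $\ln R_{\lambda,v_0}$, and your factorization $R_{\lambda,v_0}-1=\frac{(1+|v_0|^2-\lambda^2)(|w-v_0|^2-\lambda^2)}{\lambda^2(1+|w|^2)}$ is correct (expand $|w|^2=|w-v_0|^2+2\langle v_0,w-v_0\rangle+|v_0|^2$ in the denominator of the paper's formula for $\flv$); the three-regime comparison then goes through as you describe, with the near-sphere matching handled by the linear-in-$(t-1)$ coefficients ($\approx 2$ for the log versus $\approx 2L\lambda$ for the Lipschitz term) and the far regime by the AM--GM bound $R_{\lambda,v_0}\ge t^2/(1+(|v_0|+\lambda t)^2)$, which indeed beats $e^{2\|\widetilde f\|_{L^\infty}}$ once $T$ is large and $\lambda$ small. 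What the paper's monotonicity trick buys is brevity near the reflection sphere — it is the standard "start the sphere" device of Li--Zhu and avoids any quantitative matching there; what your decomposition buys is an explicit, quantitative threshold $\lambda_0$ (in terms of $L$, $\|\widetilde f\|_{L^\infty}$ and $|v_0|$), a proof that works verbatim for arbitrary $v_0$ rather than only $v_0=0$, and a transparent explanation of why the inequality degenerates exactly on $|w-v_0|=\lambda$. Both arguments use only $\widetilde f\in C^1(\Sph^{n-1})$ (compactness of the sphere makes your global Lipschitz constant available) and neither uses the Euler--Lagrange equation, consistent with the paper's remark after Proposition \ref{startu}. One bookkeeping point to make explicit if you write this up: fix $\eta_0$ first (so the $O((t-1)^2)$ correction cannot cancel the linear term), then $T$, then shrink $\lambda_0$ to satisfy all three regimes and $\lambda_0^2<1+|v_0|^2$ simultaneously.
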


\begin{proof}
Our proof uses similar ideas as in Lemma 3.1 of \cite{L} and Lemma 2.1 of \cite{LZ}. Our proof is even simpler since we have the asymptotic behavior (\ref{asympf}).
	
Consider in polar coordinate of $\R^{n-1}$, $w=(r,\theta)\in\R^{n-1}$ where $r\geq 0$ and $\theta\in\Sph^{n-2}$. Since $f\in C^{\infty}(\R^{n-1})$ (actually we only need $f$ to be $C^1$) and $e^{(n-1)f}>0$, there exists $r_0>0$ such that
\[\frac{d}{dr}\left(r^{n-1}e^{(n-1)f}(r,\theta)\right)>0\]
for all $0<r<r_0$ and all $\theta\in\Sph^{n-2}$.

As a result, when $0<\lambda<|w|<r_0$ we have
\[\left(\frac{\lambda^2}{|w|}\right)^{n-1}e^{(n-1)f}\left(\frac{\lambda^2 w}{|w|^2}\right)<|w|^{n-1}e^{(n-1)f}(w),\]
and hence using (\ref{flvlimitdef}) we have
\[e^{(n-1)f_{\lambda,0}}(w)<e^{(n-1)f}(w)\]
for all $0<\lambda<|w|<r_0$. (Note that here we used the fact that when $0<\lambda<|w|$, we have $\big|\frac{\lambda^2 w}{|w|}\big|^2=\frac{\lambda^2}{|w|}<|w|$.)

For $|u|> r_0$, from (\ref{flimitdef}) and (\ref{asympf}) we have
\[e^{(n-1)f}(w)=\left(\frac{2}{1+|w|^2}\right)^{n-1}e^{(n-1)\widetilde{f}\circ \Psi}(w)\]
and
\[e^{(n-1)f_{\lambda,0}}(w)=\left(\frac{2\lambda^2}{\lambda^4+|w|^2}\right)^{n-1}e^{(n-1)\widetilde{f}\circ\Psi}\left(\frac{\lambda^2 w}{|w|^2}\right).\]
Define $m=\inf_{\xi\in\Sph^{n-1}}e^{\widetilde{f}}(\xi)$ and $M=\sup_{\xi\in\Sph^{n-1}} e^{\widetilde{f}}(\xi)$. Note that we have $0<m\leq M$. Choose $\lambda_0$ small enough such that $0<\lambda_0<r_0$ and
\[m-\lambda_0^2 M>m/2,\]
and
\[r_0^2 m/2>\lambda_0^2 M.\]

Then we have for all $|w|>r_0$ and $0<\lambda<\lambda_0$
\begin{eqnarray*}
(m-\lambda^2 M)|w|^2 &>& (m-\lambda_0^2 M)r_0^2
\\ &>& mr_0^2/2
\\ &>& \lambda_0^2 M
\\ &>& \lambda^2 M
\\ &>& \lambda^2 (M-\lambda^2 m).
\end{eqnarray*}
As a result, we have for all $0<\lambda<\lambda_0<r_0<|w|$,
\[\frac{2m}{1+|w|^2}>\frac{2\lambda^2 M}{|w|^2+\lambda^4},\]
and hence
\begin{eqnarray*}
e^{(n-1)f}(w)	&=&\left(\frac{2}{1+|w|^2}\right)^{n-1}e^{(n-1)\widetilde{f}\circ\Psi}(w)
\\ &\geq &\left(\frac{2m}{1+|w|^2}\right)^{n-1}
\\ &>& \left(\frac{2\lambda^2 M}{|w|^2+\lambda^4}\right)^{n-1}
\\ &\geq & \left(\frac{2\lambda^2 }{|w|^2+\lambda^4}\right)^{n-1} e^{(n-1)\widetilde{f}\circ \Psi}\left(\frac{\lambda^2 w}{|w|^2}\right)
\\ &=& e^{(n-1)f_{\lambda,0}}(w).
\end{eqnarray*}

With the chosen $\lambda_0$, combining the two cases together, we get for all $0<\lambda<\lambda_0$ and all $|w|>\lambda$, we have
\[e^{(n-1)f}(w)>e^{(n-1)f_{\lambda,0}}(w).\]
\end{proof}

\begin{remark}
Note that
\[e^{(n-1)\flv}(w)< e^{(n-1)f}(w)\]	
for all $|w-v_0|>\lambda$ is equivalent to 
\[e^{(n-1)\flv}(w)> e^{(n-1)f}(w)\]
for all $|w-v_0|<\lambda$. Since for any $w$ such that $|w-v_0|<\lambda$, we can define $w_{\lambda,v_0}=\plv(w)$. Then we have
\begin{equation}\label{plvidentity}
\plv(w_{\lambda,v_0})=\plv\circ\plv (w)=w,	
\end{equation}

and
\[|w_{\lambda,v_0}-v_0|=|\plv(w)-v_0|=\frac{\lambda^2}{|w-v_0|}>\lambda.\]
As a result, using Theorem \ref{startf}, we have for $|w-v_0|<\lambda$
\begin{eqnarray*}
e^{(n-1)\flv}(w) &=&|\plv'(w)|^{n-1} e^{(n-1)f}(w_{\lambda,v_0})
\\ &>&|\plv'(w)|^{n-1} e^{(n-1)\flv}(w_{\lambda,v_0})	
\\ & =&|\plv'(w)|^{n-1}|\plv'(w_{\lambda,v_0})|^{n-1} e^{(n-1)f}(w)
\\ &=& e^{(n-1)f}(w).
\end{eqnarray*}
Note that in the last step we used 
\begin{equation}\label{plvjacobian}
	|\plv'(w)|^{n-1}|\plv'(w_{\lambda,v_0})|^{n-1} =1,
\end{equation}
 which follows from (\ref{plvidentity}) and chain rule.
\end{remark}

With the help of the previous remark, we can use maximum principle to prove similar results for $P_{2-n}f$.

\begin{proposition}\label{startu}
For any $\widetilde{f}\in C^\infty(\Sph^{n-1})$, define $f$ and $\flv$ as in (\ref{flimitdef}) and (\ref{flvlimitdef}) respectively.

For any $v_0\in \partial \R^n_+=\R^{n-1}$ there exists $\lambda_0>0$ such that for all $0<\lambda<\lambda_0$ we have
\[e^{nP_{2-n}f}(y)<e^{nP_{2-n}\flv}(y)\]
for all $|y-v_0|<\lambda$.
\end{proposition}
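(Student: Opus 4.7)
The plan is to reduce the statement to the maximum principle for the linear elliptic operator $L=\Delta+\frac{2-n}{y_n}\partial_{y_n}$ on the half-ball $\Blvp$, using Proposition \ref{startf} to supply a strictly positive boundary contribution on the flat disk and the conformal symmetry of $\plv$ to kill the contribution on the spherical cap. First I would take $\lambda_0>0$ from Proposition \ref{startf} and, for $0<\lambda<\lambda_0$, invoke the remark after that proposition to deduce $\flv(w)>f(w)$ strictly for every $w\in\R^{n-1}$ with $|w-v_0|<\lambda$. I would then set $h:=n\bigl(P_{2-n}\flv-P_{2-n}f\bigr)$ and observe, via Lemma \ref{hyperbolicharmonic}, that $h$ is continuous on $\overline{\R^n_+}$ and hyperbolic harmonic in $\R^n_+$; equivalently, $h$ is a classical solution of $Lh=0$ in $\R^n_+$.

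Next I would read off the boundary values of $h$ on $\partial\Blvp$. On the flat disk $\{w\in\R^{n-1}:|w-v_0|<\lambda\}$ the value is $n(\flv-f)>0$ by the first step. On the spherical cap $\Sigma:=\{y\in\R^n_+:|y-v_0|=\lambda\}$ the inversion $\plv$ acts as the identity, and its conformal factor equals $1$ by (\ref{plvConformalFactor}); combining the definition $\flv=f\circ\plv+\ln|\plv'|$ with the transformation laws (\ref{PKphi}) and (\ref{Extension}) yields
\[P_{2-n}\flv(y)=P_{2-n}f\bigl(\plv(y)\bigr)+\ln|\plv'(y)|\]
for every $y\in\R^n_+$, which collapses to $P_{2-n}f(y)$ on $\Sigma$. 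Hence $h=0$ on $\Sigma$ and, by continuity, also on the equatorial circle $\{|y-v_0|=\lambda,\,y_n=0\}$. Thus $h\geq 0$ on $\partial\Blvp$ with strict positivity on the interior of the flat disk.

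The final step is the strong maximum principle. Since $\frac{2-n}{y_n}$ is smooth and bounded on every compact subset of $\R^n_+$, the operator $L$ is uniformly elliptic on such subsets, so any hypothetical non-positive interior value of $h$ in $\Blvp$ would propagate constancy along chains of balls compactly contained in $\R^n_+$, forcing $h\equiv$ const on $\Blvp$ and then, by continuity, on $\overline{\Blvp}$. That constant would simultaneously have to equal $0$ (from the values on $\Sigma$) and be strictly positive (from the interior of the flat disk), a contradiction. I therefore conclude $h>0$ throughout $\Blvp$, and exponentiating yields $e^{nP_{2-n}f}(y)<e^{nP_{2-n}\flv}(y)$ for all $y\in\Blvp$. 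The main obstacle I anticipate is exactly the interaction with the degeneracy of $L$ as $y_n\to 0$; it is handled by always applying the strong maximum principle on subsets compactly contained in $\R^n_+$ and invoking the continuity of $h$ up to the ideal boundary $\R^{n-1}$ only as an a priori input.
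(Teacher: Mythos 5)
Your proposal is correct and follows essentially the same route as the paper: use Proposition \ref{startf} (via the remark) to get $\flv>f$ on the flat disk, use the conformal identities (\ref{PKphi}) and (\ref{Extension}) to get equality of the extensions on the hemisphere $|y-v_0|=\lambda$, and then conclude by the maximum principle for hyperbolic harmonic functions from Lemma \ref{hyperbolicharmonic}. The only difference is that you spell out the strong maximum principle argument for the degenerate operator (working on compact subsets of $\R^n_+$ and using continuity up to $y_n=0$), a detail the paper leaves implicit in the phrase ``by maximum principle.''
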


\begin{proof}

From Lemma \ref{hyperbolicharmonic} we see that $P_{2-n}f$ and $P_{2-n}\flv$ are Hyperbolic harmonic with boundary values $f$ and $\flv$ respctivly. By the discussion at the beginning of this section, all four functions $f$, $\flv$, $P_{2-n}f$ and $P_{2-n}\flv$ are continuous at the point $v_0\in\partial\R^n_+=\R^{n-1}$. Choose the same $\lambda_0$ as in Theorem \ref{startf}. Using the previous remark we have
\[e^{(n-1)f}(w)<e^{(n-1)\flv}(w),\]
for any $0<\lambda<\lambda_0$ and for all $w\in \R^{n-1}$ such that $|w-v_0|<\lambda$. Which is equivalent to
\[f(w)<\flv(w)\]
for all $w\in \R^{n-1}$ such that $|w-v_0|<\lambda$. On the other hand, using the definition of $\flv$, (\ref{PKphi}) and (\ref{Extension}), we can see 
\[P_{2-n}f(y)=P_{2-n}\flv (y)\]
for all $y\in\R^n_+$ such that $|y-v_0|=\lambda$. By maximum principle we can see 
\[P_{2-n}f(y)<P_{2-n}\flv (y)\]
	for all $|y-v_0|<\lambda$.
\end{proof}

Note that in this subsection we only used asymptotic behavior (\ref{asympf}), (\ref{asymppf}) and the maximum principle. We did not use the fact that $\widetilde{f}$ is a solution to the integral equation (\ref{ELball}).

\subsection{The Case $\bar{\lambda}_0=\infty$}
In the previous subsection we showed that $\lambda_0>0$ exists. In this subsection we will show that it can not go to infinity.

Define 
\begin{equation}\label{criticallambda}
\bar{\lambda}_0=\sup\{\lambda>0 \text{ such that }e^{(n-1)f}(w)<e^{(n-1)\flv}(w) \text{ for all }|w-v_0|<\lambda.\}	
\end{equation}
In the following lemma we show that $\bar{\lambda}_0$ can not equal to $\infty$.

\begin{lemma}
For any $f:\R^{n-1}\to\R$, such that $e^f\in L^{n-1}(\R^{n-1})$, and for all $v_0\in \partial \R^n_+=\R^{n-1}$, we have $\bar{\lambda}_0<\infty$.
\end{lemma}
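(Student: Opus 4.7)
The plan is to argue by contradiction, using the integrability hypothesis $e^f \in L^{n-1}(\R^{n-1})$ and the fact that inversion $\phi_{\lambda,v_0}$ swaps the inside and outside of the sphere $|w-v_0|=\lambda$ while preserving total $(n-1)$-power mass.

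First, I would recast the definition of $\bar\lambda_0$ in its ``outside'' form. By the remark following Proposition \ref{startf} (together with the identities $\phi_{\lambda,v_0}\circ\phi_{\lambda,v_0}=\mathrm{Id}$ in (\ref{plvidentity}) and the Jacobian identity (\ref{plvjacobian})), the condition $e^{(n-1)f}(w)<e^{(n-1)\flv}(w)$ for all $|w-v_0|<\lambda$ is equivalent to $e^{(n-1)\flv}(w)<e^{(n-1)f}(w)$ for all $|w-v_0|>\lambda$. Thus if $\bar\lambda_0=\infty$, then for every $\lambda>0$,
\begin{equation*}
\int_{|w-v_0|>\lambda} e^{(n-1)\flv}(w)\,dw \;<\; \int_{|w-v_0|>\lambda} e^{(n-1)f}(w)\,dw.
\end{equation*}

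Next I would change variables $u=\phi_{\lambda,v_0}(w)$ on the left. Since $\phi_{\lambda,v_0}$ is a conformal diffeomorphism of $\R^{n-1}$ with conformal factor $|\phi_{\lambda,v_0}'(w)|=\lambda^2/|w-v_0|^2$, the Jacobian in $\R^{n-1}$ is $|\phi_{\lambda,v_0}'(w)|^{n-1}$, and $\phi_{\lambda,v_0}$ sends the exterior $\{|w-v_0|>\lambda\}$ bijectively onto the interior $\{|u-v_0|<\lambda\}$. By the definition (\ref{flvlimitdef}) of $\flv$, the factor $e^{(n-1)\ln|\phi_{\lambda,v_0}'(w)|}=|\phi_{\lambda,v_0}'(w)|^{n-1}$ in $e^{(n-1)\flv}$ cancels against the Jacobian, yielding
\begin{equation*}
\int_{|w-v_0|>\lambda} e^{(n-1)\flv}(w)\,dw \;=\; \int_{|u-v_0|<\lambda} e^{(n-1)f}(u)\,du.
\end{equation*}
Combining with the previous inequality and adding $\int_{|w-v_0|<\lambda} e^{(n-1)f}$ to both sides gives
\begin{equation*}
2\int_{|u-v_0|<\lambda} e^{(n-1)f}(u)\,du \;<\; \|e^f\|_{L^{n-1}(\R^{n-1})}^{n-1}
\end{equation*}
for every $\lambda>0$.

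Finally, I would let $\lambda\to\infty$. The left-hand side converges by monotone convergence to $2\|e^f\|_{L^{n-1}(\R^{n-1})}^{n-1}$, producing $2\|e^f\|_{L^{n-1}}^{n-1}\le \|e^f\|_{L^{n-1}}^{n-1}$, which contradicts $e^f>0$ (hence $\|e^f\|_{L^{n-1}}>0$). The only substantive point to double-check is the conformal cancellation in the change-of-variables step; everything else is direct. I do not foresee any serious obstacle: the integrability $e^f\in L^{n-1}$ is used in the final limit precisely so that the exterior integral on the right shrinks to zero, while the interior integral on the left exhausts the full $L^{n-1}$ mass.
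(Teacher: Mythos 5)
Your argument is correct and is essentially the paper's own proof: argue by contradiction, use the inversion change of variables with the Jacobian cancellation (\ref{plvjacobian}) to convert the integral of $e^{(n-1)f_{\lambda,v_0}}$ over one side of the sphere $|w-v_0|=\lambda$ into the integral of $e^{(n-1)f}$ over the other side, and then let $\lambda\to\infty$ using $e^{f}\in L^{n-1}(\R^{n-1})$ (the paper applies dominated convergence to the shrinking exterior integral where you apply monotone convergence to the interior one --- the same estimate). The only nitpick is that $\bar{\lambda}_0=\infty$ gives a sequence $\lambda_i\to\infty$ along which the comparison holds, not every $\lambda>0$, but your limiting step goes through verbatim along such a sequence.
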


\begin{proof}
We can prove this by contradiction following \cite{FKT}. Suppose for some $v_0\in \partial \R^n_+=\R^{n-1}$, we have $\bar{\lambda}_0=\infty$, then we can find a sequence $\lambda_i\to \infty$ such that
\[e^{(n-1)f}(w)<e^{(n-1)f_{\lambda_i,v_0}}(w)\]
for all $|w-v_0|<\lambda_i$ and for all $i$. For a given $i$, by the previous inequality, (\ref{plvjacobian}) and change of variable we have
\begin{eqnarray*}
\int_{\B^+_{\lambda_i,v_0}}e^{(n-1)f}(w)dw &<& \int_{\B^+_{\lambda_i,v_0}}	e^{(n-1)f_{\lambda_i,v_0}}(w)dw
\\ &=& \int_{\R^n_+\backslash \overline{\B}^+_{\lambda_i,v_0}} e^{(n-1)f}(w)dw.
\end{eqnarray*}
As a result, we have
\[0<\frac{1}{2}\int_{\R^{n-1}}e^{(n-1)f}(w)dw< \int_{|w-v_0|>\lambda_i}e^{(n-1)f}(w)dw.\]
But the right hand side of the inequality goes to zero as $\lambda_i\to \infty$ by dominated convergence theorem.
\end{proof}

Note that in the this proof we only need $e^f\in L^{n-1}(\R^{n-1})$ in order to use dominated convergence theorem. We don't need $\widetilde{f}$ to be a solution to the integral equation (\ref{ELball}).

\subsection{The Case $\bar{\lambda}_0<\infty$}\label{SubsectionEndofMovingSphere}
Note that since $\bar{\lambda}_0>0$, this is the last case we need to consider. In this subsection we will show that at the critical value $\bar{\lambda}_0$, we have
\[f_{\bar{\lambda}_0,v_0}=f.\]
But firstly we need to show three lemmas. The first lemma is about the kernel function $p_{2-n}(y,u)$.

\begin{lemma}\label{Kgreaterthanzero}
For any $v_0\in\partial\R^n_+=\R^{n-1}$	 and any $\lambda>0$, define $\plv$ as in (\ref{plvd}). Then for any $y\in \Blvp$ and any $w\in\partial\R^n_+=\R^{n-1}$ such that $|w-v_0|<\lambda$, we have
\begin{equation}
p_{2-n}(y,w)-p_{2-n}(\plv(y),w)>0
\end{equation}
\end{lemma}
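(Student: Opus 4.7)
The plan is to reduce the claimed positivity to a clean factored expression by direct computation, using the explicit formula for $p_{2-n}$ together with the standard identities for sphere inversions centered on $\partial\R^n_+$.

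First, I would recall that in the limit case $\alpha=2-n$ the formula (\ref{PoissonKernelUpperHalfSpace}) reads
\[
p_{2-n}(y,w) = c_{n,2-n}\,\frac{y_n^{n-1}}{|y-w|^{2(n-1)}},
\]
so the desired inequality is equivalent to
\[
\frac{y_n^{n-1}}{|y-w|^{2(n-1)}} > \frac{(\phi_{\lambda,v_0}(y))_n^{n-1}}{|\phi_{\lambda,v_0}(y)-w|^{2(n-1)}}.
\]
Since $v_0 \in \R^{n-1}$ lies on the boundary (so its $n$th coordinate is zero), the definition (\ref{plvd}) of $\phi_{\lambda,v_0}$ gives
\[
(\phi_{\lambda,v_0}(y))_n = \frac{\lambda^2 y_n}{|y-v_0|^2},
\]
which together with (\ref{plvConformalFactor}) just says $(\phi_{\lambda,v_0}(y))_n = y_n\,|\phi_{\lambda,v_0}'(y)|$. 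Therefore, after taking $(n-1)$th roots, the inequality to prove is
\[
|y-v_0|^2\,|\phi_{\lambda,v_0}(y)-w|^2 \;>\; \lambda^2 \,|y-w|^2.
\]

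The main step is a direct computation of the left-hand side. Setting $a = y-v_0$ and $b = w-v_0$ and expanding $\phi_{\lambda,v_0}(y)-w = \lambda^2 a/|a|^2 - b$, I get
\[
|a|^2\,|\phi_{\lambda,v_0}(y)-w|^2 = \lambda^4 - 2\lambda^2\,a\!\cdot\! b + |a|^2|b|^2,
\]
while $\lambda^2|y-w|^2 = \lambda^2\bigl(|a|^2 - 2 a\!\cdot\! b + |b|^2\bigr)$. Subtracting, the cross terms $-2\lambda^2 a\!\cdot\! b$ cancel and the result factors as
\[
|a|^2\,|\phi_{\lambda,v_0}(y)-w|^2 - \lambda^2 |y-w|^2 = (\lambda^2 - |a|^2)(\lambda^2 - |b|^2) = (\lambda^2 - |y-v_0|^2)(\lambda^2 - |w-v_0|^2).
\]

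To finish, I invoke the hypotheses: $y \in \B^+_{\lambda,v_0}$ means $|y-v_0|<\lambda$, so $\lambda^2 - |y-v_0|^2 > 0$; and by assumption $|w-v_0|<\lambda$, so $\lambda^2 - |w-v_0|^2 > 0$. Both factors are strictly positive, hence the difference is strictly positive and the desired inequality follows. There is essentially no obstacle here; this is a short algebraic verification of the kind of positivity lemma standard in the moving-sphere method, and will be used together with an integral representation to transfer the moving-sphere comparison from the boundary to $P_{2-n}f$ in the next steps.
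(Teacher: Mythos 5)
Your proof is correct and is essentially the paper's argument: both reduce the claim by direct computation to the factorization $(\lambda^2-|y-v_0|^2)(\lambda^2-|w-v_0|^2)>0$. The only cosmetic difference is that the paper first invokes the kernel transformation law (\ref{pkphi}) to move the inversion onto the boundary variable $w$ before exhibiting the analogous factored identity, whereas you compute $p_{2-n}(\plv(y),w)$ directly using $(\plv(y))_n=y_n\,|\plv'(y)|$; the underlying algebraic identity is the same.
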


\begin{proof}
	By (\ref{pkphi}), we have
	\[p_{2-n}(\plv(y),w)=p_{2-n}(y,\plv(w))|\plv'(w)|^{n-1},\]
	so we only need to prove 
	\[\frac{1}{|y-w|^{2n-2}}-\frac{|\plv'(w)|^{n-1}}{|y-\plv(w)|^{2n-2}}>0,\]
	 for any $y\in \Blvp$ and any $w\in\R^{n-1}$ such that $|w-v_0|<\lambda$.
	By direct calculation using (\ref{plvd}), we have
	\[|\plv'(w)|=\frac{\lambda^2}{|w-v_0|^2}.\]
So the proof follows from
\[
|y-\plv(w)|^2\frac{|w-v_0|^2}{\lambda^2}-|y-w|^2 =\frac{(\lambda^2-|w-v_0|^2)(\lambda^2-|y-v_0|^2)}{\lambda^2},
\]
which is positive when both $|w-v_0|<\lambda$ and $|y-v_0|<\lambda$.
	
\end{proof}

If we define 
\[K(v_0,\lambda;y,w)=p_{2-n}(y,w)-p_{2-n}(\plv(y),w)\]
 as in \cite{L} (right before Lemma 3.1 in \cite{L}), then we can show the following result about the derivative of $K$ with respect to $w$ (as in the proof of Lemma 3.2 in \cite{L}).
 
 \begin{lemma}\label{Kgradient}
\begin{equation}
\langle\overline{\nabla}_w K(v_0,\lambda;y,w), w-v_0\rangle\big|_{|w-v_0|=\lambda}=-\frac{2(n-1)c_{n,2-n}y_n^{n-1}}{|w-y|^{2n}} (|w-v_0|^2-|y-v_0|^2),
\end{equation}
	for all $y \in \R^n_+$.
 \end{lemma}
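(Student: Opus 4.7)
The plan is direct differentiation followed by simplification on the sphere $|w-v_0|=\lambda$, using the standard inversion identities to handle $\plv(y)$. Writing $v=(v_0,0)$ and using $p_{2-n}(z,w)=c_{n,2-n}z_n^{n-1}/|z-w|^{2n-2}$ (identifying $w\in\R^{n-1}$ with $(w,0)\in\R^n$ for distances), differentiating in the tangential directions yields
\[\overline{\nabla}_w p_{2-n}(z,w)=-\frac{2(n-1)\,c_{n,2-n}\,z_n^{n-1}}{|z-w|^{2n}}\,(w-z'),\]
where $z'\in\R^{n-1}$ denotes the projection of $z\in\R^n$ onto the first $n-1$ coordinates. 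Applying this with $z=y$ and with $z=\plv(y)$ and pairing with $w-v_0$ reduces the claim to an algebraic identity on the sphere.

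On the sphere $|w-v_0|=\lambda$, the inversion $\plv$ fixes $w$ pointwise, and the standard inversion identity $|\plv(y)-\plv(w)|=\lambda^2|y-w|/(|y-v|\cdot|w-v|)$ therefore specializes to $|\plv(y)-w|^2=\lambda^2|y-w|^2/|y-v|^2$. From the definition of $\plv$ one also reads off $\plv(y)_n=\lambda^2 y_n/|y-v|^2$ and $\plv(y)'-v_0=(\lambda^2/|y-v|^2)(y'-v_0)$. Substituting these identities, the ratio $\plv(y)_n^{n-1}/|\plv(y)-w|^{2n}$ collapses to $y_n^{n-1}|y-v|^2/(\lambda^2|w-y|^{2n})$, while the inner product $\langle w-\plv(y)',w-v_0\rangle$ becomes $(\lambda^2/|y-v|^2)\bigl(|y-v|^2-\langle y'-v_0,w-v_0\rangle\bigr)$.

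Multiplying these two simplifications, the contribution of $p_{2-n}(\plv(y),w)$ to $\langle\overline{\nabla}_w K,w-v_0\rangle$ becomes $-2(n-1)c_{n,2-n}y_n^{n-1}\bigl(|y-v|^2-\langle y'-v_0,w-v_0\rangle\bigr)/|w-y|^{2n}$. The parallel computation for $p_{2-n}(y,w)$ uses $\langle w-y',w-v_0\rangle=|w-v_0|^2-\langle y'-v_0,w-v_0\rangle=\lambda^2-\langle y'-v_0,w-v_0\rangle$ and yields $-2(n-1)c_{n,2-n}y_n^{n-1}\bigl(\lambda^2-\langle y'-v_0,w-v_0\rangle\bigr)/|w-y|^{2n}$. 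Subtracting, the cross term $\langle y'-v_0,w-v_0\rangle$ cancels exactly, leaving $-2(n-1)c_{n,2-n}y_n^{n-1}(\lambda^2-|y-v|^2)/|w-y|^{2n}$, which is the claim after identifying $\lambda^2=|w-v_0|^2$ and $|y-v|^2=|y-v_0|^2$ as $\R^n$-distances. The only ``obstacle'' is bookkeeping: one must carefully distinguish tangential gradients and $\R^{n-1}$-distances from the ambient $\R^n$-distances, and correctly track the powers of $\lambda$ and $|y-v|$ introduced by the inversion.
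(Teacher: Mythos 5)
Your computation is correct, and it reaches the stated identity by a slightly different organization of the same direct calculation. The paper first invokes the transformation law (\ref{pkphi}) to rewrite $p_{2-n}(\plv(y),w)$ as $p_{2-n}(y,\plv(w))\,|\plv'(w)|^{n-1}$, so that $w$ enters both through the composite point $\plv(w)$ and through the prefactor $\bigl(\lambda/|w-v_0|\bigr)^{2n-2}$; it then differentiates this expression in $w$, which produces the long multi-term gradient displayed in the paper, and only at the end contracts with $w-v_0$ and sets $|w-v_0|=\lambda$. You instead differentiate the kernels as they stand: since $\plv(y)$ is a fixed point of $\R^n_+$ independent of $w$, the tangential gradient of each kernel is the one-line formula
\[\overline{\nabla}_w p_{2-n}(z,w)=-\frac{2(n-1)\,c_{n,2-n}\,z_n^{n-1}}{|z-w|^{2n}}\,(w-z'),\]
and the inversion identities ($\plv(w)=w$ on the sphere, $|\plv(y)-w|=\lambda|y-w|/|y-v|$, $\plv(y)_n=\lambda^2 y_n/|y-v|^2$, $\plv(y)'-v_0=(\lambda^2/|y-v|^2)(y'-v_0)$) are used only after restricting to $|w-v_0|=\lambda$. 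I checked the key simplifications: the ratio $[\plv(y)_n]^{n-1}/|\plv(y)-w|^{2n}=y_n^{n-1}|y-v|^2/(\lambda^2|y-w|^{2n})$ and the inner products are right, the cross terms $\langle y'-v_0,w-v_0\rangle$ cancel as you claim, and the signs work out to the stated right-hand side. Your order of operations (differentiate first as a function of $w$, restrict to the sphere afterwards) is legitimate and in fact yields a cleaner derivation than the paper's, at the cost of having to justify the restriction-after-differentiation bookkeeping, which you do correctly.
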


\begin{proof}
Here $\langle\overline{\nabla}_w K(v_0,\lambda;y,w), w-v_0\rangle$ denote the inner product in $\R^{n-1}$ (or $\R^n$) with respect to the Euclidean metric.
And $\overline{\nabla}_w$ denotes the gradient in $\R^{n-1}$ with the Euclidean metric. The subscript $w$ emphasizes the fact that the derivative is taken with respect to $w$.
The proof follows from direct calculation. As in the previous lemma, use \ref{pkphi} we can have
\begin{eqnarray*}
&&K(v_0,\lambda;y,w)
\\&=&c_{n,2-n}y_n^{n-1}\left(\frac{1}{|w-y|^{2n-2}}-\left(\frac{\lambda}{|w-v_0|}\right)^{2n-2}\frac{1}{\left|y-v_0-\frac{\lambda^2(w-v_0)}{|w-v_0|^2}\right|^{2n-2}}\right).	
\end{eqnarray*}
As a result, we can calculate $\overline{\nabla}_u K$ to be
\begin{eqnarray*}
	&&\frac{\overline{\nabla}_w K}{c_{n,2-n} y_n^{n-1}} 
	\\&=&-\frac{2(n-1)(w-y')}{|w-y|^{2n}}- \frac{2(n-1)\lambda^{2n-2}|w-v_0|^{2n-4}}{|(y-v_0)|w-v_0|^2-\lambda^2 (w-v_0)|^{2n-2}}(w-v_0)
	\\ &&+\frac{(n-1)\lambda^{2n-2}|w-v_0|^{2n-2}(4|y-v_0|^2|w-v_0|^2-4\lambda^2\langle y-v_0,w-v_0\rangle)}{|(y-v_0)|w-v_0|^2-\lambda^2 (w-v_0)|^{2n}}(w-v_0)
	\\ &&-\frac{2(n-1)\lambda^{2n}|w-v_0|^{2n-2}}{|(y-v_0)|w-v_0|^2-\lambda^2 (w-v_0)|^{2n}}(|w-v_0|^2(y-v_0)-\lambda^2 (w-v_0)).
\end{eqnarray*}
Here $\langle y-v_0, w-v_0\rangle$ denotes inner product in $\R^n$. Then for the inner product we have
\begin{eqnarray*}
\langle\overline{\nabla}_w K, w-v_0\rangle\big|_{|w-v_0|=\lambda}=	\frac{-2(n-1)c_{n,2-n}y_n^{n-1}}{|w-y|}(|w-v_0|^2-|y-v_0|^2).
\end{eqnarray*}

\end{proof}

In the next lemma, we use change of variable to rewrite equation $(\ref{ELup})$ into a new form.

\begin{lemma}\label{ELupdifferentform}
 For any $v_0\in\partial\R^n_+$ and any $\lambda>0$,  define the inversion $\plv$ as in (\ref{plvd}). For any $\widetilde{f}\in C^\infty(\R^{n-1})$ that is a solution to (\ref{ELball}). Define $f$ and $\flv$ as in (\ref{flimitdef}) and (\ref{flvlimitdef}) respectively,
 then we have
\begin{equation}
\begin{split}
&e^{(n-1)\flv}-e^{(n-1)f} \\ 
&=\int_{\Blvp}\left(e^{nP_{2-n\flv}}-e^{nP_{2-n}f}\right)(p_{2-n}(y,w)-p_{2-n}(\plv(y),w))dy	
\end{split}
\end{equation}

\end{lemma}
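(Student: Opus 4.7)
The plan is to apply the Euler--Lagrange equation (\ref{ELup}) to both $f$ and $\flv$, subtract, and then fold the integral over $\R^n_+\setminus\overline{\Blvp}$ into an integral over $\Blvp$ via the involution $\plv$. The key input is that $\flv$ also satisfies the Euler--Lagrange equation, which is a manifestation of the conformal invariance of the extension problem.

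First, I would verify that $\flv$ itself solves (\ref{ELup}). Starting from (\ref{ELup}) for $f$, substitute $y = \plv(z)$ (Jacobian $|\plv'(z)|^n$), then replace $w$ by $\plv(w)$. The Poisson kernel transforms via (\ref{pkphi}) as $p_{2-n}(\plv(z),\plv(w)) = p_{2-n}(z,w)|\plv'(w)|^{1-n}$, while the composition formula $P_{2-n}\flv(z) = P_{2-n}f(\plv(z)) + \ln|\plv'(z)|$, coming from (\ref{PKphi}) together with (\ref{Extension}) (applied to $\phi = \plv = \Psi^{-1}\circ\Phi\circ\Psi$ for the hyperbolic isometry $\Phi = \Psi\circ\plv\circ\Psi^{-1}\in SO(n,1)$), converts the $e^{nP_{2-n}f(\plv(z))}|\plv'(z)|^n$ factor into $e^{nP_{2-n}\flv(z)}$. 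The prefactor $|\plv'(w)|^{n-1}$ combines with $f(\plv(w))$ to give $e^{(n-1)\flv(w)}$ via (\ref{flvlimitdef}), producing
\[
e^{(n-1)\flv(w)} = \int_{\R^n_+} e^{nP_{2-n}\flv(z)}\, p_{2-n}(z,w)\, dz.
\]

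Subtracting the Euler--Lagrange equation for $f$ from the one for $\flv$ yields
\[
e^{(n-1)\flv} - e^{(n-1)f} = \int_{\R^n_+} \bigl(e^{nP_{2-n}\flv} - e^{nP_{2-n}f}\bigr)\, p_{2-n}(y,w)\, dy.
\]
Split this integral as $\int_{\Blvp} + \int_{\R^n_+\setminus\overline{\Blvp}}$. In the outer piece, substitute $y = \plv(z)$, which maps $\R^n_+\setminus\overline{\Blvp}$ bijectively onto $\Blvp$. Using the identity $P_{2-n}\flv(\plv(z)) = P_{2-n}f(z) - \ln|\plv'(z)|$ (obtained by applying the composition formula at $\plv(z)$ and using $\plv\circ\plv = \mathrm{Id}$ together with $|\plv'(z)|\cdot|\plv'(\plv(z))|=1$), and symmetrically $P_{2-n}f(\plv(z)) = P_{2-n}\flv(z) - \ln|\plv'(z)|$, the exponential factor becomes
\[
e^{nP_{2-n}\flv(\plv(z))} - e^{nP_{2-n}f(\plv(z))} = -\bigl(e^{nP_{2-n}\flv(z)} - e^{nP_{2-n}f(z)}\bigr)|\plv'(z)|^{-n},
\]
and the Jacobian $|\plv'(z)|^n$ cancels the $|\plv'(z)|^{-n}$ factor. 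The outer integral becomes $-\int_{\Blvp}\bigl(e^{nP_{2-n}\flv}-e^{nP_{2-n}f}\bigr)\,p_{2-n}(\plv(z),w)\,dz$, and combining with the inner piece gives the claimed formula.

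The main obstacle is bookkeeping: one must correctly chain the transformation laws (\ref{pkphi}), (\ref{PKphi}), (\ref{Extension}), (\ref{plvConformalFactor}), and (\ref{plvjacobian}) without sign or exponent errors, in particular to establish that $\flv$ inherits the Euler--Lagrange equation from $f$. Once the conformal covariance of both sides is in hand, the change of variable is essentially automatic.
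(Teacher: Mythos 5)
Your proposal is correct and follows essentially the same route as the paper: both rely on the Euler--Lagrange equation for $f$ (and, via the conformal covariance identities (\ref{pkphi}), (\ref{PKphi}), (\ref{Extension}), for $\flv$), split the integral at the sphere $|y-v_0|=\lambda$, and fold the exterior piece into $\Blvp$ by the involution $\plv$, the only cosmetic difference being that you subtract the two equations before folding while the paper folds each one first and then subtracts. Your explicit verification that $\flv$ inherits the Euler--Lagrange equation is a step the paper leaves implicit, and it is carried out correctly.
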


\begin{proof} Since $\widetilde{f}$ is a solution to \ref{ELball}, through change of variable we can see that $f$ is a solution to (\ref{ELup}).
Using equation (\ref{ELup}) we have
\begin{eqnarray*}
e^{(n-1)f} &=& \int_{\Blvp}	e^{nP_{2-n}f}p_{2-n}(y,w)dy+\int_{\R^n_+\backslash\Blvp}	e^{nP_{2-n}f}p_{2-n}(y,w)dy
\\&=&\int_{\Blvp}	\left(e^{nP_{2-n}f}p_{2-n}(y,w)+e^{nP_{2-n}\flv}p_{2-n}(\plv(y),w)\right)dy,
\end{eqnarray*}
where the second step follows from change of variable and (\ref{Extension}). Similarly
\begin{eqnarray*}
e^{(n-1)\flv} &=& 	\int_{\Blvp}e^{nP_{2-n}\flv}p_{2-n}(y,w)dy+\int_{\R^n_+\backslash\Blvp}e^{nP_{2-n}\flv}p_{2-n}(y,w)dy
\\ &=&\int_{\Blvp}\left(e^{nP_{2-n}\flv}p_{2-n}(y,w)+e^{nP_{2-n}f}p_{2-n} (\plv(y),w)\right)dy.
\end{eqnarray*}
Subtracting the first inequality from the second one, we get the desired result.

\end{proof}

We now prove $f=f_{\overline{\lambda}_0,v_0}$ by contradiction.

\begin{proposition}\label{PropositionMovingSphereStep3}
Suppose $f\in C^1(\R^{n-1})$ is a solution to (\ref{ELup}). For any $v_0\in \partial \R^n_+=\R^{n-1}$, with $\overline{\lambda}_0$	 defined as in (\ref{criticallambda}), $f$ and $f_{\overline{\lambda}_0,v_0}$ defined as in (\ref{flimitdef}) and (\ref{flvlimitdef}) respectively, we have
\[f(w)=f_{\overline{\lambda}_0,v_0}(w),\]
for all $w\in \R^{n-1}$.
\end{proposition}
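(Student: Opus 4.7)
The plan is to argue by contradiction, assuming $f \not\equiv f_{\overline{\lambda}_0, v_0}$ on $\R^{n-1}$. By the definition of $\overline{\lambda}_0$ as a supremum, combined with continuity of $f$ and of $f_{\lambda, v_0}$ in $\lambda$, I would first pass to the limit to obtain the weak inequality
\begin{equation*}
e^{(n-1)f_{\overline{\lambda}_0, v_0}}(w) \geq e^{(n-1)f}(w), \quad |w-v_0| \leq \overline{\lambda}_0,
\end{equation*}
with equality on the fixed sphere $|w-v_0| = \overline{\lambda}_0$ (where $\phi_{\overline{\lambda}_0, v_0}$ acts as the identity with unit conformal factor). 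Applying Lemma \ref{hyperbolicharmonic} and the maximum principle on $\Bblvp$ with the hyperbolic metric then promotes this to $P_{2-n}f_{\overline{\lambda}_0, v_0} \geq P_{2-n}f$ throughout $\overline{\B}^+_{\overline{\lambda}_0, v_0}$, noting that on the hemisphere $\{|y-v_0|=\overline{\lambda}_0\}\cap \R^n_+$ the two extensions agree by (\ref{PKphi}) and (\ref{Extension}).

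The second step is to upgrade these weak inequalities to strict ones. The difference $P_{2-n}f_{\overline{\lambda}_0, v_0} - P_{2-n}f$ is hyperbolic harmonic on $\Bblvp$ and vanishes on the hemispherical part of the boundary. Under the contradiction hypothesis it cannot be identically zero (otherwise its boundary values would force $f = f_{\overline{\lambda}_0, v_0}$ on the disk $\{|w-v_0|<\overline{\lambda}_0\}$, and hence everywhere by the inversion), so the strong maximum principle yields $P_{2-n}f_{\overline{\lambda}_0, v_0} > P_{2-n}f$ strictly throughout $\Bblvp$. Inserting this into the integral identity of Lemma \ref{ELupdifferentform} and exploiting positivity of the kernel from Lemma \ref{Kgreaterthanzero} then yields $e^{(n-1)f_{\overline{\lambda}_0, v_0}}(w) > e^{(n-1)f}(w)$ strictly for every $|w-v_0| < \overline{\lambda}_0$.

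To derive the contradiction with the maximality of $\overline{\lambda}_0$, I would show that the strict inequality $e^{(n-1)f_{\lambda, v_0}} > e^{(n-1)f}$ persists on $\{|w-v_0|<\lambda\}$ for all $\lambda$ slightly larger than $\overline{\lambda}_0$. Splitting this ball into a compact interior piece $\{|w-v_0|\leq \overline{\lambda}_0 - \epsilon\}$ and a thin annular neighborhood of the sphere $|w-v_0|=\overline{\lambda}_0$, the compact piece is handled by joint continuity of $f_{\lambda, v_0}(w)$ in $(\lambda,w)$ together with the strict inequality at $\overline{\lambda}_0$. For the annulus, I would apply Lemma \ref{ELupdifferentform} at the relevant $\lambda$ and use the boundary-derivative formula from Lemma \ref{Kgradient} to produce a Hopf-type strict positive outward radial derivative of $e^{(n-1)f_{\overline{\lambda}_0, v_0}} - e^{(n-1)f}$ on the sphere $|w-v_0|=\overline{\lambda}_0$, which by continuity in $\lambda$ keeps the difference positive on the annulus for $\lambda$ sufficiently close to $\overline{\lambda}_0$.

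The main obstacle is precisely this annular Hopf-type step: because $\phi_{\lambda,v_0}$ fixes $\{|w-v_0|=\lambda\}$ pointwise, the inequality degenerates on the very sphere one is trying to push past, so continuity alone cannot close the argument. One has to verify carefully that the first-order positivity supplied by Lemma \ref{Kgradient}, combined with the strict interior positivity from the second step, dominates the simultaneous deformation of both the domain $\Blvp$ and the inversion $\phi_{\lambda, v_0}$ as $\lambda$ varies, and that the asymptotic behavior of $f$ encoded in (\ref{flimitdef}) is strong enough to make the relevant boundary integrals well-controlled.
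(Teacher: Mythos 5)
Your first half is sound and coincides with the paper's: by the definition (\ref{criticallambda}) and continuity you get the weak inequality at $\overline{\lambda}_0$, Lemma \ref{hyperbolicharmonic} and the maximum principle transfer it to the extensions on $\Bblvp$, and the combination of Lemma \ref{ELupdifferentform} with the kernel positivity of Lemma \ref{Kgreaterthanzero} upgrades it, under the contradiction hypothesis, to the strict inequalities $P_{2-n}\fblv>P_{2-n}f$ in $\Bblvp$ and $e^{(n-1)\fblv}>e^{(n-1)f}$ on $\{|w-v_0|<\overline{\lambda}_0\}$; this is exactly how the paper obtains (\ref{fblvu}). The genuine gap is the continuation step, which you yourself call ``the main obstacle'' and leave unverified. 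The difficulty is structural, not technical bookkeeping: Lemma \ref{Kgradient} signs $\langle\overline{\nabla}_w K(v_0,\lambda;y,w),w-v_0\rangle$ only on the sphere $|w-v_0|=\lambda$, and to exploit it through Lemma \ref{ELupdifferentform} at a supercritical radius $\lambda>\overline{\lambda}_0$ you must weight it by $e^{nP_{2-n}\flv}-e^{nP_{2-n}f}$ on $\Blvp$, whose sign for $\lambda>\overline{\lambda}_0$ is precisely what is being proved. Falling back on the derivative at $\overline{\lambda}_0$ (which, note, is strictly \emph{negative} in the outward radial direction, since the difference is positive inside and vanishes on the sphere --- your ``strict positive outward radial derivative'' has the wrong sign) and then invoking ``continuity in $\lambda$'' would require a uniform modulus controlling the simultaneous deformation of $\flv$, of the domain $\Blvp$, and of the kernel, on the very sphere where the quantity degenerates to zero; your sketch supplies none of this, so the compact-piece-plus-annulus argument does not close.

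The paper closes the argument by a different mechanism that never evaluates the identity at a supercritical radius: take $\lambda_i\downarrow\overline{\lambda}_0$; since each $\lambda_i$ fails, the minimum of $P_{2-n}f_{\lambda_i,v_0}-P_{2-n}f$ over $\overline{\B}^+_{\lambda_i,v_0}$ is negative, the maximum principle places the minimum point $w_i$ on the flat boundary, the lower bound (\ref{gammahalfball}) keeps $|w_i-v_0|\geq\overline{\lambda}_0/2$, and interior minimality gives $\overline{\nabla}(f_{\lambda_i,v_0}-f)(w_i)=0$. Passing to a subsequential limit $w_i\to w_0$ and using the strict inequality (\ref{fblvu}) forces $|w_0-v_0|=\overline{\lambda}_0$ together with $\overline{\nabla}(\fblv-f)(w_0)=0$, which contradicts the strictly negative radial derivative of $e^{(n-1)\fblv}-e^{(n-1)f}$ at $w_0$ computed from Lemma \ref{ELupdifferentform} and Lemma \ref{Kgradient} at the critical radius, where the integrand's sign is known from your second step. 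To repair your proposal, replace the annular Hopf-plus-continuity step with this limiting minimum-point argument (the Li--Zhu scheme the paper follows); the rest of what you wrote can stay as is.
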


\begin{proof}
We prove this by contradiction. Our proof is similar to a combination of Lemma 2.4 in \cite{LZ} and Lemma 3.2 in \cite{L}. Suppose there exists $v\in \partial \R^n_+=\R^{n-1}$ such that
\[f(v)<\fblv(v),\]
then by maximum principle (note that by Lemma \ref{hyperbolicharmonic} we can use maximum principle here), we have
\[P_{2-n}f(y)<P_{2-n}\fblv(y),\]
for all $y\in\Bblvp $. As a result, using (\ref{ELup}) and Lemma \ref{Kgreaterthanzero} we see that
\begin{equation}\label{fblvu}
f(w)<\fblv(w)	
\end{equation}

for all $w\in \partial \R^n_+=\R^{n-1}$ such that $|w-v_0|<\overline{\lambda}_0$. By (\ref{asympf}), $\fblv$ is continuous on $\partial\R^{n}_+$. Using compactness of $\overline{\B}^+_{\overline{\lambda}_0/2,v_0}$, there exists $\gamma>0$ such that
\[P_{2-n}\fblv(y)-P_{2-n}f(y)\geq \gamma>0\]
for all $y\in \overline{\B}^+_{\overline{\lambda}_0/2,v_0}$.

Consider a sequence $\{\lambda_i\}_{i=1}^\infty$ such that for each $i$ we have
\[\lambda_i>\lambda_{i+1}>\overline{\lambda}_0,\]
and that
\[\lambda_i\to \overline{\lambda}_0\]
as $i\to\infty$. We can also require that
\begin{equation}\label{gammahalfball}
P_{2-n}f_{\lambda_i,v_0}(y)-P_{2-n}f(y)\geq \frac{\gamma}{2}>0,	
\end{equation}
since by (\ref{asymppf}) we know that $P_{2-n}\flv(y)$ is continuous function of $\lambda$.

For each $i$ by compactness of $\overline{\B}^+_{\lambda_i,v_0}$, there exists $w_i\in \overline{\B}^+_{\lambda_i,v_0} $ such that
\[P_{2-n}f_{\lambda_i,v_0}(w_i)-P_{2-n}f(w_i)=\inf_{\overline{\B}^+_{\overline{\lambda}_0,v_0}} P_{2-n}f_{\lambda_i,v_0}-P_{2-n}f.\]
Since $\overline{\lambda}_0$ is the critical value, we must have
\[P_{2-n}f_{\lambda_i,v_0}(w_i)-P_{2-n}f(w_i)<0.\]
By maximum principle, we also know that $w_i\in \overline{\B}^+_{\lambda_i,v_0}\cap \partial \R^n_+ $, and that
\[f_{\lambda_i,v_0}(w_i)-f(w_i)=P_{2-n}f_{\lambda_i,v_0}(w_i)-P_{2-n}f(w_i)<0.\]
In addition, by \ref{gammahalfball} we have $\frac{\lambda_0}{2}\leq |w_i-v_0|<\lambda_i$. We have strict inequality because $f_{\lambda_i,v_0}(w)=f(w)$ for all $|w-v_0|=\lambda_i$.

Since $w_i$ is an interior minimum for $f_{\lambda_i,v_0}-f$ in $\overline{\B}^+_{\lambda_i,v_0}\cap\partial \R^n_+$, we have
\[\overline{\nabla}(f_{\lambda_i,v_0}-f)(w_i)=0.\]
Here we use $\overline{\nabla}$ to denote the gradient in $\R^{n-1}$ with the Euclidean metric. It is the same notation as in Lemma \ref{Kgradient}.

By compactness of $\overline{\B}^+_{\lambda_1,v_0}$, we can choose a subsequence (still denote it $\{w_i\}_{i=1}^\infty$ and $\{\lambda_i\}_{i=1}^{\infty}$) such that $w_i\to w_0$ for some $w_0\in \overline{\B}^+_{\overline{\lambda}_0,v_0}\cap \partial \R^n_+$. By (\ref{asympf}) we can see that both $\flv(w)$ and $\overline{\nabla} \flv(w)$ are continuous for $\lambda>0$ and $|w|\neq 0$. As a result, we can take limit $i\to \infty$ to get
\begin{equation}\label{fu0}
\fblv(w_0)-f(w_0)\leq 0	
\end{equation}
and
\begin{equation}\label{fu0gradient}
\overline{\nabla}(\fblv-f)(w_0)=0.
\end{equation}
Because of (\ref{fu0}) and (\ref{fblvu}) we have $|w_0-v_0|=\overline{\lambda}_0$. But by Lemma \ref{ELupdifferentform} and Lemma \ref{Kgradient}, we can see that
\begin{eqnarray*}
&& (n-1)e^{(n-1)f}(w_0)\langle \overline{\nabla}(\fblv-f)(w_0),w_0-v_0\rangle 
\\&=& \langle\overline{\nabla} (e^{(n-1)\fblv}-e^{(n-1)f})(w_0),w_0-v_0\rangle 
\\ &=&\int_{\Bblvp} \left(e^{nP_{2-n}\fblv}-e^{nP_{2-n}f}(y)\right)\langle\overline{\nabla}_w K(v_0,\overline{\lambda}_0;y,w_0), w_0-v_0\rangle dy
\\ &=& -2(n-1)c_{n,2-n}\int_{\Bblvp} \left(e^{nP_{2-n}\fblv}-e^{nP_{2-n}f}(y)\right) \frac{y_n^{n-1}}{|w_0-y|^{2n}} (\overline{\lambda}_0^2-|y-v_0|^2)dy
\\ &<& 0.
\end{eqnarray*}
Which is a contradiction to (\ref{fu0gradient}).

\end{proof}
\subsection{Proof of Theorem \ref{MainTheoremLimitCaseUniqueness}}
For the proof of Theorem \ref{MainTheoremLimitCaseUniqueness} we need the following Lemma proved by Li and Zhu \cite{LZ}
\begin{lemma}\cite[Lemma 2.5]{LZ}\label{LemmaLiZhu}
For any integer $n\geq 3$, suppose $f\in C^1(\R^{n-1})$ satisfying: for any $b\in \R^{n-1}$, there exists $\lambda_b\in \R$ such that
\[f(w)=\frac{\lambda_b^{n-2}}{|w-b|^{n-2}}f
\left(\frac{\lambda_b^2}{|w-b|^2}(w-b)+b\right),\text{ for all } w\in\R^{n-1}\backslash\{b\}.\]
Then for some $a\geq 0$, $d>0$, $w_0\in \R^{n-1}$,
\[f(w)=\left(\frac{a}{|w-w_0|^2+d}\right)^{(n-2)/2},\text{ for all }w\in \R^{n-1},\]
or
\[f(w)=-\left(\frac{a}{|w-w_0|^2+d}\right)^{(n-2)/2},\text{ for all }w\in \R^{n-1}.\]
\end{lemma}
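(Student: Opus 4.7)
The plan is to reinterpret the given functional equation as saying that $u := f^{2/(n-2)}$ (when $f>0$) is a conformal density of weight $1$ on $\R^{n-1}$ invariant under the inversion about every point, hence under the full M\"obius group of $\widehat{\R}^{n-1}$, and then to invoke the classification of M\"obius-invariant weight-$1$ densities. The case $f\equiv 0$ is handled by $a=0$ in the conclusion. For $f\not\equiv 0$, I would first show $f$ is sign-definite (WLOG $f>0$); the negative branch yields the $-$ sign in the conclusion by symmetry.

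Sign-definiteness comes from asymptotic analysis at infinity. Setting $y=\phi_{\lambda_b,b}(w)$ in the identity rewrites it as
\[
f(\phi_{\lambda_b,b}(y)) = \left(\frac{|y-b|}{\lambda_b}\right)^{n-2} f(y),
\]
and letting $|y|\to\infty$ (so $\phi_{\lambda_b,b}(y)\to b$ and $|y-b|/|y|\to 1$) gives
\[
\lim_{|y|\to\infty} f(y)|y|^{n-2} = \lambda_b^{n-2} f(b) \;=:\; C_0,
\]
a limit independent of $b$. If $f(b_0)=0$ for some $b_0$ then $C_0=0$, so $\lambda_b^{n-2} f(b)=0$ for every $b$; since $\lambda_b$ must be finite and nonzero (else $\phi_{\lambda_b,b}$ is degenerate), $f\equiv 0$. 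Thus $f$ never vanishes and, being continuous, has constant sign.

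Assuming $f>0$ and setting $u := f^{2/(n-2)}$, raising the functional equation to power $2/(n-2)$ gives
\[
u(w) = \frac{\lambda_b^2}{|w-b|^2}\, u(\phi_{\lambda_b,b}(w)) = |\phi'_{\lambda_b,b}(w)|\, u(\phi_{\lambda_b,b}(w)),
\]
which is precisely the transformation law of a weight-$1$ conformal density under the inversion $\phi_{\lambda_b,b}$. The same asymptotic argument applied to $u$ yields $\lambda_b^2\, u(b) = C_0^{2/(n-2)}$, pinning down $\lambda_b$ as a function of $u(b)$. Compositions of two such inversions with varying centres generate all orientation-preserving M\"obius transformations of $\widehat{\R}^{n-1}$, so $u$ is a M\"obius-invariant weight-$1$ conformal density. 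By the classical classification of such densities (they are precisely the positive constant multiples of the pullback of the round spherical density by stereographic projection), $u(w)=a/(|w-w_0|^2+d)$ for constants $a,d>0$ and $w_0\in\R^{n-1}$. Unwinding yields $f(w)=(a/(|w-w_0|^2+d))^{(n-2)/2}$.

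The main obstacle is this final classification step. To make it self-contained I would bootstrap $u\in C^\infty$ from $f\in C^1$ via the symmetric form $u(w)\,u(b)\,|w-b|^2 = C_0^{2/(n-2)}\, u(\phi_{\lambda_b,b}(w))$ of the identity, and then expand both sides to second order in $w$ at $w=b$. Matching Taylor coefficients of order two produces a pointwise relation forcing $\nabla^2 u(b)$ to be a scalar multiple of the identity at every $b$, a Liouville-type identity whose $C^2$ solutions are exactly the quadratic reciprocals $a/(|w-w_0|^2+d)$; equivalently one shows directly that $1/u$ is a polynomial of degree at most two with positive-definite Hessian. The $\pm$ in the statement follows by combining the two sign branches for $f$.
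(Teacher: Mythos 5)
The paper does not prove this lemma; it is quoted from Li and Zhu \cite[Lemma 2.5]{LZ} and used as a black box, so there is no internal proof to compare against. On its own merits, your outline is on the right track, and the opening moves are correct: the limit $C_0:=\lim_{|y|\to\infty}|y|^{n-2}f(y)=\lambda_b^{n-2}f(b)$ exists and is independent of $b$, so either $f\equiv 0$ (the case $a=0$) or $f$ is nowhere zero and of one sign; and for $f>0$, passing to $u:=f^{2/(n-2)}$ does turn the hypothesis into the weight-$1$ transformation law $u(w)=|\phi'_{\lambda_b,b}(w)|\,u(\phi_{\lambda_b,b}(w))$ with $\lambda_b^2\,u(b)=C_0^{2/(n-2)}=:K$ fixed.

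The decisive step, however, is both circular and misformulated. Invoking ``the classical classification of M\"obius-invariant weight-$1$ densities'' is circular: that classification \emph{is} the content of the lemma, not an independent result one can cite. And the proposed self-contained substitute --- ``bootstrap $u\in C^\infty$ and expand both sides to second order in $w$ at $w=b$'' --- cannot be carried out as stated, because $\phi_{\lambda_b,b}(w)\to\infty$ as $w\to b$: the quantity $u(\phi_{\lambda_b,b}(w))$ is not Taylor-expandable about $w=b$, and its behavior there is governed by the asymptotics of $u$ at infinity, not by local data. The correct version of your idea (and, essentially, the Li--Zhu argument) expands at infinity instead: writing $v:=1/u$, the hypothesis reads $v(w)=\tfrac{|w-b|^2}{K}\,v(\phi_{\lambda_b,b}(w))/v(b)$, and since $\phi_{\lambda_b,b}(w)\to b$ as $|w|\to\infty$, a \emph{first}-order Taylor expansion of $v$ about $b$ --- which only needs $f\in C^1$, so no smoothness bootstrap is required --- gives $v(w)=\tfrac{|w-b|^2}{K}+\nabla v(b)\cdot(w-b)+o(|w|)$. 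Subtracting this for two base points $b_1,b_2$ and matching the $O(|w|)$ term forces $\nabla v(b_1)-\nabla v(b_2)=\tfrac{2}{K}(b_1-b_2)$ for all $b_1,b_2$, hence $v$ is a quadratic polynomial of the form $(|w-w_0|^2+d)/a$, which is the desired conclusion. Finally, the aside about compositions of two inversions generating the orientation-preserving M\"obius group plays no role in a correct proof and is not immediate anyway, since the available inversions have their radii $\lambda_b$ pinned down by $u(b)$.
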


We restate Theorem \ref{MainTheoremLimitCaseUniqueness} here for the convenience of the reader:
\begin{theorem}
For any integer $n\geq 2	$, if $\widetilde{f}\in L^\infty(\Sph^{n-1})$ satisfies the equation
\[e^{(n-1)\widetilde{f}(\xi)}=\int_{\B^n}e^{n\widetilde{I}_n +n\widetilde{P}_{2-n}\widetilde{f}}\widetilde{p}_{2-n}(x,\xi)dx,\]
then for all $\xi\in\Sph^{n-1}$
\[\widetilde{f}(\xi)=\ln \frac{1-|\zeta|^2}{|\xi-\zeta|^2}+C_n,\]
where $\zeta \in\B^{n}$ and $C_n=-\frac{1}{n-1} \ln \left|\Sph^{n-1}\right|$ is a constant. Here $\left|\Sph^{n-1}\right|$ denotes the volume of the standard sphere.
\end{theorem}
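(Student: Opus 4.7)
The plan is to apply the method of moving spheres set up in Subsections~5.3--5.5, invoke Li--Zhu's classification (Lemma~\ref{LemmaLiZhu}) to pin down the form of $\widetilde f$, and then identify the additive constant by substituting into the Euler--Lagrange equation. A preliminary step is a regularity bootstrap: starting from $\widetilde f\in L^\infty(\Sph^{n-1})$, the prefactor $e^{n\widetilde I_n+n\widetilde P_{2-n}\widetilde f}$ in the right-hand side is bounded (continuity of $\widetilde I_n$ up to $\Sph^{n-1}$ from Lemma~\ref{LemmaBoundaryValueThroughInduction} together with the trivial bound $\|\widetilde P_{2-n}\widetilde f\|_\infty\le\|\widetilde f\|_\infty$ via Remark~\ref{IntegrationLemmaLimitCaseRemark}), so standard Hardy--Littlewood--Sobolev-type estimates against the kernel $\widetilde p_{2-n}(x,\xi)$ upgrade $\widetilde f$ to $C^{1}(\Sph^{n-1})$, which is all the subsequent steps require. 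Transferring via $\Psi$ through~(\ref{flimitdef}) then produces $f\in C^{1}(\R^{n-1})$ solving the upper-half-space equation~(\ref{ELup}).

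For each basepoint $v_{0}\in\R^{n-1}$, Propositions~\ref{startf}--\ref{startu} start the sphere with some $\lambda_0(v_0)>0$, the lemma in the subsection ``The Case $\bar\lambda_0=\infty$'' forces $\bar\lambda_0(v_0)<\infty$, and Proposition~\ref{PropositionMovingSphereStep3} yields $f\equiv f_{\bar\lambda_0(v_0),v_0}$ on $\R^{n-1}$. Exponentiating, raising to the $(n-2)/2$ power, and inserting the explicit conformal factor~(\ref{plvConformalFactor}), the positive function $F:=e^{(n-2)f/2}$ satisfies
\[
F(w)=\frac{\bar\lambda_0(v_0)^{\,n-2}}{|w-v_0|^{\,n-2}}\,F\bigl(\phi_{\bar\lambda_0(v_0),v_0}(w)\bigr)\qquad\text{for every }v_0\in\R^{n-1}.
\]
Since this holds for every $v_0$, Lemma~\ref{LemmaLiZhu} (positive branch, forced by $F>0$) produces constants $a,d>0$ and $w_0\in\R^{n-1}$ with $e^{f(w)}=a/(|w-w_0|^{2}+d)$.

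To pull the conclusion back to the sphere I would use the identity
\[
\frac{1-|\Psi(z)|^{2}}{|\Psi(w)-\Psi(z)|^{2}}=\frac{z_{n}(1+|w|^{2})}{|w-z|^{2}}\qquad(z\in\R^{n}_{+},\ w\in\R^{n-1}),
\]
which follows immediately by dividing the two displays established in the proof of Proposition~\ref{PropositionPoissonKernelConformalTransformationPsi}. Taking $z=(w_0,\sqrt{d})$ and $\zeta:=\Psi(z)\in\B^{n}$, and rewriting~(\ref{flimitdef}) as $\widetilde f(\xi)=f(w)-\ln|\Psi'(w)|$ with $\xi=\Psi(w)$, produces
\[
\widetilde f(\xi)=\ln\frac{1-|\zeta|^{2}}{|\xi-\zeta|^{2}}+C,\qquad C=\ln\frac{a}{2\sqrt{d}}.
\]
To finish, the Euler--Lagrange equation is invariant under the $SO(n,1)$-action $\widetilde f\mapsto\widetilde f\circ\Phi+\ln|\Phi'|$, a short check from~(\ref{pkPhi}) and~(\ref{InBall}); choosing $\Phi$ sending $\zeta$ to $0$ reduces to the case $\widetilde f\equiv C$, and substitution combined with $\widetilde P_{2-n}C=C$ (Remark~\ref{IntegrationLemmaLimitCaseRemark}), rotational symmetry of $\widetilde I_n$, and integration over $\Sph^{n-1}$ gives a single scalar equation for $C$ whose unique solution is $C_n$. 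The main obstacle I anticipate is the regularity bootstrap, since Propositions~\ref{startf}--\ref{PropositionMovingSphereStep3} cannot be applied directly to a merely bounded $\widetilde f$; once $C^{1}$ regularity is secured, the remainder of the argument is bookkeeping against the propositions already established in Section~\ref{UniquenessThroughMovingSpheres}.
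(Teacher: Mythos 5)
Your proposal is correct and follows essentially the same route as the paper: regularity upgrade to $C^1$, transfer to the upper half space via (\ref{flimitdef}), the moving-sphere conclusion $f=f_{\overline{\lambda}_0,v_0}$ for every $v_0$ from Proposition~\ref{PropositionMovingSphereStep3}, classification via Lemma~\ref{LemmaLiZhu} applied to $e^{(n-2)f/2}$, and pull-back through $\Psi$ with $\zeta=\Psi(w_0,\sqrt d)$. The only departures are peripheral: the regularity step you sketch by ``HLS-type'' kernel estimates is exactly what Proposition~\ref{AppendixPropositionRegularity2} in the appendix supplies, and you fix the additive constant by conformally reducing to $\zeta=0$ and substituting the constant function into the Euler--Lagrange equation, whereas the paper pins it by the normalization $\left\|e^{\widetilde f}\right\|_{L^{n-1}(\Sph^{n-1})}=1$ inherited from the variational problem.
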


\begin{proof}
By Proposition \ref{AppendixPropositionRegularity2} in the appendix we know that $\widetilde{f}\in C^1(\Sph^{n-1})$. For $w\in \R^{n-1}$ define $f(w)$ as in (\ref{flimitdef}) then we have $f\in C^1(\R^{n-1})$. By discussion before (\ref{ELup}) we know that $f$ satisfies the Euler-Lagrange equation (\ref{ELup}). Then by Proposition \ref{PropositionMovingSphereStep3} we know that for any $v_0\in \partial \R^n_+=\R^{n-1}$, there exists $\overline{\lambda}_0>0$ depending on $v_0$, such that 
\[f(w)=f_{\overline{\lambda}_0,v_0}(w),\]
for all $w\in \R^{n-1}$.
Note that here 
\begin{equation*}
\begin{split}
f_{\overline{\lambda}_0,v_0}(w) &=f\circ \phi_{\overline{\lambda}_0,v_0}(w)+\ln|\phi_{\overline{\lambda}_0,v_0}'(w)|	\\
&=f\left(\frac{\overline{\lambda}_0^2}{|w-v_0|^2}(w-v_0)+v_0\right)+\ln\frac{\overline{\lambda}_0^2}{|w-v_0|^2}
\end{split}	
\end{equation*}
is as in (\ref{flvlimitdef}).

Define
\[\varphi(w)=e^{\frac{n-2}{2} f(w)},\]
then it is easy to check that $\varphi$ satisfies the assumption of Lemma \ref{LemmaLiZhu}. As a result we have
\[\varphi(w)=\left(\frac{a}{|w-w_0|^2+d}\right)^{(n-2)/2}\]
for some $a> 0$, $d>0$ and $w_0\in \R^{n-1}$. From this we know that
\[f(w)=-\ln (|w-w_0|^2+d)+\ln a.\]
Define
\[\zeta= \Psi(w_0,\sqrt{d}),\]
where $(w_0,\sqrt{d})$ is a point in $\R^n_+$. Then we have
\[\widetilde{f}(\xi)=\ln\frac{1-|\zeta|^2}{|\xi-\zeta|^2}+C_n, \text{ for all }\xi\in \Sph^{n-1}.\]
Note that here the constant $C_n=-\frac{1}{n-1} \ln \left|\Sph^{n-1}\right|$ is determined by the restriction 
\[\left\|e^{\widetilde{f}}\right\|_{L^{n-1}(\Sph^{n-1})}=1.\]
This finishes the proof.
\end{proof}

\appendix
\section{REgularity}
For any integer $n\geq 3$ any $\alpha\in (2-n,1)$ and any $p>\frac{2(n-1)}{n-2+\alpha}$ we want to prove regularity of the function $\widetilde{f}\in L^p(\Sph^{n-1})$ such that $\widetilde{f}$ is a solution to the integral equation for any $\xi\in\Sph^{n-1}$
\[\int_{\B^n}\pka(x,\xi)\left(\PKA \widetilde{f} \right)(x)^{\frac{n+2-\alpha}{n-2+\alpha}}=\widetilde{f}(\xi)^{p-1}.\]
Here $\pka (x,\xi)$ is the Poisson kernel in the unit ball as in (\ref{PoissonKernelUnitBall}). The normalizing constant $\cna$ is as in (\ref{cna}).

The corresponding integral equation in the upper half space is
\[\left(\widetilde{f}\circ\Psi(w)\right)^{p-1}=\int_{\R^n_+}\upka(y,w)|\Psi'(w)|^{\frac{\alpha-n}{2}}\left((\UPKA f)(y)\right)^{\frac{n+2-\alpha}{n-2+\alpha}}dy,\]
here $\upka(y,w)$ is the Poisson kernel in the upper half space is given in (\ref{PoissonKernelUpperHalfSpace}). 

The integral equation in the upper half space can be simplified to
\begin{equation}\label{IntegralEquationP}
\left(f(w)\right)^{p-1}|\Psi'(w)|^{\frac{n-\alpha}{2}-
\frac{(p-1)(n-2+\alpha)}{2}}=\int_{\R^n_+}\upka(y,w)\left((\UPKA f)(y)\right)^{\frac{n+2-\alpha}{n-2+\alpha}}dy,
\end{equation}

\subsection{From $L^p$ to $L^\infty$ }
\begin{proposition}
For any integer $n\geq 3$, any $\alpha\in (2-n,1)$ and any $\frac{2(n-1)}{n-2+\alpha}<p<\infty$ suppose $\widetilde{f}\in L^p(\Sph^{n-1})$ is a solution to the Euler-Lagrange equation
\[\int_{\B^n}\pka (x,\xi)\left(\left(\PKA \widetilde{f}\right)(x)\right)^{\frac{n+2-\alpha}{n-2+\alpha}}dx=\widetilde{f}(\xi)^{p-1},\]
then we have $\widetilde{f}\in L^\infty(\Sph^{n-1})$
\end{proposition}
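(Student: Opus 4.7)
The plan is to run a Brezis--Kato type bootstrap on the Euler--Lagrange equation, exploiting the strictly supercritical hypothesis $p > \frac{2(n-1)}{n-2+\alpha}$ to gain integrability at each step.

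Set $u := \widetilde{P}_\alpha \widetilde{f}$, $q-1 := \frac{n+2-\alpha}{n-2+\alpha}$, and rewrite the equation as $|\widetilde{f}|^{p-1} = T(u^{q-1})$, where $Tg(\xi) := \int_{\B^n} \widetilde{p}_\alpha(x,\xi) g(x)\,dx$. By Proposition~\ref{WeakEstimate}, $u \in L^{np/(n-1)}(\B^n)$. By duality of the mapping $\widetilde{P}_\alpha : L^{p'}(\Sph^{n-1}) \to L^{np'/(n-1)}(\B^n)$ from the same proposition, $T$ sends $L^{np'/(np'-n+1)}(\B^n)$ boundedly into $L^{p'/(p'-1)}(\Sph^{n-1})$ for each $p' > 1$. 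Matching the Lebesgue exponent so that $u^{q-1}$ lies in the domain of $T$, one finds a $p' = p'(n,\alpha,p)$ for which the condition $p' < p$ is equivalent to $p > \frac{2(n-1)}{n-2+\alpha}$. The duality bound then gives $|\widetilde{f}|^{p-1} \in L^{p'/(p'-1)}(\Sph^{n-1})$, i.e., $\widetilde{f} \in L^{(p-1)p'/(p'-1)}(\Sph^{n-1})$ with a strict gain $(p-1)p'/(p'-1) > p$.

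Iterating this gain finitely many times produces $\widetilde{f} \in L^s(\Sph^{n-1})$ for $s$ as large as desired. Once $s$ is sufficiently large, a single Hölder estimate with the kernel $\widetilde{p}_\alpha(\cdot,\xi)^a$ (integrable in $x \in \B^n$ for any $a < n/(n-\alpha)$, using the explicit formula (\ref{PoissonKernelUnitBall})) applied to $T(u^{q-1})(\xi)$ yields a pointwise bound on $|\widetilde{f}|^{p-1}$, and hence $\widetilde{f} \in L^\infty(\Sph^{n-1})$.

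The main obstacle is ensuring that the integrability gain at each iteration is bounded below, so the bootstrap terminates in finitely many steps rather than converging to a finite limit below infinity. This boils down to showing that the ratio $(p-1)p'/((p'-1)p)$ stays uniformly greater than $1$ along the iteration sequence, which uses the strictly supercritical hypothesis essentially: at the borderline $p = \frac{2(n-1)}{n-2+\alpha}$ one computes $p' = p$ and so no gain is available, which is consistent with this value being the conformally critical exponent.
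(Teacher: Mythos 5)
Your argument is essentially the paper's own proof: the paper likewise feeds the strong estimate of Proposition \ref{WeakEstimate} into its dual form (\ref{AppendixEquationDualityHWY2}), $\|\widetilde{T}_\alpha\widetilde{u}\|_{L^{(n-1)q/(n-q)}(\Sph^{n-1})}\leq\|\widetilde{u}\|_{L^q(\B^n)}$ for $q<n$, and iterates the resulting exponent map $s\mapsto(p-1)\frac{\gamma s(n-1)}{n-1-\gamma s}$ with $\gamma=\frac{n-2+\alpha}{n+2-\alpha}$; your $p'$-parametrization is the same estimate in disguise, and the gain criterion you isolate ($p'<p$ precisely when $p>\frac{2(n-1)}{n-2+\alpha}$) is exactly the paper's. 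If anything, you are more explicit than the paper, which ends with ``if we keep going,'' about why finitely many steps suffice and how large finite integrability is converted into a uniform bound.

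One concrete slip in your endgame: the claim that $\pka(\cdot,\xi)^a$ is integrable on $\B^n$ for every $a<\frac{n}{n-\alpha}$ is what one gets by discarding the factor $(1-|x|^2)^{1-\alpha}$, and for $\alpha\leq 0$ (which is allowed, since $\alpha\in(2-n,1)$) this threshold is $\leq 1$, so it supplies no H\"older exponent $a>1$ and the final pointwise bound does not follow as written. The repair is one line: since $1-|x|^2\leq 2|x-\xi|$ and $1-\alpha>0$, one has $\pka(x,\xi)\leq C(n,\alpha)|x-\xi|^{1-n}$, hence $\sup_{\xi\in\Sph^{n-1}}\|\pka(\cdot,\xi)\|_{L^a(\B^n)}<\infty$ for every $a<\frac{n}{n-1}$; once the bootstrap gives $\widetilde{f}\in L^s(\Sph^{n-1})$ with $\frac{ns\gamma}{n-1}>n$, H\"older with such an $a$ (conjugate exponent $a'>n$) bounds $\widetilde{f}(\xi)^{p-1}$ uniformly in $\xi$ and yields $\widetilde{f}\in L^\infty(\Sph^{n-1})$. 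The same remark applies to the paper's own terse conclusion, which also needs an argument of this kind once the restriction $q<n$ in (\ref{AppendixEquationDualityHWY2}) is no longer met.
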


\begin{proof}
Note that the operator $\PKA: L^p(\Sph^{n-1})\to L^{\frac{2n}{n-2+\alpha}}(\B^n)$ is bounded and compact when $\frac{2(n-1)}{n-2+\alpha}<p<\infty$. For $\widetilde{u}\in L^q(\B^n)$ where $1\leq q<n$, define the operator $T_\alpha$
\[\widetilde{T}_\alpha \widetilde{u}=\int_{\B^n}\pka (x,\xi)\widetilde{u}(x)dx. \]
Using a duality argument as in \cite{HWY2} we can prove
\begin{equation}\label{AppendixEquationDualityHWY2}
\|\widetilde{T}_\alpha \widetilde{u}\|_{L^{\frac{(n-1)q}{n-q}}(\Sph^{n-1})}\leq \|\widetilde{u}\|_{L^q(\B^n)}	
\end{equation}

Suppose $\widetilde{f}\in L^p(\Sph^{n-1})$ is a solution to the Euler-Lagrange equation
\[\int_{\B^n}\pka (x,\xi)\left(\left(\PKA \widetilde{f}\right)(x)\right)^{\frac{n+2-\alpha}{n-2+\alpha}}dx=\widetilde{f}(\xi)^{p-1}.\]
From Proposition \ref{WeakEstimate} we see that
\[\PKA \widetilde{f}\in L^{\frac{np}{n-1}}(\B^n).\]
As a result if we define $\gamma=\frac{n-2+\alpha}{n+2-\alpha}$, then we have
\[(\PKA \widetilde{f})^{1/\gamma}\in  L^{\frac{np\gamma}{n-1}}(\B^n). \]
Using (\ref{AppendixEquationDualityHWY2}) we have
\[\widetilde{f}^{p-1}\in L^{\frac{\gamma p(n-1)}{n-1-\gamma p}}(\Sph^{n-1}),\]
and hence
\[\widetilde{f}\in L^{\frac{\gamma p(p-1)(n-1)}{n-1-\gamma p}}(\Sph^{n-1})\] 

If we keep going we can have $\widetilde{f}\in L^\infty(\Sph^{n-1})$.
\end{proof}

\subsection{Derivative of $\pka$ with respect to $x$}
Next, we want to prove regularity for $f$ in using the same idea as in the book by Gilbart-Trudinger \cite[Chapter 4]{GT}. (The way they handle the Newtonian Potential) So firstly, we take derivative of $\pka$ with respect to $x$ and with respect to $\xi$.
Through direct calculation we get
\[
\partial_{y_i}\upka(y,w)=\begin{cases}
-(n-\alpha)\frac{y_n^{1-\alpha}(y_i-w_i)}{|y-w|^{n-\alpha+2}},\ i\neq n\\
-(n-\alpha)\frac{y_n^{2-\alpha}}{|y-w|^{n-\alpha+2}}+(1-\alpha)\frac{y_n^{-\alpha}}{|y-w|^{n-\alpha}},\ i= n
\end{cases}
\]

\[
\partial_{w_i}\upka(y,w)=
(n-\alpha)\frac{y_n^{1-\alpha}(y_i-w_i)}{|y-w|^{n-\alpha+2}},\ i=1,\ 2,...,\ n-1\]

\subsection{$C^\beta$ Regularity for $F$}

Suppose we have the integral equation
\[F(w)=\cna \int_{\R^n_+}\frac{y_n^{1-\alpha}}{|y-w|^{n-\alpha}}U(y)dy,\]

where $y=(y',y_n)\in \R^n_+$ and $y',w\in \R^{n-1}$. We assume $U(y)\in L^\infty(\R^n_+)$ and $U(y)>0$ for all $y\in \R^n_+$.

For any $R>0$ we can write
\begin{eqnarray*}
F(w)&=&\cna \int_0^R\int_{\R^{n-1}}\frac{y_n^{1-\alpha}}{|y-w|^{n-\alpha}}U(y)dy'dy_n\\
&+&\cna \int_R^\infty\int_{\R^{n-1}}\frac{y_n^{1-\alpha}}{|y-w|^{n-\alpha}}U(y)dy'dy_n.\\
\end{eqnarray*}
Define
\begin{equation}\label{FRdefinition}
F_R(w)=\cna \int_0^R\int_{\R^{n-1}}\frac{y_n^{1-\alpha}}{|y-w|^{n-\alpha}}U(y)dy'dy_n.	
\end{equation}

It is easy to see that $F-F_R\in C^\infty(\R^{n-1})$, since there is no local singularity, and the singularity at $\infty$ is still summable after taking derivatives.

\begin{lemma}\label{CalphaF}
For any $2-n\leq\alpha<1$. For any $R>0$ and for $U\in L^\infty (R^n_+)$, and $U>0$, define $F_R$ as in (\ref{FRdefinition}).
Then for any $\beta$ such that $0<\beta<1$, we have $F_R\in C^\beta_{loc} (\R^{n-1})$.
\end{lemma}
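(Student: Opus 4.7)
The plan is to estimate $|F_R(w_1)-F_R(w_2)|$ by the classical splitting argument used for Newtonian-type potentials in Gilbarg--Trudinger. Fix a compact set $K\subset\R^{n-1}$, take $w_1,w_2\in K$, and set $\delta:=|w_1-w_2|$; one may assume $\delta<R/4$, since in the complementary regime the estimate is trivial from the uniform bound $\|F_R\|_{L^\infty(K)}\leq CR\|U\|_\infty$, which follows after scaling the inner $y'$-integral to the identity $\int_{\R^{n-1}}(|y'-w|^2+y_n^2)^{-(n-\alpha)/2}\,dy'=Cy_n^{\alpha-1}$. Let $w^*:=(w_1+w_2)/2$, viewed as a point of $\partial\R^n_+$, and decompose the integration domain $\{0<y_n<R\}$ into a near part $A_{\text{near}}=\{|y-w^*|<4\delta\}$ and a far part $A_{\text{far}}=\{|y-w^*|\geq 4\delta\}$.

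For the near part, use $|U|\leq\|U\|_\infty$ and bound the $w_1$ and $w_2$ integrals separately. Working in spherical coordinates in $\R^n$ centered at $(w_i,0)\in\partial\R^n_+$ with $\theta\in[0,\pi/2]$ the polar angle from the $y_n$-axis (so $y_n=r\cos\theta$), the weight times the Euclidean volume element reduces exactly to $\cos^{1-\alpha}\theta\,\sin^{n-2}\theta\,dr\,d\theta\,d\omega$: the powers of $r$ cancel. Since $1-\alpha>0$ and $n\geq 2$ the angular factor has finite total integral, and integrating $r$ over $(0,5\delta)$ gives a near-part bound of order $\delta\|U\|_\infty$.

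For the far part, apply the mean value theorem to $t\mapsto|y-(tw_1+(1-t)w_2)|^{-(n-\alpha)}$ to extract the factor $|w_1-w_2|$; the resulting denominator $|y-w_t|^{n-\alpha+1}$ is comparable to $|y-w^*|^{n-\alpha+1}$ because $|w_t-w^*|\leq\delta/2\leq|y-w^*|/8$. The far contribution is thus bounded by $C\delta\|U\|_\infty$ times
\[
\int_0^R\int_{|y-w^*|\geq c\delta}\frac{y_n^{1-\alpha}}{|y-w^*|^{n-\alpha+1}}\,dy'\,dy_n.
\]
Split this into $y_n\geq\delta$ and $y_n<\delta$: in the first regime the spatial constraint on $y'$ is automatic (since $|y-w^*|\geq y_n\geq\delta$), and scaling the $y'$-integral yields $Cy_n^{\alpha-2}$, so the outer integral becomes $\int_\delta^R y_n^{-1}\,dy_n=\log(R/\delta)$; in the second, the constraint forces $|y'-w^*|\geq c'\delta$ for some $c'>0$, the $y'$-integral is $O(\delta^{\alpha-2})$, and $\int_0^\delta y_n^{1-\alpha}\,dy_n=O(\delta^{2-\alpha})$, for a total of $O(1)$.

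Combining the two pieces gives $|F_R(w_1)-F_R(w_2)|\leq C\delta(1+|\log\delta|)\|U\|_\infty$ on $K$, which for any $\beta\in(0,1)$ is majorized by $C_\beta\delta^\beta$ once $\delta$ is small, establishing the $C^\beta_{\text{loc}}$ bound. The only subtlety is bookkeeping the boundary integrability: one needs $1-\alpha>0$ to tame the weight $y_n^{1-\alpha}$ near $\{y_n=0\}$ and $n-\alpha>n-1$ so the $y'$-integrals converge at infinity, both guaranteed by the hypothesis $2-n\leq\alpha<1$.
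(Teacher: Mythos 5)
Your proof is correct, and it follows the same overall skeleton as the paper's (a near/far decomposition around the two boundary points plus the mean value theorem applied to the kernel on the far region, in the spirit of Gilbarg--Trudinger), but the quantitative bookkeeping is genuinely different. The paper never produces a logarithm: in the near region it writes $|y-w|^{-(n-\alpha)}=|y-w|^{\beta}\,|y-w|^{-(n-\alpha+\beta)}$ and uses $|y-w|\le 2|w-v|$, and in the far region it splits $|w-v|\le |y-w_0|^{1-\beta}|w-v|^{\beta}$, so both pieces reduce to the single master integral $\int_0^R\int_{\R^{n-1}} y_n^{1-\alpha}|y-\cdot|^{-(n-\alpha+\beta)}\,dy'\,dy_n$, which is evaluated by the substitution $z'=(y'-w)/y_n$ and converges precisely because $\beta<1$; the factor $|w-v|^{\beta}$ thus appears directly, with the $\beta$-dependence absorbed into the constant. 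You instead estimate the near part sharply via spherical coordinates centered on the boundary (exploiting the exact cancellation of the powers of $r$, which is a cleaner computation than the paper's) to get a bound of order $\delta$, and the far part via the scaling identity for the $y'$-integral to get $\delta\log(R/\delta)$, converting the resulting almost-Lipschitz modulus $\delta(1+|\log\delta|)$ into $\delta^{\beta}$ only at the very end. What each buys: the paper's exponent-interpolation gives the $C^{\beta}$ bound in one stroke with a single convergent integral, while your version proves a strictly stronger, $\beta$-independent modulus of continuity (log-Lipschitz), from which every $\beta\in(0,1)$ follows at once; your hypotheses are used in the same way (only $\alpha<1$ matters, the lower bound $\alpha\ge 2-n$ playing no role), and positivity of $U$ is not needed in either argument beyond $U\in L^{\infty}$.
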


\begin{proof}
For any $v,\ w\in R^{n-1}$,  consider
\[F_R(w)-F_R(v)=\cna \int_0^R\int_{\R^{n-1}}\left(\frac{y_n^{1-\alpha}}{|y-w|^{n-\alpha}}-\frac{y_n^{1-\alpha}}{|y-v|^{n-\alpha}}\right)U(y)dy'dy_n.\]
Define $r=|v-w|$. Suppose we have $0<r<\frac{R}{2}$. Define
\[A=\{y\in\R^n_+: \text{ such that }|y-v|<2r \text{ and } |y-w|<2r\}.\]
Then we can write

\begin{eqnarray*}
F_R(w)-F_R(v)&=&\cna \int_0^R\int_{A}\left(\frac{y_n^{1-\alpha}}{|y-w|^{n-\alpha}}-\frac{y_n^{1-\alpha}}{|y-v|^{n-\alpha}}\right)U(y)dy'dy_n\\
	&+&\cna \int_0^R\int_{\R^{n-1}\backslash A}\left(\frac{y_n^{1-\alpha}}{|y-w|^{n-\alpha}}-\frac{y_n^{1-\alpha}}{|y-v|^{n-\alpha}}\right)U(y)dy'dy_n\\
	&:=& I+II
\end{eqnarray*}

For $I$ we have
\begin{eqnarray*}
 &&\bigg|\cna  \int_0^R\int_{A}\left(\frac{y_n^{1-\alpha}}{|y-w|^{n-\alpha}}-\frac{y_n^{1-\alpha}}{|y-v|^{n-\alpha}}\right)U(y)dy'dy_n	\bigg|\\
&\leq & \CNA\int_0^R\int_{A}\frac{y_n^{1-\alpha }|y-w|^{\beta}}{|y-w|^{n-\alpha+\beta}}U(y)dy'dy_n\\
 && +\CNA\int_0^R\int_{A}\frac{y_n^{1-\alpha }|y-v|^{\beta}}{|y-v|^{n-\alpha+\beta}}U(y)dy'dy_n\\
&\leq & 2^{\beta}|w-v|^{\beta}\CNA \|U\|_{L^\infty (\R^n_+)}\bigg(\int_0^R\int_{A}\frac{y_n^{1-\alpha }}{|y-w|^{n-\alpha+\beta}}dy'dy_n\\
 && +\int_0^R\int_{A}\frac{y_n^{1-\alpha }}{|y-v|^{n-\alpha+\beta}}dy'dy_n \bigg),\\
 \end{eqnarray*}
in the last inequality we used the fact that $|y-w|<2r=2|v-w|$, $|y-w|<2|v-w|$ and $U>0$ in $\R^n_+$. By change of variable, choose $z'=\frac{y'-w}{y_n}$ with $dz'=\frac{dy'}{y_n^{n-1}}$ then we have
\begin{eqnarray*}
\int_0^R\int_{A}\frac{y_n^{1-\alpha }}{|y-w|^{n-\alpha+\beta}}dy'dy_n &\leq& \int_0^R\frac{dy_n}{y_n^\beta}\int_{\R^{n-1}}\frac{dz'}{(|z'|^2+1)^{\frac{n-\alpha+\beta}{2}}} \\	
&=& |\Sph^{n-2}|\int_0^R\frac{dy_n}{y_n^\beta}\int_0^\infty \frac{r^{n-2}dr}{(r^2+1)^{\frac{n-(\alpha-\beta)}{2}}}\\
&=& |\Sph^{n-2}| \frac{R^{1-\beta}}{1-\beta}\cdot \frac{\Gamma\left(\frac{1-(\alpha-\beta)}{2}\right)\Gamma(\frac{n-1}{2})}{2\Gamma\left(\frac{n-(\alpha-\beta)}{2}\right)}.
\end{eqnarray*}
Note that when $2-n\leq\alpha<1$ and $0<\beta <1$, we have $\frac{1-(\alpha-\beta)}{2}>0$ and $\frac{n-(\alpha-\beta)}{2}>0$. As a result, we have
\[\int_0^R\int_{A}\frac{y_n^{1-\alpha }}{|y-w|^{n-\alpha+\beta}}dy'dy_n \leq C(n,\alpha,\beta,R).\]
Similarly for $v$ we have
\[\int_0^R\int_{A}\frac{y_n^{1-\alpha }}{|y-v|^{n-\alpha+\beta}}dy'dy_n \leq C(n,\alpha,\beta,R).\]
As a result, we have
\[I\leq C(n,\alpha,\beta,R)\|U\|_{L^\infty(\R^n_+)} |w-v|^\beta.\]

Now we consider $II$. Notice that 
\[|D_w \upka(y,w)|\leq (n-\alpha)c_{n,\alpha}\frac{y_n^{1-\alpha}}{|y-w|^{n-\alpha+1}}.\]
Using mean value theorem we have: for some $w_0$ lying on the line segment between $v$ and $w$
\[|\upka(y,w)-\upka(y,v)|\leq|D_w\upka(y,w_0)||w-v|.\]
As a result, we have
\begin{eqnarray*}
|II|&\leq & \CNA \|U\|_{L^\infty(\R^n_+)}	\int_0^R\int_{\R^{n-1}\backslash A} |D_w\upka(y,w_0)||w-v| dy'dy_n\\
&=& \CNA \|U\|_{L^\infty(\R^n_+)}	\int_0^R\int_{\R^{n-1}\backslash A} \frac{y_n^{1-\alpha}}{|y-w_0|^{n-\alpha+1}}|w-v|^{1-\beta}|w-v|^\beta dy'dy_n.
\end{eqnarray*}
In $\R^n_+\backslash A$, we have
\[|y-w_0|\geq |w-v|=r.\]
We can prove this by contradiction. Suppose we have
\[|y-w_0|<r,\]
then by triangle inequality, we have
\[|y-w|\leq |w-w_0|+|y-w_0|\leq|w-v|+|y-w_0| \leq 2r. \]
Similarly for $v$, we have
\[|y-v|\leq 2r.\]
As a result, we have $y\in A$, which is a contradiction.

Now we have
\begin{eqnarray*}
|II|&\leq &  	\CNA \|U\|_{L^\infty(\R^n_+)}	\int_0^R\int_{\R^{n-1}\backslash A} \frac{y_n^{1-\alpha}}{|y-w_0|^{n-\alpha+1}}|y-w_0|^{1-\beta}|w-v|^\beta dy'dy_n \\
\\ &=& \CNA \|U\|_{L^\infty(\R^n_+)}	|w-v|^\beta\int_0^R\int_{\R^{n-1}\backslash A} \frac{y_n^{1-\alpha}}{|y-w_0|^{n-\alpha+\beta}}dy'dy_n.
\end{eqnarray*}
By previous calculation we have
\begin{eqnarray*}
\int_0^R\int_{\R^{n-1}\backslash A} \frac{y_n^{1-\alpha}}{|y-w_0|^{n-\alpha+\beta}}dy' dy_n &\leq &  \int_0^R\int_{\R^{n-1}} \frac{y_n^{1-\alpha}}{|y-w_0|^{n-\alpha+\beta}}dy' dy_n .\\
 &\leq& C(n,\alpha,\beta,R)	
\end{eqnarray*}
As a result, we have
\[|II|\leq C(n,\alpha,\beta,R) \|U\|_{L^{\infty}(\R^{n}_+)}|w-v|^\beta.\]

All together, for any $0<\beta<1$ we have
\[|F_R(w)-F_R(v)|\leq C(n,\alpha,\beta,R)\|U\|_{L^\infty (\R^n_+)}|w-v|^\beta,\]
which means $F_R\in C^\beta_{loc}(\R^{n-1})$.
\end{proof}

\subsection{$C^\beta$ Regularity for $U$}
\begin{lemma}\label{CalphaU}
For some $2-n\leq\alpha<1$, $0<\beta<1$ such that $\alpha +\beta<1$ and for some $f\in C^\beta_{loc}(\R^{n-1})\cap L^\infty(\R^{n-1})$, define
\[u(y)=\UPKA f (y)=c_{n,\alpha}\int_{\R^{n-1}}\frac{y_n^{1-\alpha}}{|y-w|^{n-\alpha}}f(w)dw.\]
Then for any $y'\in R^{n-1}$ we have
\[\lim_{y_n\to 0}u(y',y_n)=f(y'),\]
and $u\in C^\beta_{loc}(\overline{\R}^n_+)$.	
\end{lemma}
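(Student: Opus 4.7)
The proof is based on the identity $\int_{\R^{n-1}} p_\alpha(y,w)\,dw = 1$ for every $y \in \R^n_+$, proved earlier in the paper. Fixing any $y_0' \in \R^{n-1}$, this yields
\[
u(y) - f(y_0') = c_{n,\alpha}\int_{\R^{n-1}}\frac{y_n^{1-\alpha}}{|y-w|^{n-\alpha}}\bigl(f(w) - f(y_0')\bigr)\,dw,
\]
so both conclusions reduce to estimating the right-hand side against the oscillation of $f$.

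For the boundary limit I would take $y_0' = y'$ and split the integral at $|w - y'| = \delta$. On the ball of radius $\delta$, local H\"older continuity $|f(w) - f(y')| \leq [f]_\beta|w-y'|^\beta$ together with the rescaling $w = y' + y_n z$ gives a bound of the form $C[f]_\beta y_n^\beta \int_0^\infty r^{n-2+\beta}(1+r^2)^{-(n-\alpha)/2}\,dr$; the last integral converges precisely because $\alpha+\beta<1$. Outside the ball, $|y-w| \geq |w-y'|$ and the $L^\infty$ bound on $f$ produce a contribution of order $y_n^{1-\alpha}\int_\delta^\infty r^{\alpha-2}\,dr$, which vanishes as $y_n \to 0$ since $\alpha<1$. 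Sending $y_n\to 0$ and then $\delta \to 0$ gives the boundary limit.

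For the H\"older estimate on a compact $K\subset\overline{\R}^n_+$, take $y_1,y_2\in K$ with $r=|y_1-y_2|$; the case where both points lie on $\partial\R^n_+$ is immediate from the H\"older continuity of $f$. Otherwise I would choose $y_0'$ to be the boundary projection of $y_1$ and subtract the two instances of the identity above to write
\[
u(y_1) - u(y_2) = c_{n,\alpha}\int_{\R^{n-1}}\bigl(p_\alpha(y_1,w)-p_\alpha(y_2,w)\bigr)\bigl(f(w)-f(y_0')\bigr)\,dw,
\]
and partition the domain into $A = \{|w-y_0'|\leq 2r\}$ and $A^c$. On $A$ I would estimate each kernel separately via the same rescaling as in the boundary-limit step, using $|f(w)-f(y_0')|\leq [f]_\beta|w-y_0'|^\beta$ to obtain a contribution of size $C[f]_\beta r^\beta$, again with $\alpha+\beta<1$ driving the convergence. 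On $A^c$ I would apply the fundamental theorem along the segment joining $y_1$ and $y_2$, bound the gradient of $p_\alpha$ by $C\bigl(y_n^{1-\alpha}|y-w|^{-(n-\alpha+1)}+y_n^{-\alpha}|y-w|^{-(n-\alpha)}\bigr)$, pair it with $[f]_\beta|w-y_0'|^\beta$, and use $|w-y_0'|\geq 2r$ to extract an overall bound proportional to $r\cdot r^{\beta-1}=r^\beta$, imitating the corresponding calculation in Lemma \ref{CalphaF}.

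The main obstacle is the $y_n^{-\alpha}$ blow-up in $\partial_{y_n}p_\alpha$ when $\alpha\geq 0$, which defeats a naive mean-value argument near the boundary. The restriction $\alpha+\beta<1$ is exactly what rescues the bookkeeping: after introducing the H\"older gain $|w-y_0'|^\beta$ on $A^c$ and keeping the interpolation segment inside $\overline{\R}^n_+$, the relevant radial integrals in the $y_n$-variable remain finite with the correct $r^\beta$ scaling. Piecing the boundary-value identification together with the uniform H\"older estimate then gives $u\in C^\beta_{loc}(\overline{\R}^n_+)$.
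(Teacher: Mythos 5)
The boundary-limit half of your argument is correct and is essentially the paper's mechanism made quantitative: the kernel has total mass one, the Hölder gain near $y'$ produces the convergent integral $\int_0^\infty r^{n-2+\beta}(1+r^2)^{-(n-\alpha)/2}\,dr$ (this is exactly where $\alpha+\beta<1$ enters in the paper as well), and the $L^\infty$ tail is of order $y_n^{1-\alpha}\delta^{\alpha-1}$. Your estimate on the near set $A$ in the Hölder step is also fine, since there one may simply use $|w-y_0'|^\beta\leq (2r)^\beta$ together with $\int_{\R^{n-1}}p_\alpha(y_i,w)\,dw=1$.

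The gap is in the $A^c$ step. The fundamental theorem along the straight segment gives the factor $|y_1-y_2|=r$ times $\sup_t\int_{A^c}|\nabla_y p_\alpha(y(t),w)|\,|w-y_0'|^\beta\,dw$. The term $y_n^{1-\alpha}|y-w|^{-(n-\alpha+1)}$ is harmless (bound it by $|y-w|^{-n}$, giving $\int_{2r}^\infty \rho^{\beta-2}\,d\rho\sim r^{\beta-1}$), but the term $y_n^{-\alpha}|y-w|^{-(n-\alpha)}$ contributes $y_n(t)^{-\alpha}\int_{2r}^\infty\rho^{\alpha+\beta-2}\,d\rho\sim y_n(t)^{-\alpha}r^{\alpha+\beta-1}$, and along the segment $y_n(t)$ can be as small as $\min((y_1)_n,(y_2)_n)$, which need not be comparable to $r$. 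Concretely, for $\alpha\in(0,1)$, $y_1=(0,h)$ and $y_2=(r e_1,h)$ with $h\ll r$, your bound is of size $h^{-\alpha}r^{\alpha+\beta}\gg r^\beta$. The hypothesis $\alpha+\beta<1$ only makes the $\rho$-integral converge at infinity; it does nothing to the $y_n^{-\alpha}$ factor, so the claim that it rescues the bookkeeping is not justified, and the estimate as written does not close precisely in the regime of two points near the boundary but horizontally separated. The repair is not hard: when $\max((y_1)_n,(y_2)_n)\lesssim r$, compare each $u(y_i)$ with $f$ at its boundary projection using the quantitative form of your first step, namely $|u(y',y_n)-f(y')|\leq C y_n^\beta$ locally (the tail is $O(y_n^{1-\alpha})$ and $\beta<1-\alpha$), and use the Hölder continuity of $f$ on the boundary; the segment argument is then only needed when both heights are $\gtrsim r$, where $y_n(t)^{-\alpha}\lesssim r^{-\alpha}$ indeed yields $r\cdot r^{-\alpha}r^{\alpha+\beta-1}=r^\beta$ (alternatively, replace the segment by a path that first lifts both points to height comparable to $r$). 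Note that the paper avoids differentiating the kernel altogether: it substitutes $w=y'+y_n z$, so that $u(y)$ becomes an average of $f(y'+y_n z)$ against the fixed profile $c_{n,\alpha}(1+|z|^2)^{-(n-\alpha)/2}$, and the Hölder modulus of $f$ transfers directly to the estimate $|u(y)-u(v)|\leq C|y-v|^\beta$ with $v$ on the boundary.
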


\begin{proof}
Through change of variable we see that
\[|u(y)|=c_{n,\alpha}\left|\int_{\R^{n-1}}\frac{f(y_n w+y')}{(|w|^2+1)^{\frac{n-\alpha}{2}}}dw\right|\leq \|f\|_{L^\infty(\R^{n-1})}\cna\int_{\R^{n-1}}\frac{dw}{(|w|^2+1)^{\frac{n-\alpha}{2}}}.\]
From which we can get
\[\|u\|_{L^\infty(\R^n_+)}\leq \|f\|_{L^\infty(\R^{n-1})}.\]

In addition, by dominated convergence theorem, we get
\[\lim_{y_n\to 0}u(y',y_n)=\cna \int_{\R^{n-1}}\lim_{y_n\to 0}\frac{f(y_n w+y')}{(|w|^2+1)^{\frac{n-\alpha}{2}}}dw=f(y').\]

Since it is easy to see that $U\in C^\infty(\R^n_+)$, we only need to show that $U$ is H\"{o}lder continuous up to the boundary.

For any $y\in \R^n_+$ and any $v\in \R^{n-1}$, define $D=\{w\in \R^{n-1} \text{ such that } |(y_n w+y')-v|<1 \}$. Note that $D$ is a ball in $\R^{n-1}$ centered at $\frac{v-y'}{y_n}$ with radius $\frac{1}{y_n}$. Also, note that for any $w\in D$ we have
\[|y_n w+y'|\leq 1+|v|\]

Choose $R>0$ large enough such that
\[1+|v|<R\]
then for all $w\in D$ we have
\[|y_n w+y'|\leq 1+|v|<R.\]

Since $f\in C^\beta_{loc}(\R^{n-1})$, we have
\[\sup_{|w_1|<R,\ |w_2|<R} \frac{|f(w_1)-f(w_2)|}{|w_1-w_2|^\beta}=\kappa<\infty.\]

 Consider
\begin{eqnarray*}
|u(y)-u(v)|&=&c_{n,\alpha}\int_{\R^{n-1}}\left|\frac{f(y_n w+y')-f(v)}{(|w|^2+1)^{\frac{n-\alpha}{2}}}\right|dw\\
&\leq & c_{n,\alpha}\int_{D}\left|\frac{f(y_n w+y')-f(v)}{(|w|^2+1)^{\frac{n-\alpha}{2}}}\right|dw\\
& & +c_{n,\alpha}\int_{\R^{n-1}\backslash D}\left|\frac{f(y_n w+y')-f(v)}{(|w|^2+1)^{\frac{n-\alpha}{2}}}\right|dw\\
&\leq & \cna\kappa \int_{D}\frac{|y_n w+(y'-v)|^\beta}{(|w|^2+1)^{\frac{n-\alpha}{2}}} dw\\
&& + 2\cna\|f\|_{L^\infty(\R^{n-1})}\int_{\R^{n-1}\backslash D}\frac{|y_n w+(y'-v)|^\beta}{(|w|^2+1)^{\frac{n-\alpha}{2}}} dw\\
&\leq & C(n,\alpha,\beta,\kappa,\|f\|_{L^\infty})\int_{\R^{n-1}}\frac{|y_n w|^\beta+|y'-v|^\beta}{(|w|^2+1)^{\frac{n-\alpha}{2}}}dw\\
&\leq & C(n,\alpha,\beta,\kappa,\|f\|_{L^\infty})\int_{\R^{n-1}} \left(\frac{|y_n|^\beta |w|^\beta}{(|w|^2+1)^{\frac{n-\alpha}{2}}} +\frac{
|y'-v|^\beta}{(|w|^2+1)^{\frac{n-\alpha}{2}}}\right) dw\\
&\leq &C(n,\alpha,\beta,\kappa,\|f\|_{L^\infty})|y-v|^\beta \int_{\R^{n-1}} \frac{|w|^\beta}{(|w|^2+1)^{\frac{n-\alpha}{2}}} dw\\
& & +C(n,\alpha,\beta,\kappa,\|f\|_{L^\infty}) |y-v|^\beta \int_{\R^{n-1}} \frac{1}{(|w|^2+1)^{\frac{n-\alpha}{2}}} dw.
\end{eqnarray*}
Note that in the third inequality, we used subadditivity of concave function. Also, note that
\[\int_{\R^{n-1}} \frac{|w|^\beta}{(|w|^2+1)^{\frac{n-\alpha}{2}}} dw=|\Sph^{n-2}| \frac{\Gamma\left(\frac{1-(\alpha+\beta)}{2}\right)\Gamma\left(\frac{n+\beta-1}{2}\right)}{2\Gamma\left(\frac{n-\alpha}{2}\right)},\]
and
\[\int_{\R^{n-1}} \frac{|w|^\beta}{(|w|^2+1)^{\frac{n-\alpha}{2}}} dw=|\Sph^{n-2}| \frac{\Gamma\left(\frac{1-\alpha}{2}\right)\Gamma\left(\frac{n-1}{2}\right)}{2\Gamma\left(\frac{n-\alpha}{2}\right)}.\]
Both integrals are finite when $2-n\leq\alpha<1$, $0<\beta<1$ and $\alpha+\beta<1$. As a result, we have
\[|u(y)-u(v)|\leq C(n,\alpha,\beta,\kappa,\|f\|_{L^\infty})|y-v|^\beta\]
for all $v\in \R^{n-1}$ such that $|v|+1<R$ and for all $y\in R^n_+$. Note that since $\kappa$ depends on $R$, the function $u$ is only locally H\"{o}lder continuous. We have $u\in C^\beta_{loc}(\overline{\R}^n_+)$.
\end{proof}

\subsection{$C^1$ Regularity for $F$}

For any $v\in \R^{n-1}=\partial \R^n$ and for any $R>0$ define $\B_{R,v}=\{y\in\R^n: |y-v|<R\}$, and $\B_{R,v}^+= \B_{R,v}\cap \R^n_+$. We also define the $n-1$ dimensional ball as $\B^{n-1}_{R,v}= \{w\in\R^{n-1}: |w-v|<R\}$. We also use notation $\B_{R}$, $\B_{R}^+$ and $\B_{R}^{n-1}$ to denote  $\B_{R,0}$, $\B_{R,0}^+$ and $\B_{R,0}^{n-1}$ respectively.

We prove $C^1_{loc}(\R^{n-1})$ regularity using the same argument as in \cite[Lemma 4.2]{GT}.

\begin{lemma}\label{C1F}
For any $2-n\leq\alpha<1$, $0<\beta<1$ and for any $U\in L^\infty (\R^n_+)\cap C^\beta_{loc}(\overline{\R}^n_+)$, $U>0$ such that
\[F(w)=\cna \int_{\R^n_+}\frac{y_n^{1-\alpha}}{|y-w|^{n-\alpha}}U(y)dy\]
is well defined.
We have $F\in C^1_{loc}(\R^{n-1})$.
\end{lemma}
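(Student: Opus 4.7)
I would follow the same splitting device as in the proof of Lemma \ref{CalphaF}, combined with the regularization technique used for the second derivative of the Newtonian potential in Gilbarg--Trudinger. Fix $R>0$ and write $F = F_R + (F-F_R)$ with $F_R$ as in (\ref{FRdefinition}). The tail $F - F_R$ integrates over $\{y_n \ge R\}$, where the kernel and all its derivatives in $w$ are smooth and, after the usual $\R^{n-1}$-integration, have summable decay, so $F - F_R \in C^{\infty}(\R^{n-1})$. It therefore suffices to show $F_R \in C^1_{\mathrm{loc}}(\R^{n-1})$ for each fixed $R$, and I focus on the partial derivatives $\partial_{w_i}$ for $i = 1,\ldots,n-1$.

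The candidate derivative, obtained after subtracting the boundary value $U(w,0)$, is
\[
V_i(w) := \cna(n-\alpha)\int_0^R\!\!\int_{\R^{n-1}}\frac{y_n^{1-\alpha}(y_i-w_i)}{|y-w|^{n-\alpha+2}}\bigl[U(y)-U(w,0)\bigr]\,dy'\,dy_n.
\]
The subtraction is motivated by the fact that the ``missing'' $U(w,0)$-contribution, evaluated as an iterated integral in the order $dy'\,dy_n$, vanishes by the odd symmetry of the integrand in $y_i - w_i$. Using the local H\"older bound $|U(y)-U(w,0)|\le C_{R,w}|y-w|^\beta$, together with the elementary estimate $y_n \le |y-w|$ (valid because $w\in\R^{n-1}$) and $1-\alpha\ge 0$, the integrand of $V_i$ is pointwise majorized by $C|y-w|^{\beta-n}$ near $(w,0)$, which is integrable since $\beta>0$ produces a radial factor $r^{\beta-1}$; the far region is controlled by the $L^\infty$ bound on $U$ exactly as in the proof of Lemma \ref{CalphaF}. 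Thus $V_i$ is well-defined.

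To identify $V_i$ with $\partial_{w_i}F_R$ I would regularize with a smooth cutoff: choose $\eta \in C^\infty([0,\infty))$ with $\eta\equiv 0$ on $[0,1]$ and $\eta\equiv 1$ on $[2,\infty)$, and set
\[
F_R^\epsilon(w) := \cna\int_0^R\!\!\int_{\R^{n-1}}\frac{y_n^{1-\alpha}}{|y-w|^{n-\alpha}}\,\eta\!\left(\frac{|y-w|}{\epsilon}\right)U(y)\,dy'\,dy_n.
\]
Differentiation under the integral is valid for $F_R^\epsilon$ because the cutoff uniformly separates the integrand from the singularity, and the bound $|F_R^\epsilon - F_R| = O(\epsilon)$ uniformly in $w$ on compact sets is immediate. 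The derivative $\partial_{w_i}F_R^\epsilon$ splits into a ``kernel-derivative'' piece and an ``$\eta'$-piece''. In each piece I apply the decomposition $U(y) = [U(y)-U(w,0)] + U(w,0)$: the $U(w,0)$-part vanishes identically by odd symmetry in $y_i - w_i$ (the cutoff depends only on $|y-w|$), and the H\"older remainder is the tractable term. For the kernel-derivative piece, $\eta \to 1$ pointwise and dominated convergence delivers $V_i(w)$; for the $\eta'$-piece, the support condition $\epsilon \le |y-w|\le 2\epsilon$ together with the H\"older bound yields an estimate of order $\epsilon^{-1}\int_\epsilon^{2\epsilon} r^\beta\,dr = O(\epsilon^\beta)$, which vanishes as $\epsilon\to 0$. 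Both convergences are uniform on compact subsets of $\R^{n-1}$, giving $\partial_{w_i}F_R = V_i$.

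It remains to verify that $V_i$ is continuous, which is a routine dominated convergence argument: on a compact set of $w$ values, the integrand of $V_i$ admits a single dominating function built from the uniform H\"older constant of $U$ on a neighborhood and the $L^\infty$ bound at infinity. The principal obstacle throughout is that $\partial_{w_i}\upka$ fails to be absolutely integrable near $y=(w,0)$ --- the crude bound $y_n^{1-\alpha}|y-w|^{-(n-\alpha+1)} \le |y-w|^{-n}$ is logarithmically divergent --- so differentiation under the integral in the naive sense is illegitimate. The remedy is precisely the combination of the H\"older regularity of $U$, which supplies the critical factor $|y-w|^\beta$ via the subtraction $U(y)-U(w,0)$, and the odd symmetry in $y_i - w_i$, which annihilates the $U(w,0)$-piece in every regularization; this is the same mechanism that underlies the $C^{2,\alpha}$ estimate for the Newtonian potential of a H\"older density.
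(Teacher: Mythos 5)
Your proposal is correct, and it rests on the same core mechanism as the paper's proof: the Gilbarg--Trudinger regularization for potentials with H\"older densities, in which subtracting the value $U(w,0)$ supplies the factor $|y-w|^{\beta}$ that rescues the critically non-integrable derivative kernel, while a symmetry or divergence-theorem argument disposes of the subtracted piece. The implementations differ, though. The paper reflects $U$ evenly across $\{y_n=0\}$ to $\overline{U}$ on $\R^n$, replaces the kernel by $\cna|y_n|^{1-\alpha}|y-w|^{-(n-\alpha)}$, rewrites the near part as one half of an integral over the full ball $\B_R$, and then invokes the argument of \cite[Lemma 4.2]{GT} wholesale; the resulting derivative formula carries the boundary term $-U(w)\int_{\partial\B_R}\overline{p}_\alpha(y,w)\nu_i(y)\,dS_y$ produced by the divergence theorem on the ball. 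You instead stay on the slab $\{0<y_n<R\}$ of (\ref{FRdefinition}), keep the explicit cutoff regularization, and use the reflection $y_i-w_i\mapsto -(y_i-w_i)$ (available exactly for the tangential directions $i\le n-1$, which are the only ones needed) to annihilate the $U(w,0)$-contribution outright, so no boundary integral appears; this buys a self-contained argument and a cleaner formula for $\partial_{w_i}F_R$, at the cost of redoing the GT bookkeeping by hand, and your estimates (the $|y-w|^{\beta-n}$ majorant, the $O(\epsilon)$ and $O(\epsilon^{\beta})$ rates) are accurate. Two points deserve one extra sentence each if you write this up: the cancellation of the $U(w,0)$-piece should be carried out at fixed $\epsilon>0$ (or as an iterated integral with $dy'$ first), after checking that each of the two pieces of the split of $\partial_{w_i}F_R^{\epsilon}$ is absolutely convergent, since without the cutoff that integral is only conditionally convergent over the slab --- your text essentially does this; and the smoothness of the tail $F-F_R$ is not a matter of generic ``summable decay'' (with only $U\in L^{\infty}$ the differentiated kernel integrated over $\{y_n\ge R\}$ diverges), but follows from the standing assumption that $F$ is well defined, because for $|y-w|\ge R$ the differentiated kernel is dominated by $R^{-1}$ times the original convergent integrand --- the paper is equally terse on this last point.
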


\begin{proof}
Note that for any $R>0$ we can write
\[F(w)=\cna \int_{\B^+_R}\frac{y_n^{1-\alpha}}{|y-w|^{n-\alpha}}U(y)dy+\cna \int_{\R^n_+\backslash \B^+_R}\frac{y_n^{1-\alpha}}{|y-w|^{n-\alpha}}U(y)dy,\]
where 
\[\cna \int_{\R^n_+\backslash \B_{R,w}}\frac{y_n^{1-\alpha}}{|y-w|^{n-\alpha}}U(y)dy\in C^\infty_{loc}(\B^{n-1}_R),\] 
so we only need to consider
\[\cna \int_{\B^+_R}\frac{y_n^{1-\alpha}}{|y-w|^{n-\alpha}}U(y)dy.\]

Extend $U(y)$ to $\R^n$ by defining 
\[\overline{U}(y',y_n)=\begin{cases}U(y',y_n),\text{ for } y_n\geq 0,\\
U(y',-y_n),\text{ for } y_n<0.
\end{cases}
\] 
for all $y'\in \R^{n-1}$. Then it is easy to see that $\overline{U}\in L^\infty (\R^n)\cap C^\beta_{loc}(\R^n)$.
Extend $\upka(y,w)$ to $\R^n$ in the $y$ variable by defining
\[\overline{p}_\alpha(y,w)=\cna \frac{|y_n|^{1-\alpha}}{|y-w|^{n-\alpha}}.\]
As a result we have
\[\int_{\B^+_R}\upka (y,w)U(y)dy=\frac{1}{2}\int_{\B_R}\overline{p}_\alpha(y,w)\overline{U}(y)dy,\]
for all $w\in \B^{n-1}_R$.
Now we only need to consider $\int_{\B_R}\overline{p}_\alpha(y,w)\overline{U}(y)dy$. We can use the same argument as in Lemma 4.2 of \cite{GT} to prove that
\begin{eqnarray*}
D_i\left(\int_{\B_R}\overline{p}_\alpha(y,w)\overline{U}(y)dy\right) &=&\int_{\B_R} D_i\overline{p}_\alpha(y,w) \left(\overline{U}(y)-\overline{U}(w)\right)dy\\
&&-U(w)\int_{\partial \B_R}\overline{p}_\alpha(y,w)\nu_i(y)dS_{y},
\end{eqnarray*}
for $i=1,2,...,n-1$. Here derivative is taken with respect to $w$, $\partial\B_R$ is the boundary of $\B_R$ in $\R^n$, $dS_y$ is the standard measure on $\partial \B_R$.
\end{proof}

\subsection{Application to the Non-limit case}
Now we are ready to prove regularity results which was used in Theorem \ref{maininequality}.
\begin{proposition}\label{AppendixPropositionRegularity1}
For any integer $n\geq 3$, for any $2-n<\alpha<1$ and any $p>\frac{2(n-1)}{n-2+\alpha}$ suppose we have $\widetilde{f}\in L^\infty (\Sph^{n-1})$, $\widetilde{f}\geq 0$ and that $\widetilde{f}$ is a solution to the Euler-Lagrange equation
\[\int_{\B^n}\pka (x,\xi)\left(\left(\PKA \widetilde{f}\right)(x)\right)^{\frac{n+2-\alpha}{n-2+\alpha}}dx=\widetilde{f}(\xi)^{p-1},\]
then $\widetilde{f}\in C^1(\Sph^{n-1})$.
\end{proposition}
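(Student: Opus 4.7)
The plan is to transfer the Euler--Lagrange equation to the upper half space via the stereographic map $\Psi$ and then bootstrap regularity by alternately applying Lemmas \ref{CalphaF}, \ref{CalphaU} and \ref{C1F}. First I would set $f=\widetilde{f}\circ\Psi\cdot|\Psi'|^{(n-2+\alpha)/2}$ (and correspondingly $\UPKA f=\PKA\widetilde{f}\circ\Psi^{-1}$ transforms via (\ref{PsiTransformationInverse})), so that the Euler--Lagrange equation becomes (\ref{IntegralEquationP}):
\begin{equation*}
f(w)^{p-1}\,|\Psi'(w)|^{\frac{n-\alpha}{2}-\frac{(p-1)(n-2+\alpha)}{2}}
=\int_{\R^n_+}\upka(y,w)\bigl((\UPKA f)(y)\bigr)^{\frac{n+2-\alpha}{n-2+\alpha}}\,dy.
\end{equation*}
Since $\widetilde f\in L^\infty(\Sph^{n-1})$ and the conformal factor $|\Psi'(w)|=2/(1+|w|^2)$ is bounded, $f\in L^\infty(\R^{n-1})$. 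Because $\int_{\R^{n-1}}\upka(y,w)\,dw\equiv 1$, the Poisson extension $\UPKA f$ lies in $L^\infty(\R^n_+)$, and hence $U:=(\UPKA f)^{(n+2-\alpha)/(n-2+\alpha)}\in L^\infty(\R^n_+)$.

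Next, I would split the right-hand side as $F=F_R+(F-F_R)$ with $F_R$ given by (\ref{FRdefinition}); the tail $F-F_R$ is $C^\infty$ on any fixed compact set, while Lemma \ref{CalphaF} gives $F_R\in C^\beta_{\mathrm{loc}}(\R^{n-1})$ for every $\beta\in(0,1)$. Multiplying by the smooth positive factor $|\Psi'(w)|^{-((n-\alpha)/2-(p-1)(n-2+\alpha)/2)}$ yields $f^{p-1}\in C^\beta_{\mathrm{loc}}(\R^{n-1})$. The Euler--Lagrange equation forces $\widetilde f>0$ on $\Sph^{n-1}$ (since the right-hand integral is strictly positive whenever $\widetilde f\not\equiv 0$), and consequently $f>0$; on compact subsets $f$ is therefore bounded below by a positive constant, so taking the $(p-1)$-th root gives $f\in C^{\beta_1}_{\mathrm{loc}}(\R^{n-1})$ for some $\beta_1>0$ (pick any $\beta_1<1-\alpha$).

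With $f\in L^\infty\cap C^{\beta_1}_{\mathrm{loc}}$ and $\alpha+\beta_1<1$, Lemma \ref{CalphaU} promotes $\UPKA f$ to $C^{\beta_1}_{\mathrm{loc}}(\overline{\R}^n_+)$, so $U=(\UPKA f)^{(n+2-\alpha)/(n-2+\alpha)}\in L^\infty(\R^n_+)\cap C^{\beta_1}_{\mathrm{loc}}(\overline{\R}^n_+)$. Now the hypotheses of Lemma \ref{C1F} are satisfied (after treating the tail $F-F_R$ separately as before, which is smooth on compact sets), and I conclude that $F\in C^1_{\mathrm{loc}}(\R^{n-1})$. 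Hence $f^{p-1}\in C^1_{\mathrm{loc}}$, and because $f$ is strictly positive locally, $f\in C^1_{\mathrm{loc}}(\R^{n-1})$. Transporting back by $\Psi^{-1}$ (which is smooth away from the image point $(0,\dots,0,1)$, handled by re-centring $\Psi$ via a rotation) gives $\widetilde f\in C^1(\Sph^{n-1})$.

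The main subtlety I anticipate is step (5), namely taking the $(p-1)$-th root cleanly: the exponent $p-1$ can be either smaller or larger than $1$, so one has to argue separately that for strictly positive, locally $C^\beta$ data the map $g\mapsto g^{1/(p-1)}$ preserves some positive Hölder exponent, and then — after the second application of the lemmas — that $(g^{1/(p-1)})'$ exists continuously when $g\in C^1$ and $g>0$. Positivity of $\widetilde f$ (and hence of $f$), which follows directly from the integral equation together with $\pka,\UPKA f>0$, is what makes this step routine rather than delicate.
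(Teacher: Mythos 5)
Your proposal follows essentially the same route as the paper's own proof: transfer the Euler--Lagrange equation to the upper half space in the form (\ref{IntegralEquationP}), bootstrap regularity through Lemma \ref{CalphaF}, then Lemma \ref{CalphaU}, then Lemma \ref{C1F}, using strict positivity of $f$ to take the $(p-1)$-th root, and finally pull back to $\Sph^{n-1}$. The extra care you take with the Hölder exponent when extracting the root and with covering the north pole by re-centring the stereographic projection only supplies details the paper leaves implicit.
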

\begin{proof}
If for any $w\in \R^{n-1}$ we define
\[f(w)=\widetilde{f}\circ \Psi \left(w\right) \left( \frac{2}{1+\left\vert w\right\vert ^{2}}\right)
^{\frac{n-2+\alpha }{2}}\]
as in \ref{PsiTransformation}, then by Proposition \ref{PropositionPoissonKernelConformalTransformationPsi} and change of variable we can see that $f$ satisfies the the integral equation
\[
\left(f(w)\right)^{p-1}|\Psi'(w)|^{\frac{n-\alpha}{2}-
\frac{(p-1)(n-2+\alpha)}{2}}=\int_{\R^n_+}\upka(y,w)\left((\UPKA f)(y)\right)^{\frac{n+2-\alpha}{n-2+\alpha}}dy.
\]
As an immediate result, we have $f(w)>0$ for all $w\in \R^{n-1}$, since otherwise we have $f=0$.
Where $\|\UPKA f\|_{L^\infty (\R^{n}_+)}\leq \|f\|_{L^\infty(\R^{n-1})}$, and hence $\left((\UPKA f)(y)\right)^{\frac{n+2-\alpha}{n-2+\alpha}}\in L^\infty (\R^n_+)$. 

Using Lemma \ref{CalphaF}, choose some $\beta$ such that $0<\beta<1$, $\beta<p-1$ and $\frac{\beta}{p-1}+\alpha<1$, we get that 
\[\left(f(w)\right)^{p-1}|\Psi'(w)|^{\frac{n-\alpha}{2}-
\frac{(p-1)(n-2+\alpha)}{2}}\in C^\beta_{loc} (\R^{n-1}).\] 
Since $|\Psi'(w)|=\frac{2}{1+|w|^2}$ is smooth as a function of $w$ and that it is always positive, we have
\[f^{p-1}\in C^\beta_{loc}(\R^{n-1}),\]
and as a reuslt
\[f\in C^{\frac{\beta}{p-1}}_{loc}(\R^{n-1}).\]

Now apply Lemma \ref{CalphaU}, we get
\[\UPKA f\in C^\frac{\beta}{p-1}_{loc}(\R^n_+).\]
Finally apply Lemma \ref{C1F} to get
\[\left(f(w)\right)^{p-1}|\Psi'(w)|^{\frac{n-\alpha}{2}-
\frac{(p-1)(n-2+\alpha)}{2}}\in C^1_{loc}(\R^n),\]
and hence
\[\left(f(w)\right)^{p-1}\in C^1_{loc}(\R^n).\]
Lastly, since $f(w)>0$ for all $w\in \R^{n-1}$, we have
\[f(w)\in C^1_{loc}(\R^{n-1}).\]
Transform $f$ back to the unit ball we see that
\[\widetilde{f}\in C^1(\Sph^{n-1}).\]
\end{proof}

\subsection{Application to the Limit Case $\alpha=2-n$}

\begin{proposition}\label{AppendixPropositionRegularity2}
For any integer $n\geq 3$, suppose we have $\widetilde{f}\in L^\infty (\Sph^{n-1})$ and that $\widetilde{f}$ is a solution to the Euler-Lagrange equation
\begin{equation}\label{ELballAppendix}
	e^{(n-1)\widetilde{f}(\xi)}=\int_{\B^n}e^{n\widetilde{I}_n+n\widetilde{P}_{2-n}\widetilde{f}}\widetilde{p}_{2-n}(x,\xi)dx
\end{equation}
then $\widetilde{f}\in C^1(\Sph^{n-1})$.
\end{proposition}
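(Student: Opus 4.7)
The plan is to mirror the strategy of Proposition \ref{AppendixPropositionRegularity1}, pushing the equation to the upper half space via \eqref{flimitdef} and then running the Hölder bootstrap provided by Lemmas \ref{CalphaF}, \ref{CalphaU} and \ref{C1F}, with $\alpha=2-n$. First I define $f(w)=\widetilde{f}\circ\Psi(w)+\ln|\Psi'(w)|$, so by the calculation leading to \eqref{ELup} the function $f$ satisfies
\[
e^{(n-1)f(w)}=\int_{\R^n_+}p_{2-n}(y,w)\,U(y)\,dy,\qquad U(y):=e^{nP_{2-n}f(y)}.
\]
Using the decomposition \eqref{pfharmonic}, $P_{2-n}f=(\widetilde P_{2-n}\widetilde f)\circ\Psi+\widetilde I_n\circ\Psi+\ln|\Psi'|$. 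Each summand is bounded above: $\widetilde P_{2-n}\widetilde f$ is bounded by $\|\widetilde f\|_{L^\infty(\Sph^{n-1})}$ since $\widetilde p_{2-n}$ integrates to $1$ (Remark \ref{IntegrationLemmaLimitCaseRemark}); $\widetilde I_n$ is continuous on $\overline{\B^n}$ by Lemma \ref{LemmaBoundaryValueThroughInduction}; and $|\Psi'(y)|\le 2$. Hence $U\in L^\infty(\R^n_+)$ and $U>0$, so Lemma \ref{CalphaF} with $\alpha=2-n$ gives $F(w):=\int_{\R^n_+}p_{2-n}(y,w)U(y)\,dy\in C^\beta_{\mathrm{loc}}(\R^{n-1})$ for every $\beta\in(0,1)$. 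Since $F=e^{(n-1)f}>0$, taking logarithm yields $f\in C^\beta_{\mathrm{loc}}(\R^{n-1})$, and then $\widetilde f\circ\Psi=f-\ln|\Psi'|\in C^\beta_{\mathrm{loc}}(\R^{n-1})$, so $\widetilde f\in C^\beta$ on $\Sph^{n-1}\setminus\{(0,\dots,0,1)\}$.

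Next I bootstrap to $C^1$. Split $f=f_1+f_2$ with $f_1=\widetilde f\circ\Psi$ and $f_2=\ln|\Psi'|$. Now $f_1\in L^\infty(\R^{n-1})\cap C^\beta_{\mathrm{loc}}(\R^{n-1})$ from the previous step, and since $\alpha=2-n\le -1$ for $n\ge 3$ we have $\alpha+\beta<1$, so Lemma \ref{CalphaU} (with $\alpha=2-n$) applies and gives $P_{2-n}f_1\in C^\beta_{\mathrm{loc}}(\overline{\R^n_+})$. By Remark \ref{RemarkExtensionSpecialCase} this equals $(\widetilde P_{2-n}\widetilde f)\circ\Psi$; by \eqref{Inconformal} $P_{2-n}f_2=\widetilde I_n\circ\Psi+\ln|\Psi'|$, and both terms are continuous (in fact Lipschitz at the boundary by Lemma \ref{LemmaBoundaryValueThroughInduction}) on $\overline{\R^n_+}$. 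Consequently $P_{2-n}f\in C^\beta_{\mathrm{loc}}(\overline{\R^n_+})$, so $U=e^{nP_{2-n}f}\in L^\infty(\R^n_+)\cap C^\beta_{\mathrm{loc}}(\overline{\R^n_+})$ with $U>0$. Lemma \ref{C1F} then yields $F\in C^1_{\mathrm{loc}}(\R^{n-1})$, and because $F>0$ we obtain $f\in C^1_{\mathrm{loc}}(\R^{n-1})$, hence $\widetilde f\in C^1(\Sph^{n-1}\setminus\{(0,\dots,0,1)\})$.

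To upgrade to $C^1$ at the missing pole I use rotational invariance: for any rotation $R\in SO(n)$ the function $\widetilde f\circ R$ satisfies the same Euler--Lagrange equation (since $\widetilde p_{2-n}$ and $\widetilde I_n$ are $SO(n)$-invariant), so the argument above applied to $\widetilde f\circ R$ shows it is $C^1$ away from $(0,\dots,0,1)$, i.e.\ $\widetilde f$ is $C^1$ away from $R^{-1}(0,\dots,0,1)$. Choosing $R$ so that $R^{-1}(0,\dots,0,1)$ misses a preassigned point of $\Sph^{n-1}$ covers the pole, and $\widetilde f\in C^1(\Sph^{n-1})$ follows.

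The one delicate point is the second application of Lemma \ref{CalphaU}: one cannot feed the full $f$ into it because $\ln|\Psi'(w)|\to-\infty$ as $|w|\to\infty$, so $f\notin L^\infty(\R^{n-1})$. The splitting $f=f_1+f_2$ is what isolates the unbounded but explicit part $f_2$, whose Poisson extension is computed in closed form via \eqref{Inconformal}, from the bounded part $f_1$ to which Lemma \ref{CalphaU} can be applied. This, together with verifying that the boundary regularity of $\widetilde I_n\circ\Psi$ from Lemma \ref{LemmaBoundaryValueThroughInduction} suffices to keep $U$ in $C^\beta_{\mathrm{loc}}(\overline{\R^n_+})$, is the main technical step beyond the non-limit argument.
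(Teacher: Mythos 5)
Your proposal is correct and follows the same route as the paper's own proof: pass to the upper half space via \eqref{flimitdef}, note that $U=e^{nP_{2-n}f}$ is bounded and positive, and bootstrap through Lemma \ref{CalphaF}, Lemma \ref{CalphaU} and Lemma \ref{C1F} with $\alpha=2-n$, finally returning to the sphere. The difference is that you patch two points that the paper's proof passes over in silence. First, the paper applies Lemma \ref{CalphaU} directly to $f=\widetilde f\circ\Psi+\ln|\Psi'|$, even though $\ln|\Psi'(w)|\to-\infty$ as $|w|\to\infty$, so the $L^\infty$ hypothesis of that lemma is not literally satisfied; your splitting $f=f_1+f_2$, with Lemma \ref{CalphaU} applied only to the bounded part $f_1=\widetilde f\circ\Psi$ and the explicit identity \eqref{Inconformal} (plus the boundary behavior of $\widetilde I_n$ from Lemma \ref{LemmaBoundaryValueThroughInduction}) handling $P_{2-n}f_2=\widetilde I_n\circ\Psi+\ln|\Psi'|$, is a clean way to make that step rigorous. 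Second, the paper concludes with ``transform back to the unit ball,'' although $\Psi(\R^{n-1})$ omits the point $(0,\dots,0,1)$; your rotation argument, using the $SO(n)$-invariance of $\widetilde p_{2-n}$ and of the radial function $\widetilde I_n$ so that $\widetilde f\circ R$ solves the same equation, legitimately upgrades the conclusion to $C^1$ on all of $\Sph^{n-1}$. So the skeleton is identical to the paper's, but your version is the more complete one; the only caveat is that, like the paper, you quote Lemma \ref{CalphaF} as giving regularity of the full integral $F$ rather than of the truncated $F_R$, which is harmless here because the specific $U=e^{nP_{2-n}f}\le C|\Psi'(y)|^n$ decays fast enough at infinity for the far part to be smooth.
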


\begin{proof}
Suppose $\widetilde{f}\in L^\infty(\Sph^{n-1})$ satisfies the integral equation (\ref{ELballAppendix}). Define 
\[f(w)=\widetilde{f}\circ\Psi(w)+\ln|\Psi'(w)|,\]
then from the discussion before (\ref{ELup}) we see that $f(w)$ satisfy the following integral equation:
\begin{equation}\label{ELupAppendix}
e^{(n-1)f(w)}=\int_{\R^n_+}e^{nP_{2-n}f}p_{2-n}(y,w)dy.	
\end{equation}
Since $e^{nP_{2-n}f}(y)=|\Psi'(y)|^ne^{n\widetilde{I}_n+n\widetilde{P}_{2-n}\widetilde{f}}\circ\Psi(y)\in L^\infty (\R^n_+)$, we can apply Lemma \ref{CalphaF} to get
\[e^{(n-1)f(w)}\in C^\beta_{loc}(\R^{n-1}).\]
Since $e^{(n-1)f(w)}>0$ for all $w\in \R^{n-1}$, we have
\[f(w)\in C^\beta_{loc}(\R^{n-1})\]
Now from Lemma \ref{CalphaU} we know that $P_{2-n}f\in C^\beta_{loc}(\overline{\R}^n_+)$. Using Lemma \ref{C1F} and equation (\ref{ELupAppendix}) we eventually get $f\in C^1_{loc}(\R^{n-1})$.
Eventually, transform back to the unit ball we get $\widetilde{f}\in C^1(\Sph^{n-1})$.
\end{proof}

\section*{Acknowledgement}
The author would like to thank his thesis advisor Xiaodong Wang for his guidance throughout this project.

\newpage

\bibliographystyle{amsplain}
 \bibliography{Ballanalysis}

\providecommand{\bysame}{\leavevmode\hbox to3em{\hrulefill}\thinspace}
\providecommand{\MR}{\relax\ifhmode\unskip\space\fi MR }
\providecommand{\MRhref}[2]{%
  \href{http://www.ams.org/mathscinet-getitem?mr=#1}{#2}
}
\providecommand{\href}[2]{#2}
\begin{thebibliography}{10}

\bibitem{AC}
Antonio~G. Ache and Sun-Yung~Alice Chang, \emph{{Sobolev trace inequalities of
  order four}}, Duke Mathematical Journal \textbf{166} (2017), no.~14, 2719 --
  2748.

\bibitem{BR}
E.~F. Beckenbach and T.~Rado, \emph{Subharmonic functions and surfaces of
  negative curvature}, Transactions of the American Mathematical Society
  \textbf{35} (1933), no.~3, 662--674.

\bibitem{CS}
Luis Caffarelli and Luis Silvestre, \emph{An extension problem related to the
  fractional {Laplacian}}, Communications in Partial Differential Equations
  \textbf{32} (2007), no.~8, 1245--1260.

\bibitem{C}
Torsten Carleman, \emph{{Zur theorie der minimalfl{\"a}chen}}, Mathematische
  Zeitschrift \textbf{9} (1921), no.~1, 154--160.

\bibitem{CC}
Jeffrey~S. Case and Sun-Yung Alice~Chang, \emph{{On fractional GJMS
  operators}}, Communications on Pure and Applied Mathematics \textbf{69}
  (2016), no.~6, 1017--1061.

\bibitem{Chang}
Sun-Yung~Alice Chang, \emph{Conformal invariants and partial differential
  equations}, Bull. Amer. Math. Soc. (N.S.) \textbf{42} (2005), no.~3,
  365--393.

\bibitem{CG}
Sun-Yung~Alice Chang and Mar{\'\i}a del Mar~Gonz{\'a}lez, \emph{{Fractional
  Laplacian in conformal geometry}}, Advances in Mathematics \textbf{226}
  (2011), no.~2, 1410--1432.

\bibitem{Ch}
Shibing Chen, \emph{A new family of sharp conformally invariant integral
  inequalities}, International Mathematics Research Notices \textbf{2014}
  (2012), no.~5, 1205--1220.

\bibitem{ChenLiOu}
Wenxiong Chen, Congming Li, and Biao Ou, \emph{Classification of solutions for
  an integral equation}, Communications on Pure and Applied Mathematics
  \textbf{59} (2006), no.~3, 330--343.

\bibitem{FG}
Charles Fefferman and C.~Robin Graham, \emph{{Q-curvature and Poincar\'e
  metrics}}, Mathematical Research Letters \textbf{9} (2002), no.~2, 139--151.

\bibitem{FKT}
Rupert~L. Frank, Tobias K{\"o}nig, and Hanli Tang, \emph{{Classification of
  solutions of an equation related to a conformal log Sobolev inequality}},
  Advances in Mathematics \textbf{375} (2020), 107395.

\bibitem{GNN}
B.~Gidas, Wei~Ming Ni, and L.~Nirenberg, \emph{{Symmetry and related properties
  via the maximum principle}}, Communications in Mathematical Physics
  \textbf{68} (1979), no.~3, 209 -- 243.

\bibitem{GT}
David Gilbarg and Neil~S. Trudinger, \emph{{Elliptic Partial Differential
  Equations of Second Order}}, Springer-Verlag Berlin Heidelberg, 2001.

\bibitem{GuoWang}
Qianqiao Guo and Xiaodong Wang, \emph{Uniqueness results for positive harmonic
  functions on {$\overline{\mathbb{B}^n}$} satisfying a nonlinear boundary
  condition}, Calculus of Variations and Partial Differential Equations
  \textbf{59} (2020), no.~5, 146.

\bibitem{HWY2}
Fengbo Hang, Xiaodong Wang, and Xiaodong Yan, \emph{An integral equation in
  conformal geometry}, Annales de l'Institut Henri Poincar{\'e} C, Analyse non
  lin{\'e}aire \textbf{26} (2009), no.~1, 1--21.

\bibitem{L}
Yanyan Li, \emph{Remark on some conformally invariant integral equations: the
  method of moving spheres}, Journal of the European Mathematical Society
  \textbf{6} (2004), no.~2, 153--180.

\bibitem{LZ}
Yanyan Li and Meijun Zhu, \emph{{Uniqueness theorems through the method of
  moving spheres}}, Duke Mathematical Journal \textbf{80} (1995), no.~2, 383 --
  417.

\bibitem{WZ}
Lei Wang and Meijun Zhu, \emph{Liouville theorems on the upper half space},
  Discrete and Continuous Dynamical Systems \textbf{40} (2020), no.~9,
  5373--5381.

\bibitem{Yang}
Qiaohua {Yang}, \emph{{Sharp Sobolev trace inequalities for higher order
  derivatives}}, arXiv e-prints (2019), arXiv:1901.03945.

\end{thebibliography}

\end{document}